\documentclass[a4paper]{article}
\usepackage{preamble_notes}
\usepackage{longtable} 
\usepackage{booktabs}
\usepackage{authblk}

\title{Gaussian Mixture Model with unknown diagonal covariances via continuous sparse regularization}

\bgroup

\author[1]{Romane Giard}
\author[1,3]{, Yohann De Castro}
\author[2]{ and Clément Marteau}

{
\affil[1]{{Centrale Lyon, CNRS UMR 5208, Institut Camille Jordan, Écully, France.}}
\affil[2]{Univ. Claude Bernard, CNRS UMR 5208, Institut Camille Jordan, Villeurbanne, France.}
\affil[3]{Institut Universitaire de France (IUF)}
}
\date{version as of \today}
\egroup

\begin{document}

\maketitle

\begin{abstract}
This paper addresses the statistical estimation of Gaussian Mixture Models (GMMs) with unknown diagonal covariances from independent and identically distributed samples. We employ the Beurling-LASSO (BLASSO), a convex optimization framework that promotes sparsity in the space of measures, to simultaneously estimate the number of components and their parameters.

Our main contribution extends the BLASSO methodology to multivariate GMMs with component-specific unknown diagonal covariance matrices. This setting is significantly more flexible than previous approaches, which required known and identical covariances. We establish non-asymptotic recovery guarantees with nearly parametric convergence rates for component means, diagonal covariances, and weights, as well as for density prediction. 

A key theoretical contribution is the identification of an explicit separation condition on mixture components that enables the construction of non-degenerate dual certificates—essential tools for establishing statistical guarantees for the BLASSO. Our analysis leverages the Fisher-Rao geometry of the statistical model and introduces a novel semi-distance adapted to our framework, providing new insights into the interplay between component separation, parameter space geometry, and achievable statistical recovery. 
\end{abstract}

\medskip

\noindent Corresponding author: Romane Giard\\
\noindent E-mail address: \texttt{romane.giard@ec-lyon.fr}

\section{Introduction}
Gaussian Mixture Models (GMMs) are a cornerstone of statistical modeling, offering a flexible and powerful framework for representing complex data distributions. They are widely applied in diverse fields, including clustering and density estimation \citep{mclachlan_mm_2000,bouveyron_model-based_2019}, image processing \citep{hdmi}, bioinformatics and economics \citep[Section~8.4]{handbook_mm}.
Despite their ubiquity, the estimation of GMMs, particularly when the number of components and their covariance structures are unknown, presents significant statistical and computational challenges. 

The predominant method for GMM estimation is the Expectation-Maximization (EM) algorithm, which iteratively maximizes the log-likelihood. While the EM algorithm guarantees a non-decreasing likelihood and converges to a stationary point under some conditions \citep{mclachlan_em, wu_em, balakrishnan_em}, it faces significant practical challenges. The log-likelihood function is non-convex and multi-modal, making the algorithm sensitive to initialization and prone to converging to local maxima rather than the global optimum.
Crucially, it requires the number of mixture components to be specified in advance. We aim to overcome these limitations.

\medskip
This paper introduces a novel approach to the estimation of multivariate GMMs with unknown diagonal covariances, leveraging the Beurling-LASSO (BLASSO) methodology \citep{decastro_2012, candes_fernandez_granda_2014, blasso_duval_peyre, boyer2017adapting, blasso_poon, supermix}. The BLASSO provides a convex optimization framework in the space of measures that promotes sparsity, allowing the recovery of discrete measures from continuous observations. It offers a principled way to simultaneously estimate the number of components, their respective weights, means, and diagonal covariance matrices. It has not yet been applied to settings where each mixture component is allowed to have its own covariance structure. This extension makes it possible to handle significantly more realistic models.
For instance, in clustering applications, assuming identical covariances across components yields Voronoi partitions (linear decision boundaries), as in $k$-means. In contrast, allowing distinct covariance matrices leads to boundaries defined by quadratic equations \citep[Section 4.2]{ml_probabilistic_perspective}. 

\medskip
A pivotal contribution of our research is the identification of a separation condition for the mixture components. This condition is instrumental in the construction of so-called \emph{non-degenerate dual certificates} \citep{candes_fernandez_granda_2014, blasso_duval_peyre}, which play a key role in establishing estimation guarantees. Our analysis is grounded in the Fisher-Rao geometry of the statistical model, providing theoretical insights into the intricate relationships between component separation, the underlying geometry of the parameter space, and the achievable statistical recovery.

\subsection{Continuous sparse regression for Gaussian Mixture Models}
We denote by $\N(t,C)$ a multivariate Gaussian distribution in dimension $d\in\mathbb{N}^*$, with mean $t\in \R^d$ and covariance matrix $C\in \S_{++}^d$, where $\S_{++}^d$ denotes the space of positive-definite symmetric matrices of size $d \times d$. Given $u=(u_1,\ldots, u_d)\in(\mathbb{R}_+^*)^d$ a vector with positive entries, we denote by $\diag((u_{1})^2,\ldots, (u_{d})^2)$ the $d\times d$ diagonal matrix with diagonal entries $u_k^2$, where each $u_k$ is interpreted as a marginal standard deviation. The density of $\mathcal{N}\left(t,\diag((u_{1})^2,\ldots, (u_{d})^2)\right)$ is denoted by $\varphi_{(t,u)}$.

\medskip
In this paper, we observe a $n$-sample $X_1,\ldots,X_n \in \R^d$ drawn from a Gaussian Mixture distribution with diagonal covariances, defined by: 
\begin{equation}\label{eq:sample_distrib}
X_i \iid \sum_{j=1}^{s}a_j^0 \varphi_{(t_j^0,u_j^0)} \eqcolon f^0 
\quad 
\text{with}
\quad 
\sum_{j=1}^s a_j^0=1
\quad 
\text{and}
\quad 
a_j^0>0
\end{equation}
where $t_j^0\in\R^d$, $u_j^0=(u_{j,k}^0)_{k=1,\ldots,d}\in(\R_+^*)^d$.
Our goal is to estimate the number of \emph{components} $s$ (also called the sparsity index), the \emph{weights}~$a_j^0$ and the \emph{location-scale} parameters $(t_j^0, u_j^0)$ of each mixture component, indexed by $j\in \{1,\ldots,s\}$, from the observations $X_1,\ldots,X_n$.

\begin{remark}
\label{remark:identifiability}
 As a matter of fact, the Gaussian Mixture Model with unknown number of components, weights, means and covariance matrices:
 \[
 \bigg\lbrace \sum_{j=1}^p a_j \N(t_j,C_j) \: : \: p \geq 1\,, \: a_j>0\,, \: t_j \in \R^d\,, \: C_j \in \S_{++}^d
 \,, \: \forall i\neq j
 \,,\: (t_j,C_j) \neq (t_i,C_i) 
 \,, \: \sum_{j=1}^p a_j=1 \bigg\rbrace\] 
 is identifiable, \ie, if two distributions of this model are equal, then they have the same number of components, and the same components (up to a permutation) with the same associated weights. We refer to \citep{teicher_identifiability_gmm_1d} for the proof in dimension $d=1$, and \citep{yakowitz_identifiability_gmm_multidim} for the generalization in dimension $d \geq 1$. Under mild assumptions, an even stronger result holds for \emph{continuous} mixtures of Gaussian distributions, defined by a density on the space of mean and covariance matrix \citep{bruni_identifiability_gmm_continuous}.
\end{remark}

\medskip
Our estimation strategy relies on the Beurling-LASSO (BLASSO). This framework, introduced in \citep{decastro_2012,candes_fernandez_granda_2014}, has been successfully applied to various statistical estimation problems, particularly in the context of sparse signal recovery \citep{blasso_duval_peyre} and compressed sensing \citep{off-the-grid_cs}. It has been extended to various settings, including the estimation of Gaussian Mixture Models with known covariances \citep{supermix,off-the-grid_cs}. Algorithmic implementations are discussed in \citep{cpgd_chizat} and \citep{fastpart}. One key modeling idea of this approach is to lift the parameter space into the space of measures via the embedding
\[
(a_j,t_j,u_j)_{j=1}^p\mapsto \mu=\sum_{j=1}^p~a_j\delta_{(t_j,u_j)}\,.
\]
where $(t_j,u_j)_{j=1}^p$ are referred to as the \emph{particles} of $\mu$. Any discrete probability measure on $\R^d \times (0,+\infty)^d$ with finite support, of size $p$ for any $p\geq1$, describes a set of parameters. We stress that the parameter $p$, the number of components, is free and not prescribed.
The law of the $n$-sample $(X_i)_{i=1}^n$ (see \eqref{eq:sample_distrib}) is unambiguously described by the target parameters $(a^0_j,t^0_j,u^0_j)_{j=1}^s$ (by identifiability of the model, see Remark~\ref{remark:identifiability}), and hence unambiguously represented by the so-called \emph{target measure} $\mu^0\coloneq \sum_{j=1}^s~a_j^0\delta_{(t_j^0,u_j^0)}$.

\medskip
Our estimator will be defined as a solution of a minimization problem over the space of measures, constructed from the observations $X_1,\ldots,X_n$. It is designed to estimate the target measure $\mu^0$. In particular, we want our estimator to put a mass close to $a_j^0$ around the particle $(t_j^0,u_j^0)$. We will consider a convex loss of the form
\begin{equation}\label{eq:general_form_blasso}
 F_{n,\tau}(\mu)+\kappa R(\mu)
\end{equation}
where $F_{n,\tau}(\mu)$ is a \emph{data fidelity} term, $R(\mu)$ is a \emph{regularization} term enforcing sparsity, and $\kappa>0$ a tuning parameter. The data fidelity term compares a predicted density encoded by $\mu$ with an empirical approximation of the target density. This approximation is obtained from the empirical distribution of $(X_i)_{i=1}^n$ by convolution with a Gaussian kernel of covariance $\tau^2\mathrm{Id}_d$, depending on a \emph{smoothing parameter} $\tau$. This approach is standard in kernel density estimation \citep{tsybakov_kde}, but the main difference here is that $\tau$ will not be necessarily chosen to match some nonparametric rate. Its calibration will sometimes answer to an alternative purpose. 
The regularization term $R(\mu)$ is the total variation (TV) norm, analog of the $\ell^1$-norm for measures, and aims to concentrate the mass of our estimator in a few regions. We refer to Section \ref{section:statistical_modeling} and \eqref{eq:pb_BLASSO_gaussian_kernel_norm} for a complete description of the BLASSO procedure.

\subsection{Related works}

Gaussian mixture estimation ranges from clustering to parameter estimation. Our focus is primarily on the latter. Prior work has investigated a variety of settings under different modeling assumptions.

\paragraph{Gaussian location mixtures}
When covariances are known, \citep{heinrich_kahn} derived local asymptotic minimax rates for univariate mixtures. Generalizing to dimension $d$, \citep{doss2023optimal} established minimax rates under $W_1$ and Hellinger distances.
\citep{regev_well_separated_gaussians} further highlighted the gap between statistical and computational limits: while a mean separation of $\sqrt{\ln s}$ suffices for identifiability with a polynomial sample size, efficient algorithms typically require a separation of order $\sqrt{d}$.
Other approaches include Bayesian methods with priors on the number of components \citep{ohn_lin_2023_growing_number_components} or approximations by finite mixtures \citep{ma_yun_best_approx_gmm}.

\paragraph{Unknown covariances}
Intermediate settings with \emph{shared} unknown variance have been studied using the Denoised Method of Moments (DMM) \citep{wu_yang_dmm} in the univariate case, or EM \citep{bing2025} in dimension $d$ under a separation condition on the components.

The general case of \emph{varying} unknown covariances remains challenging. Restricted to two components, efficient estimators exist given sufficient separation \citep{hardt_price} or specific weight constraints \citep{wu_zhou_2_components}.

When the number of mixture components $s$ is unknown, \citep{ho_nguyen_aos_2016} analyzed the Maximum Likelihood Estimation (MLE) in over-parameterized settings, establishing slow polynomial convergence rates dependent on the excess number of components. \citep{guha_bayesian_mm} also studied this setting in a bayesian framework. In both works, the resulting convergence guarantees are pointwise in the sense that the bounds depend on the underlying target mixture.
Alternatively, \citep{belkin_polynomial_learning} proposed a method of moments (MoM) with polynomial sample complexity when $s$ is known; however, their grid-search approach remains exponentially hard in the dimension. Finally, \citep{optimalclustering} derived minimax bounds for clustering risks with anisotropic covariances using Lloyd's algorithm.

\paragraph{Positioning}
Our work departs from these approaches by adopting the Beurling-LASSO (BLASSO) framework. Unlike the EM algorithm \citep{bing2025} or MoM \citep{belkin_polynomial_learning}, this convex optimization approach does not require specifying $s$. While BLASSO is established for fixed covariances \citep{supermix}, extending it to multivariate GMMs with component-specific \emph{unknown} diagonal covariances presents a distinct theoretical challenge. We address this by introducing a geometry-adapted semi-distance and establishing non-asymptotic recovery guarantees for both parameter estimation and density prediction. Our results are not pointwise (except for the sparsity index estimation displayed in Section \ref{section:NDSC}); rather, they hold uniformly over classes of mixtures satisfying a prescribed separation condition.

\subsection{Contributions}
Our primary contribution is the extension of the BLASSO framework to estimate GMMs with unknown diagonal covariances. This setting introduces a key challenge: the associated kernel (see Section \ref{section:dual_certif}) is not translation-invariant, departing from many standard BLASSO applications. We address this by:
\begin{itemize}
 \item Introducing a reparameterization of the measures to work with a normalized kernel—an object essential for the theoretical analysis of the BLASSO, describing the correlation between 2 location-scale parameters $(t,u)$ and $(t',u')$. 
 \item Establishing recovery guarantees through the construction of non-degenerate dual certificates. This involves proving a modified version of the \emph{local positive curvature} assumption (LPC) from \citep[Assumption~$1$]{off-the-grid_cs} for our specific non translation-invariant kernel.
 \item Using a semi-distance naturally aligned with the kernel-induced geometry. This allows us to formulate more tractable conditions for the certificate construction, relaxing the reliance on the Fisher-Rao distance used in related works.
 \item Establishing conditions on the separation between the components of the mixture (\ie, the particles of~$\mu^0$) with respect to the semi-distance. These conditions depend on bounds on the variance, the sparsity $s$, the dimension $d$, and the smoothing parameter $\tau$.
 Higher sparsity levels or looser bounds on the variances necessitate a wider minimal separation between the components of $\mu^0$. See \eqref{eq:minimum_separation} for a precise condition.
\end{itemize}
We also provide novel prediction guarantees for the target density, achieving nearly parametric rates of convergence in different regularization regimes.

\paragraph{Informal results on error bounds}
We define $\mu_\omega^0$ as a reparameterized version of the target measure $\mu^0$,
\[
\mu_\omega^0=\sum_{j=1}^s \omega_j^0\, \delta_{(t_j^0,u_j^0)}
\quad \text{with} \quad \omega_j^0 = W(x_j^0)\,a_j^0\,,
\]
where $W$ is a positive function that will be specified later. 
According to the procedure displayed in Section \ref{section:statistical_modeling}, we estimate $\mu_\omega^0$ rather than $\mu^0$. Our estimator is given by the argument minimum of a function $J_W$ of the form \eqref{eq:general_form_blasso} over nonnegative measures with support restricted to a compact set $\X \subset
\R^d \times [u_{\min},+\infty)^d$ (see \eqref{eq:pb_BLASSO_gaussian_kernel_norm} for a formalized version). Here, $u_{\min}$ denotes a lower bound on the diagonal elements of the covariance matrix. 

Most of our results remain valid for approximate solutions, that is for any measure $\mu$ satisfying $J_W(\mu)\leq J_W(\mu_\omega^0)$, and not only the exact argument minimum solution of $J_W$. Accordingly, we will use the notation $\mu_{n,\omega}^\star$ when an exact solution is required, and $\hat{\mu}_{n,\omega}$ when an approximate solution suffices.

We evaluate the estimator by comparing its mass against the mass of $\mu_\omega^0$ within ``near regions'' $\X_j^{\rm near}(r_e)$ centered on the true parameters $(t_j^0,u_j^0)$, and in the complementary ``far region'' within $\X$. Near regions correspond to balls of radius $r_e$ for a semi-distance $\mathpzc{d}$ (given by \eqref{eq:def_semi_distance} below). Figure \ref{fig:regions_comparison_estimate_mu^0} illustrates this partitioning. The near region $\X_j^{\rm near}(r_e)$ depends only on its radius $r_e$, $x_j^0$, the dimension $d$, and the smoothing parameter $\tau$.
\begin{figure}[ht!]
\centering
\begin{tikzpicture}
 \definecolor{lightorange}{RGB}{255, 181, 112}
 \definecolor{darkblue}{RGB}{0, 0, 139}

 \draw[->] (-2.3, 0)--(5.5, 0) node[anchor=north] {$t$};
 \draw[->] (0, 0)--(0, 5) node[anchor=west] {$u$};

 \newcommand{\largecloudA}[2]{
 \begin{scope}[shift={(#1,#2)}, scale=0.9]
 \fill[lightorange]
 plot[smooth cycle, tension=1] coordinates {
 (0.4,0.7) (0.9,0.2)
 (0,-0.7) (-0.8,0.3)
 };
 \end{scope}
 }

 \newcommand{\largecloudB}[2]{
 \begin{scope}[shift={(#1,#2)}, scale=0.9]
 \fill[lightorange]
 plot[smooth cycle, tension=1] coordinates {
 (0.8,0.7)
 (-0.2,-0.7) (-0.7,-0.2) (-0.5,0.5)
 };
 \end{scope}
 }

 \newcommand{\largecloudC}[2]{
 \begin{scope}[shift={(#1,#2)}, scale=0.9]
 \fill[lightorange]
 plot[smooth cycle, tension=1] coordinates {
 (0.4,0.9) (0.9,0.8)
 (0.5,-0.2) (-0.7,-0.1)
 };
 \end{scope}
 }

 \largecloudA{-1.1}{1.5}
 \largecloudB{1.2}{3.1}
 \largecloudC{4.0}{2.5}

 \fill[lightorange]
 plot[smooth cycle, tension=1] coordinates {
 (-0.6,3.1) (-0.5,2.9) (-0.3,3.0)
 };
 \fill[lightorange]
 plot[smooth cycle, tension=1] coordinates {
 (3.0,2.3) (2.5,2.1) (2.9,2.0)
 };

 \begin{scope}
 \clip plot[smooth cycle, tension=0.5] coordinates {
 (-1.9,0.9) (-1.7,2.6) (0.2,4.1)
 (2.5,4.6) (5.0,3.4) (5.1,1.7) (2.0,0.4)
 }
 (-1.2,1.7) circle[radius=0.8]
 (1.2,3.2) circle[radius=0.8]
 (4.0,2.5) circle[radius=0.8];
 
 \fill[pattern=north east lines, pattern color=gray] 
 (-2.3,0) rectangle (5.6,5); 
\end{scope}

 \draw[thick] (-1.2,1.7) circle[radius=0.8];
 \fill[darkblue] (-1.2,1.7) circle[radius=2.5pt];

 \draw[thick] (1.2,3.2) circle[radius=0.8];
 \fill[darkblue] (1.2,3.2) circle[radius=2.5pt];
 \draw[dashed, -] (1.2,3.2)--(1.2,4.0) node[midway, right] {$r_e$};
 
 \draw[thick] (4.0,2.5) circle[radius=0.8];
 \fill[darkblue] (4.0,2.5) circle[radius=2.5pt];

 \draw[very thick, smooth, color=black]
 plot[smooth cycle, tension=0.5] coordinates {
 (-1.9,0.9) (-1.7,2.6) (0.2,4.1)
 (2.5,4.6) (5.0,3.4) (5.1,1.7) (2.0,0.4)
 };

 \node at (2.6,4.1) {$\X$};

 \coordinate (massLabel) at (-3.3,2.7);
 \node[above] at (massLabel) {\small Mass of $\mu_\omega^0$ (blue dots)};
 \draw[->, thick] (massLabel) to[out=270, in=170] (-1.25,1.75);

 \coordinate (regionLabel) at (-1.2,4.0);
 \node[above] at (regionLabel) {\small Near region};
 \draw[->, thick] (regionLabel) to[out=290, in=180] (0.4,3.3);

 \coordinate (estimateLabel) at (5.5,4.0);
 \node[above] at (estimateLabel) {\small Mass of $\hat{\mu}_{n,\omega}$ (orange)};
 \draw[->, thick] (estimateLabel) to[out=250, in=20] (4.6,3.2);
\end{tikzpicture}

\caption{Schematic representation for Gaussian mixture models in dimension $d=1$. Both parameters $u$ (standard deviation) and $t$ (mean) are in dimension $1$, resulting in a 2-dimensional plot in location-scale space~$(t,u)$. The discs represent the near regions, shown schematically: these regions correspond to balls defined with respect to a semi-distance, not the Euclidean distance. The hatched area corresponds to the far region. 
}
\label{fig:regions_comparison_estimate_mu^0}
\end{figure}

\medskip

Our estimator satisfies the following properties, given in expected value over $X_1,\ldots,X_n$.

\begin{theorem}[Recovery guarantees for the estimation of $\mu_\omega^0$, informal result] \label{th:informal_estimation}
Assume that the particles $(t_j^0,u_j^0)$ of $\mu^0$ are sufficiently separated, where the minimal separation constraint only depends on the dimension~$d$, the sparsity index $s$, bounds on the variance and choice of a smoothing parameter $\tau \leq u_{\min}$. Choosing as regularization parameter $\kappa=\frac{\sqrt{2}}{(2\pi)^{d/4} \tau^{d/2} \sqrt{n}}$, for any $r_e$ such that $0<r_e \leq r$ with $r$ a fixed constant depending on $d$, it holds that, omitting the dependence on $d$,
\begin{equation}
 \E{|\mu_\omega^0(\X_j^{\rm near}(r_e))- \hat{\mu}_{n,\omega}(\X_j^{\rm near}(r_e))|}\lesssim \frac{s}{r_e^2\sqrt{n} \tau^{d/2}} \quad \forall \, j=1,\ldots, s \label{eq:bound_informal_effective_near}
\end{equation} and 
\[\mathbb{E}{\bigg[\Big|\hat{\mu}_{n,\omega}
\Big(
 \X \setminus \bigcup_j \X_j^{\rm near}(r)
\Big)
\Big|\bigg]}\lesssim \frac{s}{\sqrt{n} \tau^{d/2}}\,. \]
\end{theorem}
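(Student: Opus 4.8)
My plan is to follow the dual-certificate route for the BLASSO. The first ingredient is a non-degenerate dual certificate: I would invoke the construction of Section~\ref{section:dual_certif}, which under the separation hypothesis~\eqref{eq:minimum_separation} provides a function $\eta = \Phi^* p$ — where $\Phi$ is the forward operator of Section~\ref{section:statistical_modeling} sending a measure to the associated $\tau$-smoothed mixture density, $\Phi^*$ its $L^2(\R^d)$-adjoint, and $\psi_x := \Phi\delta_x$ the normalized Gaussian atom at $x = (t,u)$ — such that $\|\eta\|_\infty \le 1$, $\eta(x_j^0) = 1$ for every $j$, the modified local-positive-curvature bound $1-\eta(x) \ge c_{\mathrm{near}}\,\mathpzc{d}(x,x_j^0)^2$ holds on each near region $\X_j^{near}(r)$, a uniform gap $1-\eta(x) \ge c_{\mathrm{far}} > 0$ holds on the far region $\X\setminus\bigcup_j\X_j^{near}(r)$, and $\|p\|_{L^2}^2 \lesssim s$ (this last bound using that the separation makes the Gram matrix of the $\psi_{x_j^0}$ well-conditioned); all constants depend only on $d$. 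Next, writing the objective as $J_W(\mu) = \tfrac12\|\Phi\mu - \hat y_n\|_{L^2}^2 + \kappa R(\mu)$ with $\hat y_n = \tfrac1n\sum_i g_\tau(\cdot - X_i)$, $g_\tau$ the density of $\N(0,\tau^2\mathrm{Id}_d)$ (so $\E{\hat y_n} = \Phi\mu_\omega^0$), I would expand the square in $J_W(\hat\mu_{n,\omega}) \le J_W(\mu_\omega^0)$ and use $R(\hat\mu_{n,\omega}) = \hat\mu_{n,\omega}(\X) = \langle\eta,\hat\mu_{n,\omega}\rangle + \int_\X(1-\eta)\,d\hat\mu_{n,\omega}$, $R(\mu_\omega^0) = \langle\eta,\mu_\omega^0\rangle$ and $\langle\eta,\nu\rangle = \langle p,\Phi\nu\rangle$ to reach, with $Z := \|\Phi(\hat\mu_{n,\omega}-\mu_\omega^0)\|_{L^2}$ and $N := \|\hat y_n - \Phi\mu_\omega^0\|_{L^2}$,
\[
\tfrac12 Z^2 + \kappa\!\int_\X(1-\eta)\,d\hat\mu_{n,\omega} \;\le\; \big\langle \hat y_n - \Phi\mu_\omega^0 - \kappa p,\; \Phi(\hat\mu_{n,\omega}-\mu_\omega^0)\big\rangle \;\le\; (N+\kappa\|p\|)\,Z ,
\]
hence $Z \le 2(N+\kappa\|p\|)$ and $\int_\X(1-\eta)\,d\hat\mu_{n,\omega} \le \tfrac1{2\kappa}(N+\kappa\|p\|)^2$.

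Then I would control the noise. Since $\hat y_n - \Phi\mu_\omega^0$ is a centered average of $n$ independent copies of $g_\tau(\cdot - X_1)$ in $L^2$, $\E{N^2} \le \tfrac1n\|g_\tau\|_{L^2}^2 = \tfrac1{n(4\pi)^{d/2}\tau^d}$; the prescribed $\kappa$ satisfies $\kappa^2 \asymp_d (n\tau^d)^{-1}$, so $\E{(N+\kappa\|p\|)^2} \lesssim_d (1+s)(n\tau^d)^{-1}$ and therefore
\[
\E{\int_\X(1-\eta)\,d\hat\mu_{n,\omega}} \;\lesssim_d\; \frac1\kappa\cdot\frac{s}{n\tau^d} \;\asymp_d\; \frac{s}{\sqrt n\,\tau^{d/2}} \;=:\; \varepsilon, \qquad \E{Z} \;\le\; \big(\E{Z^2}\big)^{1/2} \;\lesssim_d\; \frac{\sqrt s}{\sqrt n\,\tau^{d/2}} .
\]
On the far region $1-\eta \ge c_{\mathrm{far}}$ while $1-\eta \ge 0$ everywhere, so $c_{\mathrm{far}}\,\hat\mu_{n,\omega}(\X\setminus\bigcup_j\X_j^{near}(r)) \le \int_\X(1-\eta)\,d\hat\mu_{n,\omega}$; since $\hat\mu_{n,\omega}\ge0$, taking expectations gives the second claim.

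For the near regions, the curvature bound applied on each $\X_k^{near}(r)$ — these being pairwise disjoint by~\eqref{eq:minimum_separation} — gives $\sum_k\int_{\X_k^{near}(r)}\mathpzc{d}(x,x_k^0)^2\,d\hat\mu_{n,\omega} \le c_{\mathrm{near}}^{-1}\int_\X(1-\eta)\,d\hat\mu_{n,\omega}$, so this sum has expectation $\lesssim_d \varepsilon$; in particular, on the annulus $\X_j^{near}(r)\setminus\X_j^{near}(r_e)$, where $\mathpzc{d}(\cdot,x_j^0)\ge r_e$, $\E{\hat\mu_{n,\omega}(\X_j^{near}(r)\setminus\X_j^{near}(r_e))} \lesssim \varepsilon/r_e^2$; and $\mu_\omega^0(\X_j^{near}(r_e)) = \omega_j^0$ because the $\X_k^{near}(r)$ isolate the $x_k^0$. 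To bound $\E{|\hat\mu_{n,\omega}(\X_j^{near}(r_e)) - \omega_j^0|}$, I would test $\Phi(\hat\mu_{n,\omega}-\mu_\omega^0)$ against a localized certificate $\eta_j = \Phi^* q_j$ interpolating the $j$-th canonical sign pattern with vanishing derivatives: $\eta_j(x_j^0) = 1$, $\eta_j(x_k^0) = 0$ for $k\ne j$, $\nabla\eta_j(x_k^0) = 0$ for all $k$, $\|\eta_j\|_\infty \lesssim 1$, $\|q_j\|_{L^2}^2 \lesssim s$ — so that $|1-\eta_j(x)| \lesssim \mathpzc{d}(x,x_j^0)^2$ on $\X_j^{near}(r)$ and $|\eta_j(x)| \lesssim \mathpzc{d}(x,x_k^0)^2$ on $\X_k^{near}(r)$ for $k\ne j$; its existence and norm control follow, as for $\eta$, from~\eqref{eq:minimum_separation} and an LPC-type argument. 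Then $\langle q_j,\Phi(\hat\mu_{n,\omega}-\mu_\omega^0)\rangle = \langle\eta_j,\hat\mu_{n,\omega}\rangle - \omega_j^0$, while $\langle\eta_j,\hat\mu_{n,\omega}\rangle - \hat\mu_{n,\omega}(\X_j^{near}(r_e))$ splits over $\X_j^{near}(r_e)$ (error $\lesssim\int_{\X_j^{near}(r)}\mathpzc{d}^2\,d\hat\mu_{n,\omega}$), the annulus (error $\lesssim\hat\mu_{n,\omega}(\X_j^{near}(r)\setminus\X_j^{near}(r_e))$), the other near regions (error $\lesssim\sum_{k\ne j}\int_{\X_k^{near}(r)}\mathpzc{d}^2\,d\hat\mu_{n,\omega}$, bounded using disjointness, which avoids an $s$-fold blow-up) and the far region (error $\lesssim\|\eta_j\|_\infty\,\hat\mu_{n,\omega}(\X\setminus\bigcup_k\X_k^{near}(r))$); taking expectations and using $r_e \le r$,
\[
\E{\big|\hat\mu_{n,\omega}(\X_j^{near}(r_e)) - \omega_j^0\big|} \;\lesssim_d\; \|q_j\|_{L^2}\,\E{Z} + \varepsilon + \frac{\varepsilon}{r_e^2} \;\lesssim_d\; \varepsilon + \frac{\varepsilon}{r_e^2} \;\lesssim\; \frac{s}{r_e^2\sqrt n\,\tau^{d/2}} ,
\]
which is~\eqref{eq:bound_informal_effective_near}.

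I expect the main obstacle to be the construction of these vanishing-derivative certificates — both $\eta$ and the localized $\eta_j$ — for the \emph{non-translation-invariant} normalized kernel $x\mapsto\psi_x$: one has to simultaneously establish the well-conditioning of the atom Gram matrix (yielding $\|p\|_{L^2}^2,\|q_j\|_{L^2}^2 \lesssim s$), the quadratic local-positive-curvature decay near every $x_k^0$, and the uniform gap on the far region, and then pin down the minimal separation that makes all of this work, namely~\eqref{eq:minimum_separation} — this being the point at which the Fisher--Rao distance is replaced by the more tractable kernel-induced semi-distance $\mathpzc{d}$. Once the certificates are available, what remains is the Cauchy--Schwarz/Young bookkeeping above, together with the elementary Gaussian identities $\|g_\tau\|_{L^2}^2 = (4\pi)^{-d/2}\tau^{-d}$ and $\kappa^2 \asymp_d (n\tau^d)^{-1}$.
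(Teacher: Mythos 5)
Your proposal follows essentially the same route as the paper: the basic inequality from $J_W(\hat\mu_{n,\omega})\le J_W(\mu_\omega^0)$ yielding the Bregman-type bound $\int_\X(1-\eta)\,\d\hat\mu_{n,\omega}\lesssim(N+\kappa\|p\|)^2/\kappa$, noise control at scale $(n\tau^d)^{-1}$, the global certificate for the far region, and the local certificates with the splitting over $\X_j^{near}(r_e)$, the annulus, the other near regions and the far region — this is exactly the combination of Theorem \ref{th:estimation_error}, Proposition \ref{prop:effective_near_regions} and Theorem \ref{th:main_theorem_certif} used in the paper. The only cosmetic differences are that the paper runs the duality in the RKHS $\L$ (where the certificates $\Psi^*p$ of Section \ref{section:dual_certif} actually live) rather than in $L^2(\R^d)$, and that those certificates control $1-\eta$ via the Fisher--Rao distance $\mathfrak{d}_\mathfrak{g}^2$ rather than via $\mathpzc{d}^2$, the passage to the semi-distance on the annulus being precisely the comparison of Lemma \ref{lemma:tilde_varepsilon_3} that your certificate black box quietly absorbs; neither point changes the argument or the rate.
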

\noindent
We refer to Theorems \ref{th:estimation_error} and \ref{th:main_theorem_certif}, Proposition \ref{prop:effective_near_regions} with Lemma \ref{lemma:tilde_varepsilon_3} for precise statements and related discussions. Note that the constant factor omitted in the bound \eqref{eq:bound_informal_effective_near} does not depend on the target mixture, only on the dimension $d$.
Our estimator allows us to locate the particles of the reparameterized target measure on fixed regions with a parametric convergence rate.
The radius $r_e$ can be chosen as desired (sufficiently small), leading to degraded bounds (the rate depends on $r_e^{-2}$).
We give in Corollary \ref{cor:control_renormalized_estimate} a variation of the previous result, for a direct estimator of $\mu^0$—and with decreasing size of regions.

\medskip
\begin{remark}
 Note that in the above result, $\kappa$ does not depend on $s$ (unknown in practice). We call it the \emph{$s$-agnostic} choice. Making the \emph{$s$-dependent} choice $\kappa=\frac{\sqrt{2}}{(2\pi)^{d/4} \tau^{d/2}\sqrt{sn}}$, we obtain a rate of $\frac{\sqrt{s}}{r_e^2 \sqrt{n}\tau^{d/2}}$ for the bound \eqref{eq:bound_informal_effective_near} (better dependence on $s$). See Remark \ref{remark:choice_kappa_cor_near_effective}.
\end{remark}

\medskip
We provide at the same time prediction error bounds. We achieve an almost parametric rate (up to a logarithmic factor) for the prediction of the target density~$f^0$, in two distinct regimes: under small regularization (Proposition \ref{prop:prediction_under_small_reg}) and with larger regularization (Theorem \ref{th:prediction_error}) when the assumption of Theorem \ref{th:informal_estimation} is verified. We present an informal version of these results. 
\begin{theorem}[Recovery guarantees for the prediction of $f^0$, informal results] Let $\tau=\frac{\sqrt{2}u_{\min}}{\sqrt{\ln n}}$. We construct an estimator $\hat{h}_n$ of $f^0$ from $\hat{\mu}_{n,\omega}$. We consider two regimes.
\begin{itemize}
 \item Assume that the particles $(t_j^0,u_j^0)$ of $\mu^0$ are sufficiently separated. Choosing $\kappa=\frac{\sqrt{2}(\ln n)^{d/4}}{(2\pi)^{d/4} (2u_{\min}^2)^{d/4} \sqrt{n}}$, omitting the dependence on $d$ and on bounds on the variance,
\[\E{\norm{\hat{h}_n - f^0}_{L^2(\R^d)}^2}\lesssim \frac{s(\ln n)^{d/2}}{n}\,.\]
 \item Without separation assumptions on $(t_j^0,u_j^0)_j$, choosing $\kappa=\frac{4(\ln n)^{d/2}}{(2\pi)^{d/2} (2u_{\min}^2)^{d/2} n}$, omitting the dependence on~$d$ and on bounds on the variance,
\[\E{\norm{\hat{h}_n - f^0}_{L^2(\R^d)}^2}\lesssim \frac{(\ln n)^{d/2}}{n}\,.\]
\end{itemize}
\end{theorem}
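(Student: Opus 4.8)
The plan is to run the standard BLASSO ``basic inequality'' in the \emph{smoothed} (observation) domain and then transport the resulting bound back to $f^0$ via a deconvolution estimate tailored to the lower variance bound $u_{\min}$; the two displayed bounds correspond to the small‑regularization regime of Proposition~\ref{prop:prediction_under_small_reg} and the larger‑regularization regime of Theorem~\ref{th:prediction_error}. Let $g_\tau$ denote the Gaussian smoothing kernel (the density of $\N(0,\tau^2\mathrm{Id}_d)$), write $\bar{f}_\tau\coloneq f^0\ast g_\tau$ for the smoothed target density, let $\hat{f}_\tau$ be the kernel density estimate underlying $F_{n,\tau}$, and set $w_n\coloneq\hat{f}_\tau-\bar{f}_\tau$, a centred random element of $L^2(\R^d)$. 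Let $\Phi$ be the operator sending a measure to the density of its $\tau$‑smoothed Gaussian mixture; by the design of the reparametrization, $\Phi\mu_\omega^0=\bar{f}_\tau$, so $F_{n,\tau}(\mu_\omega^0)=\tfrac12\norm{w_n}_{L^2(\R^d)}^2$, and — defining $\hat{h}_n$ by reweighting $\hat{\mu}_{n,\omega}$ by $W^{-1}$ and reading off the associated (unsmoothed) mixture density — one has $\Phi\hat{\mu}_{n,\omega}-\bar{f}_\tau=(\hat{h}_n-f^0)\ast g_\tau$. Expanding the quadratic $F_{n,\tau}(\mu)=\tfrac12\norm{\Phi\mu-\hat{f}_\tau}_{L^2(\R^d)}^2$ around $\bar{f}_\tau$ in $J_W(\hat{\mu}_{n,\omega})\le J_W(\mu_\omega^0)$ gives
\[
 \tfrac12\,\norm{\Phi\hat{\mu}_{n,\omega}-\bar{f}_\tau}_{L^2(\R^d)}^2
 \;\le\; \big\langle \Phi(\hat{\mu}_{n,\omega}-\mu_\omega^0),\,w_n\big\rangle_{L^2(\R^d)}
 \;+\; \kappa\big(R(\mu_\omega^0)-R(\hat{\mu}_{n,\omega})\big),
\]
which is the common starting point for both regimes.

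In the small‑regularization regime (no separation hypothesis) I would discard $\kappa R(\hat{\mu}_{n,\omega})\ge0$, absorb the inner product with Young's inequality, and use $R(\mu_\omega^0)=\sum_{j}\omega_j^0=\sum_j W(x_j^0)a_j^0=O(1)$ (independent of $s$, since $\sum_j a_j^0=1$ and $W$ is bounded on $\X$) to get $\norm{\Phi\hat{\mu}_{n,\omega}-\bar{f}_\tau}_{L^2(\R^d)}^2\lesssim\norm{w_n}_{L^2(\R^d)}^2+\kappa$. Taking expectations, the $L^2$ variance bound for a Gaussian kernel density estimate gives $\E{\norm{w_n}_{L^2(\R^d)}^2}\le n^{-1}\norm{g_\tau}_{L^2(\R^d)}^2\asymp(n\tau^d)^{-1}$, and with $\kappa\asymp(n\tau^d)^{-1}$, $\tau^d\asymp(\ln n)^{-d/2}$ this is of order $(\ln n)^{d/2}/n$, with no $s$. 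In the larger‑regularization regime (under the separation condition~\eqref{eq:minimum_separation}) the previous route only yields $\kappa\asymp n^{-1/2}$, so I would instead use the non‑degenerate dual certificate $\eta_0=\Phi^{\ast}p_0$ from Theorem~\ref{th:main_theorem_certif}. Being dual‑feasible ($\norm{\eta_0}_\infty\le1$) and equal to $1$ on $\mathrm{supp}(\mu_\omega^0)$, and since $\hat{\mu}_{n,\omega},\mu_\omega^0\ge0$, it satisfies $R(\hat{\mu}_{n,\omega})\ge\langle p_0,\Phi\hat{\mu}_{n,\omega}\rangle$ and $R(\mu_\omega^0)=\langle p_0,\Phi\mu_\omega^0\rangle$, hence $R(\mu_\omega^0)-R(\hat{\mu}_{n,\omega})\le\langle p_0,\Phi(\mu_\omega^0-\hat{\mu}_{n,\omega})\rangle$. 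Substituting into the basic inequality and applying Cauchy--Schwarz in $L^2(\R^d)$ (with $\Phi\mu_\omega^0=\bar{f}_\tau$) gives $\norm{\Phi\hat{\mu}_{n,\omega}-\bar{f}_\tau}_{L^2(\R^d)}\le2\big(\norm{w_n}_{L^2(\R^d)}+\kappa\norm{p_0}_{L^2(\R^d)}\big)$, so $\E{\norm{\Phi\hat{\mu}_{n,\omega}-\bar{f}_\tau}_{L^2(\R^d)}^2}\lesssim\E{\norm{w_n}_{L^2(\R^d)}^2}+\kappa^2\norm{p_0}_{L^2(\R^d)}^2$. Since $p_0$ is built from $O(sd)$ localized kernel bumps whose pairwise correlations are controlled by the separation condition, a Gram/Schur‑complement estimate should give $\norm{p_0}_{L^2(\R^d)}^2\lesssim s$; with $\kappa^2\asymp(n\tau^d)^{-1}$ this is the rate $s(\ln n)^{d/2}/n$.

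It then remains — and this is the heart of the matter — to pass from control of $\norm{(\hat{h}_n-f^0)\ast g_\tau}_{L^2(\R^d)}^2$ to control of $\norm{\hat{h}_n-f^0}_{L^2(\R^d)}^2$. I would argue in Fourier variables $\xi$. Both $\hat{h}_n$ and $f^0$ are (signed) Gaussian mixtures with all component covariances $\succeq u_{\min}^2\mathrm{Id}_d$ — for $\hat{h}_n$ because $\hat{\mu}_{n,\omega}$ lives on $\X\subset\R^d\times[u_{\min},+\infty)^d$ — and with total mass $O(1)$ (with bounded second moment): indeed $J_W(\hat{\mu}_{n,\omega})\le J_W(\mu_\omega^0)$ forces $R(\hat{\mu}_{n,\omega})\le R(\mu_\omega^0)+\norm{w_n}_{L^2(\R^d)}^2/(2\kappa)$, which is $O(1)$ in expectation in both regimes, and $W^{-1}$ is bounded on $\X$. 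Hence the Fourier transform of $\hat{h}_n-f^0$ is pointwise dominated by $C\,e^{-u_{\min}^2|\xi|^2/2}$. Splitting at $|\xi|=\Lambda$ with $\Lambda\asymp\sqrt{\ln n}/u_{\min}$: on $\{|\xi|\le\Lambda\}$ the deconvolution factor $e^{\tau^2|\xi|^2}\le e^{\tau^2\Lambda^2}$ is a universal constant precisely because $\tau=\sqrt{2}\,u_{\min}/\sqrt{\ln n}$, so low frequencies contribute $\lesssim\norm{(\hat{h}_n-f^0)\ast g_\tau}_{L^2(\R^d)}^2$; on $\{|\xi|>\Lambda\}$ the Gaussian tail $\int_{|\xi|>\Lambda}e^{-u_{\min}^2|\xi|^2}\,\mathrm{d}\xi\lesssim e^{-u_{\min}^2\Lambda^2}$ up to polynomial‑in‑$\ln n$ factors, i.e.\ $\lesssim1/n$. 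This gives $\norm{\hat{h}_n-f^0}_{L^2(\R^d)}^2\lesssim\norm{(\hat{h}_n-f^0)\ast g_\tau}_{L^2(\R^d)}^2+C/n$; taking expectations and inserting the two smoothed‑domain bounds established above closes both parts.

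The two steps I expect to be most delicate are the quantitative bound $\norm{p_0}_{L^2(\R^d)}^2\lesssim s$ — i.e.\ uniform well‑conditioning in $s$ and $d$ of the interpolation system defining the certificate, which should piggyback on the construction of Theorem~\ref{th:main_theorem_certif} — and the Fourier split, where the single choice $\tau\asymp u_{\min}/\sqrt{\ln n}$ must simultaneously keep the deconvolution blow‑up $e^{\tau^2\Lambda^2}$ of constant order and the high‑frequency tail $e^{-u_{\min}^2\Lambda^2}$ of order $1/n$, a balance made possible by the fact that the components of $\hat{h}_n$ and $f^0$ are ``no sharper than'' $u_{\min}$.
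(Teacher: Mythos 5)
Your proposal is correct and follows essentially the same route as the paper: a basic inequality in the smoothed domain (Young's inequality and $\kappa\norm{\mu_\omega^0}_{\mathrm{TV}}$ for the small-regularization regime, the dual certificate with $\norm{p}_\L^2\lesssim s$ — exactly the paper's Assumption \ref{assumption:existence_non-degenerate_certif}/Proposition \ref{prop:norm_certif} with $c_p=2$ — for the large-regularization regime), combined with the noise variance of order $(n\tau^d)^{-1}$ and a Fourier split exploiting the $u_{\min}$ lower bound on component variances and the calibration $\tau\asymp u_{\min}/\sqrt{\ln n}$, which is precisely Lemma \ref{lemma:control_high_freq}. The only cosmetic differences are that you phrase the fidelity as an $L^2$ norm of $\tau$-smoothed densities rather than the RKHS $\L$-norm (same Gaussian-weighted $L^2$ quantity up to a factor in the exponent), and that the control of $\E{\norm{\hat{\mu}_n}_{\mathrm{TV}}^2}$ you invoke for the high-frequency tail requires, as in the paper, the fourth-moment bound $\E{\norm{\Gamma_n}_\L^4}\lesssim\rho_n^4$ from Lemma \ref{lemma:control_noise}.
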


The regularization parameter $\kappa$ is allowed to depend on $n$. Its value has some importance on the performances of the procedure. It is calibrated according to the objective to be achieved: estimation, prediction or both. Under some conditions on $\mu^0$, large regularization provides good estimation and prediction. Small regularization allows the BLASSO to focus on minimizing the data fidelity term, yielding good prediction without any separation condition on the particles of $\mu^0$. For theoretical purposes, we will also briefly discuss an $s$-dependent choice of $\kappa$, which leads to improved bounds in the large regularization regime (the bound is of order $\frac{(\ln n)^{d/2}}{n}$ for $s$ reasonably small, see Remark \ref{remark:choice_kappa_pred} below). 

\medskip	
We finally derive an alternative result for $\mu_{n,\omega}^\star$, an exact solution to \eqref{eq:pb_BLASSO_gaussian_kernel_norm}, under some specific conditions. We provide below an informal result, formalized in Corollary \ref{cor:ndsc}. 

\begin{theorem}[Sparsity of the estimator for a large sample size, informal result]
 Let $0<\tau\leq u_{\min}$ (fixed). Assume that the particles of $\mu^0$ are sufficiently separated, where the minimal separation is the same as for Theorem \ref{th:informal_estimation}. Choosing $\kappa=\frac{\alpha \sqrt{\ln n}}{(2\pi)^{d/4} \tau^{d/2} \sqrt{n}}$ for any $\alpha>0$, there exists $n_0\in \mathbb{N}$ depending on $\mu^0, \X,\tau, \alpha$, such that if the sample size $n$ verifies $n\geq n_0$, then $\mu_{n,\omega}^\star$ is $s$-sparse with probability greater than $1-C_\Gamma n^{-\frac{\gamma_0^2\alpha^2 }{C_\Gamma^2}}$, with $C_\Gamma$ a universal positive constant and $\gamma_0$ depending on $d$.
Moreover, writing $\mu_{n,\omega}^\star=\sum_{j=1}^s \omega_j^\star \delta_{x_j^{\star}}$, omitting constants depending on $\X,\mu^0,\tau$ we have for all $j=1,\ldots,s$
\begin{equation}
 |a_j^0 -a_j^\star | \lesssim \alpha \sqrt{\frac{{\ln n}}{{n}}} \quad \text{and} \quad \mathpzc{d}((t_j^{\star},u_j^{\star}),(t_j^0,u_j^0)) \lesssim \alpha \sqrt{\frac{{\ln n}}{{n}}}\,,
\label{eq:th_sparsity_informal}
\end{equation}
where $\mathpzc{d}(\dotp,\dotp)$ is the semi-distance between location-scale parameters defined in \eqref{eq:def_semi_distance} below, and $a_j^\star=\frac{\omega_j^\star}{W(x_j^\star)}$ is an estimator of the true weight $a_j^0$.
\end{theorem}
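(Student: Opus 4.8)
The plan is to recognize this as an \emph{exact support recovery} result for the BLASSO — of the type ``non-degenerate dual certificate $+$ small noise $\Rightarrow$ recovery of the exact number of atoms with Lipschitz-stable parameters'', in the spirit of \citep{blasso_duval_peyre,off-the-grid_cs} — and to combine a deterministic ingredient with a probabilistic one. The deterministic ingredient is already available: under the minimal-separation hypothesis, Theorem~\ref{th:main_theorem_certif} produces a non-degenerate dual certificate $\eta$ for the normalized kernel, \ie\ a function on $\X$ with $\eta(x_j^0)=1$ and $\nabla\eta(x_j^0)=0$ at each of the $s$ true particles, a uniformly positive-definite Hessian in the $\mathpzc{d}$-geometry around each $x_j^0$ (the modified local-positive-curvature property, a variant of Assumption~1 of \citep{off-the-grid_cs}), and $\eta<1$ uniformly on $\X$ away from the near regions. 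Write $\gamma_0>0$ for the associated \emph{non-degeneracy margin}, namely the minimum of the far-region gap $1-\sup\eta$ and the smallest eigenvalue of the $\mathpzc{d}$-Hessians; $\gamma_0$ depends only on $\mu^0,\X,\tau$.

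For the probabilistic ingredient I would isolate the empirical noise. Since convolving a Gaussian density with $g_\tau$ (the density of $\N(0,\tau^2\mathrm{Id}_d)$) again yields a Gaussian density, the smoothed empirical measure $\frac1n\sum_i g_\tau(\dotp-X_i)$ has \emph{exact} mean $f^0*g_\tau=\Psi\mu_\omega^0$, where $\Psi$ is the synthesis operator of the $\tau$-smoothed normalized model; hence the only stochastic fluctuation entering the dual problem is $w_n\coloneq\Psi^*\!\left(\frac1n\sum_i g_\tau(\dotp-X_i)-f^0*g_\tau\right)=\frac1n\sum_{i=1}^n\left(\zeta_\tau(X_i)-\mathbb{E}[\zeta_\tau(X_1)]\right)$, with $\zeta_\tau(x)\coloneq\Psi^*g_\tau(\dotp-x)$. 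This is an i.i.d.\ average of centered, bounded (sub-Gaussian) random elements of the Banach space carrying the dual norm $\|\dotp\|_\star$, with second-moment proxy of order $\tau^{-d}$, so a vector-valued Bernstein/Hoeffding inequality gives $\mathbb{P}(\|w_n\|_\star>t)\le C\exp(-c\,n\,t^2\tau^{d})$ for universal $c,C$. The choice $\kappa=\frac{\alpha\sqrt{\ln n}}{(2\pi)^{d/4}\tau^{d/2}\sqrt n}$ is calibrated precisely so that $n\kappa^2\tau^{d}\asymp\alpha^2\ln n$; taking the threshold $t=c_0\gamma_0\kappa$ therefore gives $\mathbb{P}(\|w_n\|_\star>c_0\gamma_0\kappa)\le C_\Gamma\,n^{-\gamma_0^2\alpha^2/C_\Gamma^2}$ once all universal factors (the Bernstein constant, the $(2\pi)$'s, $c_0$) are collected into $C_\Gamma$.

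On the complementary ``good'' event $\mathcal E_n=\{\|w_n\|_\star\le c_0\gamma_0\kappa\}$ I would run the recovery argument. Because $\kappa\to0$ as $n\to\infty$, for $n$ larger than some $n_0(\mu^0,\X,\tau,\alpha)$ the regularization $\kappa$ also falls below the absolute thresholds the abstract theorem requires — in particular below a fixed fraction of $\min_j\omega_j^0$, so that no recovered atom can vanish. Then the certificate-perturbation argument of \citep{off-the-grid_cs}, applied with the local-positive-curvature property of Theorem~\ref{th:main_theorem_certif}, shows that on $\mathcal E_n$ the dual optimum $\eta_n$ is an $O(\|w_n\|_\star/\kappa)$ perturbation of $\eta$ in a $C^2$ sense, hence still saturates ($\eta_n=1$) at exactly $s$ points, one in each near region and none in the far region; by complementary slackness $\mu_{n,\omega}^\star$ is supported on exactly those $s$ points, \ie\ $s$-sparse. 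Finally, linearizing the optimality system $\eta_n(x_j^\star)=1,\ \nabla\eta_n(x_j^\star)=0$ about $(\omega_j^0,x_j^0)$ (equivalently, comparing objective values as in \citep{blasso_duval_peyre,off-the-grid_cs}) and inverting the resulting Jacobian — invertible thanks to the positive-definiteness of the $\mathpzc{d}$-Hessian — yields $|\omega_j^\star-\omega_j^0|\lesssim\|w_n\|_\star+\kappa$ and $\mathpzc{d}(x_j^\star,x_j^0)^2\lesssim\|w_n\|_\star+\kappa$, the quadratic dependence on $\mathpzc{d}$ reflecting the curvature $1-\eta(x)\asymp\mathpzc{d}(x,x_j^0)^2$ of the certificate near each particle. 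On $\mathcal E_n$ one has $\|w_n\|_\star\le c_0\gamma_0\kappa\lesssim\kappa\asymp\alpha\sqrt{\ln n/n}$ (the $\tau$ and $d$ factors absorbed into the omitted constants), which gives \eqref{eq:th_sparsity_informal}.

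The main obstacle is the probabilistic step together with the accompanying constant bookkeeping: one must set up $w_n$ in precisely the right Banach space so that the $\|\dotp\|_\star$-concentration carries the sharp $\tau^{-d}$ variance scaling matched by $\kappa$, and — more substantially — one must certify that $\gamma_0$ is a genuine strictly positive constant in the $\mathpzc{d}$-geometry, uniformly over $\X$, for \emph{both} the far-region gap and the near-region curvature. This is exactly where the minimal-separation hypothesis and the normalized-kernel local-positive-curvature analysis underlying Theorem~\ref{th:main_theorem_certif} are indispensable. Once $\gamma_0>0$ is secured, turning the noise bound into $s$-sparsity and the stability estimates is the by-now-standard implicit-function-theorem argument of \citep{blasso_duval_peyre,off-the-grid_cs}; the only genuine modification is that every estimate is expressed through the semi-distance $\mathpzc{d}$ rather than the Fisher–Rao distance.
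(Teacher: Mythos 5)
Your overall architecture (certificate $+$ noise concentration in the RKHS norm $+$ Duval--Peyr\'e-type perturbation of the dual solutions $+$ stability of the recovered parameters) is the same as the paper's, and your probabilistic step is essentially Lemma~\ref{lemma:control_noise} with $\rho=\gamma_0^2\alpha^2\ln n/C_\Gamma^2$. But there is a genuine gap in the deterministic ingredient: the certificate of Theorem~\ref{th:main_theorem_certif} is \emph{not} sufficient for this result. The support-recovery machinery you invoke is a statement about the actual dual solutions $p_{\kappa,b}$, which converge (Lemma~\ref{lemma:convergence_dual_sol_ndsc}) to the \emph{minimal-norm} certificate $p_{0,0}$ of \eqref{eq:def_p_00} — not to an arbitrary hand-built interpolating function. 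To identify the constructed certificate with $p_{0,0}$ and to run the saturation argument on the constraint $\norm{\Psi^*p}_\infty\le 1$, one needs the full Non Degenerate Source Condition: $|\eta|\le 1$ on all of $\X$ with $|\eta(x)|=1$ only at the particles, in particular the lower bound $\eta>-1$ \emph{on the near regions}. The certificate of Theorem~\ref{th:main_theorem_certif} only guarantees $\eta\le 1-\varepsilon_2\mathfrak{d}_\mathfrak{g}^2$ there, and its lower bound $1-\tfrac{B_{02}+\bar\varepsilon_2/16}{2}\mathfrak{d}_\mathfrak{g}^2$ can fall below $-1$ at radius $r=0.3025/\sqrt d$ since $B_{02}=\sqrt{4d^2+10d}$. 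This is exactly why the paper introduces Lemma~\ref{lemma:eta_NDSC}: a smaller radius $r_{\textrm{\tiny NDSC}}$ and a strictly stronger separation $\Delta_{\textrm{\tiny NDSC}}$ are required, and only then does Lemma~\ref{lemma:unique_sol_minimal_certif_ndsc} (via the vanishing-derivatives precertificate argument) give $\restr{\eta_{\textrm{\tiny NDSC}}}{\X}=\Psi^*p_{0,0}$. Relatedly, your claim that the noisy dual certificate is an $O(\norm{w_n}_\star/\kappa)$ perturbation of $\eta$ omits the second, non-quantified term $\norm{p_{\kappa,0}-p_{0,0}}_\L\to 0$ as $\kappa\to 0$; this term, not the size of the weights, is the real source of the unspecified thresholds $\kappa_0$ and $n_0$ in Theorem~\ref{th:NDSC}.

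Two smaller repairs are also needed. First, the perturbed-Hessian argument only yields \emph{at most} one saturation point per near region and none in the far region; to conclude that each near region actually carries an atom (hence exactly $s$-sparsity), the paper proves weak$^*$ convergence of $\mu_{\kappa,b}$ to $\mu_\omega^0$ — ``$\kappa$ below a fraction of $\min_j\omega_j^0$'' is not by itself an argument, although a quantitative mass lower bound per near region would also do. Second, for the rates \eqref{eq:th_sparsity_informal} the paper does not linearize the optimality system: it bounds the Bregman divergence by $O(\kappa s)$, uses the local NDSC certificates for $|\omega_j^0-\omega_j^\star|$, and converts the Fisher--Rao curvature bound into the semi-distance via Lemma~\ref{lemma:local_control_metric_eps3_dim_d}, finally dividing by $\omega_j^\star\ge\omega_j^0-|\omega_j^0-\omega_j^\star|$ (which again needs $n\ge n_0$ and $\kappa\sqrt n\to$ small, cf.\ Corollary~\ref{cor:ndsc}). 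Your implicit-function route could in principle give these (even sharper) bounds, but as written it is asserted rather than carried out, and the justification you give for the quadratic dependence on $\mathpzc{d}$ is in fact the Bregman/curvature argument, not the Jacobian inversion.
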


The result displayed here provides a different flavor on the BLASSO performances. First, this bound holds provided the sample size is large enough (and under a separation condition). In such case, we first establish that the measure $\mu_{n,\omega}^\star$ has, with high probability, exactly the same sparsity index $s$ than the target $\mu^0$. Moreover, the bound \eqref{eq:th_sparsity_informal} provides theoretical guarantees on the estimation of the mixture parameters themselves instead of a control on far and near regions (Theorem~\ref{th:informal_estimation}).

\paragraph{Comparison with the state-of-the-art rates}
The natural benchmarks are minimax rates for the estimation of the mixing measure in
Wasserstein distance. For \emph{strongly identifiable} (in the second-order sense) families
with known number of components, the mixing measure can be estimated at the parametric rate
$n^{-1/2}$, degrading to $n^{-1/(4(p-s)+2)}$ when the fitted order $p$ overshoots the true
order $s$ \citep{heinrich_kahn}. These benchmarks are not available here, for two reasons.
First, the heteroscedastic location-scale Gaussian family is only \emph{weakly} identifiable
(the heat equation makes second-order location derivatives colinear with first-order scale
derivatives), and over-parameterized maximum likelihood then converges at rates
$n^{-1/(2\bar r)}$ with $\bar r\geq 4$ \citep{ho_nguyen_aos_2016}. Second, absent any
separation condition, parametric rates are unattainable for \emph{any} procedure: already for
location mixtures with known common covariance, the minimax $W_1$ rate is $n^{-1/(4k-2)}$ in
dimension one \citep{heinrich_kahn,wu_yang_dmm} and $\left(\frac{d}{n}\right)^{1/4}+n^{-1/(4k-2)}$ in dimension
$d$ \citep{doss2023optimal}, reaching $n^{-1/2}$ only for fully separated components
\citep{wu_yang_dmm}. 

Our results, established under an explicit separation condition, are consistent with this landscape. Theorem~\ref{th:informal_estimation} locates the mass of the
reparameterized target at a parametric rate in $n$, uniformly over the class and without
knowledge of $s$; extracting the individual weights
(Corollary~\ref{cor:control_renormalized_estimate}) incurs the slower $n^{-1/6}$, a byproduct
of the certificate-based analysis. For exact solutions and $n$ large enough, the sparsity
result above (formalized in Corollary~\ref{cor:ndsc}) recovers the number of components and
estimates weights and location-scale parameters at the near-parametric pointwise rate $\sqrt{\frac{\ln n}{n}}$—matching, up to logarithms, the benchmark for exactly-fitted strongly identifiable models,
although the family is weakly identifiable and $s$ is unknown. Finally, in contrast with
likelihood or moment-based procedures, which take the order (or an upper bound on it) as
input, our estimator is defined through a program that is \emph{convex} in the measure and
order-free. Practical solvers such as sliding Frank-Wolfe
\citep{sliding_frank_wolfe_denoyelle} or Conic Particle Gradient Descent \citep{cpgd_chizat}
come with convergence guarantees under additional assumptions; note that
Theorems~\ref{th:estimation_error} and~\ref{th:prediction_error} hold for \emph{any} measure
whose objective value does not exceed that of the reweighted target—a strictly weaker
requirement than solving \eqref{eq:pb_BLASSO_gaussian_kernel_norm} to optimality. Only the
results of Section~\ref{section:NDSC} require an exact solution.

\paragraph{Outline}
Section \ref{section:statistical_modeling} introduces the statistical framework and the BLASSO estimators $\hat{\mu}_{n,\omega}$, $\mu_{n,\omega}^\star$ used for recovering Gaussian Mixture Models. In Section \ref{section:estimation}, we establish recovery guarantees for the estimation of the target measure, relying on the existence of non-degenerate dual certificates. Section \ref{section:prediction} focuses on prediction guarantees for the density $f^0$, providing rates of convergence under different regularization regimes. In Section \ref{section:dual_certif}, we construct non-degenerate dual certificates by analyzing the kernel properties and deriving sufficient conditions on $\mu^0$. Finally, Section \ref{section:NDSC} demonstrates that, for sufficiently large sample sizes and under sufficient separation between components, the estimator $\mu_{n,\omega}^\star$ is, with high probability, a discrete measure. Section \ref{section:further_remarks} concludes with a discussion of potential extensions, algorithmic considerations, and open problems. It also comments on the separation condition required in our analysis. 
The main results are summarized in Tables \ref{table:results_summary_kappa_not_depending_on_s} and \ref{table:results_summary_kappa_depending_on_s} with corresponding assumptions, choices of smoothing parameter and regularization. Table \ref{table:notations} summarizes notation used throughout the paper. All proofs and technical results are gathered in the appendix at the end of the paper. To facilitate verification, we also provide a notebook (the associated Zenodo repository can be found at \citep{notebook}) implementing our calculations using a symbolic computation library.

\section{Statistical modeling}\label{section:statistical_modeling}
Our approach operates in the space of Radon measures, targeting the recovery of a sparse (\ie discrete) measure. Each particle of this discrete measure encodes the parameters of a Gaussian component, and its associated mass corresponds to the component's proportion in the mixture. In this section, we introduce the necessary notation, describe the statistical model, and present the BLASSO estimator.

\subsection{Radon measures}
We first start with some definitions allowing for a rigorous introduction of the BLASSO principle. 

For $A \subset \R^p$ locally compact, we denote $\C(A)$ the set of all continuous functions from $A$ to $\R$ and $\C_0(A)$ the set of continuous functions that vanish at infinity, \ie
\[\C_0(A)\coloneq \Big\{f \in \C(A) \: :\: \forall\, \varepsilon>0\,, \:  \exists\, C \text{ compact s.t.\ } |f|\leq \varepsilon \text{ on } A\setminus C \Big\} \,.\]
When $A$ is compact, $\C_0(A)=\C(A)$.
\begin{definition}[Radon measure on $A \subset \R^p$]
Let $A \subset \R^p$ be locally compact. The space of (real-valued) Radon measures $\M(A)$ is defined as the dual of $\left(\C_0(A), \norm{\dotp}_{\infty}\right)$. 
Its dual norm is the total variation norm:
\[\norm{\mu}_{\rm TV}=\sup_{\substack{\eta \in \C_0(A)\\ \norm{\eta}_{\infty}\leq 1}} \, \int_{A} \eta(x) \d\mu(x). \,\]
\end{definition}

\noindent
We denote $\M(A)^+$ the set of nonnegative measures: $\mu \in \M(A)^+$ if for all $\eta \in \C_0(A)$ such that $\eta \geq 0$, $\int_A \eta \, \d \mu \geq 0$. A discrete mixture measure, or a sparse measure, is a measure that can be expressed as a finite weighted sum of Dirac measures: \[\mu=\sum_{j=1}^{s} a_j \delta_{x_j}\]
 where $s \geq 1$, $a_1,\ldots,a_s \in \R$, $x_1,\ldots,x_s \in A$. Remark that $\norm{\mu}_{\rm TV}=\sum_{j=1}^s |a_j|$ when the $(x_j)_j$ are distinct.

Other details about the functional framework can be found in Appendix \ref{section:functional_framework}.

\subsection{Model and estimator}
\paragraph{Model} 
Our aim is to recover the target measure encoding the Gaussian Mixture distribution: 
\[
\mu^0=\sum_{j=1}^{s} a_j^0\delta_{x_j^0} 
\quad \text{where} 
\quad a_1^0,\ldots a_s^0>0\,, \; \sum_{j=1}^{s} a_j^0=1\,.
\] 
We assume that the location-scale parameters $(x_1^0,\ldots,x_s^0)$ are distinct points, each of the form $x_j^0=(t_j^0,u_j^0)$ with $t_j^0=(t_{j,k}^0)_{k=1}^d \in\R^d$ and $u_j^0~=~(u_{j,k}^0)_{k=1}^d \in [u_{\min},+\infty)^d$ where $u_{\min}$ is some positive lower bound on $(u_{j,k}^0)_k$. The parameters $t_j^0$ and $u_j^0$ represent respectively the mean and square root of the diagonal covariance of a Gaussian component with weight $a_j^0$. 

We denote $\varphi$ the standard Gaussian density in $\R$, and $\sigma \coloneq \F{\varphi}=e^{-\frac{\dotp^2}{2}}$ its associate Fourier transform.
We can rewrite $f^0=\Phi \mu^0$ where $\Phi:\M(\R^d \times [u_{\min},+\infty)^d) \longrightarrow L^2(\R^d)$ is the linear operator defined by \[\Phi\mu : z \in \R^d \mapsto \int_{\R^d \times [u_{\min},+\infty)^d} \prod_{k=1}^d \frac{1}{u_k} \varphi\left(\frac{z_k - t_k}{u_k}\right) \,\d \mu(t,u) \quad \forall \, \mu \in \M(\R^d \times [u_{\min},+\infty)^d)\,.\]

\medskip
Our aim is to recover $\mu^0$ from a $n$-sample $X_1,\ldots,X_n \iid f^0$ (see \eqref{eq:sample_distrib}), considering that the weights $\{a_j^0\}_j$, the location-scale parameters $\{x_j^0\}_j$ and the sparsity index $s$ are unknown. The empirical density associated with our sample $X_1,\ldots,X_n$ is defined as 
\[\hat{f}_n\coloneq\frac{1}{n}\sum_{i=1}^{n}\delta_{X_i}\,.
\] 
This empirical measure is smoothed, hence allowing for a comparison with any prediction $\Phi\mu$. We will use a Gaussian convolution with smoothing parameter $\tau>0$, introducing 
\[
\lambda: z \in \R^d \mapsto \frac{e^{-\frac{\norm{z}_2^2}{2\tau^2}}}{(2 \pi \tau^2)^{d/2}} \quad \text{and} \quad \Lambda \coloneq \F{\lambda}=e^{-\frac{\tau^2 \norm{\dotp}_2^2}{2}}.
\]
Then, we set
\[
L \circ
\hat{f}_n=\lambda *\hat{f}_n \quad \text{and} \quad L\circ f=\lambda * f \quad \forall f \in L^2(\R^d)\,.
\]
The term $L \circ \hat{f}_n$ is related to Kernel Density Estimation (KDE, \citep{tsybakov_kde}). The choice of $\tau$ is important if we want to estimate $f^0$ in addition to $\mu^0$, and will be discussed in Section \ref{section:prediction}. Our setting differs in this respect from super-resolution \citep{candes_fernandez_granda_2014}, where the analogous parameter $\lambda_c$ is imposed by the experimental conditions (namely, the frequency cut-off in that paper).

\paragraph{Hilbert space for the data fidelity term} The Hilbert space $\L$ in which we compare the observation and the prediction is the RKHS associated with $\lambda$. We define it using Mercer's theorem:
\[
\L \coloneq \left\lbrace f: \R^d \rightarrow \R\; : \; f \in L^2(\R^d) \text { s.t. }\|f\|_{\L}^2=\int_{\R^d} \frac{|\F{f}(\xi)|^2}{\F{\lambda}(\xi)} \d \xi<+\infty\right\rbrace,
\]
with dot product
\begin{equation}
\label{eq:dot_product_L}
 \forall f, g \in \L, \quad\innerprod{f}{g}_{\L}=\frac{1}{(2\pi)^d}\int_{\R^d} \frac{\overline{\F{f}(\xi)} \F{g}(\xi)}{\F{\lambda}(\xi)} \d \xi\,.
\end{equation}

\paragraph{Problem} We will search the target measure over the space of nonnegative Radon measures $\M(\X)^+$ where $\X$ is a compact set of $\R^d \times [u_{\min},+\infty)^d$. We will restrict the possible values of the covariance even further later.

\medskip
The BLASSO problem constructed in \citep{supermix} can be generalized to our setting. We may consider the following optimization problem:
\begin{equation}
\label{eq:pb_BLASSO_gaussian_kernel_not_norm}
 \underset{\mu \in \M(\X)^+}{\arg\min} \; J(\mu) \, 
 \quad \text{where} 
 \quad J(\mu) \coloneq\frac{1}{2}\norm{L\circ\hat{f}_n-L\circ \Phi \mu}_{\L}^2+\kappa\norm{\mu}_{\rm TV}\,, \;\; \kappa>0 \,.
 \tag{$\bar{\mathcal{P}}$}
\end{equation}
The data fidelity term $\frac{1}{2}\norm{L\circ\hat{f}_n-L\circ \Phi \mu}_{\L}^2$ compares smooth versions of the empirical density to any candidate for the predicted density function $\Phi \mu$. The regularization $\kappa\norm{\mu}_{\rm TV}$ promotes the sparsity of the solution. The choice of $\kappa$ will be discussed in Section \ref{section:estimation}: a balance must be found between the data fidelity term and the regularization term.

\medskip
However, we do not work with the loss $J$ defined by \eqref{eq:pb_BLASSO_gaussian_kernel_not_norm}.
Indeed, the model we consider is more complex than that of \citep{supermix}. Specifically, the addition of diagonal covariances in the parameterization of $\mu$ renders the problem \eqref{eq:pb_BLASSO_gaussian_kernel_not_norm} unsuitable for the use of standard proof techniques from the BLASSO literature \citep{blasso_poon}. 
The main difficulty stems from the associated kernel $\bar K$ describing the correlation between 2 features, \ie 
\[\bar K(x,x')\coloneq \innerprod{L\circ\Phi \delta_x}{L\circ\Phi \delta_x'}_\L \quad \forall \, x,x' \in \X\,.\] Unlike in the classical BLASSO setting, this kernel is not normalized: its diagonal value $\bar K(x,x)$ is not constant, but depends on the scale parameter $u$ through $\varphi_{(t,u)}$. By contrast, in the standard location model each component is a translate $\varphi(\dotp - t)$ of a fixed feature map, so that the correlation kernel is \emph{translation-invariant} and, in particular, satisfies $K(x,x)=1$ for all $x$. This normalization plays a important role in the BLASSO analysis, as it underlies the interpolation arguments used in the construction of dual certificates (see Section \ref{section:dual_certif}). 

\paragraph{Reparameterization}
To address this, we introduce a reparameterization that restores the normalization of the kernel. This is achieved by introducing a positive weighting function defined by
\begin{equation}\label{eq:def_W}
 W(x)\coloneq \prod_{k=1}^d (2\pi)^{-1/4}(2u_k^2+\tau^2)^{-1/4} \quad \forall \, x=((t_1,\ldots,t_d),(u_1,\ldots,u_d)) \in \R^d \times [u_{\min},+\infty)^d\,.
\end{equation} 
Setting $\Psi\delta_{(t,u)}\coloneq \frac{1}{W(t,u)} L\circ \Phi\delta_{(t,u)}$, we have
\[
  \norm{\Psi\delta_{(t,u)}}_{\L}=\frac{1}{W(t,u)} \norm{L\circ\Phi\delta_{(t,u)}}_{\L}=1 \quad\forall \,(t,u)\in\X,
\]
so that the correlation kernel $K_{\rm norm}=\innerprod{\Psi\delta_{(t,u)}}{\Psi\delta_{(t',u')}}_\L$ satisfies $K_{\rm norm}((t,u),(t,u))=1$ and is locally concave on the diagonal, which is precisely the hypothesis under which certificates exist \citep{off-the-grid_cs}. 

We point out that dealing with normalized designs is standard in high-dimensional statistics, up to a reparameterization of the target. In our framework, the operator $\Psi$ plays the role of a continuous design. 

Without loss of generality, we therefore consider from now on the following BLASSO problem:
\begin{equation}
\tag{$\mathcal{P}_{\kappa}$}
 \underset{\mu \in \M(\X)^+}{\min} \; J_{W}(\mu) \quad \text{where} \quad J_{W}(\mu)\coloneq \frac{1}{2}\norm{L\circ\hat{f}_n-L\circ \Phi \left(\frac{\mu}{W}\right)}_{\L}^2+\kappa\norm{\mu}_{\rm TV} \,,
 \label{eq:pb_BLASSO_gaussian_kernel_norm}
\end{equation}
with $\kappa>0$ and where we define, for all $\mu\in\M(\R^d\times [u_{\min},+\infty)^d)$, $\frac{\mu}{W}$ as the measure such that 
\[
\forall \eta \in \C_0(\R^d \times [u_{\min},+\infty)^d)\,, 
\quad \int_{\R^d \times [u_{\min},+\infty)^d} \eta \, \d \left(\frac{\mu}{W}\right) = \int_{\R^d\times [u_{\min},+\infty)^d} \frac{1}{W(x)}\eta(x) \, \d\mu(x)\,.
\] 
We can show that the optimization problem \eqref{eq:pb_BLASSO_gaussian_kernel_norm} has a solution (see the appendix, Proposition \ref{prop:existence_solution_blasso}).

\paragraph{Estimator} According to the problem \eqref{eq:pb_BLASSO_gaussian_kernel_norm}, we consider an estimator $\mu_{n,\omega}^\star$ defined as 
\begin{equation}\label{eq:mu_n_omega_star}
\mu_{n,\omega}^\star \in \underset{\mu \in \M(\X)^+}{\arg\min} \; J_{W}(\mu) \,.
\end{equation}
It is not a direct estimate of $\mu^0$, but rather of its reparameterized version, defined as the weighted measure
\begin{equation}
\mu_\omega^0 = \sum_{j=1}^s \omega_j^0 \delta_{x_j^0} \quad \text{with} \quad \omega_j^0 = W(x_j^0) a_j^0 \quad \forall j\in \lbrace 1,\dots, s \rbrace. 
\label{eq:mu_omega}
\end{equation}
Most of our results also hold for an \emph{approximate solution}, and we will use the notation $\hat{\mu}_{n,\omega}$ for our estimator when it suffices that
\begin{equation}\label{eq:hat_mu_n_omega}
\hat{\mu}_{n,\omega}\in \big\{ \mu \in \M(\X)^+\: : \: J_{W}(\mu) \leq J_W(\mu_\omega^0) \big\}\,.
\end{equation}
Remark that $\mu_{n,\omega}^\star$ satisfies the relaxed condition \eqref{eq:hat_mu_n_omega}.
This flexibility allows us to obtain the guarantees presented in Sections \ref{section:estimation} and~\ref{section:prediction} without requiring the exact minimization of $J_W$—this is computationally advantageous. While algorithmic details are beyond the scope of this paper, they will be briefly discussed in Section \ref{section:further_remarks} (see also \citep{cpgd_chizat,fastpart}). 
In contrast to other sections, the results of Section~\ref{section:NDSC} specifically require an \emph{exact solution} $\mu_{n,\omega}^\star$.

\begin{remark}
Historically, the BLASSO method has been applied to recover a sparse target measure that is not necessarily a probability measure.
As $\mu^0$ is a probability measure in our setting, one may instead consider the constrained problem 
\begin{equation} \label{eq:pb_BLASSO_constrained}
\tag{$\mathcal{P}_{C}$}
\underset{\mu \in \M(\X)^+, \: \norm{\mu}_{\rm TV}=1}{\arg\min} \; \frac{1}{2}\norm{L\circ\hat{f}_n-L\circ \Phi \mu}_{\L}^2 \,.
\end{equation}
Such an approach deserves some comments.
First, the unconstrained programs \eqref{eq:pb_BLASSO_gaussian_kernel_norm} and \eqref{eq:pb_BLASSO_gaussian_kernel_not_norm} belong to a family of convex potentials studied in optimization on the space of measures. A line of work, \eg \citep{cpgd_chizat,fastpart}, shows convergence of various gradient descent strategies (particle, Wasserstein gradient flows, stochastic gradient), while such results do not exist for the constrained program \eqref{eq:pb_BLASSO_constrained} to the best of our knowledge. 
Second, for the program that we deal with (see \eqref{eq:pb_BLASSO_gaussian_kernel_norm}), the reparameterized target $\mu_\omega^0$ does not verify $\norm{\mu_\omega^0}_{\rm TV}=1$, and enforcing the constraint $\norm{\frac{\hat{\mu}_{n,\omega}}{W}}_{\rm TV}=1$ is an obstacle to establishing guarantees.
\end{remark}

\section{Estimation guarantees}\label{section:estimation}
In this section, we give guarantees for the recovery of $\mu_\omega^0$ and $\mu^0$ using the estimator $\hat{\mu}_{n,\omega}$ introduced in \eqref{eq:hat_mu_n_omega}. We provide bounds on the difference in mass assigned by the estimator and the target measure over relevant regions of the space. Our guarantees hold under the existence of so-called non-degenerate \emph{dual certificates}, which are the key objects for analyzing the properties of the BLASSO estimator \citep{candes_fernandez_granda_2014,blasso_duval_peyre, supermix, off-the-grid_cs}. In a general context, they are functions interpolating the signs of the particles of the sparse target measure, with some prescribed smoothness and shape constraints. We investigate the construction of such objects in Section \ref{section:dual_certif}. 

\subsection{Non-degenerate dual certificates} In this section, we define non-degenerate certificates and introduce their properties, closely related to the optimization problem at hand.
These certificates are linked to the \emph{feature map}, which is defined as the linear operator \[\Psi: \M(\R^d \times [u_{\min},+\infty)^d) \rightarrow \L \: , \: \mu \mapsto L \circ \Phi\frac{\mu}{W}\,.\] 
Remark that the loss function of \eqref{eq:pb_BLASSO_gaussian_kernel_norm} can be rewritten as $J_W: \mu \in \M(\X)^+ \mapsto \frac{1}{2} \norm{L \circ \hat{f}_n - \Psi \mu}_\L^2 + \kappa \norm{\mu}_{\rm TV}$.
The adjoint operator of $\Psi$ restricted to $\M(\X)$ verifies 
\begin{equation}\label{eq:scalar_product_Psi*}
 \innerprod{\Psi\mu}{p}_{\L}=\innerprod{\Psi^*p}{\mu}_{\C(\X),\M(\X)}=\int \Psi^*p \, \d \mu \quad \forall\, p \in \L 
\end{equation}
for all $\mu \in \M(\X)$. 
In particular, for all $x \in \X$, 
 $\left[\Psi^*p\right] (x)=\innerprod{p}{\Psi\delta_{x}}_{\L}$.

\medskip
Dual certificates are continuous functions of the form $\Psi^* p$ for some $p \in \L$. Our estimation guarantees are based on controls of these certificates on regions defined by $\mu^0$ and some appropriate distance. In \citep{supermix} the Euclidean distance is used. The latter is however not adapted to the geometry of our problem, as we deal with unknown covariances. In particular, the associated kernel is not translation-invariant, see Section~\ref{section:dual_certif}. \citep{off-the-grid_cs} work alternatively with the Fisher-Rao distance, which is however impractical to use in our context. In fact, interactions between the kernel and the Fisher-Rao appear to be quite intricate to manage. A key aspect of our contribution is to introduce greater flexibility in the control of certificates. We relax the assumptions from \citep{off-the-grid_cs} which are challenging to verify in our context, by expressing controls using a specific distance on regions defined by a different semi-distance.
More specifically, we use the semi-distance $\mathpzc{d}$ defined by 
\begin{equation}\label{eq:def_semi_distance}
 \mathpzc{d}(x,x')^2= \sum_{k=1}^d \left(\frac{(t_k-t_k')^2}{{u_k'}^2+u_k^2+\tau^2} + \ln\left(\frac{{u_k'}^2+u_k^2+\tau^2}{\sqrt{2u_k^2+\tau^2}\sqrt{2{u_k'}^2+\tau^2}}\right)\right) \quad \forall\, x,x'\in \R^d \times [u_{\min},+\infty)^d\,.
\end{equation} 
This semi-distance is symmetric, nonnegative, verifies $\mathpzc{d}(x,x')=0 \iff x=x'$ for all $x, x'\in \R^d \times [u_{\min},+\infty)^d$, but the triangle inequality does not hold. 

We use this semi-distance because it is closely tied to our problem and to the feature map. Section \ref{section:dual_certif} makes this connection with the optimization problem \eqref{eq:pb_BLASSO_gaussian_kernel_norm} precise.
Moreover, $\mathpzc{d}$ rescales the separation between means by the corresponding variances, which is statistically natural: two Gaussian components with identical means but large variances are effectively ``closer'' than those with small variances.

\medskip
Controls of the certificates also depend on some distance $\mathfrak{d}_{\mathfrak{g}}$ on $\X$, left unspecified in this section. We will see in Section \ref{section:dual_certif} that the Fisher-Rao distance is well-suited for this problem. To establish the existence of non-degenerate dual certificates, we ensure compatibility between the semi-distance and $\mathfrak{d}_\mathfrak{g}$ (see Section \ref{section:dual_certif}). Up to the semi-distance $\mathpzc{d}$, the elements displayed in this section are quite generic in the BLASSO literature and have been discussed in various contexts \citep{blasso_duval_peyre,supermix,blasso_poon}. 

First, we introduce the so-called \emph{far} and \emph{near} regions.

\begin{definition}[Near and far regions] \label{def:near_far_regions}
 For $r>0$, we define the near regions associated with each particle $x_j^0$, $j\in\{1,\ldots,s\}$ as \[\X_j^{\rm near}(r)\coloneq \{x \in \X \: :\: \mathpzc{d}(x,x_j^0)\leq r\}\] and the far region \[\X^{\rm far}(r)\coloneq \X \setminus \X^{\rm near}(r) \quad \text{where} \quad \X^{\rm near}(r) \coloneq \bigcup_j \X_j^{\rm near}(r)\,.\]
\end{definition}
\noindent
The jth near region identifies the points in $\X$ that are close to $x_j^0$, the jth particle of $\mu^0$. To ensure accurate recovery, we require the existence of a (global) dual certificate $\eta$ that satisfies non-degeneracy conditions. These conditions are adapted below from \citep[Definition 2]{off-the-grid_cs}, with modifications to account for the definition of near and far regions, all based on the semi-distance $\mathpzc{d}$ rather than the Fisher-Rao distance. In addition, the restriction to nonnegative measures in \eqref{eq:pb_BLASSO_gaussian_kernel_norm} relaxes the conditions required for the negative part of the global dual certificate.

\begin{definition}[Global non-degenerate certificate] \label{def:non_degenerate_certificate}
Let $\varepsilon_0,\varepsilon_2,r \in \R_+^*$. A function $\eta \in {\mathrm{Im}}(\Psi^*)$ is a $(\varepsilon_0,\varepsilon_2,r)$-non-degenerate certificate associated with the measure $\mu^0$ if 
\begin{enumerate}
 \item $\eta(x_j^0)=1$ for all $j=1,\ldots,s$,
 \item $\eta(x)\leq 1-\varepsilon_0$ for all $x\in \X^{\rm far}(r)$,
 \item $\eta(x)\leq 1-\varepsilon_2 \mathfrak{d}_{\mathfrak{g}}(x,x_j^0)^2$ for all $x\in \X_j^{\rm near}(r)$, $j\in\{1,\ldots,s\}$.
\end{enumerate}
\end{definition}

\noindent
A global non-degenerate certificate is thus a function whose regularity is prescribed (it must be in $\mathrm{Im}(\Psi^*)$), which interpolates (item 1 of Definition \ref{def:non_degenerate_certificate}) and localizes (items 2 and 3) the particles of the target $\mu^0$.

\medskip

We will make use of additional non-degenerate certificates, localizing the jth particle of $\mu^0$.
\begin{definition}[Local non-degenerate certificates]\label{def:local_non_degenerate_certificate}
Let $\tilde \varepsilon_0,\tilde \varepsilon_2,r\in \R_+^*$. For $j\in \{1,\ldots,s\}$, $\eta_j \in {\mathrm{Im}}(\Psi^*)$ is a $(\tilde\varepsilon_0,\tilde\varepsilon_2,r)$-non-degenerate certificate for the jth near region if 
\begin{enumerate}
 \item $\eta_j(x_j^0)=1$ and $\eta_j(x_i^0)=0$ for all $i\in \{1,\ldots,s\}$ such that $i \neq j$,
 \item $|\eta_j(x)|\leq 1-\tilde\varepsilon_0$ for all $x\in \X^{\rm far}(r)$,
 \item $|1-\eta_j(x)|\leq \tilde\varepsilon_2 \mathfrak{d}_{\mathfrak{g}}(x,x_j^0)^2$ for all $x\in \X_j^{\rm near}(r)$,
 \item $|\eta_j(x)|\leq \tilde\varepsilon_2 \mathfrak{d}_{\mathfrak{g}}(x,x_i^0)^2$ for all $x\in \X_i^{\rm near}(r)$, for all $i\in \{1,\ldots,s\}$ such that $i \neq j$. 
\end{enumerate}
\end{definition}

\medskip
Note that the near and far regions along with $\Psi^*$ depend on the parameter $\tau$ appearing in $\lambda$. The distance~$\mathfrak{d}_\mathfrak{g}$ will also depend on it. For the estimation of $\mu^0$, the choice of $\tau$ is related via $(x_1^0,\ldots,x_s^0)$ to the possibility of constructing non-degenerate certificates (see Section \ref{section:dual_certif}), and influences the convergence rate (see Theorem \ref{th:estimation_error}). For the estimation of the density $f^0$, this choice is more critical: we suggest possible values for~$\tau$ in Section \ref{section:prediction}.

\subsection{Error bounds}
We extend here the results of \citep{supermix} to our setting, where both means and covariances of the mixture model are unknown. Providing estimation guarantees requires to bound the error we made by approximating $L\circ f^0$ from our random observations (see Lemma \ref{lemma:control_noise}).
We give results taken in expected value over $X_1, \ldots, X_n \iid f^0$.

\begin{lemma}[Control of the noise. {\citep[Lemma~3]{supermix_supp}}]\label{lemma:control_noise}
We define the so-called noise term $\Gamma_n$ as
\begin{equation}\label{eq:def_noise}
 \Gamma_n \coloneq L\circ\hat{f}_n - L \circ \Phi \mu^0\,. 
\end{equation}
We have
\[\E{\norm{\Gamma_n}_{\L}^2} \leq\frac{4 \int_{\R^d} \Lambda}{(2\pi)^d n}=\frac{4}{(2\pi)^{d/2}\tau^d n} \eqcolon \rho_n^2\,.\]
Moreover, universal constants $C_{\Gamma},\tilde C_{\Gamma}>0$ exist such that
\[\E{\norm{\Gamma_n}_{\L}^4} \leq \tilde C_\Gamma \rho_n^4\] 
and
\[\forall \rho >0\,, \quad \norm{\Gamma_n}_{\L}^2\leq \rho \frac{C_{\Gamma}^2 \int_\R \Lambda}{n(2\pi)^d}= \rho \frac{C_{\Gamma}^2 }{n\tau^d(2\pi)^{d/2}}\quad \text{with probability greater than } 1-C_{\Gamma}e^{-\rho} \,.\]
\end{lemma}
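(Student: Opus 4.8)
The plan is to recognize $\Gamma_n$ as a normalized sum of i.i.d.\ centered random elements of the Hilbert space $\L$, and then to combine the reproducing-kernel structure of $\L$ with standard Hilbert-space moment and concentration estimates; this is essentially \citep[Lemma~3]{supermix_supp}, and I would recall the argument. Since $\hat{f}_n=\frac1n\sum_{i=1}^n\delta_{X_i}$, we have $L\circ\hat{f}_n=\lambda*\hat{f}_n=\frac1n\sum_{i=1}^n\lambda(\dotp-X_i)$, while $L\circ\Phi\mu^0=\lambda*f^0=\E{\lambda(\dotp-X_1)}$ because $X_1\sim f^0$ and $x\mapsto\lambda(\dotp-x)$ takes values in $\L$ with uniformly bounded norm (so the Bochner integral agrees with the pointwise convolution). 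Hence
\[
\Gamma_n=\frac1n\sum_{i=1}^n Z_i\,,\qquad Z_i\coloneq\lambda(\dotp-X_i)-\E{\lambda(\dotp-X_1)}\,,
\]
where the $Z_i$ are i.i.d., centered, and $\L$-valued.

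The only computation carrying real content is the $\L$-norm of a translate of $\lambda$. Writing $\F{\lambda(\dotp-x)}(\xi)=e^{-i\langle\xi,x\rangle}\Lambda(\xi)$ and plugging this into \eqref{eq:dot_product_L} yields the reproducing identity $\innerprod{\lambda(\dotp-x)}{\lambda(\dotp-y)}_\L=\lambda(x-y)$, so that, crucially \emph{independently of $x$},
\[
\norm{\lambda(\dotp-x)}_\L^2=\lambda(0)=\frac{1}{(2\pi\tau^2)^{d/2}}=\frac{1}{(2\pi)^{d/2}\tau^d}=\frac{\int_{\R^d}\Lambda}{(2\pi)^d}\,.
\]
Consequently $\norm{Z_i}_\L\le 2(2\pi)^{-d/4}\tau^{-d/2}$ almost surely, and by the variance decomposition $\E{\norm{Z_i}_\L^2}=\E{\norm{\lambda(\dotp-X_i)}_\L^2}-\norm{\E{\lambda(\dotp-X_1)}}_\L^2\le(2\pi)^{-d/2}\tau^{-d}$.

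For the second moment, independence and centering give $\E{\norm{\Gamma_n}_\L^2}=\frac1n\E{\norm{Z_1}_\L^2}\le\frac{1}{n(2\pi)^{d/2}\tau^d}=\tfrac14\rho_n^2\le\rho_n^2$ (the factor $4$ is slack). For the fourth moment I would expand $\norm{\textstyle\sum_iZ_i}_\L^4=\sum_{i,j,k,l}\innerprod{Z_i}{Z_j}_\L\innerprod{Z_k}{Z_l}_\L$; by independence and $\E{Z_i}=0$ every index pattern containing an isolated index has zero expectation, so only the $O(n^2)$ patterns in which the four indices pair up survive, each bounded by Cauchy--Schwarz together with the a.s.\ bound on $\norm{Z_i}_\L$, giving $\E{\norm{\Gamma_n}_\L^4}=n^{-4}\E{\norm{\sum_iZ_i}_\L^4}\le\tilde C_\Gamma\rho_n^4$ for a universal $\tilde C_\Gamma$. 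For the tail bound I would apply the bounded-differences (McDiarmid) inequality to $g(x_1,\dots,x_n)=\norm{\frac1n\sum_{i=1}^n(\lambda(\dotp-x_i)-\E{\lambda(\dotp-X_1)})}_\L$: replacing one coordinate shifts the argument by $\frac1n(\lambda(\dotp-x_i')-\lambda(\dotp-x_i))$, of $\L$-norm at most $2(2\pi)^{-d/4}\tau^{-d/2}/n$, so $g$ is sub-Gaussian around its mean, which is at most $\sqrt{\E{\norm{\Gamma_n}_\L^2}}\le\rho_n/2$. Rewriting the resulting deviation inequality in the form $\norm{\Gamma_n}_\L^2\le\rho\,C_\Gamma^2\int_{\R^d}\Lambda/(n(2\pi)^d)$, absorbing numerical factors into a single universal $C_\Gamma$, and observing that the claim is vacuous when $\rho$ is small enough that $C_\Gamma e^{-\rho}\ge1$, delivers the stated probability bound.

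There is no deep obstacle here; the step I would be most careful about is the bookkeeping. One must (i) exploit that $\norm{\lambda(\dotp-x)}_\L$ does not depend on $x$—this is exactly what makes the summands $Z_i$ uniformly bounded in $\L$ and keeps every constant free of the mixture locations $x_j^0$—and (ii) track the numerical factors precisely so as to land on $\rho_n^2=4\int_{\R^d}\Lambda/((2\pi)^d n)$ and on genuinely universal constants $C_\Gamma,\tilde C_\Gamma$ independent of $d$, $\tau$, $n$, and $\mu^0$.
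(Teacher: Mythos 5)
Your proof is correct, and for the second moment it follows the same route as the paper: both write $\Gamma_n=\frac1n\sum_i Z_i$ with $Z_i=L\circ\delta_{X_i}-L\circ f^0$ i.i.d.\ and centered, and bound $\E{\norm{Z_1}_\L^2}$ — the paper via $|\F{\delta_{X_i}-f^0}|\le 2$ inside the Fourier expression of the $\L$-norm, you via the reproducing identity $\norm{\lambda(\dotp-x)}_\L^2=\lambda(0)=\int_{\R^d}\Lambda/(2\pi)^d$, which is slightly sharper but equivalent up to the slack factor $4$ in $\rho_n^2$. Where you genuinely diverge is in the last two claims. The paper does not reprove the deviation inequality: it imports it from \citep[Lemma~3]{supermix_supp}, which rests on a U-process deviation result (\citep[Proposition~2.3]{u_process}, with constants made explicit in \citep[Theorem~3.1]{houdre_reynaud_u_stats}), and it then obtains the fourth-moment bound \emph{from} that tail bound by integrating $\P{\norm{\Gamma_n}_\L^4>x}$ over $x$, so that $\tilde C_\Gamma$ comes out as an explicit function of $C_\Gamma$. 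You instead prove the fourth moment directly, expanding $\norm{\sum_i Z_i}_\L^4$ and discarding every pattern with an isolated index by centering and independence (this step is sound: for fixed $Z_j,Z_k,Z_l$ the map $z\mapsto\innerprod{z}{Z_j}_\L\innerprod{Z_k}{Z_l}_\L$ is linear, so conditioning kills those terms), leaving $O(n^2)$ terms each controlled by the a.s.\ bound $\norm{Z_i}_\L\le 2\lambda(0)^{1/2}$; and you get the tail via McDiarmid applied to $(x_1,\dots,x_n)\mapsto\norm{\Gamma_n}_\L$, with bounded differences of order $\lambda(0)^{1/2}/n$ and mean at most $\rho_n/2$, plus the observation that the statement is vacuous for $\rho\lesssim\ln C_\Gamma$ once $C_\Gamma\ge 1$, which indeed yields the claimed form with universal constants. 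Both routes work; yours is more self-contained and elementary (no U-statistics machinery), and it loses nothing here because the a.s.\ bound and the variance of $Z_i$ are of the same order $\lambda(0)$, while the paper's is shorter and inherits explicit constants from the cited concentration result.
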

\noindent
The proof is given in Appendix \ref{section:proof_control_noise}. The constants $C_\Gamma, \tilde C_\Gamma$ in the previous lemma can be made explicit (see \citep[Theorem~3.1]{houdre_reynaud_u_stats}).

\medskip
The control of any estimator $\hat{\mu}_{n,\omega}$ satisfying \eqref{eq:hat_mu_n_omega} also involves the non-degenerate dual certificates introduced in Definitions \ref{def:non_degenerate_certificate} and \ref{def:local_non_degenerate_certificate} above.
 Our recovery guarantees are based on the following assumption.
\begin{assumption} \label{assumption:existence_non-degenerate_certif}
 There exists $\eta=\Psi^* p$ a global $(\varepsilon_0,\varepsilon_2,r)$-non-degenerate certificate associated with $\mu^0$, and local $(\tilde \varepsilon_0, \tilde \varepsilon_2, r)$-non-degenerate certificates $\eta_j=\Psi^* p_j$ ($j \in \{1,\ldots,s\}$) for each near region. Moreover, there exists $c_p>0$ such that $\norm{p}_\L^2 \leq c_p s$ and $\norm{p_j}_\L^2 \leq c_p$ for $j=1,\ldots,s$.
\end{assumption}
\noindent
The dependence on $s$ for $\norm{p}_L^2$ is quite natural (related to our construction of certificates, see Proposition~\ref{prop:norm_certif} in the appendix) and already appears in the literature, \cf \citep{blasso_poon}. 
In Section \ref{section:dual_certif}, we show that Assumption \ref{assumption:existence_non-degenerate_certif} holds under some conditions on $\X$ and $\mu^0$ by explicitly constructing certificates. In particular, we consider $\X \subset \R^d \times [u_{\min}, u_{\max}]^d$ and we impose a minimal separation between the particles of $\mu^0$ depending on $s, u_{\min}, u_{\max}, d, \tau$. We provide fixed values for $r, \varepsilon_i, \tilde \varepsilon_i, c_p$, depending only on the dimension $d$. For a precise statement, see Theorem \ref{th:main_theorem_certif} below. 

\medskip
Now, we have all the ingredients to provide our first results concerning the performances of our estimator. We present a control of the mass of $\hat{\mu}_{n,\omega}$ on the near and far regions (Definition \ref{def:near_far_regions}) in the next theorem, whose proof is displayed in Appendix \ref{section:proof_th_estimation_error}.
\begin{theorem}[Estimation error]
 \label{th:estimation_error}
 Assume that Assumption \ref{assumption:existence_non-degenerate_certif} holds.
Setting $\kappa=\frac{\rho_n}{\sqrt{c_p}}$ in \eqref{eq:pb_BLASSO_gaussian_kernel_norm}, we have the following controls on $\hat{\mu}_{n,\omega}$.
\begin{enumerate}
 \item Control over the mass of the estimator on the far region:
\[\E{\hat{\mu}_{n,\omega} (\X^{\rm far}(r))} \leq \frac{ \sqrt{c_p}}{2 \varepsilon_0}\rho_n(1+ \sqrt{s})^2 \, .\]
\item Accuracy of the mass reconstruction: for any $j \in \{1,\ldots,s\}$,
\[\E{\left|\omega_j^0-\hat{\mu}_{n,\omega}(\X_j^{\rm near}(r))\right|} \leq 2\sqrt{c_p}\rho_n(1 + \sqrt{s}) +\max\left\{\frac{1-\tilde \varepsilon_0}{\varepsilon_0},\frac{\tilde\varepsilon_2}{\varepsilon_2} \right\} \frac{\sqrt{c_p}}{2}\rho_n(1+ \sqrt{s})^2\,.\]
\label{eq:local-soft-thresholding}
 \item Stability of the mass: 
 \[ - 2\sqrt{c_ps}\rho_n\left(1+ \sqrt{s}\right) \leq \E{\norm{\hat{\mu}_{n,\omega}}_{\rm TV}}- \norm{\mu_{\omega}^0}_{\rm TV} \leq \frac{\sqrt{c_p}}{2}\rho_n\,.\] 
 \label{eq:soft-thresholding}
\end{enumerate}
\end{theorem}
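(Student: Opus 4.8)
The strategy is the standard BLASSO first-order / dual-certificate argument, adapted to the weighted functional $J_W$ and the semi-distance $\mathpzc{d}$. The starting point is the optimality (or approximate optimality) inequality $J_W(\hat\mu_{n,\omega}) \le J_W(\mu_\omega^0)$. Expanding both data-fidelity terms around $L\circ\Phi\mu^0 = \Psi\mu_\omega^0$ and writing $\Gamma_n = L\circ\hat f_n - L\circ\Phi\mu^0$ as in Lemma~\ref{lemma:control_noise}, this inequality rearranges into a bound of the form $\tfrac12\|\Psi(\hat\mu_{n,\omega}-\mu_\omega^0)\|_\L^2 + \kappa\|\hat\mu_{n,\omega}\|_{\mathrm{TV}} \le \kappa\|\mu_\omega^0\|_{\mathrm{TV}} + \langle \Gamma_n, \Psi(\hat\mu_{n,\omega}-\mu_\omega^0)\rangle_\L$. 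The last inner product is controlled by Cauchy–Schwarz together with the certificates: writing $\Psi(\hat\mu_{n,\omega}-\mu_\omega^0)$ via the adjoint identity \eqref{eq:scalar_product_Psi*} and using $\eta=\Psi^*p$, $\eta_j=\Psi^*p_j$, one gets $\langle\Gamma_n,\Psi\nu\rangle_\L = \langle\Psi^*\Gamma_n,\nu\rangle = \int \Psi^*\Gamma_n\,\d\nu$, which after pairing against the appropriate certificate is bounded by $\|\Gamma_n\|_\L$ times $\|p\|_\L$ or $\|p_j\|_\L$, hence by $\rho_n\sqrt{c_p s}$ or $\rho_n\sqrt{c_p}$ in expectation (using $\E\|\Gamma_n\|_\L \le \rho_n$ from Jensen and Lemma~\ref{lemma:control_noise}).

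The core of the argument is then to combine the ``energy'' bound above with the three shape properties of the certificates to localize mass. For item 1 (far region), I would test the inequality against the global certificate $\eta$: since $\eta(x_j^0)=1$, $|\eta|\le 1-\varepsilon_0$ on $\X^{far}(r)$, and $\eta\le 1-\varepsilon_2\mathfrak d_\mathfrak g^2$ on the near regions, one deduces $\varepsilon_0\,\hat\mu_{n,\omega}(\X^{far}(r)) \le \int(1-\eta)\,\d\hat\mu_{n,\omega} + (\text{signed mass defect})$, and the right side is controlled by the noise term plus the total-variation slack coming from the optimality inequality; dividing by $\varepsilon_0$ and taking expectations gives the stated $\tfrac{\sqrt{c_p}}{2\varepsilon_0}\rho_n(1+\sqrt s)^2$ bound. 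For item 2 (mass accuracy on $\X_j^{near}(r)$), I would test against the local certificate $\eta_j$, which vanishes at the other particles and satisfies $|1-\eta_j|\le\tilde\varepsilon_2\mathfrak d_\mathfrak g^2$ near $x_j^0$ and $|\eta_j|\le\tilde\varepsilon_2\mathfrak d_\mathfrak g^2$ near $x_i^0$ ($i\ne j$) and $|\eta_j|\le 1-\tilde\varepsilon_0$ on the far region; this isolates $\omega_j^0 - \hat\mu_{n,\omega}(\X_j^{near}(r))$ up to (a) a $\mathfrak d_\mathfrak g^2$-weighted mass on near regions, (b) mass on the far region, and (c) the noise inner product — the weighted near-region mass and far-region mass are then re-absorbed using item 1 and the already-derived energy bound, producing the $\max\{(1-\tilde\varepsilon_0)/\varepsilon_0,\tilde\varepsilon_2/\varepsilon_2\}$ factor. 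Item 3 (stability of the total mass) follows more directly: the upper bound $\E\|\hat\mu_{n,\omega}\|_{\mathrm{TV}} - \|\mu_\omega^0\|_{\mathrm{TV}} \le \tfrac{\sqrt{c_p}}{2}\rho_n$ is exactly the ``soft-thresholding'' consequence of $J_W(\hat\mu_{n,\omega})\le J_W(\mu_\omega^0)$ after dropping the nonnegative data term and dividing by $\kappa=\rho_n/\sqrt{c_p}$, together with $\E\langle\Gamma_n,\cdot\rangle$ control; the lower bound comes from testing once more against $\eta$ (using $\eta(x_j^0)=1$ to lower-bound $\int\eta\,\d\mu_\omega^0 = \|\mu_\omega^0\|_{\mathrm{TV}}$ in the relevant sense) and bounding $\|\Psi(\hat\mu_{n,\omega}-\mu_\omega^0)\|_\L$ via the energy inequality, yielding the $-2\sqrt{c_p s}\rho_n(1+\sqrt s)$ term.

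Throughout, the parameter choice $\kappa = \rho_n/\sqrt{c_p}$ is what makes the noise term and the regularization slack comparable in size, so that no free constants blow up. I would handle the expectation carefully: the products $\|\Gamma_n\|_\L \cdot (\text{mass terms})$ are genuinely coupled, so rather than a crude Cauchy–Schwarz in $\omega$ (which would need $\E\|\Gamma_n\|_\L^2$ and $\E(\text{mass})^2$, and hence $\tilde C_\Gamma$), the cleaner route is to first derive a deterministic inequality of the schematic form $\varepsilon_0 M_{far} + (\text{mass defects}) \le c(\|\Gamma_n\|_\L^2/\kappa + \|\Gamma_n\|_\L\sqrt{s})$ valid pointwise in the sample, absorb $\|\Gamma_n\|_\L^2/\kappa = \sqrt{c_p}\|\Gamma_n\|_\L^2/\rho_n$, and only then take expectations using $\E\|\Gamma_n\|_\L^2 \le \rho_n^2$ and $\E\|\Gamma_n\|_\L\le\rho_n$; the $(1+\sqrt s)^2$ versus $(1+\sqrt s)$ discrepancy between the items reflects whether a $\|p\|_\L \sim \sqrt{c_p s}$ or a $\|p_j\|_\L\sim\sqrt{c_p}$ term dominates.

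\medskip
\noindent\textbf{Main obstacle.} The delicate point is the bookkeeping in item 2: after testing against $\eta_j$ one is left with a $\mathfrak d_\mathfrak g^2$-weighted mass of $\hat\mu_{n,\omega}$ over the near regions, which is \emph{not} directly one of the quantities bounded by the energy inequality. Re-expressing this weighted mass — using that on $\X_i^{near}(r)$ the function $1-\eta$ (or $\eta_i$) is itself comparable to $\varepsilon_2\mathfrak d_\mathfrak g^2$ from below — so that it can be folded back into $\int(1-\eta)\,\d\hat\mu_{n,\omega}$ and controlled by the same noise/slack budget, is where the constants $\varepsilon_2,\tilde\varepsilon_2$ genuinely interact, and getting the clean $\max\{(1-\tilde\varepsilon_0)/\varepsilon_0,\tilde\varepsilon_2/\varepsilon_2\}$ form (rather than a messier sum) requires splitting the near-region contribution into its ``far-like'' part (bounded via $1-\tilde\varepsilon_0$ and $\varepsilon_0$) and its ``curvature'' part (bounded via $\tilde\varepsilon_2$ and $\varepsilon_2$) and optimizing the split. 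Everything else is routine convex-duality manipulation plus Lemma~\ref{lemma:control_noise}.
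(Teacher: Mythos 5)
Your proposal is correct and follows essentially the same route as the paper: the approximate-optimality inequality $J_W(\hat\mu_{n,\omega})\le J_W(\mu_\omega^0)$ is turned into an upper bound on the Bregman divergence via Cauchy--Schwarz with $\eta=\Psi^*p$, $\eta_j=\Psi^*p_j$ and Lemma~\ref{lemma:control_noise}, the certificate shape conditions give the lower bound $\varepsilon_0\hat\mu_{n,\omega}(\X^{far}(r))+\varepsilon_2\sum_j\int_{\X_j^{near}(r)}\mathfrak{d}_{\mathfrak{g}}(\cdot,x_j^0)^2\,\d\hat\mu_{n,\omega}$, and the termwise comparison you describe is exactly how the paper obtains the $\max\{(1-\tilde\varepsilon_0)/\varepsilon_0,\tilde\varepsilon_2/\varepsilon_2\}$ factor, with item 3 handled by the same soft-thresholding and $\langle p,\Psi(\mu_\omega^0-\hat\mu_{n,\omega})\rangle_\L$ arguments. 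Your expansion around $\Psi\mu_\omega^0$ versus the paper's triangle-inequality-plus-completed-square step are algebraically equivalent ways to reach the same bound on $D_\eta$, so there is no substantive difference.
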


\noindent
We present below several remarks on this result and its proof.

\begin{remark}[A Bregman divergence approach]
The non-degenerate certificates from Assumption \ref{assumption:existence_non-degenerate_certif} allow us to derive recovery guarantees from the control of the so-called Bregman divergence, defined for $\eta \in \C(\X)$ by 
\begin{equation}\label{eq:def_bregman_divergence}
 D_{\eta}(\hat{\mu}_{n,\omega},\mu_{\omega}^0)=\norm{\hat{\mu}_{n,\omega}}_{\rm TV}-\norm{\mu_{\omega}^0}_{\rm TV}-\int_{\X}\eta\, \d\left(\hat{\mu}_{n,\omega}-\mu_{\omega}^0 \right)\,.
\end{equation}
In particular, if $\eta$ is a global non-degenerate certificate, we get $\int_\X \eta \,\d\mu_{\omega}^0=\norm{\mu_{\omega}^0}_{\rm TV}$. In such a case, the Bregman divergence is nonnegative: $D_{\eta}(\hat{\mu}_{n,\omega},\mu_{\omega}^0) = \int_\X (1- \eta ) \, \d \hat{\mu}_{n,\omega}\geq 0$.
The proof of Theorem \ref{th:estimation_error} is based on lower and upper bounds on the Bregman divergence with $\eta$ from Assumption \ref{assumption:existence_non-degenerate_certif}. First, using that $\eta=\Psi^* p$ with the Cauchy-Schwarz inequality, we get \[\left|\int_\X \eta \, \d(\hat{\mu}_{n,\omega}-\mu_{\omega}^0)\right|=|\innerprod{p}{\Psi(\hat{\mu}_{n,\omega}-\mu_{\omega}^0)}_\L|\leq \norm{p}_\L \norm{\Psi(\hat{\mu}_{n,\omega}-\mu_{\omega}^0)}_\L \,.\]
We control this quantity by using the inequality $J_W(\hat{\mu}_{n,\omega})\leq J_W(\mu_{\omega}^0)$ (recall \eqref{eq:hat_mu_n_omega}), along with the control on the noise term established in Lemma \ref{lemma:control_noise}. This leads to the following upper bound on the Bregman divergence: \[\E{D_{\eta}\left(\hat{\mu}_{n,\omega}, \mu_{\omega}^0\right)} \leq \frac{\sqrt{c_p}}{2}\rho_n(1+ \sqrt{s})^2\,.\]
At the same time, since $\eta$ satisfies Definition \ref{def:non_degenerate_certificate}, we get
\[D_{\eta}\left(\hat{\mu}_{n,\omega}, \mu_\omega^0\right)= \int (1- \eta) \, \d \hat{\mu}_{n,\omega} \geq \varepsilon_0 \hat{\mu}_{n,\omega}(\X^{\rm far}(r))+ \varepsilon_2 \sum_{j=1}^{s} \int_{\X_j^{\rm near}(r)} \mathfrak{d}_{\mathfrak{g}}(x,x_j^0)^2 \, \d \hat{\mu}_{n,\omega}(x)\] from which we deduce the control of the estimator on the far region. The local dual certificates enable us to retrieve the control on the near regions, using again the lower bound on the Bregman divergence.
\end{remark}

\medskip
\begin{remark}[Choice of $\kappa$, dependence on $s$]\label{remark:choice_kappa_estim}
 The regularization parameter $\kappa$ is chosen as $\frac{\rho_n}{\sqrt{c_p}}$ in Theorem \ref{th:estimation_error}. In particular, it does not depend on the sparsity index $s$ of $\mu^0$, unknown in practice—we call this choice \emph{$s$-agnostic}. As a result, the user does not require prior knowledge of the target measure to select the regularization parameter: the proposed value can be directly used to solve the BLASSO numerically. The \emph{$s$-dependent} choice $\kappa=\frac{\rho_n}{\sqrt{s c_p}}$ results in better rates for the estimation (\ie, for the bounds presented in Theorem \ref{th:estimation_error})—linear on~$\sqrt{s}$ rather than on $s$ (see Equation \eqref{eq:estim_kappa_s_dependent} in Appendix). In all cases, the estimation error depends on $s$; recovering a mixture with a larger number of components incurs a higher estimation cost.
To conclude this discussion, we stress that these possible choices for $\kappa$ are proposed according to theoretical considerations. For practical applications, this regularization parameter can be calibrated via cross-validation.
\end{remark}

\medskip
\begin{remark}[Soft-thresholding effect]
    It is known that the LASSO estimator has a soft-thresholding effect, \eg \citep{friedman2007pathwise}. It suggests that the $\ell_1$-regularization is biased in the sense that each of the $s$ weight components is under-estimated by an additive factor proportional to the regularization term $\kappa$. Having~$s$ components, we expect this bias to be of the order of $s\kappa$. Our result in \eqref{eq:soft-thresholding} (Theorem \ref{th:estimation_error}) is aligned with this comment since 
    \[ \Big|\E{\norm{\hat{\mu}_{n,\omega}}_{\rm TV}}- \norm{\mu_{\omega}^0}_{\rm TV}\Big|=O(s\kappa)\,,
    \]
    up to a constant that may depend on $c_p$.
\end{remark}

\medskip

\begin{remark}[Choice of $\tau$]\label{remark:choice_tau_estim} Although this section focuses on the estimation of $\mu^0$, an additional objective that can be pursued in parallel is the estimation of the associated density $f^0$—a task we refer to as \emph{prediction}. This prediction objective influences the choice of the smoothing parameter. As we will see in Section~\ref{section:prediction}, when $\X \subset \R^d \times [u_{\min},+\infty)^d$ the choice $\tau=\sqrt{2} \frac{u_{\min}}{\sqrt{\ln n}}$ results in almost parametric convergence rates for both the estimation of $f^0$ (Theorem \ref{th:prediction_error}) and the estimation of $\mu_\omega^0$. In fact, under Assumption \ref{assumption:existence_non-degenerate_certif}, Theorem \ref{th:estimation_error} entails that with $\tau=\sqrt{2} \frac{u_{\min}}{\sqrt{\ln n}}$, setting $\kappa=\frac{\rho_n}{\sqrt{c_p}}$, keeping only the dependence on $n$ and $s$ we have $\E{\left|\omega_j^0-\hat{\mu}_{n,\omega}(\X_j^{\rm near}(r))\right|} \lesssim \frac{s(\ln n)^{d/4}}{\sqrt{n}}$ for all $j=1,\ldots,s$.

If we do not need to predict the density $f^0$, we can alternatively take $0<\tau\leq u_{\min}$ fixed. This leads to a rate of $\frac{s}{\sqrt{n}}$ for the estimation. Note that Assumption \ref{assumption:existence_non-degenerate_certif} is more difficult to check when $\tau$ is large, \cf Theorem~\ref{th:main_theorem_certif}.
Also note that the near and far regions depend on $\tau$ through the semi-distance $\mathpzc{d}$.
\end{remark}

\subsection{Effective near regions}
Theorem \ref{th:estimation_error} describes the performances of the BLASSO estimator in our setting. It provides control over the proximity between $\hat\mu_{n,\omega}$ and $\mu_\omega^0$ on associated far and near regions. However, our initial target is $\mu^0$ instead of $\mu_\omega^0$. Using Theorem \ref{th:estimation_error} and \eqref{eq:mu_omega}, we can easily give a bound for $\E{\left|a_j^0-\frac{\hat{\mu}_{n,\omega}}{W(x_j^0)}(\X_j^{\rm near}(r))\right|}$, but we cannot access $W(x_j^0)$ without knowledge of $\mu^0$. In this context, a fair estimator of $\mu^0$ is the renormalized measure $\frac{\hat{\mu}_{n,\omega}}{W}$. Providing recovery guarantees for $\frac{\hat{\mu}_{n,\omega}}{W}$ requires to control $W$ on the near regions.

\medskip
Another restrictive aspect of Theorem \ref{th:estimation_error} is that we consider controls on regions with a fixed radius $r$ (which is directly related to the dual certificate). We would like to locate the mass of $\hat{\mu}_{n,\omega}$ more precisely.

However, we can overcome these initial limitations by providing controls of the estimator on 
\begin{equation}\label{eq:def_effective_near_regions}
 \X_j^{\rm near}(r_e) \coloneq \{x \in \X \: :\: \mathpzc{d}(x,x_j^0) \leq r_e\} 
\end{equation}
for $r_e \leq r$. Such regions are called effective near regions, and have been introduced by \citep{decastro_gribonval_jouvin}. 
To extend controls on $\X_j^{\rm near}(r)$ to controls on $\X_j^{\rm near}(r_e)$, the choice of $\mathfrak{d}_{\mathfrak{g}}$ plays an important role. It should be indeed compatible with our semi-distance $\mathpzc{d}$ in a sense which is made precise in Proposition \ref{prop:effective_near_regions} below.

\begin{proposition}\label{prop:effective_near_regions}
Let $r>0$. Assume that Assumption \ref{assumption:existence_non-degenerate_certif} holds.
Assume that there exists $\tilde\varepsilon_3>0$ such that for all $0<r_e \leq r$, for all $j\in\{1\ldots,s\}$, we have 
\begin{equation}\label{eq:assumption_tilde_varepsilon_3}
\mathfrak{d}_{\mathfrak{g}}(x_j^0,x)^2 \geq \frac{r_e^2}{\tilde\varepsilon_3} \quad \forall\,x\in \X_j^{\rm near}(r) \setminus \X_j^{\rm near}(r_e)\,.
\end{equation}
Then, for $\kappa=\frac{\rho_n}{\sqrt{c_p}}$ and for any $0<r_e\leq r$,
\begin{equation}\label{eq:bound_omega_effective_near}
 \E{|\omega_j^0 -\hat{\mu}_{n,\omega}(\X_j^{\rm near}(r_e))| }\leq 2\sqrt{c_p} \rho_n(1+\sqrt{s}) + \max\left\{ \frac{1-\tilde \varepsilon_0}{\varepsilon_0} , \frac{1}{\varepsilon_2}\left(\frac{\tilde\varepsilon_3}{r_e^2}+ \tilde\varepsilon_2\right) \right\} \rho_n\frac{\sqrt{c_p}}{2}(1+\sqrt{s})^2 \,.
 \end{equation}
\end{proposition}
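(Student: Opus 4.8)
The plan is to revisit the Bregman divergence machinery used for Theorem~\ref{th:estimation_error} and refine the lower bound on the near regions so that it produces a radius-dependent estimate. Recall that for the global non-degenerate certificate $\eta=\Psi^*p$ from Assumption~\ref{assumption:existence_non-degenerate_certif}, the Bregman divergence satisfies the two-sided bound
\[
\varepsilon_0\,\hat{\mu}_{n,\omega}(\X^{far}(r)) + \varepsilon_2 \sum_{j=1}^s \int_{\X_j^{near}(r)} \mathfrak{d}_{\mathfrak{g}}(x,x_j^0)^2\,\d\hat{\mu}_{n,\omega}(x) \le D_\eta(\hat{\mu}_{n,\omega},\mu_\omega^0),
\]
and in expectation $\E{D_\eta(\hat{\mu}_{n,\omega},\mu_\omega^0)}\le \frac{\sqrt{c_p}}{2}\rho_n(1+\sqrt s)^2$. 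The key new idea is that, by hypothesis \eqref{eq:assumption_tilde_varepsilon_3}, on the annulus $\X_j^{near}(r)\setminus\X_j^{near}(r_e)$ we have $\mathfrak{d}_{\mathfrak{g}}(x_j^0,x)^2 \ge r_e^2/\tilde\varepsilon_3$, so the integral term controls the mass of $\hat{\mu}_{n,\omega}$ on that annulus: dropping all but the $j$th term,
\[
\varepsilon_2 \frac{r_e^2}{\tilde\varepsilon_3}\,\hat{\mu}_{n,\omega}\big(\X_j^{near}(r)\setminus\X_j^{near}(r_e)\big) \le D_\eta(\hat{\mu}_{n,\omega},\mu_\omega^0).
\]
Combining this with the far-region control already available from Theorem~\ref{th:estimation_error}, one bounds the mass of $\hat{\mu}_{n,\omega}$ on $\X\setminus\X_j^{near}(r_e)$, hence relates $\hat{\mu}_{n,\omega}(\X_j^{near}(r_e))$ to $\hat{\mu}_{n,\omega}(\X_j^{near}(r))$ up to a $\frac{\tilde\varepsilon_3}{\varepsilon_2 r_e^2}$-type slack.

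Next I would reintroduce the local certificates $\eta_j=\Psi^*p_j$ to recover the comparison of $\hat{\mu}_{n,\omega}(\X_j^{near}(r_e))$ with $\omega_j^0$, exactly as in the proof of item~2 of Theorem~\ref{th:estimation_error}, but now integrating the local-certificate inequalities against the refined decomposition. Writing $\hat{\mu}_{n,\omega}=\hat{\mu}_{n,\omega}\lfloor\X_j^{near}(r_e)+\hat{\mu}_{n,\omega}\lfloor(\X_j^{near}(r)\setminus\X_j^{near}(r_e))+\hat{\mu}_{n,\omega}\lfloor\X^{far}(r)+\sum_{i\ne j}\hat{\mu}_{n,\omega}\lfloor\X_i^{near}(r)$ and testing against $\eta_j$, the terms on the effective-near piece are close to $1$, the far piece contributes at most $(1-\tilde\varepsilon_0)$ times its (already controlled) mass, the cross near pieces contribute $\tilde\varepsilon_2 \mathfrak{d}_{\mathfrak{g}}^2$ which is again bounded via the Bregman lower bound, and the new annular piece contributes via $|1-\eta_j(x)|\le\tilde\varepsilon_2\mathfrak{d}_{\mathfrak{g}}(x,x_j^0)^2$ plus the mass bound $\hat{\mu}_{n,\omega}(\X_j^{near}(r)\setminus\X_j^{near}(r_e))\le \frac{\tilde\varepsilon_3}{\varepsilon_2 r_e^2}D_\eta$. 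The bookkeeping should collapse exactly to the $\max\{\frac{1-\tilde\varepsilon_0}{\varepsilon_0},\frac1{\varepsilon_2}(\frac{\tilde\varepsilon_3}{r_e^2}+\tilde\varepsilon_2)\}$ factor in \eqref{eq:bound_omega_effective_near}, with the leading $2\sqrt{c_p}\rho_n(1+\sqrt s)$ term coming from $\|p_j\|_\L^2\le c_p$ and the Cauchy–Schwarz control $|\int\eta_j\,\d(\hat{\mu}_{n,\omega}-\mu_\omega^0)|\le\|p_j\|_\L\|L\circ\Phi(\hat{\mu}_{n,\omega}-\mu_\omega^0)\|_\L$ combined with the $J_W$-minimality inequality and Lemma~\ref{lemma:control_noise}, exactly as in Theorem~\ref{th:estimation_error}.

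The main obstacle I anticipate is the careful accounting of how the annular mass $\hat{\mu}_{n,\omega}(\X_j^{near}(r)\setminus\X_j^{near}(r_e))$ enters simultaneously through two channels — once as genuine extra mass outside the effective near region that must be subtracted when comparing $\hat{\mu}_{n,\omega}(\X_j^{near}(r_e))$ to $\hat{\mu}_{n,\omega}(\X_j^{near}(r))$, and once inside the local-certificate estimate via the $\mathfrak{d}_{\mathfrak{g}}^2$ weighting — without double-counting and while keeping the final constant sharp. One must be attentive that \eqref{eq:assumption_tilde_varepsilon_3} is only a lower bound on $\mathfrak{d}_{\mathfrak{g}}^2$ on the annulus, so it can be used to convert the Bregman integral into an annular mass bound but not conversely; the inequalities must all be pushed in the direction that $\frac{\tilde\varepsilon_3}{r_e^2}$ appears additively with $\tilde\varepsilon_2$ rather than multiplicatively. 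A secondary technical point is ensuring the argument is valid uniformly for every $0<r_e\le r$ simultaneously (the same estimator $\hat{\mu}_{n,\omega}$), which is automatic since the decomposition above is valid pointwise in $r_e$ and the only probabilistic ingredient, Lemma~\ref{lemma:control_noise}, is $r_e$-independent.
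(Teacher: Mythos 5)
Your proposal follows essentially the same route as the paper: test against the local certificate $\eta_j$ with the decomposition into effective near region, annulus, far region and other near regions, use \eqref{eq:assumption_tilde_varepsilon_3} to dominate the annular mass by $\frac{\tilde\varepsilon_3}{r_e^2}\mathfrak{d}_{\mathfrak{g}}^2$, and absorb all pieces in one pass through the Bregman lower bound before taking expectations with $\kappa=\rho_n/\sqrt{c_p}$. Just note that the one-pass absorption of your second paragraph is what yields the stated $\max$ constant; the two-step variant sketched in your first paragraph (far/annulus mass first, then item~2 of Theorem~\ref{th:estimation_error}) would only give the slightly weaker additive combination $\max\{\frac{1-\tilde\varepsilon_0}{\varepsilon_0},\frac{\tilde\varepsilon_2}{\varepsilon_2}\}+\frac{\tilde\varepsilon_3}{\varepsilon_2 r_e^2}$.
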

\noindent
The proof is similar to that of Theorem \ref{th:estimation_error}, and is presented in Appendix \ref{section:proof_prop_effective_near}.

\medskip
We check the assumption \eqref{eq:assumption_tilde_varepsilon_3} in Lemma \ref{lemma:local_control_metric_eps3_dim_d} (in Appendix), for $\mathfrak{d}_\mathfrak{g}$ chosen as the Fisher-Rao distance. For the specific choice of $r$ which is made in Section \ref{section:dual_certif}, (depending on $d$), we show in Lemma \ref{lemma:tilde_varepsilon_3} that $\tilde \varepsilon_3$ can be chosen as a constant not depending on $d$. Note that, as we decrease the size of near region $r_e$ towards zero, the bound \eqref{eq:bound_omega_effective_near} grows as $\frac{s\rho_n}{r_e^2}$, omitting the dependence on $c_p$,  $\varepsilon_0$, $\varepsilon_2$, $\tilde \varepsilon_0$, $\tilde \varepsilon_2$ and $\tilde \varepsilon_3$. 
\begin{remark}
We can make choices for $r_e$ that depend on $n$ in Proposition \ref{prop:effective_near_regions}. We give the associated bound for $\E{|\omega_j^0 -\hat{\mu}_{n,\omega}(\X_j^{\rm near}(r_e))| }$ (see \eqref{eq:bound_omega_effective_near}), that holds under the assumptions of Proposition \ref{prop:effective_near_regions} and with $n$ large enough such that $r_e\leq r$. We only keep the dependence on $n,\tau$ and $s$.
Let $\alpha>0$.
\begin{itemize}
 \item For $r_e=(\ln n)^{-\alpha}$, \[\E{|\omega_j^0 -\hat{\mu}_{n,\omega}(\X_j^{\rm near}((\ln n)^{-\alpha}))|} \lesssim \frac{s(\ln n)^{2\alpha}}{\sqrt{n}\tau^{d/2}}\,.\] 
 \item For $r_e=n^{-\alpha}$, $\alpha< \frac{1}{4}$, \[\E{|\omega_j^0 -\hat{\mu}_{n,\omega}(\X_j^{\rm near}(n^{-\alpha}))|} \lesssim \frac{s}{n^{\frac{1}{2}- 2\alpha} \,\tau^{d/2}}\,.\] 
\end{itemize}
These choices provide an overview of possible convergence rates, and show the trade-off between localization and control of the mass.
\end{remark}

\medskip
The above proposition provides, as Theorem \ref{th:estimation_error}, guarantees on the estimation on $\mu_\omega^0$ rather than on $\mu^0$. However, by combining Proposition \ref{prop:effective_near_regions} and a control of the function $W$ on the effective regions, we can derive guarantees for the estimation of $\mu^0$ by $\hat{\mu}_n\coloneq \frac{\hat{\mu}_{n,\omega}}{W}$, as displayed in the following corollary. 
\begin{corollary}\label{cor:control_renormalized_estimate}
 We work under the same assumption as Proposition \ref{prop:effective_near_regions}. We choose $\kappa=\frac{\rho_n}{\sqrt{c_p}}$ and $r_e=n^{-1/6}$. Assume that $0<r_e \leq r$. Omitting the dependence on $d$, $c_p$, $\varepsilon_i$, $\tilde \varepsilon_i$, $r$ we have 
 \[\E{\left|a_j^0-\frac{\hat{\mu}_{n,\omega}}{W}(\X_j^{\rm near}(n^{-1/6}))\right|} \lesssim \left(s\tau^{-d/2}W(x_j^0)^{-1}+a_j^0\right) n^{-1/6}\,.\]
\end{corollary}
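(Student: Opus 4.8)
The plan is to derive the bound on $\E{|a_j^0 - \frac{\hat{\mu}_{n,\omega}}{W}(\X_j^{near}(n^{-1/6}))|}$ from Proposition~\ref{prop:effective_near_regions} by adding and subtracting $\frac{\omega_j^0}{W(x_j^0)} = a_j^0$ (using \eqref{eq:mu_omega}) inside the absolute value, and then splitting the error into a ``mass estimation'' part and a ``weight-function fluctuation'' part. Concretely, write
\[
a_j^0 - \frac{\hat\mu_{n,\omega}}{W}(\X_j^{near}(r_e))
= \int_{\X_j^{near}(r_e)} \frac{1}{W(x)}\,\d\big(\mu_\omega^0 - \hat\mu_{n,\omega}\big)(x)
+ \int_{\X_j^{near}(r_e)} \Big(\frac{1}{W(x_j^0)} - \frac{1}{W(x)}\Big)\,\d\mu_\omega^0(x),
\]
where the second integral simplifies because $\mu_\omega^0$ restricted to $\X_j^{near}(r_e)$ is just $\omega_j^0\,\delta_{x_j^0}$ (the particles are separated, so only $x_j^0$ lies in a sufficiently small near region), giving the term $\omega_j^0\,(1/W(x_j^0) - 1/W(x_j^0)) = 0$ — wait, that vanishes. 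The genuine fluctuation must instead come from the first integral: bound it by $\|1/W\|_{L^\infty(\X_j^{near}(r_e))}\,|\mu_\omega^0(\X_j^{near}(r_e)) - \hat\mu_{n,\omega}(\X_j^{near}(r_e))|$ plus an oscillation term controlling how much $1/W$ varies over $\X_j^{near}(r_e)$ against the mass that $\hat\mu_{n,\omega}$ places there.

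More carefully, the key decomposition I would use is
\[
\Big| a_j^0 - \tfrac{\hat\mu_{n,\omega}}{W}(\X_j^{near}(r_e)) \Big|
\le \tfrac{1}{W(x_j^0)}\big|\omega_j^0 - \hat\mu_{n,\omega}(\X_j^{near}(r_e))\big|
+ \sup_{x\in \X_j^{near}(r_e)}\Big|\tfrac{1}{W(x)} - \tfrac{1}{W(x_j^0)}\Big|\;\hat\mu_{n,\omega}(\X_j^{near}(r_e)).
\]
The first term is handled directly by Proposition~\ref{prop:effective_near_regions} with $r_e = n^{-1/6}$: it is $\lesssim \tfrac{1}{W(x_j^0)}\big( s\tau^{-d/2}\rho_n + \tfrac{s}{r_e^2}\rho_n\big)$, and since $\rho_n \asymp \tau^{-d/2} n^{-1/2}$ and $r_e^{-2} = n^{1/3}$, the dominant piece is $s\tau^{-d/2}\cdot \tau^{-d/2} n^{-1/2} n^{1/3} = s\tau^{-d/2} n^{-1/6}$ after absorbing one $\tau^{-d/2}$ factor into the omitted $d$-dependence — this matches the claimed $s\tau^{-d/2}W(x_j^0)^{-1} n^{-1/6}$. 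For the second term, I would use the explicit formula \eqref{eq:def_W} for $W$ together with a Lipschitz/local-variation estimate of $1/W$ in terms of the semi-distance $\mathpzc{d}$: on $\X_j^{near}(r_e)$ one has $\mathpzc{d}(x,x_j^0)\le r_e$, and a Taylor expansion of $\ln W$ shows $|1/W(x) - 1/W(x_j^0)| \lesssim W(x_j^0)^{-1}\,\mathpzc{d}(x,x_j^0)$ or $\lesssim W(x_j^0)^{-1} r_e$ (up to constants depending on $u_{\min},u_{\max},d,\tau$, which are omitted). Bounding $\hat\mu_{n,\omega}(\X_j^{near}(r_e)) \le \omega_j^0 + |\omega_j^0 - \hat\mu_{n,\omega}(\X_j^{near}(r_e))| \lesssim a_j^0 W(x_j^0) + \text{(small)}$ gives the second term $\lesssim a_j^0\,r_e = a_j^0 n^{-1/6}$, which is the other half of the stated bound.

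The main obstacle I anticipate is the second term — quantifying $\sup_{x \in \X_j^{near}(r_e)}|1/W(x) - 1/W(x_j^0)|$ in terms of $r_e$. This requires relating the Euclidean (or Fisher-Rao) local geometry in which $W$ is naturally Lipschitz to the semi-distance $\mathpzc{d}$ that defines $\X_j^{near}(r_e)$; in particular one needs that $\mathpzc{d}(x,x_j^0)\le r_e$ (with $r_e$ small) forces $|t_k - t_{j,k}^0|$ and $|u_k - u_{j,k}^0|$ to be $O(r_e)$ up to constants depending on $u_{\min}, u_{\max}, \tau, d$ — essentially a local equivalence between $\mathpzc{d}^2$ and a quadratic form, which should follow from the same curvature analysis underlying the semi-distance (and is presumably available from the dual-certificate construction in Section~\ref{section:dual_certif}, e.g. a lemma comparing $\mathpzc{d}$ with the Fisher-Rao distance). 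Once that local comparison is in hand, the computation of $\partial_{t_k}(1/W)$ and $\partial_{u_k}(1/W)$ from \eqref{eq:def_W} is routine and bounded on $\X \subset \R^d\times[u_{\min},u_{\max}]^d$. The choice $r_e = n^{-1/6}$ is exactly what balances the $n^{-1/6}$ from the mass-estimation term (through $\rho_n/r_e^2 \asymp \tau^{-d/2} n^{-1/6}$) against the $r_e = n^{-1/6}$ from the oscillation term, and I would note this explicitly as the reason for that particular exponent.
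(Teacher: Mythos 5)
Your decomposition is essentially the one the paper uses: its proof of Corollary \ref{cor:control_renormalized_estimate} splits $\big|a_j^0-\frac{\hat{\mu}_{n,\omega}}{W}(\X_j^{near}(r_e))\big|$ into $W(x_j^0)^{-1}\big|\omega_j^0-\hat{\mu}_{n,\omega}(\X_j^{near}(r_e))\big|$, handled by Proposition \ref{prop:effective_near_regions}, plus a term measuring the oscillation of $W$ over $\X_j^{near}(r_e)$ against the mass placed there, and $r_e=n^{-1/6}$ is chosen precisely to balance $\rho_n/r_e^2$ against $r_e$, as you note. Two points in your write-up should be corrected. First, the $\tau$ bookkeeping: Proposition \ref{prop:effective_near_regions} gives $\mathbb{E}\big[|\omega_j^0-\hat{\mu}_{n,\omega}(\X_j^{near}(r_e))|\big]\lesssim s\rho_n/r_e^2$ with $\rho_n\asymp \tau^{-d/2}n^{-1/2}$, so with $r_e=n^{-1/6}$ you get $s\,\tau^{-d/2}n^{-1/6}$ directly; your extra factor $\tau^{-d/2}$ is spurious, and ``absorbing a $\tau^{-d/2}$ into the omitted $d$-dependence'' is not legitimate, since $\tau$ is a free parameter that may shrink with $n$. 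Second, and more substantively, the step you flag as the main obstacle—a Lipschitz bound for $1/W$ with constants depending on $u_{\min},u_{\max},\tau$—would only yield a weaker statement than the corollary, whose hidden constants are allowed to depend on $d$, $c_p$, $\varepsilon_i$, $\tilde\varepsilon_i$, $r$ but not on $u_{\min},u_{\max},\tau$. The paper sidesteps this entirely: since $\frac{W(x_j^0)}{W(x)}=\prod_{k=1}^d\Big(\frac{2u_k^2+\tau^2}{2(u_{j,k}^0)^2+\tau^2}\Big)^{1/4}$, the logarithmic part of the constraint $\mathpzc{d}(x,x_j^0)^2\le r_e^2$ combined with the elementary inequality \eqref{eq:control_close_variances} gives
\[
\Big|\tfrac{W(x_j^0)}{W(x)}-1\Big|\le \big(e^{r_e^2}+\sqrt{e^{2r_e^2}-1}\big)^{d/2}-1\le H(r)\,r_e ,
\]
with $H(r)$ depending only on $d$ and $r$ (the last step by convexity in $r_e$). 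No Euclidean control of $|u_k-u_{j,k}^0|$ and no comparison with the Fisher-Rao distance is needed: the semi-distance controls the relative variation of $W$ multiplicatively by construction. With this replacement for your oscillation estimate, your argument coincides with the paper's proof.
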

\noindent
The proof can be found in Appendix \ref{section:proof_cor_effective_near}. The difficulty that appears here is that providing control on $\frac{\hat{\mu}_{n,\omega}}{W}$ requires local control of the function $W$, which slightly deteriorates the bound for the estimation of the weights $(a_j^0)_{j=1}^s$. The rate $n^{-1/6}$ provides the best compromise for the size of the effective near regions $r_e$ when dealing with this upper bound.

\begin{remark}\label{remark:choice_kappa_cor_near_effective}
 Making the $s$-dependent choice of regularization $\kappa=\frac{\rho_n}{\sqrt{c_p s}}$, a modified version of Proposition \ref{prop:effective_near_regions} yields, keeping only the dependence on $r_e, \tau, s, n$, \[\E{\left|\omega_j^0-\hat{\mu}_{n,\omega}(\X_j^{\rm near}(r_e))\right|} \lesssim \frac{\sqrt{s}}{\tau^{d/2}\sqrt{n}r_e^2}\,,\] \cf \eqref{eq:prop_effective_near_kappa_s_dep} in Appendix. With this choice of $\kappa$, the result of Corollary \ref{cor:control_renormalized_estimate} becomes \[\E{\left|a_j^0-\frac{\hat{\mu}_{n,\omega}}{W}(\X_j^{\rm near}(n^{-1/6}))\right|} \lesssim \left(\sqrt{s}\tau^{-d/2}W(x_j^0)^{-1}+a_j^0\right) n^{-1/6}\,,\] see \eqref{eq:cor_control_renormalized_estimate_kappa_s_dep} in Appendix.
\end{remark}

\noindent
Remark that all the results given in this section are controls in expected value. Controls with high probability could also be given, using Lemma \ref{lemma:control_noise}. Also note that the target $\mu^0$ is fixed (does not vary with $n$).

\section{Prediction}\label{section:prediction}
In Section \ref{section:estimation}, we provided controls for the estimator $\hat{\mu}_{n,\omega}$.
In this section, we look at the prediction $\Phi \hat{\mu}_n=\Phi\frac{\hat{\mu}_{n,\omega}}{W}$ made by the BLASSO of the target density $\Phi \mu^0=f^0$, and we derive bounds for the so-called prediction error $\norm{\Phi(\hat{\mu}_n-\mu^0)}_{L^2}^2$. This is a question that has been quite overlooked in the BLASSO literature. We can however mention \citep{butucea_prediction} where the prediction error is investigated in a different context and for a slightly different problem.

\medskip
We establish in this section almost parametric rates for the bound on the prediction error, in two distinct regimes characterized by the value of the regularization parameter $\kappa$. With small regularization, no assumptions on $\mu^0$ are needed (Section \ref{section:control_prediction_small_reg}).
With stronger regularization, we show in Section \ref{section:control_prediction_strong_reg} that the BLASSO achieves a good prediction when dual certificates exist (namely when Assumption \ref{assumption:existence_non-degenerate_certif} holds).

\subsection{Prediction under small regularization}\label{section:control_prediction_small_reg}
We can achieve good prediction results with small regularization, using control of the low frequencies of the estimated density (using Lemma \ref{lemma:control_noise}) and of its high frequencies, resulting from an appropriate choice of $\tau$. These controls require an upper bound on the variance.
We hence assume from now on that $\X \subset \R^d \times [u_{\min},u_{\max}]^d$.

\begin{proposition}[Prediction error under small regularization]\label{prop:prediction_under_small_reg} Assume that $\X \subset \R^d \times [u_{\min},u_{\max}]^d$.
Choosing $\tau= \frac{\sqrt{2}u_{\min}}{\sqrt{\ln n}}$ and $\kappa=\rho_n^2$, keeping only the dependence on $n$ we have 
\[\E{\norm{\Phi(\hat{\mu}_n - \mu^0)}_{L^2}^2} \lesssim \frac{(\ln n)^{d/2}}{n} \,.\]
\end{proposition}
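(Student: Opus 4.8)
The plan is to decompose the prediction error $\norm{\Phi(\hat{\mu}_n - \mu^0)}_{L^2}^2$ into a low-frequency part and a high-frequency part, because the data-fidelity term in \eqref{eq:pb_BLASSO_gaussian_kernel_norm} controls precisely a Gaussian-smoothed ($L$-filtered) version of $\Phi\hat\mu_n - f^0$, and the smoothing kernel $\Lambda = e^{-\tau^2\norm{\cdot}_2^2/2}$ suppresses high frequencies. Writing, via Plancherel, $\norm{\Phi(\hat\mu_n - \mu^0)}_{L^2}^2 = (2\pi)^{-d}\int_{\R^d} |\F{\Phi\hat\mu_n}(\xi) - \F{f^0}(\xi)|^2\,\d\xi$, I would split the integral at a threshold, or more cleanly insert $\Lambda(\xi)$ and $1-\Lambda(\xi)$ as a partition: the part weighted by $\Lambda$ is comparable to $\norm{L\circ\Phi\hat\mu_n - L\circ f^0}_{L^2}^2$ (and further to the $\L$-norm $\norm{\Psi\hat\mu_n - L\circ f^0}_\L^2$ after another factor of $\Lambda$), while the part weighted by $1-\Lambda$ must be handled by a direct decay estimate on $\F{\Phi\mu}$ for measures $\mu$ supported on $\X$.

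For the low-frequency (data-fidelity) part: from the defining inequality $J_W(\hat\mu_{n,\omega}) \le J_W(\mu_\omega^0)$ one gets, by a standard convexity argument, $\tfrac12\norm{\Psi\hat\mu_{n,\omega} - L\circ\hat f_n}_\L^2 + \kappa\norm{\hat\mu_{n,\omega}}_{\mathrm{TV}} \le \tfrac12\norm{\Psi\mu_\omega^0 - L\circ\hat f_n}_\L^2 + \kappa\norm{\mu_\omega^0}_{\mathrm{TV}}$. Since $\Psi\mu_\omega^0 = L\circ\Phi\mu^0 = L\circ f^0$, the right-hand residual is $\tfrac12\norm{\Gamma_n}_\L^2$, so $\norm{\Psi\hat\mu_{n,\omega} - L\circ f^0}_\L \lesssim \norm{\Gamma_n}_\L + \sqrt{\kappa\,(\norm{\mu_\omega^0}_{\mathrm{TV}} - \norm{\hat\mu_{n,\omega}}_{\mathrm{TV}})_+}$, and the TV difference is itself $O(\kappa \cdot \text{stuff})$ or can be absorbed — here the choice $\kappa = \rho_n^2$ is what makes the regularization ``small'' relative to $\norm{\Gamma_n}_\L^2 \asymp \rho_n^2$ in expectation, so this term is $O(\rho_n^2)$ in expectation by Lemma \ref{lemma:control_noise} (also needing $\norm{\mu_\omega^0}_{\mathrm{TV}}$ bounded, which follows from $W$ bounded on the compact $\X$ and $\sum a_j^0 = 1$). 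Then I would pass from the $\L$-norm control to an $L^2$-control of the low frequencies: for a function $h$ with $\F h$ supported (effectively) where $\Lambda$ is not too small, $\int \Lambda(\xi)|\F h(\xi)|^2\d\xi \le \int |\F h(\xi)|^2/\F\lambda(\xi) \cdot \Lambda(\xi)^2 \d\xi \le \norm{h}_\L^2 \cdot (2\pi)^d \sup\Lambda^2$; more carefully, I want $(2\pi)^{-d}\int \Lambda(\xi)|\widehat{\Phi\hat\mu_n - f^0}(\xi)|^2 \d\xi = \norm{\lambda^{1/2}* (\Phi\hat\mu_n - f^0)}$-type quantity $\le \norm{\Psi\hat\mu_{n,\omega} - L\circ f^0}_\L^2$ up to the weighting function $W$ (the $1/W$ reparametrization must be tracked, but $W$ and $1/W$ are bounded on $\X$). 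So the $\Lambda$-weighted part of the error is $O(\rho_n^2)$ in expectation.

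For the high-frequency (bias) part $(2\pi)^{-d}\int (1-\Lambda(\xi))|\F{\Phi\hat\mu_n}(\xi) - \F{f^0}(\xi)|^2\d\xi$: here I use that for any nonnegative $\mu$ supported on $\X \subset \R^d\times[u_{\min},u_{\max}]^d$, $\F{\Phi\mu}(\xi) = \int \prod_k \sigma(u_k\xi_k)\,\d\mu(t,u)$ (a convex combination of products of $e^{-u_k^2\xi_k^2/2}$), so $|\F{\Phi\mu}(\xi)| \le \norm{\mu}_{\mathrm{TV}}\,e^{-u_{\min}^2\norm{\xi}_2^2/2}$. Applying this to both $\hat\mu_n = \hat\mu_{n,\omega}/W$ (with $\norm{\hat\mu_n}_{\mathrm{TV}}$ bounded by part 3 of Theorem \ref{th:estimation_error} or simply by the $J_W$-inequality giving $\norm{\hat\mu_{n,\omega}}_{\mathrm{TV}} \le \norm{\mu_\omega^0}_{\mathrm{TV}} + O(\rho_n^2/\kappa) = O(1)$) and to $\mu^0$, and noting $1-\Lambda(\xi) \le 1$, the high-frequency contribution is $\lesssim \int_{\R^d} e^{-u_{\min}^2\norm{\xi}_2^2} \d\xi$ restricted to where $1-\Lambda$ is appreciable, i.e. $\norm{\xi}_2 \gtrsim 1/\tau$; that tail integral is $\lesssim \tau^{-d}e^{-c\,u_{\min}^2/\tau^2}$ for a constant $c$. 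With $\tau^2 = 2u_{\min}^2/\ln n$ one has $e^{-c u_{\min}^2/\tau^2} = e^{-(c/2)\ln n} = n^{-c/2}$ and $\tau^{-d} = (\ln n / 2u_{\min}^2)^{d/2} \asymp (\ln n)^{d/2}$, so — once the exponent is arranged so $c/2 \ge 1$, which is exactly why the factor $\sqrt 2$ appears in $\tau$ — the bias part is $\lesssim (\ln n)^{d/2} n^{-1} $ (or smaller). Combining, $\E{\norm{\Phi(\hat\mu_n - \mu^0)}_{L^2}^2} \lesssim \rho_n^2 + (\ln n)^{d/2}/n$, and since $\rho_n^2 = 4/((2\pi)^{d/2}\tau^d n) \asymp (\ln n)^{d/2}/n$ with this $\tau$, both terms match the claimed rate.

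\textbf{Main obstacle.} The routine-but-delicate book-keeping is the passage between the three norms ($L^2$ of $\Phi(\hat\mu_n-\mu^0)$, $L^2$ of its $\lambda^{1/2}$-smoothing, and $\norm{\cdot}_\L$ of $\Psi\hat\mu_{n,\omega}-L\circ f^0$) through the reparametrization by $W$, making sure the $\Lambda$ versus $1-\Lambda$ split is consistent on the Fourier side and that no factor of $\F\lambda$ is mishandled; the genuinely substantive point is the high-frequency decay estimate $|\F{\Phi\mu}(\xi)| \le \norm{\mu}_{\mathrm{TV}} e^{-u_{\min}^2\norm{\xi}^2/2}$ together with choosing $\tau$ so the resulting Gaussian tail beats $1/n$ while $\tau^{-d}$ only costs a $(\ln n)^{d/2}$ factor — this is where the specific value $\tau = \sqrt2\, u_{\min}/\sqrt{\ln n}$ is forced, and getting the constant in the exponent right (i.e. $\ge 1$) is the crux.
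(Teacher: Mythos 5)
Your plan is, in substance, the paper's proof: the data--fidelity bound $\norm{L\circ\Phi(\hat\mu_n-\mu^0)}_\L^2\le 4\norm{\Gamma_n}_\L^2+4\kappa\norm{\mu_\omega^0}_{\mathrm{TV}}$ from $J_W(\hat\mu_{n,\omega})\le J_W(\mu_\omega^0)$, a frequency split exploiting $|\F{\Phi\mu}(\xi)|\le\norm{\mu}_{\mathrm{TV}}e^{-u_{\min}^2\norm{\xi}_2^2/2}$ for measures supported in $\X$, a TV bound on $\hat\mu_{n,\omega}$ from the same $J_W$ inequality together with the boundedness of $W$ and $1/W$ on $\X$, and the balancing choice $\tau=\sqrt2\,u_{\min}/\sqrt{\ln n}$, $\kappa=\rho_n^2$; this is exactly Lemma~\ref{lemma:control_high_freq} combined with Lemma~\ref{lemma:control_noise}. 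However, the ``clean'' decomposition you actually execute, with weights $\Lambda$ and $1-\Lambda$, does not give the rate as written. The weight $1-\Lambda$ is not supported on $\{\norm{\xi}_2\gtrsim 1/\tau\}$: on moderate frequencies $1-\Lambda(\xi)\approx\tfrac{\tau^2}{2}\norm{\xi}_2^2$, and if there you only invoke the Gaussian-decay bound on $|\F{\Phi(\hat\mu_n-\mu^0)}|$, the $(1-\Lambda)$-weighted contribution is of order $\norm{\hat\mu_n-\mu^0}_{\mathrm{TV}}^2\,\tau^2\asymp 1/\ln n$, which swamps $(\ln n)^{d/2}/n$. The repair is the sharp threshold you mention as an alternative (and which the paper uses in Lemma~\ref{lemma:control_high_freq}): cut at $|\xi_k|\le 1/T$ with $T\asymp\tau\sqrt d$, bound $1/\Lambda\le e$ on the low block so its entire unweighted contribution is at most $e\,\norm{L\circ\Phi(\hat\mu_n-\mu^0)}_\L^2$, and use the Gaussian tail only outside, producing the factor $\tau^d u_{\min}^{-2d}e^{-2u_{\min}^2/\tau^2}\asymp (\ln n)^{-d/2}n^{-1}$. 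Equivalently, if you keep the $\Lambda/(1-\Lambda)$ partition you must re-split the $(1-\Lambda)$ term at that threshold and absorb its low-frequency piece into the data-fidelity term using $\Lambda\gtrsim1$ there; skipping this is the one step in your write-up that genuinely fails.

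A second, smaller point: the high-frequency term carries $\norm{\hat\mu_n}_{\mathrm{TV}}^2$, and your bound $\norm{\hat\mu_{n,\omega}}_{\mathrm{TV}}\le\norm{\mu_\omega^0}_{\mathrm{TV}}+\norm{\Gamma_n}_\L^2/(2\kappa)$ is a random, pathwise bound; with $\kappa=\rho_n^2$, taking expectations after squaring requires $\E{\norm{\Gamma_n}_\L^4}\lesssim\rho_n^4$, i.e.\ the fourth-moment estimate of Lemma~\ref{lemma:control_noise}, which you do not invoke. Also, item~3 of Theorem~\ref{th:estimation_error} is not available as a substitute here, since it assumes Assumption~\ref{assumption:existence_non-degenerate_certif} and the different regularization $\kappa=\rho_n/\sqrt{c_p}$, whereas Proposition~\ref{prop:prediction_under_small_reg} is certificate-free. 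With these two repairs your argument coincides with the paper's proof of the proposition.
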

\noindent
We do not make any assumption on the existence of dual certificates in the above proposition. The proof is given in Appendix \ref{section:proof_prop_prediction_under_small_reg}. This result shows that in the small regularization regime corresponding to $\kappa=\rho_n^2$, we obtain a quasi-parametric rate (up to a log factor) for the estimation of the density $f^0$.

\begin{remark}[Choice of $\tau$, bounds on the variances]
Keeping $\tau$ fixed does not lead to a good prediction rate: to control the high frequencies of the predicted density with Lemma \ref{lemma:control_high_freq} in Appendix, we need to lower bound the variances of $\X$ with a parameter not depending on $n$, while $\tau$ must decrease as $n$ grows. 
In Proposition \ref{prop:prediction_under_small_reg}, we also use an upper bound on the variances in $\X$ to control $\E{\norm{\hat{\mu}_n}_{\rm TV}^2}$. 
\end{remark}

\begin{remark}[Comparison with Kernel Density Estimation]
Note that $L \circ \hat{f}_n$ is itself a kernel density estimator of the density (see \citep{tsybakov_kde}). However, the resulting rate of convergence deteriorates very quickly with the dimension: it can be shown that with $\X \subset \R^d \times [u_{\min},+\infty)^d$, setting $\tau=\frac{1}{\sqrt{\ln n}n^{\frac{1}{4+d}}}$, omitting the dependence on $d$ we have 
 \[\E{\norm{L \circ \hat{f}_n- \Phi \mu^0}_{L^2}^2} \lesssim \frac{(\ln n)^{d/2}}{n^{\frac{4}{d+4}}u_{\min}^{d+4}}\,.\] 
 The proof is in Appendix \ref{section:proof_prediction_kde}. This bound can be explained by the fact that the Gaussian kernel is not appropriate for the estimation of so-called super-smooth target densities. Nevertheless, as displayed in Proposition~\ref{prop:prediction_under_small_reg}, an almost parametric rate is obtained for $\Phi \hat \mu_n$, which is based on a regularized version of $L \circ \hat{f}_n$.
\end{remark}

\begin{remark}[On the $(\ln n)^{d/2}$ prediction factor]\label{rem:logn}
The logarithmic factor originates in the construction of the observation: the embedding smooths the empirical measure at scale $\tau$, and the choice
$\tau \asymp \frac{u_{\min}}{\sqrt{\ln n}}$ balances the high-frequency bias of the embedded density
against the low-frequency control of the noise, yielding the power $d/2$. Whether such a factor
is intrinsic depends on the class. For location mixtures with a \emph{known} upper bound $k$ on
the number of components and known common isotropic covariance, the minimax rate of density
estimation is $\sqrt{d/n}$ in Hellinger distance, with no logarithmic factor
\citep{doss2023optimal}. That benchmark is not directly comparable to ours: it takes $k$ as
input whereas our procedure is order-free, it concerns homoscedastic location mixtures, and it
is stated in Hellinger distance whereas our bound is in $L^2$ (for scales bounded below,
$\norm{p-q}_{L^2}^2 \lesssim u_{\min}^{-d}\, h^2(p,q)$, so Hellinger guarantees are the
stronger ones). In the order-free regime, by contrast, logarithmic factors are present in all
known bounds: the NPMLE for $d$-variate Gaussian location mixtures attains a squared Hellinger
risk of order $(\ln n)^{d+1}/n$ \citep{SahaGuntuboyina2020}, and minimax lower bounds for
univariate Gaussian mixtures with unrestricted mixing already exhibit a logarithmic factor
\citep{Kim2014}. Our log-power $d/2$ is milder than the NPMLE's $d+1$; we do not claim it is
optimal.
\end{remark}

\paragraph{Choice of regularization}
Proposition \ref{prop:prediction_under_small_reg} does not take advantage on the fact that $\mu^0$ is a discrete measure. This is not surprising, as the considered regularization is very small. 
The choice $\kappa=\rho_n^2$ is not suited for the estimation of the target measure $\mu^0$ (see Section \ref{section:estimation}). In fact, the result displayed in Proposition \ref{prop:prediction_under_small_reg} does not take into account the trade-off we would like to have between making a good prediction and guaranteeing a good estimation of the target measure $\mu^0$.
In particular, choosing $\kappa = \rho_n^2$, we have no control over the proximity between $\norm{\mu_\omega^0}_{\rm TV}$ and $\norm{\hat{\mu}_{n,\omega}}_{\rm TV}$ as $n$ grows: we only know that $\E{\norm{\hat{\mu}_{n,\omega}}_{\rm TV}} \leq \norm{\mu_\omega^0}_{\rm TV} + \frac{1}{2}$ (as $\kappa \E{\norm{\hat{\mu}_{n,\omega}}_{\rm TV}} \leq \frac{\rho_n^2}{2}+ \kappa\norm{\mu_\omega^0}_{\rm TV}$), which is enough to control the high frequencies of $\Phi\hat{\mu}_n$. 
For an estimation purpose, we need more regularization (\ie a larger $\kappa$). This is exactly what has been done in Theorem \ref{th:estimation_error} where $\kappa=\frac{\rho_n}{\sqrt{c_p}}$.

\begin{remark}
This property of the BLASSO estimator, \ie obtaining a good prediction rate under small regularization, highlights a strong difference from the LASSO framework. In the LASSO setting \citep{lasso1996,tibshirani_notes_lasso}, regularization plays a greater role in controlling the prediction error: to get a parametric rate, we need to use bounds on the estimation error.
\end{remark}

\subsection{Prediction under large regularization} \label{section:control_prediction_strong_reg}
Provided that non-degenerate dual certificates exist, one can obtain appropriate bounds for both estimation and prediction, with $\kappa$ chosen as in Theorem \ref{th:estimation_error}. The following theorem presents the prediction bound in this regime.

\begin{theorem} \label{th:prediction_error}
 Assume that $\X \subset \R^d \times [u_{\min},u_{\max}]^d$. 
 Let $\tau^2=\frac{2u_{\min}^2}{\ln n}$. If Assumption \ref{assumption:existence_non-degenerate_certif} holds, choosing $\kappa=\frac{\rho_n}{\sqrt{c_p}}$, keeping only the dependence on $s$ and $n$ we have
 \[\E{\norm{\Phi\hat{\mu}_n - \Phi\mu^0}_{L^2(\R^d)}^2}\lesssim \frac{s (\ln n)^{d/2}}{n}\,.\]
\end{theorem}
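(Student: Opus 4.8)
The strategy is to split the prediction error into low- and high-frequency parts in Fourier space, exactly as in the small-regularization regime (Proposition \ref{prop:prediction_under_small_reg}), but now exploiting the sharper control on $\|\hat\mu_{n,\omega}\|_{\mathrm{TV}}$ and on the mass localization coming from the dual certificates. Write $\Phi(\hat\mu_n-\mu^0)$ and compute its $L^2(\R^d)$-norm via Plancherel, separating $\{\|\xi\|_2\le M\}$ and $\{\|\xi\|_2> M\}$ for a threshold $M=M(n,\tau)$ to be tuned. On the low-frequency band, $\Lambda(\xi)=e^{-\tau^2\|\xi\|_2^2/2}$ is bounded below by a constant depending on $M\tau$, so $\|\cdot\|_{L^2}$ on that band is controlled by $\|\cdot\|_{\L}$; thus the low-frequency contribution is bounded by $\|L\circ\Phi(\hat\mu_n-\mu^0)\|_{\L}^2$ up to the factor $e^{M^2\tau^2/2}$. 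On the high-frequency band, $\Phi\mu^0=f^0$ has Gaussian tails (its Fourier transform decays like $e^{-u_{\min}^2\|\xi\|_2^2/2}$ componentwise), and likewise each atom of $\hat\mu_n$ produces a density whose Fourier transform decays at rate governed by $u_{\min}$; the high-frequency mass is therefore $\lesssim e^{-u_{\min}^2 M^2}\big(1+\|\hat\mu_n\|_{\mathrm{TV}}^2\big)$ or similar.

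The key inputs are then: (i) the bound $\E{\|L\circ\Phi(\hat\mu_{n,\omega}-\mu_\omega^0)\|_{\L}^2}\lesssim \rho_n^2\, s$ (with the $s$-factor coming from $\|p\|_\L^2\le c_p s$ and the Bregman/optimality argument used in Theorem \ref{th:estimation_error}), together with $\|\Phi(\hat\mu_n-\mu^0)\|$ being the unweighted version — here one must pass between $\Phi\frac{\mu}{W}$ and $L\circ\Phi\frac{\mu}{W}$, i.e. relate the $L^2$-norm of $\Phi(\hat\mu_n-\mu^0)$ to the $\L$-norm of $L\circ\Phi(\hat\mu_n-\mu^0)$ on the low band; (ii) a bound $\E{\|\hat\mu_{n,\omega}\|_{\mathrm{TV}}^2}\lesssim s$ (or $\E{\|\hat\mu_n\|_{\mathrm{TV}}^2}\lesssim s$), which follows from item 3 of Theorem \ref{th:estimation_error} giving $\E{\|\hat\mu_{n,\omega}\|_{\mathrm{TV}}}\le \|\mu_\omega^0\|_{\mathrm{TV}}+\tfrac{\sqrt{c_p}}{2}\rho_n$ plus a second-moment refinement via $\E{\|\Gamma_n\|_\L^4}\le\tilde C_\Gamma\rho_n^4$; and (iii) the explicit choice $\tau^2=2u_{\min}^2/\ln n$, which makes $e^{-u_{\min}^2 M^2}$ and $e^{M^2\tau^2/2}$ balance. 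Concretely I expect to pick $M^2 \asymp \ln n / u_{\min}^2$ so that $e^{M^2\tau^2/2}=e^{(\ln n)\cdot(\text{const})}$ is polynomial in $n$ (indeed $\rho_n^2\asymp (\ln n)^{d/2}/n$ already carries the right normalization, since $\tau^d = (2u_{\min}^2/\ln n)^{d/2}$ gives $\rho_n^2 = 4(\ln n)^{d/2}/((2\pi)^{d/2}(2u_{\min}^2)^{d/2} n)$), while $e^{-u_{\min}^2M^2}$ is $\lesssim 1/n$ or smaller, killing the high-frequency term up to the $\|\hat\mu_n\|_{\mathrm{TV}}^2\lesssim s$ factor.

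Assembling: the low-frequency term contributes $\lesssim e^{M^2\tau^2/2}\,\E{\|L\circ\Phi(\hat\mu_n-\mu^0)\|_\L^2}\lesssim n^{c}\cdot \rho_n^2 s$ for a controllable constant $c$; choosing the constants in $M$ so that this is $\lesssim s(\ln n)^{d/2}/n$ (which forces $c$ to be absorbed — one checks $e^{M^2\tau^2/2}$ with $M^2\tau^2 = (2u_{\min}^2/\ln n)\cdot(\text{const}\cdot \ln n/u_{\min}^2)=\text{const}$, hence this prefactor is $O(1)$, not a power of $n$), so the low-frequency term is $\lesssim \rho_n^2 s \asymp s(\ln n)^{d/2}/n$; and the high-frequency term is $\lesssim e^{-u_{\min}^2M^2}\cdot s \lesssim s/n \lesssim s(\ln n)^{d/2}/n$. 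Summing the two yields the claimed rate.

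The main obstacle I anticipate is getting a clean high-frequency bound: one needs a uniform-in-the-atom control on the Fourier decay of $\Phi\delta_x$ for $x=(t,u)\in\X$, using only $u\ge u_{\min}$, and then to aggregate over the (random) atoms of $\hat\mu_n$ via $\|\hat\mu_n\|_{\mathrm{TV}}$ — the subtlety is that $\hat\mu_n=\hat\mu_{n,\omega}/W$ and $W$ is bounded on the compact $\X\subset\R^d\times[u_{\min},u_{\max}]^d$, so $\|\hat\mu_n\|_{\mathrm{TV}}\lesssim \|\hat\mu_{n,\omega}\|_{\mathrm{TV}}$, but one must be careful that the high-frequency estimate does not secretly require controlling the number of atoms (it should not, since the $L^2$ triangle/Minkowski inequality over atoms costs only $\|\hat\mu_n\|_{\mathrm{TV}}$ if each normalized atom density has the same high-frequency $L^2$ mass bound). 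A second, more routine, difficulty is the second-moment bound $\E{\|\hat\mu_{n,\omega}\|_{\mathrm{TV}}^2}\lesssim s$, which requires upgrading the expectation bound of Theorem \ref{th:estimation_error}(3) using the fourth-moment control of $\Gamma_n$ from Lemma \ref{lemma:control_noise} inside the optimality inequality $J_W(\hat\mu_{n,\omega})\le J_W(\mu_\omega^0)$.
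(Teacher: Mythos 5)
Your plan is correct and follows essentially the same route as the paper: the $\L$-norm control $\E{\norm{L\circ\Phi(\hat{\mu}_n-\mu^0)}_\L^2}\lesssim \rho_n^2 s$ from the optimality inequality \eqref{eq:ineq_mu_minimizer}, the Fourier low/high-frequency split with cutoff of order $\sqrt{\ln n}/u_{\min}$ (this is exactly Lemma \ref{lemma:control_high_freq} with $\Tau=\tau\sqrt{d}/\sqrt{2}$), and the second-moment TV bound obtained from $\norm{\hat{\mu}_{n,\omega}}_{\mathrm{TV}}\leq \frac{1}{2\kappa}\norm{\Gamma_n}_\L^2+\norm{\mu_\omega^0}_{\mathrm{TV}}$ together with $\E{\norm{\Gamma_n}_\L^4}\lesssim\rho_n^4$, as in \eqref{eq:bound_sum_norm_tv_mu_0_hat_mu}. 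The only difference is presentational, so no gap to report.
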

\noindent
The proof is given in Appendix \ref{section:proof_th_prediction_error}. The upper bound on the prediction error displayed here is slightly larger than in Proposition \ref{prop:prediction_under_small_reg}. In particular, this bound depends linearly on the sparsity index $s$. We refer for instance to \citep{butucea_prediction} for a similar bound in a different context. We recall that this specific choice for~$\kappa$ allows us to control the estimation performances of the estimator (see Theorem \ref{th:estimation_error}).
\begin{remark}\label{remark:choice_kappa_pred}
 Similarly to the estimation task (see Remark \ref{remark:choice_kappa_estim}), a choice of regularization depending on $s$ leads to a better rate. With $\kappa=\frac{\rho}{\sqrt{c_p s}}$, we get $\E{\norm{\Phi\hat{\mu}_n - \Phi\mu^0}_{L^2(\R^d)}^2}\lesssim \frac{(\ln n)^{d/2}}{n}$ for $s=O(n (\ln n)^{d/2})$. We refer to Equation \eqref{eq:remark_choice_kappa_pred} in Appendix \ref{section:proof_th_prediction_error}.
\end{remark}

\section{Dual certificates}\label{section:dual_certif} 
The proofs of Theorem \ref{th:estimation_error} and Proposition \ref{prop:effective_near_regions} rely on the existence of non-degenerate dual certificates associated with $\mu^0$ (Assumption \ref{assumption:existence_non-degenerate_certif}). In this section, we explain how to construct such objects and the assumptions needed.

\paragraph{Connection with previous works} 
In the general framework of BLASSO, the objective is to recover a sparse target measure from the observed signal. A key analytical tool in this setting is the dual certificate—a smooth function associated with the underlying feature map of the problem at hand. These certificates identify the positions of the target particles by interpolating the signs of the target measure.

Dual certificates can be traced back to super-resolution \citep{candes_fernandez_granda_2014} and minimal extrapolation \citep{decastro_2012}. They are related to the dual solutions of the BLASSO when the observation is noiseless, \cf \citep{blasso_duval_peyre} (in our framework, the noiseless observation corresponds to $\E{L\circ\hat{f_n}}=L\circ f^0$). Our particular construction of dual certificates is inspired from the BLASSO literature (\eg, ``vanishing derivatives pre-certificate'' in \citep{blasso_duval_peyre}, ``limit certificate'' in \citep{off-the-grid_cs}). Although our setting differs—we observe a sample drawn from some mixture distribution—the construction of the dual certificates follows the same principles: they depend solely on the target measure $\mu^0$ and the feature map $\Psi$, and not on the observed data nor the regularization. Let us mention that other constructions exist for translation-invariant kernels (\eg, \emph{pivot certificates} in \citep{supermix,decastro_gribonval_jouvin}).

The existence of certificates requires assumptions on $\mu^0$, mainly on the separation between its particles. In \citep{off-the-grid_cs}, precise conditions are proposed, based on controls on the kernel. In this section, we adapt and check these assumptions.

\paragraph{Contributions} 
Our main contribution is the construction of dual certificates for a BLASSO problem involving a kernel that is not translation-invariant. 

In Section \ref{section:statistical_modeling}, we have adapted the data fitting term to obtain a normalized kernel, and introduced reparameterized measures (see \eqref{eq:pb_BLASSO_gaussian_kernel_norm}).
It is however challenging to check that $K_{\rm norm}$ satisfies the Local Positive Curvature assumption \citep[Assumption 1]{off-the-grid_cs} with the Fisher-Rao distance. We therefore consider regions for controls defined instead by the semi-distance $\mathpzc{d}$ introduced in \eqref{eq:def_semi_distance}. Even so, controls of the certificates are expressed with the Fisher-Rao distance, since they are carried out using Taylor expansions along the Fisher-Rao geodesics.

A technical contribution is the derivation of bounds and local controls for the Riemannian derivatives of the normalized Gaussian kernel.
Additionally, we establish the compatibility between the Fisher–Rao metric and the semi-distance. It is essential to control the Fisher–Rao geodesics within the balls with respect to $\mathpzc{d}$ in order to make use of the local bounds on the kernel.
Combining these elements, we derive sufficient conditions on the target $\mu^0$ that guarantee the existence of non-degenerate dual certificates. The next subsections are dedicated to the proof of the following theorem.

\begin{theorem}[Main result: existence of certificates under a minimal separation]\label{th:main_theorem_certif}
Assume that $\X$ is a compact set of $\R^d \times [u_{\min},u_{\max}]^d$. Let $s\geq 2$, $0<\tau \leq u_{\min}$ and $\{x_j^0\}_{j=1}^s \subset \X$. We set $\mathfrak{d}_\mathfrak{g}$ as the Fisher-Rao distance (associated with the metric $\mathfrak{g}$ defined by \eqref{eq:def_fisher_rao_metric}).
If 
\begin{equation}
\min_{i \neq j} \mathpzc{d}(x_i^0, x_j^0) \geq \max \left\{\frac{\sqrt{u_{\max}^2 + \frac{1}{4}\frac{0.3025^2}{d}(2u_{\max}^2+\tau^2)}}{u_{\min}} \left(\Delta+\frac{0.3025}{\sqrt{d}}\right) \: , \: 2\frac{u_{\max}}{u_{\min}} \Delta \right\} + \sqrt{d\ln \left(\frac{u_{\max}^2}{u_{\min}^2}\right)}
\label{eq:minimum_separation}
\end{equation}
where $\Delta=2\sqrt{11.9+3\ln(d+6.62)+\ln(s-1)}$,
then Assumption \ref{assumption:existence_non-degenerate_certif} holds with $r=\frac{0.3025}{\sqrt{d}}$, $(\varepsilon_0,\varepsilon_2)=\left(\frac{0.03911}{d}, 0.06158\right)$, $(\tilde \varepsilon_0,\tilde \varepsilon_2)=\left(\frac{0.03911}{d},\frac{\sqrt{4d^2+10d}}{2}+0.004106\right)$ and $c_p=2$.
\end{theorem}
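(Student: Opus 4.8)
\textbf{Plan for the proof of Theorem \ref{th:main_theorem_certif}.}

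The overall strategy is the classical ``vanishing-derivatives pre-certificate'' construction from the BLASSO literature (\citep{blasso_duval_peyre,off-the-grid_cs}), adapted to the non-translation-invariant normalized kernel $K_{\rm norm}(x,x') = \innerprod{\Psi\delta_x}{\Psi\delta_{x'}}_{\L} / (\norm{\Psi\delta_x}_{\L}\norm{\Psi\delta_{x'}}_{\L})$. First I would compute $K_{\rm norm}$ explicitly: since $\Phi\delta_x$ is a product of one-dimensional Gaussians and the RKHS norm $\norm{\cdot}_{\L}$ diagonalizes in the Fourier domain with weight $1/\Lambda$, the inner product $\innerprod{L\circ\Phi\delta_x}{L\circ\Phi\delta_{x'}}_{\L}$ factorizes over coordinates, and the weighting function $W$ in \eqref{eq:def_W} is precisely chosen so that $\norm{\Psi\delta_x}_{\L}=1$ for all $x$. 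A direct calculation should then show $K_{\rm norm}(x,x') = \exp(-\tfrac12 \mathpzc{d}(x,x')^2)$ — this is the whole point of the semi-distance \eqref{eq:def_semi_distance} and the reason that balls for $\mathpzc{d}$ are the natural regions. With this identity in hand, the global pre-certificate is built as $\eta = \Psi^* p$ where $p = \sum_j \alpha_j \Psi\delta_{x_j^0} + \sum_{j,\ell} \beta_{j,\ell}\,\partial_{\ell}\Psi\delta_{x_j^0}$ (derivatives taken along the Fisher-Rao geodesic directions at $x_j^0$), with coefficients $(\alpha,\beta)$ solving the interpolation system $\eta(x_j^0)=1$, $\nabla\eta(x_j^0)=0$ for all $j$; the local certificates $\eta_j$ solve the analogous system with $\eta_j(x_i^0)=\delta_{ij}$.

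The heart of the argument is to verify the three non-degeneracy conditions of Definitions \ref{def:non_degenerate_certificate} and \ref{def:local_non_degenerate_certificate}. For the \emph{near-region} bound (item 3, the Local Positive Curvature property), I would Taylor-expand $\eta$ along a Fisher-Rao geodesic emanating from $x_j^0$: $\eta(x) = 1 + \tfrac12 \ddot\eta(0)\,\mathfrak{d}_{\mathfrak{g}}(x,x_j^0)^2 + \text{remainder}$. Because the interpolation system kills the first derivative, one needs the second Riemannian derivative of $\eta$ to be bounded above by $-2\varepsilon_2 < 0$ uniformly on the $\mathpzc{d}$-ball of radius $r$, plus a remainder estimate controlled by a uniform bound on the third Riemannian derivative of $K_{\rm norm}$. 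This requires (a) bounds on the Riemannian derivatives (first, second, third) of $K_{\rm norm}$ with respect to the Fisher-Rao metric, and (b) a comparison lemma guaranteeing that the Fisher-Rao geodesic ball of the relevant radius stays inside the $\mathpzc{d}$-ball — the ``compatibility'' between $\mathfrak{g}$ and $\mathpzc{d}$ mentioned in the contributions. The minimal-separation hypothesis \eqref{eq:minimum_separation} enters through perturbation bounds: the interpolation coefficients $(\alpha,\beta)$ are close to those of the single-particle (diagonal) problem because the off-diagonal kernel entries $K_{\rm norm}(x_i^0,x_j^0)$ and their derivatives decay like $e^{-\mathpzc{d}(x_i^0,x_j^0)^2/2}$; the explicit form of $\Delta$ (with its $\ln(s-1)$ and $\ln(d)$ terms) is exactly what is needed to make a union bound over the $s-1$ other particles absorb these tails. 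The \emph{far-region} bound (item 2) follows from $|\eta(x)| \le \sum_j |\alpha_j|\, e^{-\mathpzc{d}(x,x_j^0)^2/2} + (\text{derivative terms})$, which is $\le 1 - \varepsilon_0$ once $\mathpzc{d}(x,x_j^0) > r$ for all $j$, again using the near-diagonality of the coefficients.

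The main obstacle, I expect, is step (b) combined with the Riemannian-derivative estimates for $K_{\rm norm}$: the Fisher-Rao metric $\mathfrak{g}$ on the location space $\R^d\times[u_{\min},u_{\max}]^d$ has a hyperbolic-type component in the $u$-variables, its geodesics are not straight lines, and the semi-distance $\mathpzc{d}$ is not even a genuine distance (no triangle inequality), so establishing a clean two-sided comparison $\mathfrak{d}_{\mathfrak{g}} \asymp \mathpzc{d}$ on small balls — tight enough to carry the explicit constants $r = 0.3025/\sqrt d$, $\varepsilon_2 = 0.06158$, etc. — is delicate and dimension-sensitive (hence the $\sqrt d$ and $1/d$ scalings throughout). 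Keeping the constants explicit forces one to track the remainder in every Taylor expansion quantitatively rather than with $O(\cdot)$ notation; I would organize this by first proving clean coordinate-wise one-dimensional estimates for the factorized kernel and the factorized metric, then assembling the $d$-dimensional bounds, and finally feeding everything into the perturbation analysis of the interpolation system. The symbolic-computation notebook cited in the paper presumably discharges the most tedious of these explicit derivative and constant verifications.
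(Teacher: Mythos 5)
Your plan follows the same route as the paper: the certificates are exactly the vanishing-derivative pre-certificates of the form \eqref{eq:eta_general_form}, the coefficients are obtained from the interpolation system \eqref{eq:Upsilon}, the identity $K_{\rm norm}(x,x')=e^{-\mathpzc{d}(x,x')^2/2}$ is indeed the definition of the semi-distance, the near-region control is a Taylor expansion along Fisher--Rao geodesics under an LPC-type curvature bound (Theorems \ref{th:existence_certificate_under_positive_curvature_assumption} and \ref{th:check_local_curvature_assumption}), and the $\ln(s-1)$ and $\ln d$ terms in $\Delta$ come, as you say, from summing the $e^{-\mathpzc{d}^2/4}$-type tails over the $s-1$ other particles.

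There is, however, one concrete step missing, and it is precisely the step that produces the actual separation condition \eqref{eq:minimum_separation} rather than just ``$\min_{i\neq j}\mathpzc{d}(x_i^0,x_j^0)\geq\Delta$''. Your far- and near-region arguments need the following: if $x$ lies in the near region of $x_j^0$ (or on a Fisher--Rao geodesic from $x_j^0$ into that region), then $\mathpzc{d}(x,x_i^0)\geq\Delta$ for every $i\neq j$, and the $\Delta$-balls around distinct particles must be disjoint. Since $\mathpzc{d}$ satisfies no triangle inequality, none of this follows from a separation of the particles by $\Delta+r$; you flag the lack of a triangle inequality only as an obstacle for the comparison $\mathfrak{d}_\mathfrak{g}\asymp\mathpzc{d}$, but it bites here first. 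The paper resolves this with Lemma \ref{lemma:minimum_separation}: a ``pseudo-quasi triangle inequality'' for $\mathpzc{d}$, valid only when variances are confined to $[u_{\min},\tilde u_{\max}]$, which costs the multiplicative factor $\tilde u_{\max}/u_{\min}$ and the additive term $\sqrt{d\ln(u_{\max}^2/u_{\min}^2)}$; and since Fisher--Rao geodesics (semicircles in the $(t,u)$ half-plane) can leave $[u_{\min},u_{\max}]^d$, one must also bound their variance excursion, which is where the factor $\sqrt{u_{\max}^2+\tfrac14 r^2(2u_{\max}^2+\tau^2)}$ in \eqref{eq:minimum_separation} comes from. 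Relatedly, the compatibility statement you need is not that a Fisher--Rao ball sits inside a $\mathpzc{d}$-ball, but that the geodesic from $x_j^0$ to any point of $B_{\mathpzc{d}}(x_j^0,r)$ stays inside $B_{\mathpzc{d}}(x_j^0,r)$ (monotonicity of $y\mapsto\mathpzc{d}(x_j^0,\gamma(y))$, Lemma \ref{lemma:geodesics_within_near_regions}); without it the local curvature bound, stated on $\mathpzc{d}$-balls, cannot be used along the geodesic in the Taylor expansion. Finally, the conclusion $c_p=2$ (i.e.\ $\norm{p}_\L^2\leq 2s$, $\norm{p_j}_\L^2\leq 2$) is part of Assumption \ref{assumption:existence_non-degenerate_certif} and requires the spectral bound $\norm{\tilde\Upsilon^{-1}}_2\leq 2$ on the metric-normalized interpolation matrix (Proposition \ref{prop:norm_certif}), which your plan does not address.
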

\noindent
This result stems from Theorems \ref{th:existence_certificate_under_positive_curvature_assumption} and \ref{th:check_local_curvature_assumption} below. The calculation of the parameters is detailed in \citep[Section VII.3]{notebook}.

Theorem \ref{th:main_theorem_certif} ensures the existence of non-degenerate dual certificates when the particles $\{x_j^0\}_{j=1}^s$ are sufficiently separated. This separation, involving both means and covariances of the Gaussian components, is expressed via the semi-distance $\mathpzc{d}$, rather than the Fisher-Rao distance.

Note also that we give explicit constants, but a less constructive approach could be considered, based on the continuity of $K_{\rm norm}$ and its derivatives.

\subsection{Geometrical framework} 
\paragraph{Kernel}
Building on the work of \citep{blasso_duval_peyre} and \citep{off-the-grid_cs}, we consider dual certificates of the form 
\begin{equation} \label{eq:eta_general_form}
 \eta_{\alpha,\beta} = \sum_{j=1}^{s} \alpha_j K_{\rm norm}(x_j^0,\dotp) + \sum_{j=1}^{s} \beta_j^T \nabla_1 K_{\rm norm}(x_j^0,\dotp)
\end{equation}
where $\alpha_j \in \R$, $\beta_j \in \R^{2d}$ for all $j\in\{1,\ldots,s\}$, and $K_{\rm norm}$ is the real-valued kernel defined, for all $x, x'\in\R^d\times[u_{\min},+\infty)^d$, by
\begin{align}
 K_{\rm norm}(x,x')= \innerprod{\Psi\delta_x}{\Psi\delta_{x'}}_{\L} =\prod_{k=1}^d (2u_k^2+\tau^2)^{1/4} (2{u_k'}^2+\tau^2)^{1/4}\frac{e^{-\frac{(t_k-t_k')^2}{2(u_k^2+{u_k'}^2+\tau^2)}}}{(u_k^2+{u_k'}^2+\tau^2)^{1/2}}\,. \label{eq:kernel_K_norm}
\end{align}
The kernel $K_{\rm norm}$ is normalized according to the definition of $W$ (involved in $\Psi$).

The gradient of $K_{\rm norm}$ with respect to its first variable is written as $\nabla_1 K_{\rm norm}$ ($\nabla_2 K_{\rm norm}$ denotes the gradient with respect to the second variable).

\begin{remark}[Smoothness of the feature map]\label{remark:smoothness_Psi_delta_x}
 Note that $K_{\rm norm}$ is $C^{\infty}$ on $(\R^d \times [u_{\min},+\infty)^d)^2$. By iteratively applying \citep[Lemma 4.34]{steinwart}, it comes that
\[\left(x \mapsto \Psi\delta_x\right) \in \C^\infty(\R^d \times [u_{\min},+\infty)^d,\L)\] and that $\innerprod{\partial_1 \Psi\delta_x}{\partial_2 \Psi\delta_{x'}}_\L=\partial_1 \partial_2 K_{\rm norm}(x,x')$, where $\partial_1$ (resp.\ $\partial_2$) denotes here any derivative \wrt to $x$ (resp.~$x'$).
\end{remark}

The next lemma shows the relevance of constructing $\eta$ of the form \eqref{eq:eta_general_form}: such functions belong to ${\mathrm{Im}}(\Psi^*)$.
\begin{lemma}\label{lemma:eta_c_infty_im_psi_star}
 Let $\alpha \in \R^s$, $\beta \in \R^{2d\times s}$. The function $\eta_{\alpha,\beta}$ introduced in \eqref{eq:eta_general_form} verifies, for all $x \in \R^d \times [u_{\min},+\infty)^d$, \[\eta_{\alpha,\beta}(x)=\innerprod{p_{\alpha,\beta}}{\Psi\delta_{x}}_\L \quad \text{with} \quad p_{\alpha,\beta}=\sum_{j=1}^s \alpha_j \Psi\delta_{x_j^0}+ \sum_{j=1}^s \beta_j \nabla_{x}\left(\Psi\delta_{x_j^0}\right) \in \L \,.\]
 Furthermore, $\eta_{\alpha,\beta} \in \C^\infty(\R^d \times [u_{\min},+\infty)^d)$ and in particular, $\restr{\eta_{\alpha,\beta}}{\X} \in {\mathrm{Im}}(\Psi^*)$.
\end{lemma}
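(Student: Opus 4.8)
The plan is to verify the two claims of Lemma~\ref{lemma:eta_c_infty_im_psi_star} separately: first, that $\eta_{\alpha,\beta}$ coincides with $\innerprod{p_{\alpha,\beta}}{\Psi\delta_\cdot}_\L$ for the explicitly given $p_{\alpha,\beta}\in\L$, and second, that $\eta_{\alpha,\beta}$ is $\C^\infty$ so that its restriction to the compact set $\X$ lies in $\mathrm{Im}(\Psi^*)$. The main ingredient is Remark~\ref{remark:smoothness_Psi_delta_x}, which already records that $x\mapsto\Psi\delta_x$ is a $\C^\infty$ map into the Hilbert space $\L$, with the interchange formula $\innerprod{\partial_1\Psi\delta_x}{\partial_2\Psi\delta_{x'}}_\L = \partial_1\partial_2 K_{\rm norm}(x,x')$.

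For the first claim, I would start from $\Psi^*p$ evaluated at a point: by \eqref{eq:scalar_product_Psi*}, $[\Psi^*p](x) = \innerprod{p}{\Psi\delta_x}_\L$ for any $p\in\L$. Taking $p=p_{\alpha,\beta} = \sum_j \alpha_j\,\Psi\delta_{x_j^0} + \sum_j \beta_j^T\,\nabla_x(\Psi\delta_{x_j^0})$, which is a finite linear combination of elements of $\L$ (the derivatives $\nabla_x(\Psi\delta_{x_j^0})$ being genuine elements of $\L$ by the $\C^\infty$ statement of Remark~\ref{remark:smoothness_Psi_delta_x}), bilinearity of $\innerprod{\cdot}{\cdot}_\L$ gives
\[
\innerprod{p_{\alpha,\beta}}{\Psi\delta_x}_\L = \sum_{j=1}^s \alpha_j \innerprod{\Psi\delta_{x_j^0}}{\Psi\delta_x}_\L + \sum_{j=1}^s \beta_j^T \innerprod{\nabla_1(\Psi\delta_{x_j^0})}{\Psi\delta_x}_\L.
\]
The first inner product is exactly $K_{\rm norm}(x_j^0,x)$ by the definition \eqref{eq:kernel_K_norm}; the second is $\nabla_1 K_{\rm norm}(x_j^0,x)$ by the derivative-interchange formula of Remark~\ref{remark:smoothness_Psi_delta_x} (applied with no derivative on the second slot). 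Since $K_{\rm norm}$ is symmetric, this matches the definition \eqref{eq:eta_general_form} of $\eta_{\alpha,\beta}(x)$, proving the identity.

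For the second claim, smoothness of $\eta_{\alpha,\beta}$ on $\R^d\times[u_{\min},+\infty)^d$ is immediate: it is a finite linear combination of $x\mapsto K_{\rm norm}(x_j^0,x)$ and $x\mapsto\nabla_1 K_{\rm norm}(x_j^0,x)$, all of which are $\C^\infty$ because $K_{\rm norm}$ is $\C^\infty$ on the product space (stated just after \eqref{eq:kernel_K_norm}). Restricting to $\X$, which is a compact subset of the domain, and using that $\C^\infty\subset\C_0(\X)=\C(\X)$ with $\eta_{\alpha,\beta}=\innerprod{p_{\alpha,\beta}}{\Psi\delta_\cdot}_\L = \Psi^*p_{\alpha,\beta}$ from the first part, we conclude $\restr{\eta_{\alpha,\beta}}{\X}\in\mathrm{Im}(\Psi^*)$. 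I do not expect a genuine obstacle here; the only point requiring a little care is justifying that the formal derivative $\nabla_x(\Psi\delta_{x_j^0})$ is a legitimate element of $\L$ and that differentiation commutes with the inner product—but this is precisely what Remark~\ref{remark:smoothness_Psi_delta_x} provides via the iterated application of \citep[Lemma 4.34]{steinwart}, so the proof reduces to bookkeeping with bilinearity and the definitions.
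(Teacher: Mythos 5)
Your proposal is correct and follows exactly the route the paper intends: the paper's own proof is a one-line appeal to Remark~\ref{remark:smoothness_Psi_delta_x} and the definition \eqref{eq:kernel_K_norm} of $K_{\rm norm}$, and your argument simply writes out the bilinearity and derivative-interchange steps that this appeal leaves implicit. No gap; nothing further is needed.
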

\noindent
The proof is an immediate consequence of Remark \ref{remark:smoothness_Psi_delta_x} and of the definition of $K_{\rm norm}$ in \eqref{eq:kernel_K_norm}. 

The construction of a non-degenerate (global) dual certificate follows several steps: first we construct a function $\eta$ of the form \eqref{eq:eta_general_form}, choosing $\alpha_j$, $\beta_j$ such that $\eta(x_j^0)=1$ and $\nabla \eta (x_j^0)=0$. This amounts to solve some linear system (see \eqref{eq:Upsilon} in Appendix).
Then we use Taylor expansions on geodesics associated with the distance $\mathfrak{d}_\mathfrak{g}$ to control $\eta$ on the near and far regions, in the spirit of \citep{off-the-grid_cs}. The Fisher-Rao distance appears to be well suited for this purpose.

\paragraph{Fisher-Rao metric} We work in a Riemannian geometry framework, and we use the Fisher-Rao metric induced by $K_{\rm norm}$ (\cf \citep[Lemma 1]{off-the-grid_cs}), defined for $x\in \R^d \times [u_{\min},+\infty)^d$ by 
\begin{equation}
\label{eq:def_fisher_rao_metric}
 \mathfrak{g}_x\coloneq\nabla_1\nabla_2K_{\rm norm}(x,x)=\diag \left(\frac{1}{2u_1^2+\tau^2},\ldots, \frac{1}{2u_d^2+\tau^2},\frac{2u_1^2}{(2u_1^2+\tau^2)^2} ,\ldots, \frac{2u_d^2}{(2u_d^2+\tau^2)^2} \right)\,.
\end{equation}
We will use the associated norm, defined by
\[\norm{v}_x=\sqrt{v^T \mathfrak{g}_x v} \quad \forall \: v \in \R^{2d}\,, \;x \in \R^d \times [u_{\min}, +\infty)^d \,. \]
Details about this metric are provided in Appendix \ref{section:properties_fisher_rao}, together with a description of the associated geodesics. We recall that a geodesic for the metric $\mathfrak{g}$ between $x,x' \in \R^d \times [u_{\min},+\infty)^d$ is a piecewise continuously differentiable function $\gamma:[0,1] \rightarrow \R^d \times [u_{\min},+\infty)^d$ such that $\gamma(0)=x, \gamma(1)=x'$, minimizing the quantity $\int_0^1 \norm{\dot\gamma(y)}_{\gamma(y)}^2\, \d y$. In dimension $d=1$, our Fisher-Rao geodesics share the same paths as those of the Poincaré half-plane model (\cf Lemma \ref{lemma:form_geodesics_dim_1} in Appendix): they are portions of straight lines parallel to $\{t=0\}$ and semicircles whose origin is on $\{u=0\}$.
The notation $\mathfrak{d}_\mathfrak{g}$ refers to the associated distance. It is defined by $\mathfrak{d}_\mathfrak{g}(x,x')^2=\int_0^1 \norm{\dot\gamma(y)}_{\gamma(y)}^2\, \d y$ with $\gamma$ the geodesic between $x,x'$. This Fisher-Rao distance is not practical to define near and far regions since it is not directly linked to the correlation between 2 features calculated with the kernel. This motivates the use of another distance on $\X$ to express the recovery results: we use the semi-distance defined by \begin{equation}\label{eq:link_semi_dist_kernel}
 \mathpzc{d}(x,x')=\sqrt{-2 \ln (K_{\rm norm}(x,x'))}\,,
\end{equation}
whose expression is given by \eqref{eq:def_semi_distance}.

\paragraph{Riemannian derivatives}
Following the work of \citep{off-the-grid_cs}, we will show that the dual certificates we construct are non-degenenerate (see Definitions \ref{def:non_degenerate_certificate} and \ref{def:local_non_degenerate_certificate}) by controlling the Riemannian derivatives of $K_{\rm norm}$.
For details about this framework, see \citep[p.263]{off-the-grid_cs}. 

Riemannian derivatives involve the Christoffel symbols associated with $\mathfrak{g}$. We will use the notation $\Gamma^{t_k},\Gamma^{u_k}$, and refer to \eqref{eq:christoffel_symbols} in Appendix \ref{section:christoffel_symbols} for a precise definition.

\begin{definition}\label{def:riemannian_derivatives}
Let $\psi \in \C^2(\R^d \times [u_{\min}, +\infty]^d)$. Let $x,x' \in \R^d \times [u_{\min}, +\infty]^d$.
\\\underline{Riemannian Hessian:} we define \[H^{\mathfrak{g}} \psi(x)=\nabla^2 \psi(x)-\sum_{k=1}^d\Gamma^{t_k} \partial_{t_k}\psi(x)-\sum_{k=1}^d\Gamma^{u_k} \partial_{u_k}\psi(x)\,.\]
\underline{Covariant derivatives:} Let $v,v' \in \R^{2d}$.
We define \[D_0[\psi](x)=\psi(x)\,, \quad D_1[\psi](x)[v]=v^T \nabla \psi(x)\,, \quad D_2[\psi](x)[v,v'] = v^T H^{\mathfrak{g}} \psi(x) v'\,.\]
\underline{Operator norms:} For $j\in\{0,1,2\}$, we define the operator norm \[\norm{D_j[\psi](x)}_x \coloneq \sup_{\substack{V=[v_1,\ldots,v_j]\in (\R^{2d})^j \\ \forall l=1,\ldots,j\,, \: \norm{v_l}_x \leq 1}} D_j[\psi](x)[V]\,.\]
\underline{Kernel derivatives and associated operators:} Let $i,j\in\{0,1,2\}$.
We define the covariant derivative of the kernel of order $i$ with respect to the first variable $x$ and of order $j$ with respect to the second variable $x'$ by \[[Q]K_{\rm norm}^{(ij)}(x,x')[V]=\innerprod{D_i[\Psi](x)[Q]}{D_j[\Psi](x')[V]}_\L \quad \forall\, Q \in (\R^{2d})^i\,, V \in (\R^{2d})^j\,.\]
The associated operator norm is \[\norm{K_{\rm norm}^{(ij)}(x,x')}_{x,x'}\coloneq \sup_{\substack{Q=[Q_1,\ldots,Q_i]\in (\R^{2d})^i\,,\,V=[V_1,\ldots,V_j]\in (\R^{2d})^j\\ \forall l\, \norm{Q_l}_x\,, \, \norm{V_l}_{x'} \leq 1}} [Q]K_{\rm norm}^{(ij)}(x,x')[V] \,.\]
\end{definition}
\noindent
Simplified expressions are given in Appendix (Lemma \ref{lemma:simplified_expressions_operator_norms_kernel}).

\subsection{Construction of dual certificates}\label{section:construct_dual_certif}
In what follows, we give some results allowing us to adapt and check the hypothesis of \citep[Theorem~2]{off-the-grid_cs}, which shows that the (global) non-degenerate certificate exists under some conditions on the kernel.
The corresponding proof can be adapted (see for instance \citep[Section 6.7]{off-the-grid_cs}) to show the existence of additional certificates for the near regions, under the same conditions.

\paragraph{Local positive curvature assumption} The following definition allows us to adapt \citep[Assumption 1]{off-the-grid_cs} to our framework. In particular, we require some smoothness and structural properties for the kernel that enable the construction of dual certificates, as stated below.

\begin{definition}[Kernel of local positive curvature with parameters $s$, $\Delta$, $r$, $\bar\varepsilon_0$ and $\bar\varepsilon_2$] \label{def:positive_curvature} Let $K$ be a real-valued normalized kernel of positive type, in $C^3((\R^d \times [u_{\min},+\infty)^d)^2)$.
It is said to satisfy the local positive curvature assumption (LPC) if the following holds:
	\begin{enumerate}
		\item For all $i,j\in{0,1,2}$ with $i+j\leq 3$, 
		\begin{equation*}
		\sup_{x,x'\in \R^d \times [u_{\min},+\infty)^d}\norm{K^{(ij)}(x,x')}_{x,x'}\leq B_{ij} <+\infty\,.
		\end{equation*}
	For $i=0,1,2$, we denote $B_i\coloneq 1+ B_{0i}+ B_{1i}$. 
		\item There exists $r>0$ such that $K$ has strictly positive curvature constants $\bar\varepsilon_0$ and~$\bar\varepsilon_2$ with 
		\begin{gather*}
		 K(x,x')\leq 1-\bar\varepsilon_0\,,\quad \forall x,x'\in\R^d \times [u_{\min},+\infty)^d \textnormal{ s.t. }\mathpzc{d}(x,x')\geq r
		\,,\\
    - K^{(02)}(x,x')[v,v]\geq \bar\varepsilon_2  \|v\|_{x'}^2\,,\quad \forall v\in\R^{2d}\,,  \quad \forall x,x'\in{\R^d \times [u_{\min},+\infty)^d}\:\: \text{s.t.}\:\: \mathpzc{d}(x,x')< r  \,.   
		\end{gather*}
	\item There exist $s\geq2$, $\Delta(s)>0$ such that for all $\{x_l\}_{l=1}^s\in\mathcal S_{\Delta(s)}$,
		\begin{equation*}
           \sum_{l=2}^{s}\norm{K^{(ij)}(x_1,x_l)}_{x_1,x_l}\leq \frac{1}{64}\min\bigg(\frac{\bar\varepsilon_0(r)}{B_0},\frac{\bar\varepsilon_2(r)}{B_2}\bigg) \quad\forall\, (i,j)\in\{0,1\}\times\{0,1,2\}
		\,,
		\end{equation*}
		where $\mathcal S_\Delta\coloneq\left\{\{x_l\}_{l=1}^{{s}}\subset \R^d \times [u_{\min},+\infty)^d \,:\,\min_{m\neq l}\mathpzc{d}(x_m,x_l)\geq \Delta\right\}$.
	\end{enumerate}
Such kernel is said to verify the LPC with parameters $s$, $\Delta(s)$, $r$, $\bar\varepsilon_0(r)$ and $\bar\varepsilon_2(r)$.
\end{definition}

\begin{remark} This definition differs on some points from \citep[Assumption 1]{off-the-grid_cs}. First, since we solve \eqref{eq:pb_BLASSO_gaussian_kernel_norm} for nonnegative measures, we do not have to control the negative part of the certificates, and thus do not require $r<B_{02}^{-1/2}$.
Moreover, we use controls on regions defined by the semi-distance $\mathpzc{d}$ instead of $\mathfrak{d}_\mathfrak{g}$. We need to provide bounds on the kernel on $\R^d \times [u_{\min},+\infty)^d$ and not $\X$, because a Fisher-Rao geodesic between 2 points of $\X$ could be outside $\X$. The use of a semi-distance, which better describes the correlation between 2 features (as expressed by the kernel), also leads to difficulties. We need to ensure that $\mathpzc{d}$ is compatible with the Fisher-Rao distance. This is detailed in the paragraph below.
\end{remark}

\paragraph{Compatibility between the Fisher-Rao metric and the semi-distance} For $x \in \R^d \times [u_{\min},+\infty)^d$ and $R>0$, we denote \[B_{\mathpzc{d}}(x,R)\coloneq\{x' \in \R^d \times [u_{\min},+\infty)^d \: :\: \mathpzc{d}(x,x') \leq R \}\] and \[\mathring B_{\mathpzc{d}}(x,R)\coloneq\{x' \in \R^d \times [u_{\min},+\infty)^d \: :\: \mathpzc{d}(x,x') <R\}\,. \]
For $A \subset \R^d \times [u_{\min},+\infty)^d$ and $x_0\in A$, we define the set containing all points from geodesics connecting $x_0$ with points of $A$, \[\mathcal{G}_{x_0}(A)\coloneq \{\gamma(y) \: : \: \gamma \text{ is a Fisher-Rao geodesic}\,, \: y\in [0,1]\,, \: \gamma(0)=x_0\, ,\gamma(1)\in A\}\,.\]
\begin{lemma}[Fisher-Rao geodesics remain within balls \wrt the semi-distance]\label{lemma:geodesics_within_near_regions}
Let $r>0$ and $x_0 \in \R^d \times [u_{\min},+\infty)^d$. Then \[\mathcal{G}_{x_0}\left(B_{\mathpzc{d}}(x_0,r)\right) \subset B_{\mathpzc{d}}(x_0,r) \,.\]
\end{lemma}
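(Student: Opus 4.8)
The claim is that the set of all points lying on Fisher–Rao geodesics emanating from $x_0$ and terminating inside the semi-distance ball $B_{\mathpzc{d}}(x_0,r)$ stays inside that same ball. Since $\mathcal{G}_{x_0}(B_{\mathpzc{d}}(x_0,r))$ is by definition the union over all endpoints $x_1 \in B_{\mathpzc{d}}(x_0,r)$ of the trace of the geodesic from $x_0$ to $x_1$, it suffices to show: for a fixed geodesic $\gamma:[0,1]\to \R^d \times [u_{\min},+\infty)^d$ with $\gamma(0)=x_0$, the function $y \mapsto \mathpzc{d}(x_0,\gamma(y))$ is monotone nondecreasing on $[0,1]$ (or at least satisfies $\mathpzc{d}(x_0,\gamma(y)) \le \mathpzc{d}(x_0,\gamma(1))$ for all $y$). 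Equivalently, by \eqref{eq:link_semi_dist_kernel}, it suffices to show $y \mapsto K_{\rm norm}(x_0,\gamma(y))$ is nonincreasing along the geodesic, i.e.\ the squared quantity $\mathpzc{d}(x_0,\gamma(y))^2 = -2\ln K_{\rm norm}(x_0,\gamma(y))$ is nondecreasing.

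\emph{First step: reduce to a one-dimensional, coordinatewise statement.} The kernel $K_{\rm norm}$ from \eqref{eq:kernel_K_norm} and the semi-distance \eqref{eq:def_semi_distance} both factorize over the $d$ coordinate pairs $(t_k,u_k)$, and the Fisher–Rao metric \eqref{eq:def_fisher_rao_metric} is block-diagonal across these pairs; hence a Fisher–Rao geodesic in dimension $d$ is a product of independent geodesics in each $(t_k,u_k)$-plane, and $\mathpzc{d}(x_0,\gamma(y))^2 = \sum_{k=1}^d \mathpzc{d}_1((t_{0,k},u_{0,k}),(\gamma_k(y)))^2$ where $\mathpzc{d}_1$ is the one-dimensional semi-distance. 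So it is enough to prove the claim for $d=1$: along a single $1$-D Fisher–Rao geodesic $\gamma$ starting at $x_0=(t_0,u_0)$, the function $g(y):=\mathpzc{d}_1(x_0,\gamma(y))^2$ is nondecreasing. By Lemma~\ref{lemma:form_geodesics_dim_1}, the $1$-D geodesics are precisely the vertical half-lines $\{t=t_0\}$ and the semicircles centered on $\{u=0\}$ (the Poincaré half-plane geodesics). I would parametrize these explicitly and compute $g(y)$ directly.

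\emph{Second step: check monotonicity along each geodesic type.} Write $x_0=(t_0,u_0)$ and $\gamma(y)=(t(y),u(y))$. Using \eqref{eq:def_semi_distance} in dimension $1$,
\[
g(y) = \frac{(t(y)-t_0)^2}{u(y)^2+u_0^2+\tau^2} + \ln\!\left(\frac{u(y)^2+u_0^2+\tau^2}{\sqrt{2u_0^2+\tau^2}\,\sqrt{2u(y)^2+\tau^2}}\right).
\]
For the vertical-line geodesic $t(y)\equiv t_0$, the first term vanishes identically and $g$ reduces to a function of $u(y)$ alone; a direct derivative computation shows $u \mapsto \ln\big((u^2+u_0^2+\tau^2)/\sqrt{2u_0^2+\tau^2}\sqrt{2u^2+\tau^2}\big)$ is strictly increasing in $|u^2-u_0^2|$ (its derivative in $u$ vanishes only at $u=u_0$ and has the sign of $u-u_0$), so $g$ grows monotonically as $\gamma$ moves away from $x_0$. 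For the semicircular geodesics $\{(t,u): (t-c)^2+u^2 = R^2\}$ with $x_0$ on the circle, I would use the natural arc-length parametrization (equivalently, parametrize by the angle, with $t-c = R\cos\theta$, $u = R\sin\theta$) and differentiate $g$ along this curve; the monotonicity should follow from the fact that $\mathpzc{d}_1$ is, by \eqref{eq:link_semi_dist_kernel}, a strictly decreasing function of the kernel $K_{\rm norm}(x_0,\cdot)$, and $K_{\rm norm}(x_0,\cdot)$ restricted to a Fisher–Rao geodesic through $x_0$ should be unimodal with its maximum at $x_0$. An alternative, cleaner route: argue that $\mathpzc{d}_1(x_0,\cdot)$ and the Fisher–Rao distance $\mathfrak{d}_{\mathfrak{g},1}(x_0,\cdot)$ are linked by a strictly increasing reparametrization along any geodesic through $x_0$ (both vanish at $x_0$, both are smooth and radially increasing), and since $y\mapsto \mathfrak{d}_{\mathfrak{g},1}(x_0,\gamma(y))$ is obviously nondecreasing for a geodesic starting at $x_0$, so is $y \mapsto \mathpzc{d}_1(x_0,\gamma(y))$.

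\emph{Main obstacle.} The delicate point is establishing that $\mathpzc{d}_1(x_0,\cdot)$ is genuinely radially increasing along Fisher–Rao geodesics through $x_0$ — that is, that its restriction to each geodesic is a strictly monotone function of the geodesic arc length. Unlike a true Riemannian distance, $\mathpzc{d}$ is only a semi-distance (no triangle inequality), so one cannot invoke a general metric argument; the monotonicity has to be extracted from the explicit formulas. Concretely, I expect the semicircular case to require showing that a certain explicit function of the angular parameter $\theta$ (coming from substituting the circle parametrization into $g(y)$ above) has a derivative that vanishes only at $x_0$ and carries the correct sign on either side. This is a one-variable calculus verification, but the algebra is where the real work lies; it is plausibly the kind of identity the authors relegate to the symbolic-computation notebook \citep{notebook}. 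Once this coordinatewise radial monotonicity is in hand, summing over $k=1,\dots,d$ and taking the union over all endpoints in $B_{\mathpzc{d}}(x_0,r)$ closes the argument.
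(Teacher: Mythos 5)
Your plan follows the same route as the paper's proof: reduce to $d=1$ using the product structure of the geodesics and of $\mathpzc{d}^2$, then show that $y\mapsto\mathpzc{d}(x_0,\gamma(y))^2$ is nondecreasing along each one-dimensional geodesic, with the vertical lines easy and the semicircles as the substantive case. The problem is that your write-up stops exactly where the proof actually lives. For the semicircular geodesics you only assert that the derivative of $g$ "should" vanish only at $x_0$ and carry the right sign, and you defer the algebra as a routine one-variable verification. It is not routine, and the naive expectation is false termwise: substituting the arc-length parametrization of Lemma~\ref{lemma:form_geodesics_dim_1} gives $\mathpzc{d}(x_0,\tilde\gamma(y))^2=A(y)+B(y)$ with explicit hyperbolic expressions depending on a constant $c_2$, and when $c_2<0$ the logarithmic part $B$ is strictly decreasing on the interval $\left[\tfrac{-c_2}{2\sqrt2},\tfrac{-c_2}{\sqrt2}\right]$; monotonicity holds only for the sum, and proving $A'+B'\ge 0$ there requires a careful lower bound on $A'$ and several hyperbolic identities (this is the bulk of the paper's argument). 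Equivalently, your heuristic that $K_{\rm norm}(x_0,\cdot)$ restricted to the geodesic is unimodal with maximum at $x_0$ is precisely the statement to be proved, not an input. Moreover, your proposed "cleaner route" is circular: claiming that $\mathpzc{d}_1(x_0,\cdot)$ and $\mathfrak{d}_{\mathfrak{g}}(x_0,\cdot)$ are linked by a strictly increasing reparametrization along each geodesic, because both are "radially increasing", presupposes the radial monotonicity of $\mathpzc{d}_1$ along geodesics — which is the content of the lemma. Since $\mathpzc{d}$ satisfies no triangle inequality, there is no soft argument to fall back on here.

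Two smaller points. First, membership in $B_{\mathpzc{d}}(x_0,r)$ also requires the geodesic to stay in the domain $\R^d\times[u_{\min},+\infty)^d$; this is true because each coordinate geodesic is a Poincaré-type arc whose $u$-component along the sub-arc joining two admissible endpoints is bounded below by the smaller endpoint value, but it should be stated (the paper does so at the outset). Second, your reduction to $d=1$ and the treatment of vertical-line geodesics (where $g(y)=\ln\cosh(\sqrt2\,y)$ after parametrizing by arc length) are fine and match the paper; only the semicircle monotonicity needs to be carried out in full for the proof to stand.
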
 
\noindent
The proof is detailed in Appendix \ref{section:proof_lemma_geodesics_within_near_regions}.
This lemma allows us to use the controls we have on the balls $B_{\mathpzc{d}}(x_j^0,r)$ on the paths of the geodesics between $x_j^0$ and a point in $B_{\mathpzc{d}}(x_j^0,r)$. It entails that $\mathpzc{d}$ and $\mathfrak{d}_\mathfrak{g}$ are in some sense compatible. 

To control the certificates, we also require that certain balls (with respect to the semi-distance) around the particles be disjoint. 
We restrict the possible values for the variances to establish this property: we need upper and lower bounds to establish a ``pseudo-quasi triangle inequality'' for $\mathpzc{d}$ (see Appendix \ref{section:proof_lemma_minimum_separation}). 
\begin{lemma}[Separation of the balls \wrt the semi-distance]\label{lemma:minimum_separation}
Let $r,\Delta>0$. Assume that $\X \subset \R^d \times [u_{\min},u_{\max}]^d$.
 If
 \begin{equation*}
 \min_{i \neq j} \mathpzc{d}(x_i^0, x_j^0) \geq \max \left\{\frac{\sqrt{u_{\max}^2 + \frac{1}{4}r^2(2u_{\max}^2+\tau^2)}}{u_{\min}} (\Delta+r) \: , \: 2\frac{u_{\max}}{u_{\min}} \Delta \right\} + \sqrt{d\ln \left(\frac{u_{\max}^2}{u_{\min}^2}\right)}\eqcolon \Delta_\tau\, , 
 \end{equation*}
 then the balls $\mathring B_{\mathpzc{d}}(x_j^0,\Delta) \cap \X$ are disjoint, and for all $j \neq i$, $\mathcal{G}_{x_j^0}(B_{\mathpzc{d}}(x_j^0,r) \cap \X)$ does not intersect $\mathring B_{\mathpzc{d}}(x_i^0,\Delta)$.
\end{lemma}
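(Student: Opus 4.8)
The obstruction here is precisely that $\mathpzc{d}$ fails the triangle inequality, so neither assertion can be deduced by a direct triangulation. The plan is to first extract a \emph{pseudo-quasi-triangle inequality} adapted to $\mathpzc{d}$, and then run two short contradiction arguments, one per assertion. The inequality I would establish is: for all $P,Q,M\in\R^d\times[u_{\min},+\infty)^d$ such that the variance coordinates of $P$ and $Q$ lie in $[u_{\min},u_{\max}]$ and those of $M$ lie in $[u_{\min},v_{\max}]$ for some $v_{\max}\ge u_{\max}$,
\[
\mathpzc{d}(P,Q)\ \le\ \frac{v_{\max}}{u_{\min}}\bigl(\mathpzc{d}(M,P)+\mathpzc{d}(M,Q)\bigr)+\sqrt{d\ln(u_{\max}^2/u_{\min}^2)}\,,
\]
the additive term depending only on the endpoints' variance range. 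The two conclusions of the lemma then follow by applying this with $M$ a hypothetical common point of the balls in question and with $v_{\max}$ suitably chosen, reading off the two entries of the $\max$ defining $\Delta_\tau$.

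To prove the inequality I would use the coordinatewise splitting $\mathpzc{d}(x,x')^2=\sum_{k=1}^d\bigl(\delta_{t,k}(x,x')^2+\delta_{u,k}(x,x')^2\bigr)$, with translation part $\delta_{t,k}(x,x')^2=(t_k-t_k')^2/(u_k^2+{u_k'}^2+\tau^2)$ and scale part $\delta_{u,k}(x,x')^2=\ln\!\bigl((u_k^2+{u_k'}^2+\tau^2)/(\sqrt{2u_k^2+\tau^2}\sqrt{2{u_k'}^2+\tau^2})\bigr)\ge 0$. For the translation part, write $|t_k^P-t_k^Q|\le|t_k^P-t_k^M|+|t_k^M-t_k^Q|$ and rewrite the denominator $\sqrt{(u_k^P)^2+(u_k^Q)^2+\tau^2}$ as the one naturally attached to the pair $(M,P)$ (resp.\ $(M,Q)$) times a correction factor; since $u_k^M\le v_{\max}$ and $u_k^P,u_k^Q\ge u_{\min}$, each such correction factor is $\le v_{\max}/u_{\min}$, so $\delta_{t,k}(P,Q)\le(v_{\max}/u_{\min})(\delta_{t,k}(M,P)+\delta_{t,k}(M,Q))$. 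For the scale part, the argument of the logarithm in $\delta_{u,k}(P,Q)^2$ is at most $(2u_{\max}^2+\tau^2)/(2u_{\min}^2+\tau^2)\le u_{\max}^2/u_{\min}^2$ (here $M$ plays no role), hence $\delta_{u,k}(P,Q)^2\le\ln(u_{\max}^2/u_{\min}^2)$. Summing over $k$, then using $\sqrt{a+b}\le\sqrt a+\sqrt b$, the $\ell^2$ triangle inequality applied to $(\delta_{t,k}(\cdot,\cdot))_k$, and $\delta_{t,k}^2\le\delta_{t,k}^2+\delta_{u,k}^2$, yields the claimed inequality.

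For the second assertion I also need to bound the variance coordinates of the points of $\mathcal{G}_{x_j^0}(B_{\mathpzc{d}}(x_j^0,r)\cap\X)$. If $y$ belongs to this set, then by monotonicity of $\mathcal{G}$ in its set argument and Lemma~\ref{lemma:geodesics_within_near_regions} we have $\mathpzc{d}(x_j^0,y)\le r$; moreover $y$ lies on a Fisher--Rao geodesic issued from $x_j^0$, and since the metric $\mathfrak{g}$ is a Riemannian product over the coordinates, this geodesic is coordinatewise either a vertical segment (along which $u_k$ remains $\le u_{\max}$) or an arc of a Euclidean semicircle centred on $\{u=0\}$ (Lemma~\ref{lemma:form_geodesics_dim_1}), whose topmost point has $u_k$ equal to the circle's radius. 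The elementary formula for that radius, together with the bound $(t_{j,k}^0-t_k)^2\le r^2(2u_{\max}^2+\tau^2)$ valid for the endpoint $x\in B_{\mathpzc{d}}(x_j^0,r)\cap\X$ (since $\delta_{t,k}(x_j^0,x)^2\le\mathpzc{d}(x_j^0,x)^2\le r^2$), shows that all variance coordinates of $y$ lie in $[u_{\min},u_{\max}']$ with $u_{\max}'=\sqrt{u_{\max}^2+\tfrac14 r^2(2u_{\max}^2+\tau^2)}$.

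It remains to conclude. If $x\in\mathring B_{\mathpzc{d}}(x_i^0,\Delta)\cap\mathring B_{\mathpzc{d}}(x_j^0,\Delta)\cap\X$ for some $i\neq j$, applying the pseudo-quasi-triangle inequality with $M=x$ and $v_{\max}=u_{\max}$ gives $\mathpzc{d}(x_i^0,x_j^0)<2\tfrac{u_{\max}}{u_{\min}}\Delta+\sqrt{d\ln(u_{\max}^2/u_{\min}^2)}\le\Delta_\tau$, contradicting the separation hypothesis; hence the sets $\mathring B_{\mathpzc{d}}(x_j^0,\Delta)\cap\X$ are pairwise disjoint. If instead $y\in\mathcal{G}_{x_j^0}(B_{\mathpzc{d}}(x_j^0,r)\cap\X)\cap\mathring B_{\mathpzc{d}}(x_i^0,\Delta)$, then $\mathpzc{d}(x_j^0,y)\le r$, $\mathpzc{d}(y,x_i^0)<\Delta$ and, by the previous paragraph, the variances of $y$ lie in $[u_{\min},u_{\max}']$; applying the inequality with $M=y$ and $v_{\max}=u_{\max}'$ gives $\mathpzc{d}(x_i^0,x_j^0)<\tfrac{u_{\max}'}{u_{\min}}(r+\Delta)+\sqrt{d\ln(u_{\max}^2/u_{\min}^2)}\le\Delta_\tau$, again a contradiction. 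I expect the variance-inflation step to be the main difficulty: the semi-distance $\mathpzc{d}$ and the Fisher--Rao geometry interact nontrivially — a geodesic joining two points of $\X$ may leave $\X$ — and while a cruder bound on $u_{\max}'$ still proves the statement, recovering the sharp constant $\tfrac14$ appearing in $\Delta_\tau$ requires the explicit semicircular description of the geodesics together with the $\mathpzc{d}$-closeness of the endpoint to $x_j^0$.
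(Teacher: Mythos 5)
Your overall strategy coincides with the paper's: a pseudo-quasi triangle inequality for $\mathpzc{d}$ with multiplicative factor $v_{\max}/u_{\min}$, a bound $u_{\max}'=\sqrt{u_{\max}^2+\tfrac{1}{4}r^2(2u_{\max}^2+\tau^2)}$ on the variances along the geodesics issued from $x_j^0$, and two contradiction arguments matching the two entries of the $\max$ defining $\Delta_\tau$. The triangle-inequality part and the two concluding arguments are correct; in fact your formulation, in which the additive term $\sqrt{d\ln(u_{\max}^2/u_{\min}^2)}$ depends only on the variance range of the two endpoints $x_i^0,x_j^0$ while only the multiplicative factor sees the intermediate point $M$, is precisely the sharpening needed to land on the constant stated in the lemma (the paper's displayed inequality, with all three points in $[u_{\min},\tilde u_{\max}]^d$, would a priori give $\sqrt{d\ln(\tilde u_{\max}^2/u_{\min}^2)}$ in the additive term).

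The variance-control step, however, has a genuine gap as written. You bound the variance along a coordinate geodesic by the radius of its supporting circle and claim that the radius formula together with $|t_k-t_{j,k}^0|\le r\sqrt{2u_{\max}^2+\tau^2}$ yields $u\le u_{\max}'$. That implication is false in general: the radius is $\sqrt{\tfrac{1}{4}\bigl(\tfrac{(t-t')^2-(u^2-u'^2)}{t-t'}\bigr)^2+u^2}$, which blows up when the two endpoints have (nearly) equal means but different variances, and such pairs do occur in $B_{\mathpzc{d}}(x_j^0,r)\cap\X$, since $\mathpzc{d}(x_j^0,x)\le r$ constrains the ratio of the variances but in no way forces the mean gap away from zero. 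The bound ``max variance along the arc $=$ radius'' is only valid when the arc actually contains the apex of the circle, which happens exactly when $|u^2-u'^2|\le(t-t')^2$ (equivalently, the centre abscissa lies between the two means); in that favorable case $R^2\le \max(u,u')^2+(t-t')^2/4\le u_{\max}'^2$, as you need, whereas in the complementary case the variance is monotone along the arc and stays below $\max(u,u')\le u_{\max}\le u_{\max}'$. Adding this case distinction repairs your proof. The paper avoids it altogether by a barrier comparison: every geodesic from $x_j^0$ into the box $[t_{j,k}^0\pm r\sqrt{2u_{\max}^2+\tau^2}]\times[u_{\min},u_{\max}]$ lies below the two semicircular geodesics joining $(t_{j,k}^0\pm r\sqrt{2u_{\max}^2+\tau^2},u_{\max})$ to $(t_{j,k}^0,u_{\max})$ (two distinct geodesics meet at most once), and these barriers have radius exactly $u_{\max}'$ because their endpoints sit at equal height $u_{\max}$.
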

\noindent
The proof is given in Appendix \ref{section:proof_lemma_minimum_separation}. We stress that this lemma leads to the separation condition in Theorem \ref{th:main_theorem_certif}. It is crucial to manage the kernel on the far and near regions according to the local positive curvature assumption. 

\paragraph{Non-degeneration of certificates under a minimal separation}
Using the compatibility between the semi-distance and the Fisher-Rao metric, and controls on $K_{\rm norm}$, we can show the existence of non-degenerate certificates under some condition on the minimal separation between the particles of $\mu^0$. This is what the following theorem entails.
\begin{theorem}\label{th:existence_certificate_under_positive_curvature_assumption}
 Assume that $\X \subset \R^d \times [u_{\min},u_{\max}]^d$ and that $K_{\rm norm}$ satisfies the LPC (Definition \ref{def:positive_curvature}) with parameters $s, \Delta, r, \bar\varepsilon_0, \bar \varepsilon_2$. 
 
 If $\{x_j^0\}_{j=1}^s \subset \X$ satisfies $\min_{i \neq j} \mathpzc{d}(x_i^0, x_j^0) \geq \Delta_\tau$ (defined in Lemma \ref{lemma:minimum_separation}), then there exists a $\left(\varepsilon_0=\frac{7}{8}\bar \varepsilon_0, \varepsilon_2=\frac{15}{32}\bar \varepsilon_2, r\right)$-global non-degenerate certificate $\eta$ of the form \eqref{eq:eta_general_form}.

 Under the same assumptions, for all $j=1,\ldots,s$ there exists a certificate $\eta_j$ for the jth near region of the form \eqref{eq:eta_general_form}, of parameters $\left(\tilde \varepsilon_0=\frac{7}{8}\bar \varepsilon_0,\tilde \varepsilon_2=\frac{B_{02}+\bar\varepsilon_2/16}{2}, r\right)$. 

 Moreover, $\restr{\eta}{\X}=\Psi^*p$ and $\restr{\eta_j}{\X}=\Psi^*p_j$ where $\norm{p}_\L\leq \sqrt{2s}$ and $\norm{p_j}_\L\leq \sqrt{2}$.
\end{theorem}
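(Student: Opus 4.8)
The plan is to mirror the argument behind \citep[Theorem~2]{off-the-grid_cs}, together with its variant producing near-region certificates \citep[Section~6.7]{off-the-grid_cs}, the one essential modification being that the Fisher--Rao balls used there are replaced throughout by balls for the semi-distance $\mathpzc{d}$; the compatibility between the two geometries is precisely what Lemmas \ref{lemma:geodesics_within_near_regions} and \ref{lemma:minimum_separation} supply. First I would build the \emph{vanishing-derivative pre-certificate} (in the spirit of \citep{blasso_duval_peyre}): one looks for $\eta=\eta_{\alpha,\beta}$ of the form \eqref{eq:eta_general_form} with $\eta(x_i^0)=1$ and $\nabla\eta(x_i^0)=0$ for every $i$, which amounts to solving the linear system \eqref{eq:Upsilon} whose matrix is assembled from $K_{\rm norm}$, $\nabla_1 K_{\rm norm}$ and $\nabla_1\nabla_2K_{\rm norm}$ evaluated at the pairs $(x_i^0,x_j^0)$. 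Since $\min_{i\neq j}\mathpzc{d}(x_i^0,x_j^0)\geq\Delta_\tau$ forces in particular the $\Delta$-separation required in item~3 of Definition \ref{def:positive_curvature}, that matrix is a perturbation, controlled by the $\tfrac1{64}$-slack, of a block-diagonal matrix whose $j$-th block is governed by $\mathfrak{g}_{x_j^0}$ (up to normalization by $B_0,B_2$); inverting by a Neumann series gives both solvability and the quantitative estimates $\alpha_j=1+O(\min(\bar\varepsilon_0,\bar\varepsilon_2))$, $\norm{\beta_j}$ small, which eventually produce the constants $\tfrac78$ and $\tfrac{15}{32}$ in the statement. The local certificates $\eta_j$ come from the \emph{same} system with right-hand side $\eta_j(x_i^0)$ equal to $1$ if $i=j$ and $0$ otherwise, and $\nabla\eta_j(x_i^0)=0$; then only the $j$-th block of coefficients is of order one.

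Next I would verify the non-degeneracy conditions. \emph{Near region:} for $x\in\X_j^{near}(r)=B_{\mathpzc{d}}(x_j^0,r)\cap\X$, let $\gamma$ be the Fisher--Rao geodesic from $x_j^0$ to $x$. By Lemma \ref{lemma:geodesics_within_near_regions} the whole path of $\gamma$ stays in $B_{\mathpzc{d}}(x_j^0,r)$, and by Lemma \ref{lemma:minimum_separation} it avoids $\mathring B_{\mathpzc{d}}(x_i^0,\Delta)$ for $i\neq j$. A second-order Riemannian Taylor expansion along $\gamma$, using $\eta(x_j^0)=1$ and $D_1[\eta](x_j^0)=0$, gives $\eta(x)=1+\tfrac12\,D_2[\eta](\gamma(y_\star))[\dot\gamma,\dot\gamma]$ for some $y_\star$; on the $j$-th term the curvature bound of item~2 of Definition \ref{def:positive_curvature} applies (the base point stays in the $\mathpzc{d}$-ball) and bounds it by $-\bar\varepsilon_2\,\norm{\dot\gamma}_{\gamma(y_\star)}^2=-\bar\varepsilon_2\,\mathfrak{d}_\mathfrak{g}(x,x_j^0)^2$, while the contributions of the terms $i\neq j$ are bounded through the uniform bounds of item~1 and the near-orthogonality of item~3 (noting that $\gamma(y_\star)$ remains near $x_j^0$, hence far from the other particles); altogether $\eta(x)\leq 1-\tfrac{15}{32}\bar\varepsilon_2\,\mathfrak{d}_\mathfrak{g}(x,x_j^0)^2$. \emph{Far region:} every $x\in\X^{far}(r)$ satisfies $\mathpzc{d}(x_i^0,x)\geq r$ for all $i$, hence $K_{\rm norm}(x_i^0,x)\leq 1-\bar\varepsilon_0$ by item~2; splitting the sum in \eqref{eq:eta_general_form} into the particle nearest to $x$ (where this bound is combined with $\alpha_j\approx1$) and the rest (bounded by an exponential $\mathpzc{d}$-decay estimate for $K_{\rm norm}$ and its derivatives, via the pseudo-triangle inequality of Appendix \ref{section:proof_lemma_minimum_separation}) yields $|\eta(x)|\leq 1-\tfrac78\bar\varepsilon_0$. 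For the local certificates the sign estimates are obtained identically; the bound $|1-\eta_j(x)|\leq\tilde\varepsilon_2\,\mathfrak{d}_\mathfrak{g}(x,x_j^0)^2$ on $\X_j^{near}(r)$ only needs an \emph{upper} bound on the Riemannian Hessian of $\eta_j$, whence $\tilde\varepsilon_2=\tfrac12(B_{02}+\bar\varepsilon_2/16)$ rather than a positive multiple of $\bar\varepsilon_2$, and condition~4 of Definition \ref{def:local_non_degenerate_certificate} follows from the near-region argument run at $x_i^0$ ($i\neq j$), where $\eta_j(x_i^0)=0$ and $\nabla\eta_j(x_i^0)=0$.

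Finally, membership in $\mathrm{Im}(\Psi^*)$ and the norm bounds follow from Lemma \ref{lemma:eta_c_infty_im_psi_star}: writing $\eta=\Psi^*p$ with $p=\sum_j\alpha_j\Psi\delta_{x_j^0}+\sum_j\beta_j\nabla_x(\Psi\delta_{x_j^0})$, the quantity $\norm{p}_\L^2$ is a quadratic form in $(\alpha_j,\beta_j)_j$ whose matrix is again within $\tfrac1{64}$ of the block-diagonal matrix above, so $\norm{p}_\L^2\leq 2\sum_j(\alpha_j^2+\norm{\beta_j}^2)\leq 2s$; for $\eta_j$ only the $j$-th block is non-negligible, whence $\norm{p_j}_\L^2\leq 2$. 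The step I expect to be the main obstacle is not any isolated computation but the propagation of constants: one must carry the $\tfrac1{64}$-slack of Definition \ref{def:positive_curvature} through the Neumann inversion and the two Taylor expansions while keeping the resulting curvature constant strictly positive, and one must check case by case that every invocation of a \emph{local} Fisher--Rao bound — legitimate in \citep{off-the-grid_cs} because the relevant geodesics stay inside $\mathfrak{d}_\mathfrak{g}$-balls — remains legitimate here thanks to Lemma \ref{lemma:geodesics_within_near_regions} (geodesics stay inside $\mathpzc{d}$-balls) and Lemma \ref{lemma:minimum_separation} (geodesics emanating from one particle stay away from the others). This substitution of $\mathpzc{d}$ for $\mathfrak{d}_\mathfrak{g}$, together with the bounds on the Riemannian derivatives of $K_{\rm norm}$, is what plays here the role that translation invariance plays in the classical constructions.
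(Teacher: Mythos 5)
Your proposal follows essentially the same route as the paper: vanishing-derivative pre-certificates obtained from the linear system \eqref{eq:Upsilon} with a perturbation/Neumann argument (the paper's Lemma \ref{lemma:invertibility_Upsilon}), Riemannian Taylor expansions along Fisher--Rao geodesics combined with Lemmas \ref{lemma:geodesics_within_near_regions} and \ref{lemma:minimum_separation} to justify the local curvature bounds on $\mathpzc{d}$-balls, and the far-region split into the (at most one) $\Delta$-close particle plus the item-3 tail. The only detail to fix is the norm bound: since $|\alpha_j|$ can exceed $1$ (it is only bounded by $\tfrac{1}{1-2h}$), the chain $\norm{p}_\L^2\leq 2\sum_j(\alpha_j^2+\norm{\beta_j}^2)\leq 2s$ does not quite give the stated constant, whereas writing $\norm{p}_\L^2=\bm u_s^T\tilde\Upsilon^{-1}\bm u_s\leq \norm{\tilde\Upsilon^{-1}}_2\norm{\bm u_s}_2^2\leq 2s$ (as in Proposition \ref{prop:norm_certif}, using $\norm{\tilde\Upsilon^{-1}}_2\leq 2$) yields $\sqrt{2s}$ directly without any bound on $(\alpha,\beta)$.
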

\noindent
The proof is based on an adaptation of \citep[Theorem 2]{off-the-grid_cs}, and is given in Appendix \ref{section:proof_existence_certificate_under_positive_curvature_assumption}.

It remains to prove that $K_{\rm norm}$ satisfies the LPC—this is the purpose of the following theorem, whose proof is provided in Appendix \ref{section:controls_kernel}. We give general bounds for $K_{\rm norm}$ in dimension $d\geq 1$, and we provide a tighter constant for $\Delta(s)$ in the case $d=1$.
\begin{theorem}\label{th:check_local_curvature_assumption}
 Let $s\geq 2$, $d\geq 1$. Assume that $\X \subset \R^d \times [u_{\min},u_{\max}]^d$ and that $\tau \leq u_{\min}$. 
Then $K_{\rm norm}$ satisfies the LPC with parameters $s$, $r=\frac{0.3025}{\sqrt{d}}$, $\bar\varepsilon_2(r)=0.13139$, $\bar\varepsilon_0(r)=\frac{0.0894}{2d}$, and 
\[
\Delta(s)= 2\sqrt{ 11.9+3\ln(d+6.62)+\ln(s-1)}\,.
\] Moreover, we can take $B_{02}=\sqrt{4d^2+10d}$ (item 1 of Definition \ref{def:positive_curvature}).
\end{theorem}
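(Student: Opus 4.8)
The plan is to verify the three items of Definition~\ref{def:positive_curvature} directly for $K_{\rm norm}$, exploiting the fact that the normalized kernel \eqref{eq:kernel_K_norm} factorizes across coordinates $k=1,\ldots,d$, that each one-dimensional factor depends only on the ``normalized'' variables, and that the Fisher–Rao norm $\|\cdot\|_x$ also diagonalizes (see \eqref{eq:def_fisher_rao_metric}). Concretely, I would first introduce, for each coordinate, the change of variables that makes the one-dimensional kernel translation-invariant-like: writing $a_k = t_k/\sqrt{2u_k^2+\tau^2}$-type reduced means and a ratio of the rescaled variances, one sees each factor $(2u_k^2+\tau^2)^{1/4}(2{u_k'}^2+\tau^2)^{1/4}(u_k^2+{u_k'}^2+\tau^2)^{-1/2}e^{-(t_k-t_k')^2/(2(u_k^2+{u_k'}^2+\tau^2))}$ is a smooth function of a small number of scalar reduced variables, uniformly bounded by $1$. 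The covariant-derivative operators $K_{\rm norm}^{(ij)}$ then also factorize (a derivative in coordinate $k$ hits only the $k$-th factor, the others contributing a factor $\le 1$), so the operator norms $\|K_{\rm norm}^{(ij)}(x,x')\|_{x,x'}$ reduce to sups of explicit one-dimensional expressions in the reduced variables, times combinatorial factors from the $d$ coordinates. This is where the $\sqrt{4d^2+10d}$ bound for $B_{02}$ and the $1/d$ and $1/\sqrt d$ scalings of $r$, $\bar\varepsilon_0$, $\bar\varepsilon_2$, $\Delta$ come from.

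For item~1 (global bounds $B_{ij}$), I would compute, in the reduced variables, the covariant first and second derivatives of the scalar kernel factor — using the Christoffel symbols of $\mathfrak g$ recorded in Appendix~\ref{section:christoffel_symbols} to pass from ordinary Hessians to the Riemannian Hessian $H^{\mathfrak g}$ — and bound them uniformly over all admissible $(u_k,u_k',t_k-t_k')$ (note $\tau\le u_{\min}$ enters only through harmless monotonicity). Each such bound is a one-variable optimization of an elementary function, which the companion notebook \citep{notebook} carries out symbolically; I would state the resulting constants and in particular track how the sum over the $d$ coordinates produces the stated $B_{02}=\sqrt{4d^2+10d}$. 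For item~2 I would fix the ``near'' scale $r=0.3025/\sqrt d$: the bound $K_{\rm norm}(x,x')\le 1-\bar\varepsilon_0$ on $\{\mathpzc d(x,x')\ge r\}$ is immediate from \eqref{eq:link_semi_dist_kernel}, which gives $K_{\rm norm}(x,x')=e^{-\mathpzc d(x,x')^2/2}\le e^{-r^2/2}$, so $\bar\varepsilon_0 = 1-e^{-r^2/2}$ up to the stated numerical rounding (hence the $1/d$ scaling through $r^2$); and the negative-curvature lower bound $-K_{\rm norm}^{(02)}(x,x')[v,v]\ge\bar\varepsilon_2\|v\|_{x'}^2$ on $\{\mathpzc d(x,x')<r\}$ follows by a continuity/Taylor argument around the diagonal, where $-K_{\rm norm}^{(02)}(x,x)[v,v]=\|v\|_x^2$ exactly (this is precisely the definition \eqref{eq:def_fisher_rao_metric} of $\mathfrak g$), so for $\mathpzc d(x,x')$ small enough one retains a definite fraction of $\|v\|_{x'}^2$; quantifying ``small enough'' via the $B_{ij}$ bounds of item~1 and Lemma~\ref{lemma:geodesics_within_near_regions} pins down $\bar\varepsilon_2=0.13139$.

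For item~3 I would estimate $\sum_{l=2}^s\|K_{\rm norm}^{(ij)}(x_1,x_l)\|_{x_1,x_l}$ when the $x_l$ are $\Delta$-separated in $\mathpzc d$. The key point is decay: each operator norm is bounded by (polynomial in the reduced variables) $\times\,e^{-c\,\mathpzc d(x_1,x_l)^2}$ — the Gaussian factor $e^{-\mathpzc d^2/2}$ survives differentiation up to polynomial prefactors. A packing argument in the semi-distance (balls $\mathring B_{\mathpzc d}(x_l,\Delta/2)$ are essentially disjoint, cf.\ the mechanism of Lemma~\ref{lemma:minimum_separation}) shows that at most a controlled number of the $x_l$ lie in each dyadic shell $\{k\Delta\le\mathpzc d(x_1,\cdot)<(k+1)\Delta\}$, so the sum is dominated by a geometric-type series $\sum_k (\text{count})\cdot(\text{poly})\cdot e^{-c k^2\Delta^2}$; requiring this to be $\le\frac1{64}\min(\bar\varepsilon_0/B_0,\bar\varepsilon_2/B_2)$ — a quantity of size $\sim 1/d$ — forces $\Delta^2\gtrsim \ln(\text{poly}(d)\cdot(s-1))$, which is exactly the stated $\Delta(s)=2\sqrt{11.9+3\ln(d+6.62)+\ln(s-1)}$ (the $\ln(s-1)$ absorbing the number of terms, the $\ln(d+6.62)$ the polynomial $d$-dependence of the prefactors and of $1/\bar\varepsilon_i$, and $11.9$ the absolute constants). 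In dimension $d=1$ the polynomial prefactors are explicit and smaller, giving the advertised tighter constant.

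The main obstacle is item~3 — more precisely, getting a clean, dimension-explicit decay estimate for the covariant derivatives $\|K_{\rm norm}^{(ij)}(x_1,x_l)\|_{x_1,x_l}$ as a function of $\mathpzc d(x_1,x_l)$ alone. The difficulty is that $\mathpzc d$ is only a semi-distance (no triangle inequality), the kernel is not translation invariant, and a derivative in $u_k$ produces prefactors that themselves depend on the variance ratio; one must show these prefactors are bounded by polynomials in $\mathpzc d$ (or just constants) uniformly over the admissible range $u_k\in[u_{\min},+\infty)$, $\tau\le u_{\min}$, so that the exponential decay is never spoiled. Controlling the Christoffel-symbol contributions to $H^{\mathfrak g}$ over this unbounded range, and then converting the per-coordinate decay into the shell-counting sum via the semi-distance packing, is the technically delicate core; the explicit numerics are then routine and are discharged in \citep{notebook}.
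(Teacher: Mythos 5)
Your outline of item~1 (coordinate factorization, Riemannian Hessian via the Christoffel symbols, entrywise Cauchy--Schwarz bounds combined through the Frobenius norm) and of $\bar\varepsilon_0$ via $K_{\rm norm}=e^{-\mathpzc{d}^2/2}$ matches the paper. The genuine gap is in your quantification of $\bar\varepsilon_2$. You propose a continuity/Taylor argument around the diagonal, ``quantified via the $B_{ij}$ bounds of item~1'', but those global bounds scale like $B_{02}=\sqrt{4d^2+10d}\sim d$ and $B_{12}\sim d^{3/2}$, while the Fisher--Rao radius of the ball $\{\mathpzc{d}(x,x')<r\}$ with $r=0.3025/\sqrt d$ does \emph{not} shrink correspondingly (by the analogue of Lemma~\ref{lemma:local_control_metric_eps4_dim_d} it is only $O(\sqrt{d\,F(r)})$, which grows with $d$). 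Hence a perturbation bound of the form $\|K^{(02)}_{\rm norm}(x,x')-K^{(02)}_{\rm norm}(x',x')\|\lesssim B\cdot\mathfrak{d}_\mathfrak{g}(x,x')$ cannot deliver a dimension-free $\bar\varepsilon_2=0.13139$. The paper's proof (Lemmas~\ref{lemma:curvature_constants_dim_1} and \ref{lemma:curvature_constants_dim_d}) instead bounds the top eigenvalue of the normalized Hessian $\tilde H^{02}(x,x')$ entry by entry, reducing to dimension one: the same-coordinate $2\times2$ blocks are controlled by explicit inequalities in $|t_k-t_k'|/\sqrt{u_k^2+u_k'^2+\tau^2}\le r$ and $|u_k^2-u_k'^2|/(u_k^2+u_k'^2+\tau^2)\le\sqrt{e^{2r^2}-1}$, and each cross-coordinate entry is a \emph{product of two first-order factors}, each $O(r)$, so the Gershgorin-type sum over the $2d$ columns stays $O(d\,r^2)=O(r_0^2)$, independent of $d$ — this cancellation structure, not a global Taylor bound, is what makes $\bar\varepsilon_2=e^{-r_0^2/(2d)}|G(r_0)|$ uniform in $d$ and fixes $r_0=0.3025$ (where $G(r_0)<0$).

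On item~3, the obstacle you single out (packing/shell counting in a semi-distance with no triangle inequality) is not part of the paper's argument and your proposed route is indeed shaky for exactly the reason you name. No packing is needed: the paper bounds each of the $s-1$ terms uniformly by $C_d\,e^{-\Delta^2/4}$ (Lemmas~\ref{lemma:control_kernel_d_large_dim1}--\ref{lemma:control_kernel_d_large_dim_d}) and multiplies by $s-1$; the $\ln(s-1)$ in $\Delta(s)$ comes from this trivial union bound, and the failure of the triangle inequality for $\mathpzc{d}$ only has to be confronted later, in Lemma~\ref{lemma:minimum_separation}. The technical device that makes the decay work — writing $K_{\rm norm}=\sqrt{K_{\rm norm}}\cdot\sqrt{K_{\rm norm}}$, absorbing all polynomial prefactors into one factor via $y^qe^{-y^2/4}\le(2q/e)^{q/2}$ together with $\tau\le u_{\min}$ (to bound ratios such as $(2u^2+\tau^2)/u^2\le 3$ uniformly over $u\ge u_{\min}$), and keeping the other factor as $e^{-\Delta^2/4}$ — is only gestured at in your sketch, and it is precisely what produces the $\Delta^2/4$ exponent and the explicit $d$-dependent prefactors $\sqrt{2d}(170.5+25.78d)$ behind the constants $11.9$ and $3\ln(d+6.62)$.
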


\begin{remark}
 When $d=1$, the minimal separation consition can be improved: $K_{\rm norm}$ satisfies the LPC with the same parameters $r, \bar\varepsilon_0, \bar\varepsilon_2$ as in Theorem \ref{th:check_local_curvature_assumption}, with $\Delta(s)=2\sqrt{13.88 + \ln(s-1)}$ (Appendix \ref{section:controls_kernel}).
\end{remark}
\begin{remark}
 Note that the assumption on the separation between particles given in Theorem \ref{th:existence_certificate_under_positive_curvature_assumption} depends on $\tau$ through the semi-distance. We write $\mathpzc{d}=\mathpzc{d}_{\tau}$ to emphasize this dependency. At first sight, the separation condition thus appears to depend on $n$ when choosing $\tau^2=\frac{2u_{\min}^2}{\ln n}$.
 However, as $\tau \in \R^+ \mapsto \mathpzc{d}_\tau(x,x')$ is decreasing and $\tau \in \R^+ \mapsto \Delta_\tau$ is increasing, if $\{x_j^0\}_{j=1}^s$ satisfies $\min_{i \neq j} \mathpzc{d}_{\tau_1}(x_i^0, x_j^0) \geq \Delta_{\tau_1}$ for some $\tau_1>0$, then it also satisfies $\min_{i \neq j} \mathpzc{d}_{\tau_2}(x_i^0, x_j^0)~\geq~\Delta_{\tau_2}$ for $0<\tau_2 \leq \tau_1$.
This motivates the introduction of the following assumption:
\begin{equation} \label{eq:min_sep_not_depending_on_tau}
 \underset{x_i^0 \neq x_j^0}{\min} \mathpzc{d}_{\tau_{\max}}(x_i^0,x_j^0) \geq \Delta_{\tau_{\max}}\quad \text{and} \quad \forall j=1,\ldots,s\,, \; x_j^0 \in \X \subset \R^d \times [u_{\min},u_{\max}]^d\,, \quad \tau_{\max} \leq u_{\min} \,.
\end{equation}
The recovery guarantees are not expressed using this assumption, but it is useful when thinking about the convergence of the solution towards $\mu_\omega^0$ as $n\to \infty$. The target is fixed, but for a good prediction $\tau$ must decrease with the number of observations. The constraint \eqref{eq:min_sep_not_depending_on_tau} provides in this context a condition on $\mu^0$ that does not depend on $n$.
\end{remark}

\paragraph{Controls on the effective near regions} Theorem \ref{th:check_local_curvature_assumption} sets the size of near regions we consider for the certificates. We can lower-bound the Fisher-Rao distance with the semi-distance on these regions, allowing us to extend the control of the estimator to near regions of smaller size (Proposition \ref{prop:effective_near_regions}). This result is key to provide recovery guarantees for the proximity of $\mu^0$ and the renormalized estimator $\frac{\hat{\mu}_{n,\omega}}{W}$ (\cf Corollary \ref{cor:control_renormalized_estimate}). Its proof can be found in Appendix \ref{section:proof_lemma_tilde_varepsilon_3}.
\begin{lemma}\label{lemma:tilde_varepsilon_3}
 Assume that $x,x_0 \in\R^d \times [u_{\min},+\infty)^d$. If $r_e \leq \mathpzc{d}(x,x_0) \leq r=\frac{0.3025}{\sqrt{d}}$, then $\mathfrak{d}_\mathfrak{g}(x,x_0)^2 \geq \frac{r_e^2}{\tilde \varepsilon_3}$ with $\tilde \varepsilon_3=2.84$. 
\end{lemma}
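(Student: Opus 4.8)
The goal is a local comparison between the Fisher--Rao distance $\mathfrak{d}_\mathfrak{g}$ and the semi-distance $\mathpzc{d}$ on the regime $\mathpzc{d}(x,x_0)\leq r=0.3025/\sqrt d$. The natural strategy is to bound $\mathpzc{d}(x,x_0)^2$ from above by a multiple of $\mathfrak{d}_\mathfrak{g}(x,x_0)^2$, since this immediately yields $\mathfrak{d}_\mathfrak{g}(x,x_0)^2\geq \mathpzc{d}(x,x_0)^2/\tilde\varepsilon_3\geq r_e^2/\tilde\varepsilon_3$ on the annulus $r_e\leq \mathpzc{d}(x,x_0)\leq r$. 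Both quantities decompose coordinatewise: $\mathpzc{d}(x,x_0)^2=\sum_{k=1}^d \mathpzc{d}_1\big((t_k,u_k),(t_{0,k},u_{0,k})\big)^2$ where $\mathpzc{d}_1$ is the one-dimensional version of \eqref{eq:def_semi_distance}, and likewise the Fisher--Rao metric \eqref{eq:def_fisher_rao_metric} is block-diagonal across coordinates, so $\mathfrak{d}_\mathfrak{g}(x,x_0)^2=\sum_{k=1}^d \mathfrak{d}_{\mathfrak{g},1}\big((t_k,u_k),(t_{0,k},u_{0,k})\big)^2$ with $\mathfrak{d}_{\mathfrak{g},1}$ the Fisher--Rao distance of the $d=1$ model. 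Hence it suffices to prove the scalar inequality $\mathpzc{d}_1(x,x_0)^2\leq \tilde\varepsilon_3\,\mathfrak{d}_{\mathfrak{g},1}(x,x_0)^2$ whenever $\mathpzc{d}_1(x,x_0)\leq 0.3025$ (note the per-coordinate radius becomes $r\sqrt d=0.3025$ after this reduction, which is why the $\sqrt d$ in $r$ is exactly what is needed).

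For the scalar problem I would work in the Poincaré-type coordinates identified in the paper. By Lemma~\ref{lemma:form_geodesics_dim_1}, the $d=1$ Fisher--Rao geometry shares its geodesic paths with the Poincaré half-plane, so $\mathfrak{d}_{\mathfrak{g},1}$ is (a rescaling of) the hyperbolic distance in the variables $(t,\sqrt{2u^2+\tau^2})$ or similar; in any case one has a clean closed form, e.g.\ $\cosh\mathfrak{d}_{\mathfrak{g},1}(x,x_0)=1+\frac{(t-t_0)^2+(\sqrt{2u^2+\tau^2}-\sqrt{2u_0^2+\tau^2})^2}{2\sqrt{2u^2+\tau^2}\sqrt{2u_0^2+\tau^2}}$ up to normalization (this should be extracted from Appendix~\ref{section:properties_fisher_rao}). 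On the other side, $\mathpzc{d}_1(x,x_0)^2=\frac{(t-t_0)^2}{u^2+u_0^2+\tau^2}+\ln\!\frac{u^2+u_0^2+\tau^2}{\sqrt{2u^2+\tau^2}\sqrt{2u_0^2+\tau^2}}$. Both are smooth, vanish to second order at $x=x_0$, and share the same Hessian at the diagonal (namely $\mathfrak{g}_x$, since $\mathfrak{g}$ is \emph{induced} by $K_{\rm norm}$ and $\mathpzc{d}_1^2=-2\ln K_{\rm norm}$ has Hessian $2\,\mathfrak{g}_x$ at the diagonal — this is precisely \eqref{eq:link_semi_dist_kernel} and \eqref{eq:def_fisher_rao_metric}). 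Thus the ratio $\mathpzc{d}_1^2/\mathfrak{d}_{\mathfrak{g},1}^2$ extends continuously by $1$ at the diagonal, and I need a uniform bound of $2.84$ on the compact-in-$\mathpzc{d}_1$ region $\{\mathpzc{d}_1\leq 0.3025\}$. Using the exact formulas, introduce the two invariants $\xi=\frac{(t-t_0)^2}{u^2+u_0^2+\tau^2}$ and $\zeta=\ln\frac{u^2+u_0^2+\tau^2}{\sqrt{2u^2+\tau^2}\sqrt{2u_0^2+\tau^2}}$ (both $\geq 0$, with $\mathpzc{d}_1^2=\xi+\zeta$); rewrite the hyperbolic formula in the same two invariants (it should reduce to a function of $\xi,\zeta$ alone by the coordinate-invariance of the ratios), and then the claim is an elementary two-variable estimate on $\{\xi+\zeta\leq 0.3025^2\approx 0.0915\}$. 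This is the kind of calculation the paper offloads to its symbolic notebook \citep{notebook}.

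The main obstacle, and the step I would spend the most care on, is making the scalar comparison \emph{rigorous and uniform} rather than just infinitesimal: the naive argument ``both have the same quadratic part, so the ratio is near $1$'' only gives a bound valid in a neighborhood whose size is a priori uncontrolled, and one must verify that $r=0.3025/\sqrt d$ (equivalently $\mathpzc{d}_1\leq 0.3025$ per coordinate) really is inside the region where $2.84$ works — in particular checking the boundary case $\mathpzc{d}_1=0.3025$ where the ratio is largest. Here the uniformity over all admissible $(t_0,u_0)$ and all $\tau\leq u_{\min}$ must be handled; the reduction to the two scalar invariants $\xi,\zeta$ is what buys this uniformity, since after that reduction there is no residual dependence on $\tau$ or on the location $x_0$, only on $(\xi,\zeta)$ constrained to a fixed triangle. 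Once that is in place, summing the scalar inequality over the $d$ coordinates gives $\mathpzc{d}(x,x_0)^2\leq 2.84\,\mathfrak{d}_\mathfrak{g}(x,x_0)^2$, and on $r_e\leq\mathpzc{d}(x,x_0)$ this is exactly $\mathfrak{d}_\mathfrak{g}(x,x_0)^2\geq r_e^2/2.84$, completing the proof with $\tilde\varepsilon_3=2.84$.
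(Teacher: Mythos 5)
Your plan is correct in outline, and it shares the paper's first step — the coordinatewise reduction ($\mathpzc{d}^2$ and $\mathfrak{d}_\mathfrak{g}^2$ both split as sums over the $d$ one-dimensional blocks, so a scalar bound valid whenever the one-dimensional semi-distance is at most $0.3025$ suffices, since each coordinate satisfies $\mathpzc{d}_1\leq\mathpzc{d}\leq 0.3025/\sqrt d\leq 0.3025$; note in passing that your parenthetical is slightly off: the per-coordinate radius does not ``become $r\sqrt d$'', it is simply bounded by the full radius, which is even smaller, so your stronger scalar statement covers it). Where you genuinely diverge is the scalar comparison itself. The paper deliberately avoids the closed-form Fisher--Rao distance (it states it for information only) and instead runs a case analysis on the two geodesic types (vertical lines vs.\ semicircles, Lemma~\ref{lemma:form_geodesics_dim_1}), splitting $\mathpzc{d}_1^2=\xi+\zeta$ by a balancing parameter $w$ and lower-bounding the arc length in each case through hyperbolic-function estimates; optimizing $w$ yields the explicit constant $1+1/R(r)$ of \eqref{eq:def_R(r)_eps3}, and the only numerics left is the one-line evaluation $1+1/R(0.3025/\sqrt d)\leq 2.84$. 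Your route instead exploits the exact distance formula: with $a=\sqrt{u^2+\tau^2/2}$, $b=\sqrt{u_0^2+\tau^2/2}$ one gets the clean identity $\mathfrak{d}_{\mathfrak{g},1}=\tfrac{1}{\sqrt2}\arcosh\!\left(e^{\zeta}(1+\xi)\right)$ with your invariants $\xi=\frac{(t-t_0)^2}{u^2+u_0^2+\tau^2}$, $\zeta=\ln\frac{u^2+u_0^2+\tau^2}{\sqrt{2u^2+\tau^2}\sqrt{2u_0^2+\tau^2}}$ (your guessed $\cosh$ formula has the wrong internal scaling — the correct substitution is $(t,\sqrt{u^2+\tau^2/2})$ with an overall factor $1/\sqrt2$ — but the structural claim that $\mathfrak{d}_{\mathfrak{g},1}$ depends on $(\xi,\zeta)$ alone is right, by scale invariance and the $a\leftrightarrow b$ symmetry). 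The lemma then reduces to the two-variable inequality $\xi+\zeta\leq 1.42\,\arcosh(e^{\zeta}(1+\xi))^2$ on the triangle $\{\xi,\zeta\geq0,\ \xi+\zeta\leq 0.3025^2\}$, which is true (it is implied by the paper's Lemma~\ref{lemma:local_control_metric_eps3_dim_1} with $r=0.3025$) and is uniform in $\tau$, $x_0$ and $d$ exactly as you argue. You also correctly identify that the matching Hessians at the diagonal only give an infinitesimal statement and that the work lies in the uniform bound up to the boundary $\mathpzc{d}_1=0.3025$. What your approach buys is a cleaner closed-form target and no geodesic case analysis; what it costs is that the final step is a genuine two-variable estimate that you leave to symbolic/numerical verification, whereas the paper's construction delivers an explicit formula for $\tilde\varepsilon_3$ whose numerical check is a single evaluation. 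To make your write-up complete you would still need to carry out (or rigorously certify, e.g.\ by monotonicity in $\xi$ and $\zeta$ or interval arithmetic) that two-variable inequality rather than assert it.
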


\section{Sparsity of the solution under large sample sizes}
\label{section:NDSC}
The certificates constructed in Section \ref{section:construct_dual_certif} allow us to obtain the non-asymptotic recovery guarantees presented in Section \ref{section:estimation}. These results are stated in terms of control of the mass in the far and near regions. Nevertheless, they say nothing about the sparsity of the estimator.

\medskip
In this section, we present results of a different nature. Our estimator is here an exact solution $\mu_{n,\omega}^\star$, verifying \eqref{eq:mu_n_omega_star}. Following \citep{blasso_duval_peyre}, we show that under the same separation condition as in Theorem \ref{th:main_theorem_certif}, and for large sample sizes, $\mu_{n,\omega}^\star$ is sparse and has exactly 1 particle in each near region with high probability. We stress that in the following, $0<\tau\leq u_{\min}$ is fixed (does not decrease as $n \to \infty$). 

\paragraph{Non Degenerate Source Condition}
Our analysis is again based on non-degenerate dual certificates (Definition \ref{def:non_degenerate_certificate}). The global certificate $\eta$ constructed in Theorem \ref{th:main_theorem_certif} is said to satisfy the Non Degenerate Source Condition \eqref{eq:NDSC}:
\begin{equation}\label{eq:NDSC} \tag{NDSC} \forall \, x\in \X\,, \quad \eta(x) \leq 1\, , \quad \eta(x)=1 \iff x\in \{x_j^0\}_j\,, \quad \nabla^2 \eta(x_j^0)\prec 0 \quad \forall\, j=1,\ldots,s\,.\end{equation}
Detailed versions of these properties are established in Lemma \ref{lemma:eta_ndsc}.

This certificate is the vanishing derivative pre-certificate of \citep[Section 4]{blasso_duval_peyre}. We can then show (see Lemma \ref{lemma:unique_sol_minimal_certif_ndsc} in Appendix) that $\eta$ corresponds to a minimal norm certificate \citep[Proposition 7]{blasso_duval_peyre}: we have $\restr{\eta}{\X}=\Psi^*p_{0,0}$ where 
\begin{equation}
\label{eq:def_p_00}
 p_{0,0}\coloneq \underset{p \in \L}{\arg\min} \left\lbrace \norm{p}_{\L}\; : \; \Psi^*p(x) \leq 1 \: \forall x\in\X\,, \: \Psi^*p(x_j^0)=1  \: \forall j=1,\ldots,s \right\rbrace \,.\tag{$p_{0,0}$} 
\end{equation}
The definitions \eqref{eq:NDSC} and \eqref{eq:def_p_00} slightly differ from those appearing in \citep[Definition 1 and Proposition 7]{blasso_duval_peyre}. Since \eqref{eq:pb_BLASSO_gaussian_kernel_norm} is solved over the space of nonnegative measures (rather than signed measures), there is no need to control the negative part of the global certificate. Hence the condition $|\eta(x)| \leq 1$ for all $x \in \X$ in \citep{blasso_duval_peyre} is replaced by $\restr{\eta}{\X} \leq 1$ (see Appendix \ref{section:proof_lemma_eta_ndsc}).

The minimal norm property allows us to control the sparsity index of $\mu_{n,\omega}^\star$ for large sample sizes, leading to the following theorem.

\begin{theorem}\label{th:NDSC}
Under the assumptions of Theorem \ref{th:main_theorem_certif}, there exists $\kappa_0>0$ (depending on $\mu^0$, $\X$ and $\tau$) and $\gamma_0>0$ depending on $d$ such that for all $\kappa\leq \kappa_0$ and if $\norm{\Gamma_n}_{\L} \leq \gamma_0 \kappa$, then $\mu_{n,\omega}^\star$ is $s$-sparse and has exactly 1 particle in each $\X_j^{\rm near}(r)$, with $r=\frac{0.3025}{\sqrt{d}}$.
 Moreover, writing $\mu_{\kappa,b}=\sum_{j=1}^s \omega_j^{\kappa,b} \delta_{x_j^{\kappa,b}}$ with $x_j^{\kappa,b} \in \X_j^{\rm near}(r)$, and denoting $a_j^{\kappa,b}= \frac{\omega_j^{\kappa,b}}{W(x_j^{\kappa,b})}$ we have \[|a_j^{\kappa,b}-a_j^0| \lesssim_{\X, \mu^0, \tau} \kappa \quad \text{and} \quad \mathpzc{d}(x_j^0, x_j^{\kappa,b}) \lesssim_{\X, \mu^0, \tau} \kappa \quad \forall \, j=1,\ldots, s\,.\]
\end{theorem}
\noindent
The proof can be found in Appendix \ref{section:proof_result_ndsc}.

This result should be seen in conjunction with Lemma \ref{lemma:control_noise}. The latter indicates that the assumption $\norm{\Gamma_n}_{\L}~\leq~\gamma_0 \kappa$ is verified with high probability for $n$ large enough. 
We detail this in the following corollary. 

\begin{corollary} \label{cor:ndsc}Let $c_\kappa >0$. Assume that the conditions given in Theorem \ref{th:main_theorem_certif} hold.
Let $n \geq \frac{c_\kappa^2}{\kappa_0^2 (2\pi)^{d/2} \tau^d}$ and $\kappa=\frac{c_\kappa}{(2\pi)^{d/4} \tau^{d/2} \sqrt{n}}$. With probability greater than $1-C_\Gamma e^{-\left(\frac{\gamma_0 c_\kappa}{C_\Gamma}\right)^2}$, $\mu_{n,\omega}^\star=\sum_{j=1}^s \omega_j^\star \delta_{x_j^{\star}}$ where $\omega_j^\star>0$ and $x_j^{\star} \in \X_j^{\rm near}(r)$ for all $j=1,\ldots, s$. Moreover, \begin{equation}\label{eq:bound_omega_ndsc}
 |a_j^0 -a_j^\star | \lesssim_{\X,\mu^0,\tau} \frac{c_\kappa}{\sqrt{n}} \quad \text{and} \quad \mathpzc{d}(x_j^\star,x_j^0)\lesssim_{\X,\mu^0,\tau} \frac{c_\kappa}{\sqrt{n}} \,.
\end{equation} 
\end{corollary}
\noindent
The proof is given in Appendix \ref{section:proof_cor_ndsc}. The result is established for a generic value of $c_\kappa$, but specific choices verifying $c_{\kappa,n}=\underset{n \to + \infty}{o}(\sqrt{n})$ can be considered. For instance, choosing $c_{\kappa,n}=\alpha \sqrt{\ln n}$ with $\alpha>0$, there exists $n_{0,\alpha}\in \mathbb{N}$ depending on $ \X, \mu^0, \tau, \alpha$ such that for all $n\geq n_{0,\alpha}$ and with probability at least $1-C_\Gamma n^{-\frac{\gamma_0^2 \alpha^2}{C_\Gamma^2}}$, $\mu_{n,\omega}^\star=\sum_{j=1}^s \omega_j^\star \delta_{x_j^{\star}}$ where for all $j=1,\ldots,s$, 
\begin{equation}
|a_j^0 -a_j^\star | \lesssim_{\X, \mu^0, \tau} \alpha \frac{\sqrt{\ln n}}{\sqrt{n}} \quad \text{and} \quad \mathpzc{d}(x_j^{\star},x_j^0) \lesssim_{\X, \mu^0, \tau} \alpha \frac{\sqrt{\ln n}}{\sqrt{n}}\,.
\label{eq:NDSC_casparticulier}
\end{equation}
Compared to Theorem \ref{th:estimation_error}, Inequality \eqref{eq:NDSC_casparticulier} yields more classical results in parametric estimation. We indeed obtain parametric rates of convergence (up to a logarithmic factor) for the estimation of both the true weights and the location-scale parameters. Moreover, the estimator has a sparsity index exactly matching that of the target measure $\mu^0$. These bounds hold for sufficiently large sample sizes. We also stress that these results are obtained under a specific tuning of the parameters $\kappa$ and $\tau$, which differs from that used in the previous sections.

\section{Discussion and open problems}\label{section:further_remarks}

In this paper, we have described some theoretical properties of the BLASSO in the framework of Gaussian mixtures. In particular, we have considered the case where each underlying component has an unknown diagonal covariance. At this stage, further improvements and extensions are still possible and are discussed below.

\paragraph{Algorithmic considerations} Recall that Theorems \ref{th:estimation_error} and \ref{th:prediction_error} apply with any $\hat{\mu}_{n,\omega}$ that verifies $J_W(\hat{\mu}_{n,\omega})\leq J_W(\mu_\omega^0)$: our estimator is not necessarily the solution of \eqref{eq:pb_BLASSO_gaussian_kernel_norm}, but an approximate solution. So we can restrict \eqref{eq:pb_BLASSO_gaussian_kernel_norm} to any subset of $\M(\X)^+$ containing $\mu_\omega^0$. 

This remark is interesting from an algorithmic point of view. We do not know how to solve \eqref{eq:pb_BLASSO_gaussian_kernel_norm} numerically on the entire measure space $\M(\X)^+$, due to the absence of a parameterization of this space.
\citep{hardy_thesis} studied a version of the BLASSO over the space of $K$-sparse measures (discrete measures with less than $K$ particles).
This problem, although not convex, is closer to a realistic algorithmic framework, as we can parameterize the space of $K$-sparse measures. If $K\geq s$, $\mu_\omega^0$ belongs to this space.

Several algorithms have been proposed to solve the BLASSO on sparse measures, such as the sliding Frank-Wolfe algorithm \citep{sliding_frank_wolfe_denoyelle} or the Conic Particle Gradient Descent (CPGD) method \citep{cpgd_chizat}. The latter provides a fairly general algorithmic framework together with convergence rate guarantees. CPGD has been previously studied in the context of location mixture models in \citep{supermix, fastpart}. It is therefore natural to explore its applicability to the more challenging location–scale framework considered in this work. This investigation is part of our ongoing research. We present here a preliminary illustration from this line of work, relying on so-called ``natural gradient descent'' combined with ``conic retraction''.
Figure~\ref{fig:cpgd_training} illustrates the convergence behavior of the algorithm when targeting a mixture of two bivariate Gaussian components (see Figure~\ref{fig:target_mixture}). We have access to a sample of size $n=400$, and start from an initialization with three particles. The method successfully identifies the correct number of components, with one particle effectively disappearing as its associated weight converges to zero.

\begin{figure}[ht]
    \centering
    \includegraphics[width=0.42\linewidth]{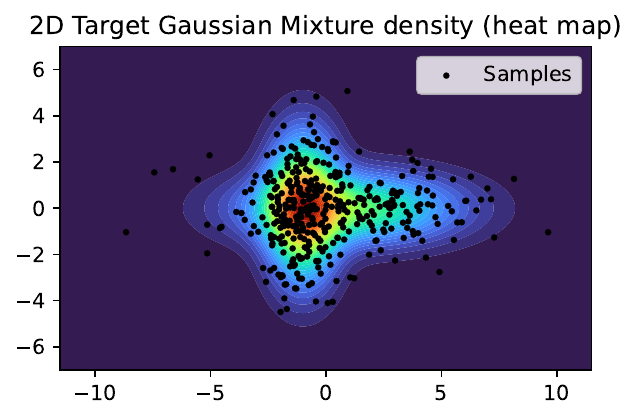}
    \caption{We have access to $n=400$ samples of the target mixture. The 2 target components have parameters $(a_1^0,a_2^0) = \left( \frac{1}{2}, \frac{1}{2}\right)$, $(t_1^0,t_2^0) = ((-1,0), (1,0))$, $(u_1^0,u_2^0) = ((1,2), (3,1))$. The corresponding separation with $\tau=0.8$ is $\mathpzc{d}(x_1^0,x_2^0) \simeq 1.1$ while $\Delta$ from Theorem \ref{th:check_local_curvature_assumption} is around $8.6$.
    }
    \label{fig:target_mixture}
\end{figure}

\begin{figure}[ht]
    \centering
    \includegraphics[width=1.0\linewidth]{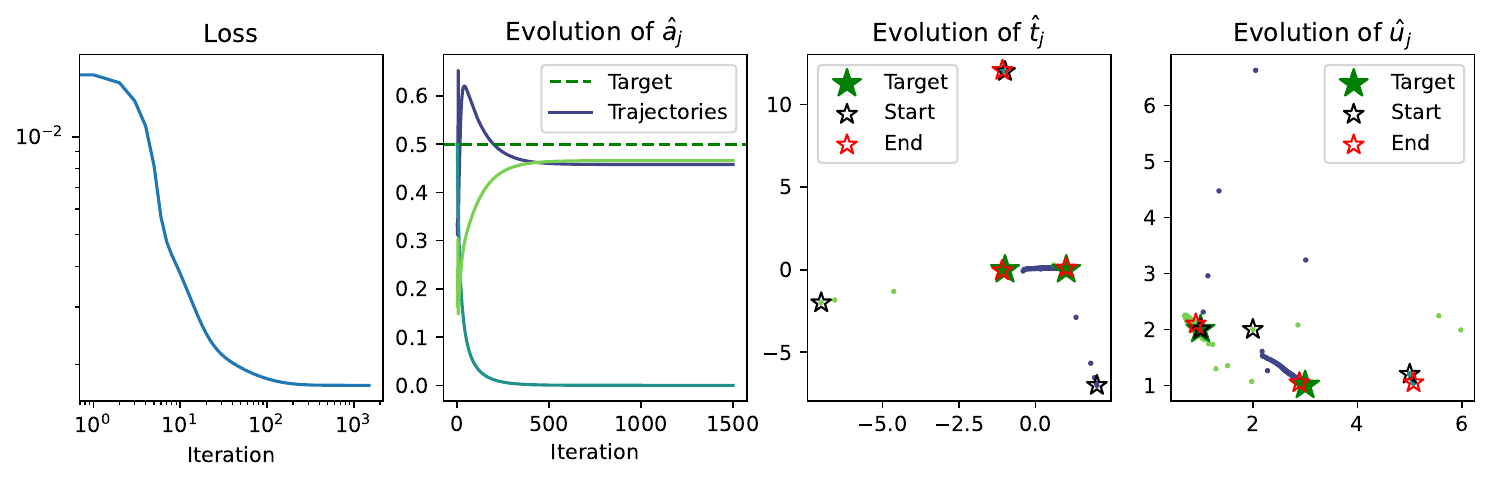}
    \caption{We take $\tau=0.8, \kappa=0.01$ and initialize CPGD with 3 particles. From left to right: the evolution of the loss during training; the evolution of the weights $\hat{a}$; of the parameters $\hat{t}$; and $\hat{u}$. The target is represented by green full stars, the initialization by empty black stars, the last estimate by empty red stars. The other iterates are represented by small dots, of different color for each particle.}
    \label{fig:cpgd_training}
\end{figure}

\paragraph{Non-diagonal covariance matrices}
In this contribution, we are only dealing with the case where the covariances of the mixture are diagonal.
Note that we can easily extend our model and results to the case where all covariances share the same (known) orientation, \ie we can diagonalize them in the same basis. However, we are not yet able to process general covariances with varying or unknown orientations. Indeed, the Fisher-Rao metric seems impractical to work with, mainly because it might be tricky to prove the compatibility between the associated distance and the semi-distance, and obtain the desired controls on the kernel. A possible outcome could be to consider an alternative metric.

\paragraph{Minimal separation condition}
Our results are established under a separation condition for the mixture components (Theorem \ref{th:main_theorem_certif}). In \citep{supermix}, the analysis is carried out for location Gaussian mixture models with a common, fixed covariance matrix across components. In this pure location setting, the minimal separation between components is allowed to shrink to zero as the sample size $n$ increases. This favorable behavior stems from the translation-invariant structure of the kernel arising in the pure location setting. Such kernels have been investigated in \citep{decastro_gribonval_jouvin}: this structure enables the construction of certificates based on a ``pivot'' kernel, whose decay can increase when dealing with closer particles. 

In contrast, we consider a substantially more complex location–scale model, where component-specific covariances are unknown and allowed to vary across components. In this setting, the associated kernel loses translation invariance and does not admit a practical pivot construction. The decay of $K_{\rm norm}$ does not allow us to consider $\Delta \to 0$ (for $(t,u)\neq (t',u') \in \R \times [u_{\min},+\infty)$, $e^{-\frac{(t-t')^2}{2(u^2+u'^2+\tau^2)}}$ cannot be as close to 0 as wanted by changing $\tau$).

\paragraph{Technical side notes}
We have used the semi-distance $\mathpzc{d}$ to define the near and far regions, and the Fisher-Rao metric to control the certificates with Taylor expansions. These are somewhat arbitrary or improvable choices. The semi-distance $\mathpzc{d}$ seems appropriate to control the kernel, because it measures precisely how 2 points are spaced for the kernel. But the downside is that it does not satisfy the triangle inequality. As a consequence, we need to take $\Delta_\tau$ much larger than $\Delta$. The Fisher-Rao metric allows us to retrieve quite easily global bounds for the kernel (\ie evaluations of the bounds $B_{ij}$, see item 1 of Definition \ref{def:positive_curvature}), but we could have used the Euclidean metric—although it leads to controls that depend on bounds on the variance.

\newpage
\section*{Summary of the main results}

The target measure is $\mu^0=\sum_{j=1}^s a_j^0\delta_{(t_j^0,u_j^0)} \in \M(\X)$. 
We assume in the following that $\X$ is a compact set of $\R^d \times [u_{\min},u_{\max}]^d$, and that $n\geq 2$, $s\geq 2$, $0<\tau \leq u_{\min}$. 
With the exception of prediction with small regularization, the results reported in the following tables require the following assumption. Keep in mind that $\mathpzc{d}$ depends on $\tau$.
\begin{assumption}\label{assumption:estimation}
 \[\min_{i \neq j} \mathpzc{d}(x_i^0, x_j^0) \geq \max \left\{\frac{\sqrt{u_{\max}^2 +\frac{1}{4} \frac{0.3025^2}{d}(2u_{\max}^2+\tau^2)}}{u_{\min}} \left(\Delta+\frac{0.3025}{\sqrt{d}}\right) \: , \: 2\frac{u_{\max}}{u_{\min}} \Delta \right\} + \sqrt{d\ln \left(\frac{u_{\max}^2}{u_{\min}^2}\right)}\] where $\Delta=2\sqrt{11.9+3\ln(d+6.62)+\ln(s-1)}$.
\end{assumption}

Some constants are omitted in the bounds, this is expressed using $\lesssim_c$ when the dependence on $c$ is not taken into account.

\begin{longtable}{>{\raggedright\arraybackslash}m{9em}>{\raggedright\arraybackslash}m{8em}>{\raggedright\arraybackslash}m{27em}} \caption{Recovery guarantees, $\kappa$ not depending on $s$} \label{table:results_summary_kappa_not_depending_on_s}\\
\toprule
 & \textbf{Parameters} & \textbf{Bounds} \\
 \midrule[\heavyrulewidth]
\endhead

\bottomrule
\endfoot

\textbf{Estimation} (Prop.~\ref{prop:effective_near_regions}, Thm.~\ref{th:main_theorem_certif}, Lem.~\ref{lemma:tilde_varepsilon_3},
Cor.~\ref{cor:control_renormalized_estimate})
& $\kappa = \frac{\sqrt{2}}{(2\pi\tau^2)^{d/4} \sqrt{n}}$
& For $0 < r_e \leq \frac{0.3025}{\sqrt{d}}$, $\mathbb{E}\left[ \left|\hat{\mu}_{n,\omega}(\X_j^{\rm near}(r_e)) - \mu_{\omega}^0(\X_j^{\rm near}(r_e)) \right| \right] \lesssim_d \frac{s}{r_e^2 \sqrt{n} \tau^{d/2}}$ \newline 
For $n^{-1/6} \leq \frac{0.3025}{\sqrt{d}}$, \newline $\mathbb{E}\left[ \left| a_j^0 - \frac{\hat{\mu}_{n,\omega}}{W}(\X_j^{\rm near}(n^{-1/6})) \right| \right] \lesssim_d \left(\frac{s}{\tau^{d/2}W(x_j^0)} + a_j^0\right)n^{-1/6}$ \\
\midrule

\textbf{Sparsity} (Cor.~\ref{cor:ndsc})
&$c_{\kappa,n}>0$,\newline $c_{\kappa,n}~=~o(\sqrt{n})$\newline $\kappa = \frac{c_{\kappa,n}}{(2\pi \tau^2)^{d/4} \sqrt{n}}$ 
& For $n \geq n_0$ (depends on $\mu^0, \X, \tau, (c_{\kappa,n})_n$), \newline with prob.\ $\geq 1 - C_\Gamma e^{-\left( \frac{\gamma_0 c_{\kappa,n}}{C_\Gamma} \right)^2}$:
 $\mu_{n,\omega}^\star = \sum_{j=1}^s \omega_j^\star \delta_{x_j^\star}$ and \newline
 $\mathpzc{d}(x_j^\star, x_j^0)^2 \lesssim_{\X, \mu^0, \tau} \frac{c_{\kappa,n}}{\sqrt{n}}$, \quad $|a_j^0 - a_j^\star| \lesssim_{\X, \mu^0, \tau} \frac{c_{\kappa,n}}{\sqrt{n}}$ \\
\midrule

\textbf{Prediction with small regularization}~(Prop.~\ref{prop:prediction_under_small_reg})
& $\tau = \frac{\sqrt{2} u_{\min}}{\sqrt{\ln n}}$ \newline $\kappa = \frac{4 (\ln n)^{d/2}}{(4\pi u_{\min}^2)^{d/2} n}$
& $\mathbb{E}\left[ \| \Phi \frac{\hat{\mu}_{n,\omega}}{W} - \Phi \mu^0 \|_{L^2}^2 \right] \lesssim_{u_{\min}, u_{\max}, d} \frac{(\ln n)^{d/2}}{n}$ \\
\midrule

\textbf{Prediction with a good estimator} (Thm.~\ref{th:prediction_error})
& $\tau = \frac{\sqrt{2} u_{\min}}{\sqrt{\ln n}}$ \newline $\kappa~=~\frac{\sqrt{2}(\ln n)^{d/2}}{(4\pi u_{\min}^2)^{d/4} \sqrt{n}}$
& $\mathbb{E}\left[ \| \Phi \frac{\hat{\mu}_{n,\omega}}{W} - \Phi \mu^0 \|_{L^2}^2 \right] \lesssim_{u_{\min}, u_{\max}, d} \frac{s (\ln n)^{d/2}}{n}$ \\
\end{longtable}

\vspace{-0.1cm}
\begin{longtable}{>{\raggedright\arraybackslash}m{9em}>{\raggedright\arraybackslash}m{8em}>{\raggedright\arraybackslash}m{27em}}
\caption{Recovery guarantees, $\kappa$ depending on $s$} \label{table:results_summary_kappa_depending_on_s} \\
\toprule
 & \textbf{Parameters} & \textbf{Bounds} \\
 \midrule[\heavyrulewidth]
\endhead

\bottomrule
\endfoot

\textbf{Estimation, v2} (Rk.~\ref{remark:choice_kappa_cor_near_effective}, Thm.~\ref{th:main_theorem_certif}, Lem.~\ref{lemma:tilde_varepsilon_3})
& $\kappa = \frac{\sqrt{2}}{(2\pi\tau^2)^{d/4} \sqrt{sn}}$
& For $0 < r_e \leq \frac{0.3025}{\sqrt{d}}$, \newline $\mathbb{E}\left[ \left|\hat{\mu}_{n,\omega}(\X_j^{\rm near}(r_e)) - \mu_{\omega}^0(\X_j^{\rm near}(r_e)) \right| \right] \lesssim_d \frac{\sqrt{s}}{r_e^2 \sqrt{n} \tau^{d/2}}$ \newline 
For $n^{-1/6} \leq \frac{0.3025}{\sqrt{d}}$, \newline $\mathbb{E}\left[ \left| a_j^0 - \frac{\hat{\mu}_{n,\omega}}{W}(\X_j^{\rm near}(n^{-1/6})) \right| \right] \lesssim_d \left(\frac{\sqrt{s}}{\tau^{d/2}W(x_j^0)} + a_j^0\right)n^{-1/6}$ \\
\midrule

\textbf{Prediction with a good estimator, v2} (Rk.~\ref{remark:choice_kappa_pred})
& $\tau = \frac{\sqrt{2} u_{\min}}{\sqrt{\ln n}}$ \newline $\kappa = \frac{\sqrt{2}}{(2\pi \tau^2)^{d/4} \sqrt{sn}}$
& If $s=O(n (\ln n)^{d/2})$, $\mathbb{E}\left[ \| \Phi \frac{\hat{\mu}_{n,\omega}}{W} - \Phi \mu^0 \|_{L^2}^2 \right] \lesssim_{u_{\min}, u_{\max}, d} \frac{(\ln n)^{d/2}}{n}$ \\
\end{longtable}

\newpage

\section*{Notation}\label{section:notation}
\renewcommand{\arraystretch}{1.4}

\begin{longtable}{p{0.17\textwidth}p{0.78\textwidth}}
\caption{Table of notations} \label{table:notations}\\
\toprule
\multicolumn{2}{c}{\textbf{Global notation}} \\
\midrule
$\X$ & compact of $\R^d\times [u_{\min},+\infty)^d$ with $u_{\min},u_{\max}>0$ \\
$\M(\X)^+$ & nonnegative Radon measures on $\X$ \\
$W$ & reparameterization function. For $x=((t_1,\ldots,t_d),(u_1,\ldots,u_d))~\in~\R^d~\times~[u_{\min},+\infty)^d$, $W(x)= \prod_{k=1}^d (2\pi)^{-1/4}(2u_k^2+\tau^2)^{-1/4}$ \\
$\mu^0,\mu_\omega^0$ & resp.\ the target probability measure $\sum_{j=1}^{s}a_j^0 \delta_{x_j^0}$ where $x_j^0=(t_j^0,u_j^0) \in \X$; its reparameterized version $W\mu^0=\sum_{j=1}^s \omega_j^0 \delta_{x_j^0}$ \\
$\varphi$, $\sigma$, $\Phi$ & resp.\ the function $z\in\R\mapsto \frac{e^{-\frac{z^2}{2}}}{\sqrt{2\pi}}$; its Fourier transform; the operator $\displaystyle\mu~\mapsto~\left(z \in \R^d \mapsto \int_{\R^d \times [u_{\min},+\infty)^d} \prod_{k=1}^d \frac{1}{u_k} \varphi\left(\frac{z_k - t_k}{u_k}\right) \,\d \mu(t,u)\right)$ \\
$X_1,\ldots,X_n$ & i.i.d.\ observations in $\R^d$, drawn from $f^0=\Phi \mu^0$ \\
$\mathbb{E}$ & expected value \wrt $X_1,\ldots,X_n$ \\
$\hat{f}_n$ & empirical density $\frac{1}{n} \sum_{i=1}^{n} \delta_{X_i}$ \\
$\kappa$ & regularization constant \\
$\hat{\mu}_{n,\omega},\hat{\mu}_n, \mu_{n,\omega}^\star$ & resp.\ a measure of $\M(\X)^+$ such that $J_W(\hat{\mu}_{n,\omega})~\leq~J_W(\mu_{\omega}^0)$ (see \eqref{eq:pb_BLASSO_gaussian_kernel_norm}); the measure $\frac{\hat{\mu}_{n,\omega}}{W}$; an exact solution of \eqref{eq:pb_BLASSO_gaussian_kernel_norm} \\
$\Gamma_n, \rho_n^2$ & resp.\ the noise $L\circ \hat{f}_n - L\circ f^0$; a bound on $\E{\norm{\Gamma_n}_{\L}^2}$ equal to $\frac{4}{(2\pi)^{d/2}\tau^d n}$ \\\\
\midrule
\multicolumn{2}{c}{\textbf{Kernel, differential geometry}} \\
\midrule
$\tau$, $\lambda$, $\Lambda$, $L$ & resp.\ the smoothing parameter $\tau$; the function $z\in\R^d\mapsto \frac{e^{-\frac{\norm{z}_2^2}{2\tau^2}}}{(2\pi\tau^2)^{d/2}}$; its Fourier transform; the operator $f \mapsto \lambda * f$ \\
$\L$ & RKHS associated with $\lambda$ (scalar product given by \eqref{eq:dot_product_L}) \\
$\Psi, K_{\rm norm}$ & resp.\ the feature map $\mu\in \M(\R^d \times [u_{\min},+\infty)^d) \mapsto L\circ \Phi\frac{\mu}{W}$; the normalized kernel $(x,x')\in \R^d \times [u_{\min},+\infty)^d \mapsto \innerprod{\Psi \delta_x}{\Psi \delta_{x'}}_\L$ \\
$\mathpzc{d}$ & semi-distance $(x,x')\mapsto \sqrt{-2\ln(K_{\rm norm}(x,x'))}$ (expression given by \eqref{eq:def_semi_distance}) \\
$\X_j^{\rm near}(r),\X^{\rm far}(r)$ & resp.\ the jth near region of radius $r$, $\{x\in \X \, : \, \mathpzc{d}(x,x_j^0)\leq r\}$; the far region $\X~\setminus~\bigcup_{j=1}^s~\X_j^{\rm near}(r)$ \\
$\eta$ & dual certificate for $\mu^0$, of the form $\Psi^* p$ with $p\in \L$ \\
$D_{\eta}(\hat{\mu}_{n,\omega},\mu_{\omega}^0)$ & Bregman divergence $\norm{\hat{\mu}_{n,\omega}}_{\rm TV}-\norm{\mu_{\omega}^0}_{\rm TV}-\int_{\X}\eta\, \d\left(\hat{\mu}_{n,\omega}-\mu_{\omega}^0 \right)$ \\
$\mathfrak{g},\mathfrak{d}_\mathfrak{g}$ & resp.\ the Fisher-Rao metric (see \eqref{eq:def_fisher_rao_metric}); the associated distance \\
$\gamma,\tilde \gamma$ & geodesic for the Fisher-Rao metric (parameterized on $[0,1]$ and by arc-length respectively) \\\\
\bottomrule
\end{longtable}

\newpage
\printbibliography

\newpage

\appendix
{
\begin{center}
\LARGE
    Gaussian Mixture Model with unknown diagonal covariances via continuous sparse regularization\\ --- \\ Appendix
\end{center}

\bigskip
{\ }
}
\section{Functional framework}\label{section:functional_framework}
The BLASSO operates on the space of Radon measures. In this section, we provide definitions of the operators on measures used throughout the paper.

Let $A \subset \R^d$.
In the following, $\mathcal{L}^\infty(A)$ is the set of bounded functions from $A$ to $\R$; $L^1(A)$ (resp. $L^2(A)$) the set of functions whose absolute value (resp. square) have a finite integral. The notions of convolution and Fourier transform can be extended to measures. 

\begin{definition}[Convolution $g* \mu$]
Let $A \subset \R^p$. The convolution between $g\in \mathcal{L}^\infty(A)$ and $\mu \in \M(A)$ is defined by \[g * \mu \coloneq \int_{A} g(\dotp - x) \d \mu(x) \,.\]
\end{definition}

\begin{definition}[Fourier transform over $L^1(A)$ and $\M(A)$]
Let $A \subset \R^p$. We use the Fourier transform defined for $g\in L^1(A)$ by \[\F{g}(\xi)\coloneq \int_{A} e^{-i \innerprod{\xi}{x}} g(x) \, \d x \,.\]
We also use its extension to $\M(A)$:
\[\F{\mu}(\xi)\coloneq \int_{A} e^{-i \innerprod{\xi}{x}} \, \d\mu(x) \quad \forall \, \mu \in \M(A)\,.\]
\end{definition}

\noindent
The standard properties of convolution and Fourier transform apply, such as $\F{g*\mu}=\F{g}\F{\mu}$ for all $g \in L^1(A) \cap \mathcal{L}^\infty(A)$ and all $\mu \in \M(A)$ (see, for example, \citep[Part 2]{rudin_functional_analysis}).

\section{Existence of a solution to the BLASSO}
\begin{proposition}\label{prop:existence_solution_blasso}
 The problem \eqref{eq:pb_BLASSO_gaussian_kernel_norm} has a solution.
\end{proposition}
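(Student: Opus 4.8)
The plan is to apply the direct method of the calculus of variations: take a minimizing sequence, extract a weak-$*$ convergent subsequence using sequential compactness of bounded sets in $\M(\X)$ (Banach--Alaoglu, applicable since $\X$ is compact so $\C_0(\X)=\C(\X)$ is separable and $\M(\X)$ is its dual), and then verify that $J_W$ is sequentially weak-$*$ lower semicontinuous, which forces the weak-$*$ limit to be a minimizer. First I would record that $J_W\geq 0$ and that $J_W(\mu_\omega^0)<+\infty$, so $m\coloneq\inf_{\mu\in\M(\X)^+}J_W(\mu)$ is finite; let $(\mu_k)_k\subset\M(\X)^+$ be a minimizing sequence. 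From $\kappa\|\mu_k\|_{\mathrm{TV}}\leq J_W(\mu_k)\to m$ we get $\sup_k\|\mu_k\|_{\mathrm{TV}}<+\infty$, so by Banach--Alaoglu there is a subsequence (not relabeled) and $\hat\mu\in\M(\X)$ with $\mu_k\overset{*}{\rightharpoonup}\hat\mu$. Nonnegativity passes to the limit: for $\eta\in\C(\X)$, $\eta\geq 0$, we have $\int\eta\,\d\hat\mu=\lim_k\int\eta\,\d\mu_k\geq 0$, so $\hat\mu\in\M(\X)^+$.

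Next I would check lower semicontinuity of the two terms separately. The total variation norm is the dual norm on $\M(\X)$, hence weak-$*$ lower semicontinuous: $\|\hat\mu\|_{\mathrm{TV}}\leq\liminf_k\|\mu_k\|_{\mathrm{TV}}$. For the data fidelity term, the key point is that the feature map $\Psi=L\circ\Phi(\tfrac{\cdot}{W})$ is weak-$*$-to-weak continuous from $\M(\X)$ to $\L$: since $x\mapsto\Psi\delta_x$ is continuous on the compact $\X$ with values in $\L$ (Remark~\ref{remark:smoothness_Psi_delta_x}), for any fixed $g\in\L$ the function $x\mapsto\innerprod{g}{\Psi\delta_x}_\L=[\Psi^*g](x)$ lies in $\C(\X)$, whence $\innerprod{g}{\Psi\mu_k}_\L=\int\Psi^*g\,\d\mu_k\to\int\Psi^*g\,\d\hat\mu=\innerprod{g}{\Psi\hat\mu}_\L$; thus $\Psi\mu_k\rightharpoonup\Psi\hat\mu$ weakly in $\L$. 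Then $L\circ\hat f_n-\Psi\mu_k\rightharpoonup L\circ\hat f_n-\Psi\hat\mu$, and the squared Hilbert norm $\tfrac12\|\cdot\|_\L^2$, being convex and norm-continuous, is weakly lower semicontinuous, so $\tfrac12\|L\circ\hat f_n-\Psi\hat\mu\|_\L^2\leq\liminf_k\tfrac12\|L\circ\hat f_n-\Psi\mu_k\|_\L^2$. Adding the two inequalities gives $J_W(\hat\mu)\leq\liminf_k J_W(\mu_k)=m$, and since $\hat\mu\in\M(\X)^+$ we conclude $J_W(\hat\mu)=m$, i.e.\ $\hat\mu$ is a solution.

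The only mildly delicate step is the weak-$*$-to-weak continuity of $\Psi$, and concretely that $\Psi^*g\in\C(\X)$ for every $g\in\L$; this is immediate once one has the continuity of $x\mapsto\Psi\delta_x=L\circ\Phi\delta_x/W(x)$ in $\L$, which follows from the explicit Gaussian form and the smoothness noted in Remark~\ref{remark:smoothness_Psi_delta_x} (compactness of $\X$ and positivity of $W$ there ensure $1/W$ is bounded and continuous). Everything else is a routine application of Banach--Alaoglu together with weak lower semicontinuity of convex continuous functionals; no separation assumption on $\mu^0$, no dual certificate, and no bound on $\tau$ is needed.
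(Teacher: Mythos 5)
Your proposal is correct and follows essentially the same route as the paper's proof: weak-$*$ compactness of bounded sets in $\M(\X)$ via Banach--Alaoglu (with coercivity coming from the TV term), weak-$*$ lower semicontinuity of the TV norm, and weak-$*$-to-weak continuity of $\Psi$ deduced from the continuity of $x\mapsto\Psi\delta_x$ (equivalently, continuity of $K_{\rm norm}$, as in the paper via \citep[Lemma 4.29]{steinwart}), which makes the data-fidelity term lower semicontinuous. The only cosmetic difference is that you phrase it with a minimizing sequence while the paper restricts to a weak-$*$ compact ball of $\M(\X)^+$ and minimizes the lower semicontinuous $J_W$ there; the content is the same.
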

\begin{proof}
\underline{$J_{W}$ is lower semi-continuous on $\M(\X)^+$ for the weak* convergence:} 
The TV norm is lower semi-continuous for the weak* convergence. It then suffices to show that $\mu \mapsto L \circ \Phi \frac{\mu}{W}$ is weak* to weak continuous. 
Let $\mu_j \overset{*}{\rightharpoonup} \mu$ in $\M(\X)$. We want to show that for $f\in\L$, \[\innerprod{L\circ \Phi \frac{\mu_j}{W}}{f}_{\L}\rightarrow~\innerprod{L\circ \Phi \frac{\mu}{W}}{f}_{\L}\,.\]
Using \citep[Lemma 4.29]{steinwart}, as \[K_{\rm norm}: (x,x') \in (\R^d \times [u_{\min},+\infty)^d)^2 \mapsto \innerprod{L\circ \Phi \frac{\delta_x}{W}}{L\circ \Phi \frac{\delta_{x'}}{W}}_\L\] is continuous (see \eqref{eq:kernel_K_norm}), we have \[
\left(x \mapsto L\circ \Phi \frac{\delta_x}{W}\right) \in \C(\X,\L)\,.
\]
Hence \[\left(x\mapsto \innerprod{L\circ \Phi \frac{\delta_x}{W}}{f}_{\L}\right) \in \C(\X)\,.\] 
As $\M(\X)=\C(\X)^*$, it comes \[\int \innerprod{L\circ \Phi \frac{\delta_x}{W}}{f}_{\L}\, \d \mu_j(x)\rightarrow \int\innerprod{L\circ \Phi \frac{\delta_x}{W}}{f}_{\L}\, \d \mu(x)\,,\] which concludes this part of the proof.
\\\underline{Conclusion:} The result follows noticing that $J_{W}(\mu)\underset{\norm{\mu}_{\rm TV}\to \infty}{\longrightarrow} \infty$: we can restrict the problem to a closed ball of $\M(\X)^+$. By the Banach–Alaoglu theorem, this ball is weakly* compact (a ball of $\M(\X)$ is weakly* compact, and $\M(\X)^+$ is a weakly* closed subspace of $M(\X)$). We deduce the existence of a minimizer of $J_W$ on $\M(\X)^+$.
\end{proof}

\section{Proof of Lemma \ref{lemma:control_noise}}\label{section:proof_control_noise}
\underline{Expected value of $\norm{\Gamma_n}_\L^2$:}
 Note that, according to the definition of $\Gamma_n$,
\[
\Gamma_n = \frac{1}{n} \sum_{i=1}^{n}\left(L \circ \delta_{X_i} - L \circ f^0 \right)\,.
\]
For all $h \in \L$, for all $i \in \{1,\ldots, n\}$, we have $\mathbb{E}\left[\innerprod{L \circ \delta_{X_i}}{h}_\L\right]=\innerprod{L \circ f^0}{h}_\L$. This entails that $\E{\innerprod{\Gamma_n}{h}_\L}=0$. 
Defining \[Z_i\coloneq L \circ \delta_{X_i}- L\circ f^0\] (where we recall that $f^0=\Phi \mu^0$), observe that $Z_1,\ldots,Z_n$ are i.i.d. Since for all $i \in \{1,\ldots, n\}$, \[\norm{Z_i}_\L^2=\frac{1}{(2\pi)^d} \int_{\R^d} \Lambda |\F{\delta_{X_i}-f^0}|^2\leq \frac{4 \int_{\R^d} \Lambda}{(2 \pi)^d}\,,\] we have 
\begin{align*}
 \norm{\Gamma_n}_\L^2& = \frac{1}{n^2}\sum_{i=1}^n \norm{Z_i}_\L^2 + \frac{1}{n^2} \sum_{i \neq j} \innerprod{Z_i}{Z_j}_\L \,, \\
 & \leq \frac{4 \int_{\R^d} \Lambda}{(2 \pi)^d n} + \frac{1}{n^2} \sum_{i \neq j} \innerprod{Z_i}{Z_j}_\L \,.
\end{align*}
We deduce that $\E{\norm{\Gamma_n}_\L^2}\leq \frac{4 \int_{\R^d} \Lambda}{(2 \pi)^d n}$ using that $\mathbb{E}[Z_i]=0$ for all $i\in \lbrace 1,\dots, n \rbrace$.
\\\underline{Control in probability:} The control in probability of $\norm{\Gamma_n}_\L^2$ comes from \citep[Lemma 3]{supermix_supp}, and stems from results on U-processes (see \citep[Proposition 2.3]{u_process}). In particular, we have 
\begin{equation}
\forall \rho >0\,, \quad \P{\norm{\Gamma_n}_{\L}^2> \rho \frac{C_{\Gamma}^2 }{n\tau^d(2\pi)^{d/2}}} \leq C_{\Gamma}e^{-\rho} \, ,
\label{eq:deviation_Gamma_n}
\end{equation}
for some positive constant $C_\Gamma>0$. 
\\\underline{Expected value of $\norm{\Gamma_n}_\L^4$:} Using \eqref{eq:deviation_Gamma_n}, it comes that
\begin{align*}
 \E{\norm{\Gamma_n}_\L^4}&= \int_0^\infty \P{\norm{\Gamma_n}_\L^4> x} \, \d x \,,\\
 &=\int_0^\infty \P{\norm{\Gamma_n}_\L^2> \sqrt{x}} \, \d x \,,\\
 &\leq \int_0^\infty C_\Gamma e^{-\sqrt{x} \frac{n \tau^d (2\pi)^{d/2}}{C_\Gamma^2}}\, \d x \,,
\end{align*}
so \[\E{\norm{\Gamma_n}_\L^4} \leq C_\Gamma\frac{C_\Gamma^4}{n^2 \tau^{2d} (2\pi)^{2d}} \int_0^\infty e^{-\sqrt{x}} \, \d x = \frac{2C_\Gamma^5}{n^2 \tau^{2d} (2\pi)^{2d}} \,.\]
This proves the desired result.

\section{Proof of Theorem \ref{th:estimation_error}}\label{section:proof_th_estimation_error}
The following proof is standard when dealing with the BLASSO procedure. Its main arguments can be found in \citep{supermix}; we adapt their proof to our setting. It is based on bounds on the Bregman divergence introduced in \eqref{eq:def_bregman_divergence}. With $W$ defined by \eqref{eq:def_W}, we denote in the following $\hat{\mu}_{n}=\frac{\hat{\mu}_{n,\omega}}{W}$. We also recall from \eqref{eq:mu_omega} that $\mu^0 = \frac{\mu_{\omega}^0}{W}$.
\\\underline{Upper bound on the Bregman divergence:} Since $J_{W}(\hat{\mu}_{n,\omega})\leq J_W(\mu_\omega^0)$, we get 
\begin{equation}\label{eq:basic_ineq_mu_minimizer_v0}
 \norm{L \circ \hat{f}_n-L \circ \Phi \hat{\mu}_{n}}_{\L}^2+2\kappa \norm{\hat{\mu}_{n,\omega}}_{\rm TV} 
\leq\norm{L\circ \hat{f}_n-L \circ \Phi \mu^0}_{\L}^2+2\kappa \norm{\mu_{\omega}^0}_{\rm TV}
\end{equation}
which can be rewritten as
\begin{equation}
\norm{L \circ \hat{f}_n-L \circ \Phi \hat{\mu}_n}_{\L}^2+2\kappa D_{\eta}\left(\hat{\mu}_{n,\omega}, \mu_\omega^0\right) +2\kappa \int_{\X} \eta \, \d(\hat{\mu}_{n,\omega}-\mu_\omega^0) \leq\norm{\Gamma_n}_{\L}^2 \, ,
\label{eq:basic_ineq_mu_minimizer}
\end{equation}
for any dual certificate $\eta$ satisfying the requirements of Assumption \ref{assumption:existence_non-degenerate_certif}. As $\eta=\Psi^*p$, recalling \eqref{eq:scalar_product_Psi*} we have \[\int_{\X} \eta\, \d(\hat{\mu}_{n,\omega}-\mu_\omega^0) =\innerprod{p}{L\circ \Phi(\hat{\mu}_n - \mu^0)}_{\L}\,.\] So using Cauchy-Schwarz inequality, \eqref{eq:basic_ineq_mu_minimizer} leads to 
 \[
 \norm{L \hat{f}_n-L \circ \Phi \hat{\mu}_n}_{\L}^2+ 2\kappa D_{\eta}\left(\hat{\mu}_{n,\omega}, \mu_\omega^0\right)-2\kappa\norm{p}_{\L}\norm{L\circ\Phi\mu^0-L \circ \Phi \hat{\mu}_n}_{\L} \leq \norm{\Gamma_n}_{\L}^2\,.
 \] The triangle inequality \[\norm{L\circ\Phi\mu^0-L \circ \Phi \hat{\mu}_n}_{\L}\leq \norm{\Gamma_n}_{\L}+\norm{L \hat{f}_n-L \circ \Phi \hat{\mu}_n}_{\L}
 \]
 then leads to
 \begin{equation*}
 \left(\norm{L \circ\hat{f}_n-L \circ \Phi \hat{\mu}_n}_{\L}-\kappa \norm{p}_{\L}\right)^2+ 2\kappa D_{\eta}\left(\hat{\mu}_{n,\omega}, \mu_\omega^0\right) \leq \left(\norm{\Gamma_n}_{\L}+\kappa\norm{p}_{\L}\right)^2 \,.
 \end{equation*}
As the Bregman divergence is positive, we deduce from the previous inequality that
 \begin{equation}\label{eq:ineq_mu_minimizer}
D_{\eta}\left(\hat{\mu}_{n,\omega}, \mu_\omega^0\right) \leq \frac{\norm{\Gamma_n}_{\L}^2}{2\kappa}+\frac{\kappa}{2} \norm{p}_{\L}^2+ \norm{\Gamma_n}_{\L}\norm{p}_{\L} \quad \text{and} \quad \norm{L \circ\hat{f}_n-L \circ \Phi \hat{\mu}_n}_{\L} \leq \norm{\Gamma_n}_{\L}+2\kappa\norm{p}_{\L}\,.
\end{equation}
The bound in expected value on the Bregman divergence follows by applying Lemma \ref{lemma:control_noise} and using $\norm{p}_\L \leq \sqrt{c_p s}$. We have 
\begin{equation}\label{eq:ineq_expected_value_bregman}
 \E{D_{\eta}\left(\hat{\mu}_{n,\omega}, \mu_{\omega}^0\right)} \leq \frac{\rho_n^2}{2\kappa}+\kappa \frac{c_p}{2} s+ \rho_n \sqrt{c_ps}\,.
\end{equation}
Taking $\kappa=\frac{\rho_n}{\sqrt{c_p}}$ gives 
\begin{equation} \label{eq:expected_value_divergence_bregman_kappa_agnostic}
 \E{D_{\eta}\left(\hat{\mu}_{n,\omega}, \mu_{\omega}^0\right)} \leq \frac{\sqrt{c_p}}{2}\rho_n(1+ \sqrt{s})^2\,.
\end{equation}
\underline{Lower bound on the Bregman divergence:} We use the controls of the non-degenerate certificate $\eta$ on the near and far regions. Since $\hat{\mu}_{n,\omega}$ is nonnegative, according to Definition \ref{def:non_degenerate_certificate} we have 
\begin{equation} \label{eq:ineq_bregman_epsilon}
D_{\eta}\left(\hat{\mu}_{n,\omega}, \mu_\omega^0\right)= \int (1- \eta) \, \d \hat{\mu}_{n,\omega} \geq \varepsilon_0 \hat{\mu}_{n,\omega}(\X^{\rm far}(r))+ \varepsilon_2 \sum_{j=1}^{s} \int_{\X_j^{\rm near}(r)} \mathfrak{d}_{\mathfrak{g}}(x,x_j^0)^2 \, \d \hat{\mu}_{n,\omega}(x)\,.
\end{equation}
Combining \eqref{eq:expected_value_divergence_bregman_kappa_agnostic} and \eqref{eq:ineq_bregman_epsilon}, we deduce the bound for the far region (item 1 of Theorem \ref{th:estimation_error}).
To control the mass of the estimator on the jth near region, we make use of the local non-degenerate certificate $\eta_j$ (Definition \ref{def:local_non_degenerate_certificate}).
We have, for all $j=1,\ldots,s$, 
\begin{align}
 |\omega_j^0 -\hat{\mu}_{n,\omega}(\X_j^{\rm near}(r)) |
 &= \left| \omega_j^0 - \int \eta_j \, \d \hat{\mu}_{n,\omega} + \int \eta_j \, \d \hat{\mu}_{n,\omega} - \int_{\X_j^{\rm near}(r)} \d \hat{\mu}_{n,\omega} \right| \,, \notag \\
 &\leq \left|\int \eta_j \, \d (\mu_\omega^0-\hat{\mu}_{n,\omega})\right| +\int_{\X^{\rm near}(r) \setminus \X_j^{\rm near}(r)} |\eta_j| \, \d \hat{\mu}_{n,\omega} + \int_{\X_j^{\rm near}(r)} |1-\eta_j| \,\d \hat{\mu}_{n,\omega} \notag \\
 &\quad +\int_{\X^{\rm far}(r)} |\eta_j|\d \hat{\mu}_{n,\omega} \,, \notag \\
 \begin{split}
    &\leq |\innerprod{p_j}{L\circ\Phi \mu^0-L\circ\Phi \hat{\mu}_n}_{\L}| + \tilde\varepsilon_2 \sum_{l=1}^{s}\int_{\X_l^{\rm near}(r)} \mathfrak{d}_{\mathfrak{g}}(x,x_l^0)^2 \, \d \hat{\mu}_{n,\omega}(x) \\
    &\quad + (1-\tilde\varepsilon_0)\hat{\mu}_{n,\omega}(\X^{\rm far}(r))  \,, 
 \end{split}\label{eq:intermediate_control_weights} \\
 &\leq \norm{p_j}_{\L}(2 \norm{\Gamma_n}_{\L} + 2\kappa \norm{p}_{\L}) + \max\left\{\frac{1-\tilde \varepsilon_0}{\varepsilon_0},\frac{\tilde\varepsilon_2}{\varepsilon_2} \right\}D_{\eta}\left(\hat{\mu}_{n,\omega}, \mu_\omega^0\right) \,, \notag
\end{align}
where for the last inequality, we have used \eqref{eq:ineq_bregman_epsilon}, and \[|\innerprod{p_j}{L\circ\Phi \mu^0-L\circ\Phi \hat{\mu}_n}_{\L}| \leq \norm{p_j}_{\L} \left(\norm{\Gamma_n}_{\L}+ \norm{L \circ \hat{f}_n - L \circ\Phi \hat{\mu}_n}_{\L} \right) \leq \norm{p_j}_{\L} \left(2\norm{\Gamma_n}_{\L}+ 2\kappa \norm{p}_{\L}\right) \] together with \eqref{eq:ineq_mu_minimizer}.
As $\norm{p_j}_\L \leq \sqrt{c_p}$, $\norm{p}_\L \leq \sqrt{c_p s}$ and $\kappa=\frac{\rho_n}{\sqrt{c_p}}$, we finally have \[ |\omega_j^0 -\hat{\mu}_{n,\omega}(\X_j^{\rm near}(r)) | \leq 2\sqrt{c_p}(\norm{\Gamma_n}_{\L}+\rho_n \sqrt{s}) +\max\left\{\frac{1-\tilde \varepsilon_0}{\varepsilon_0},\frac{\tilde\varepsilon_2}{\varepsilon_2} \right\}D_{\eta}\left(\hat{\mu}_{n,\omega}, \mu_\omega^0\right)\,.\] The result in expected value follows from $\E{\norm{\Gamma_n}_\L}\leq \rho_n$ (Lemma \ref{lemma:control_noise} used with Jensen's inequality) and \eqref{eq:expected_value_divergence_bregman_kappa_agnostic}.

For the stability of the mass (item 3), we use a similar but simpler reasoning. As $\eta \leq 1$ and $\hat{\mu}_{n,\omega}$ is nonnegative, we have 
\begin{align*}
 \norm{\mu_\omega^0}_{\rm TV}- \norm{\hat{\mu}_{n,\omega}}_{\rm TV}&= \int \eta \, (\d\mu_\omega^0-\d \hat{\mu}_{n,\omega} )+ \int(\eta-1)\, \d \hat{\mu}_{n,\omega}\,, \\
 &\leq \int \eta \, (\d\mu_\omega^0-\d \hat{\mu}_{n,\omega} ) \,, \\
 &= \innerprod{p}{L\circ\Phi \mu^0-L\circ\Phi \hat{\mu}_n}_{\L}\,, \\
 &\leq \norm{p}_{\L}(2 \norm{\Gamma_n}_{\L} + 2\kappa \norm{p}_{\L})\,.
\end{align*}
Using \eqref{eq:basic_ineq_mu_minimizer_v0}, we also have $\norm{\hat{\mu}_{n,\omega}}_{\rm TV}\leq \norm{\mu_\omega^0}_{\rm TV} + \frac{1}{2 \kappa} \norm{\Gamma_n}_{\L}^2$. We can conclude by taking the expectation in these inequalities and using Lemma \ref{lemma:control_noise}.
\\\underline{With an $s$-dependent choice of regularization:} Choosing $\kappa=\frac{\rho_n}{\sqrt{c_p s}}$, \eqref{eq:ineq_expected_value_bregman} gives $\E{D_\eta(\hat{\mu}_{n,\omega},\mu_{\omega}^0)} \leq 2\rho_n \sqrt{c_p s}$. It comes that 
\begin{equation}\label{eq:estim_kappa_s_dependent}
 \E{|\omega_j^0 -\hat{\mu}_{n,\omega}(\X_j^{\rm near}(r)) |} \leq 2\sqrt{c_p}\rho_n (1+ \sqrt{s}) +\max\left\{\frac{1-\tilde \varepsilon_0}{\varepsilon_0},\frac{\tilde\varepsilon_2}{\varepsilon_2} \right\}2\rho_n \sqrt{c_p s}\,.   
\end{equation}

\section{Basic inequalities}
The following lemma gives inequalities useful when dealing with the semi-distance $\mathpzc{d}$. We will use them in various proofs.
\begin{lemma}[Basic inequalities]\label{lemma:basic_inequalities}
Let $a,b\in\R_+^{*}$, $c\geq 1$. Then
 \begin{equation}\label{eq:control_close_variances}
 \frac{a^2+b^2}{2ab} \leq c \iff a \in \left[b(c-\sqrt{c^2 -1}), b(c+\sqrt{c^2 -1})\right]\end{equation}
 and \begin{equation}\label{eq:control_close_variances_2}
 \frac{a^2+b^2}{2ab} \leq c \implies \frac{|a^2-b^2|}{a^2+b^2} \leq \sqrt{c^2-1}
 \end{equation}
 along with 
 \begin{equation}\label{eq:control_close_variances_3}
 \frac{a^2+b^2}{2ab} \leq c \implies \frac{2a^2}{a^2+b^2} \leq c+\sqrt{c^2-1} \,.
 \end{equation}
\end{lemma}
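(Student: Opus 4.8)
The plan is to prove the three statements in the order they are listed, each of which reduces to elementary algebra once the inequality $\frac{a^2+b^2}{2ab}\le c$ is cleared of its denominator. Since $a,b>0$, multiplying through by $2ab$ shows that this hypothesis is equivalent to $a^2-2abc+b^2\le 0$, and this single reformulation drives everything.

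For \eqref{eq:control_close_variances}, I would read $a^2-2abc+b^2\le 0$ as a quadratic inequality in the variable $a$ (with $b$ treated as a parameter). Its discriminant is $4b^2c^2-4b^2=4b^2(c^2-1)\ge 0$ because $c\ge 1$, so it has the two real roots $a_\pm=b\bigl(c\pm\sqrt{c^2-1}\bigr)$. As the leading coefficient is positive, the quadratic is $\le 0$ precisely on the closed interval $[a_-,a_+]$, which is exactly the claimed equivalence; one notes in passing that $a_->0$, again because $c\ge 1$ forces $c>\sqrt{c^2-1}$, so the interval indeed lies in $\R_+^*$.

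For \eqref{eq:control_close_variances_2}, I would start from the identity $(a^2-b^2)^2=(a^2+b^2)^2-4a^2b^2$, which gives
\[
\frac{(a^2-b^2)^2}{(a^2+b^2)^2}=1-\frac{4a^2b^2}{(a^2+b^2)^2}=1-\left(\frac{2ab}{a^2+b^2}\right)^{2}.
\]
The hypothesis $\frac{a^2+b^2}{2ab}\le c$ is the same as $\frac{2ab}{a^2+b^2}\ge \frac1c>0$, hence the right-hand side above is at most $1-\frac1{c^2}=\frac{c^2-1}{c^2}\le c^2-1$, the last step using $c\ge 1$. Taking square roots yields $\frac{|a^2-b^2|}{a^2+b^2}\le\sqrt{c^2-1}$ (in fact the sharper bound $\frac{1}{c}\sqrt{c^2-1}$). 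For \eqref{eq:control_close_variances_3} I would then use the elementary inequality $a^2+b^2\ge 2ab$, valid for all reals, to get $\frac{2a^2}{a^2+b^2}\le\frac{2a^2}{2ab}=\frac ab$, and conclude by the right endpoint of \eqref{eq:control_close_variances}, namely $\frac ab\le c+\sqrt{c^2-1}$; alternatively one may write $\frac{2a^2}{a^2+b^2}=1+\frac{a^2-b^2}{a^2+b^2}$ and bound the second summand by \eqref{eq:control_close_variances_2}, using $1\le c$.

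There is essentially no substantial obstacle here: the lemma is a routine algebraic computation. The only points requiring a moment's care are the sign conditions $a,b>0$ and $c\ge 1$, which are what legitimize multiplying through by $2ab$, taking square roots of the displayed identity, and asserting $a_->0$; once these are kept in mind, each implication is immediate.
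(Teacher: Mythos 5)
Your proposal is correct. For \eqref{eq:control_close_variances} you argue exactly as the paper does: clear the denominator, view $a^2-2abc+b^2\le 0$ as a quadratic in $a$ with discriminant $4b^2(c^2-1)\ge 0$, and read off the interval between the roots $b(c\pm\sqrt{c^2-1})$. For \eqref{eq:control_close_variances_3} your main argument ($a^2+b^2\ge 2ab$ combined with the right endpoint $a\le b(c+\sqrt{c^2-1})$) is the paper's argument up to rewriting, and your alternative via \eqref{eq:control_close_variances_2} and $1\le c$ is also fine. The only genuine divergence is \eqref{eq:control_close_variances_2}: the paper factors $|a^2-b^2|=|a-b|\,|a+b|$, bounds the numerator over $2ab$ using the interval from \eqref{eq:control_close_variances} applied to both factors, and simplifies the resulting product to exactly $\sqrt{c^2-1}$; you instead use the identity $(a^2-b^2)^2=(a^2+b^2)^2-4a^2b^2$ together with $\frac{2ab}{a^2+b^2}\ge \frac1c$, which is shorter, avoids any case distinction on $\min/\max$, and in fact yields the sharper bound $\frac{\sqrt{c^2-1}}{c}$, from which the stated inequality follows since $c\ge 1$. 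Both routes are elementary and valid; yours buys a slightly cleaner computation and a stronger constant, while the paper's keeps everything as a direct consequence of the interval characterization \eqref{eq:control_close_variances}.
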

\begin{proof}
Let $a,b \in \R_+^{*}$ and $c\geq 1$.
 \\\underline{Proof of \eqref{eq:control_close_variances}:} We have 
 \[\frac{a^2+b^2}{2ab} \leq c \iff a^2+b^2-2cab\leq 0\,.\] As $4c^2b^2-4b^2=4b^2(c^2-1)\geq 0$, the roots of this polynomial in $a$ are 
 \[\frac{2cb \pm 2b\sqrt{c^2-1}}{2}=b(c \pm \sqrt{c^2-1})\] from which we deduce \eqref{eq:control_close_variances}.
 \\\underline{Proof of \eqref{eq:control_close_variances_2}:}
 Using \eqref{eq:control_close_variances} and $2ab\leq a^2+b^2$, we get 
 \begin{align*}
 \frac{|a^2-b^2|}{a^2+b^2} &\leq \frac{|a-b||a+b|}{2ab} \,,\\
 &\leq \frac{ \min\{a,b\}(1+c+\sqrt{c^2-1}) \max\{a,b\}(1-(c-\sqrt{c^2-1}))}{2ab} \,, \\
 &= \frac{(1+c+\sqrt{c^2-1})(1+\sqrt{c^2-1}-c)}{2} \,, \\
 &= \sqrt{c^2-1} \,.
 \end{align*}
 \underline{Proof of \eqref{eq:control_close_variances_3}:}
 Using again \eqref{eq:control_close_variances} with $2ab\leq a^2+b^2$, we have 
 \begin{align*}
 \frac{2a^2}{a^2+b^2} &\leq \frac{2ab(c+\sqrt{c^2-1})}{2ab} \,,\\
 &\leq c+\sqrt{c^2-1}\,.
 \end{align*}
\end{proof}

\section{Proofs related to the control of the estimator on the effective near regions}
\subsection{Proof of Proposition \ref{prop:effective_near_regions}}\label{section:proof_prop_effective_near}
The proof is similar to that of Theorem \ref{th:estimation_error} in Section \ref{section:proof_th_estimation_error}. Let $j\in \lbrace 1,\dots, s \rbrace$ and $\eta_j$ a corresponding local non-degenerate dual certificate satisfying the requirements of Assumption \ref{assumption:existence_non-degenerate_certif}. According to Definition \ref{def:local_non_degenerate_certificate}, for all $x\in \X_j^{\rm near}(r)$, we have $|\eta_j|\leq 1+\tilde \varepsilon_2 \mathfrak{d}_{\mathfrak{g}}(x,x_j^0)^2$. We deduce that
\begin{align*}
 |\omega_j^0 -\hat{\mu}_{n,\omega}(\X_j^{\rm near}(r_e)) |&= \left| \omega_j^0 - \int \eta_j \, \d \hat{\mu}_{n,\omega} + \int \eta_j \, \d \hat{\mu}_{n,\omega} - \int_{\X_j^{\rm near}(r_e)} \d \hat{\mu}_{n,\omega} \right| \,, \\
 &\leq \left|\int \eta_j \, \d (\mu_\omega^0-\hat{\mu}_{n,\omega})\right| +\int_{\X^{\rm near}(r) \setminus \X_j^{\rm near}(r)} |\eta_j| \, \d \hat{\mu}_{n,\omega} + \int_{\X_j^{\rm near}(r) \setminus \X_j^{\rm near}(r_e)} |\eta_j| \, \d \hat{\mu}_{n,\omega} \\ & \quad + \int_{\X_j^{\rm near}(r_e)} |1-\eta_j| \,\d \hat{\mu}_{n,\omega} +\int_{\X^{\rm far}(r)} |\eta_j|\d \hat{\mu}_{n,\omega} \,, \\
 &\leq \norm{p_j}_{\L}(2 \norm{\Gamma_n}_{\L} + 2\kappa \norm{p}_{\L}) + \tilde\varepsilon_2 \sum_{l\neq j}\int_{\X_l^{\rm near}(r)} \mathfrak{d}_{\mathfrak{g}}(x,x_l^0)^2 \, \d \hat{\mu}_{n,\omega} + (1-\tilde\varepsilon_0)\hat{\mu}_{n,\omega}(\X^{\rm far}(r))\\
 & \quad + \tilde\varepsilon_2\int_{\X_j^{\rm near}(r_e)} \mathfrak{d}_{\mathfrak{g}}(x,x_j^0)^2 \, \d \hat{\mu}_{n,\omega} +\int_{\X_j^{\rm near}(r)\setminus \X_j^{\rm near}(r_e)} (1+\tilde\varepsilon_2\mathfrak{d}_{\mathfrak{g}}(x,x_j^0)^2) \, \d \hat{\mu}_{n,\omega} \,. 
\end{align*} 
From \eqref{eq:assumption_tilde_varepsilon_3} we get $1 \leq \frac{\tilde\varepsilon_3}{r_e^2} \mathfrak{d}_{\mathfrak{g}}(x_j^0,x)^2$ for all $x\in \X_j^{\rm near}(r) \setminus \X_j^{\rm near}(r_e)$, so $1+\tilde \varepsilon_2 \mathfrak{d}_{\mathfrak{g}}(x_j^0,x)^2 \leq \left(\frac{\tilde\varepsilon_3}{r_e^2}+ \tilde \varepsilon_2 \right)\mathfrak{d}_{\mathfrak{g}}(x_j^0,x)^2$.
Using again \eqref{eq:ineq_bregman_epsilon}, we deduce that 
\begin{equation}\label{eq:prop_control_effective_near}
 |\omega_j^0 -\hat{\mu}_{n,\omega}(\X_j^{\rm near}(r_e))| \leq \norm{p_j}_{\L}(2 \norm{\Gamma_n}_{\L} + 2\kappa \norm{p}_{\L}) + \max\left\{ \frac{1-\tilde \varepsilon_0}{\varepsilon_0}, \frac{1}{\varepsilon_2}\left(\frac{\tilde \varepsilon_3}{r_e^2}+ \tilde\varepsilon_2\right) \right\}D_{\eta}(\hat{\mu}_{n,\omega}, \mu_\omega^0) \,. 
\end{equation}We can conclude the proof using the controls on $\E{\norm{\Gamma_n}_{\L}}$, $\norm{p_j}_\L$, $\norm{p}_\L$, $\E{D_{\eta}(\hat{\mu}_{n,\omega}, \mu_\omega^0)}$ stemming from our assumptions along with Lemma \ref{lemma:control_noise} and \eqref{eq:expected_value_divergence_bregman_kappa_agnostic}. Choosing $\kappa=\frac{\rho_n}{\sqrt{c_p}}$,
\[|\omega_j^0 -\hat{\mu}_{n,\omega}(\X_j^{\rm near}(r_e))| \leq 2\sqrt{c_p}\rho_n (1+ \sqrt{s}) + \max\left\{ \frac{1-\tilde \varepsilon_0}{\varepsilon_0}, \frac{1}{\varepsilon_2}\left(\frac{\tilde \varepsilon_3}{r_e^2}+ \tilde\varepsilon_2\right) \right\}\frac{\sqrt{c_p}}{2} \rho_n (1+\sqrt{s})^2\,.\]
\underline{With an $s$-dependent choice of regularization:} Choosing $\kappa=\frac{\rho_n}{\sqrt{c_p s}}$, using \eqref{eq:prop_control_effective_near} and the control of $\E{D_{\eta}(\hat{\mu}_{n,\omega}, \mu_\omega^0)}$ stemming from \eqref{eq:ineq_expected_value_bregman} we get \begin{align}
 \E{|\omega_j^0 -\hat{\mu}_{n,\omega}(\X_j^{\rm near}(r_e))|} & \leq 4\sqrt{c_p} \rho_n + \max\left\{ \frac{1-\tilde \varepsilon_0}{\varepsilon_0}, \frac{1}{\varepsilon_2}\left(\frac{\tilde \varepsilon_3}{r_e^2}+ \tilde\varepsilon_2\right) \right\}2\rho_n\sqrt{c_p s} \,, \notag \\
 &\lesssim \frac{\sqrt{s}}{\tau^{d/2} \sqrt{n} r_e^2} \label{eq:prop_effective_near_kappa_s_dep}
\end{align}
keeping only the dependence on $s,\tau,r_e,n$.

\subsection{Proof of Corollary \ref{cor:control_renormalized_estimate}}\label{section:proof_cor_effective_near}
Let $0<r_e\leq r$.
We first prove that 
\begin{equation}\label{eq:final_result_bound_reparameterized_estimate_effective_near}
\left|\frac{\hat{\mu}_{n,\omega}}{W}(\X_j^{\rm near}(r_e))-a_j^0\right| \leq (1+H(r)r_e) W(x_j^0)^{-1}\left|\omega_j^0-\hat{\mu}_{n,\omega}(\X_j^{\rm near}(r_e))\right| + a_j^0 H(r)r_e\end{equation} where 
\begin{equation}
 \label{eq:def_H(r)_effective}
 H(r)=\frac{(e^{r^2} + \sqrt{e^{2r^2}-1})^{d/2}-1}{r}\,.
\end{equation}
We use the triangle inequality \[\left|a_j^0-\frac{\hat{\mu}_{n,\omega}}{W}(\X_j^{\rm near}(r_e))\right| \leq \underbrace{\left|a_j^0-\frac{\hat{\mu}_{n,\omega}(\X_j^{\rm near}(r_e))}{W(x_j^0)}\right|}_{\eqcolon A}+ \underbrace{\left|\frac{\hat{\mu}_{n,\omega}(\X_j^{\rm near}(r_e))}{W(x_j^0)}-\frac{\hat{\mu}_{n,\omega}}{W}(\X_j^{\rm near}(r_e))\right|}_{\eqcolon B} \,.\]
\underline{Control of $B$:} We recall the definitions of effective near regions \eqref{eq:def_effective_near_regions} and of $W$ (see \eqref{eq:def_W}). We have
\[\frac{\hat{\mu}_{n,\omega}}{W}(\X_j^{\rm near}(r_e))=\int_{\X_j^{\rm near}(r_e)} \frac{W(x_j^0)}{W(x)} \, \d \frac{\hat{\mu}_{n,\omega}}{W(x_j^0)}(x)\] so \[\left|\frac{\hat{\mu}_{n,\omega}}{W}(\X_j^{\rm near}(r_e))-\frac{\hat{\mu}_{n,\omega}(\X_j^{\rm near}(r_e))}{W(x_j^0)}\right| \leq \max \left\lbrace \sup_{x \in \X_j^{\rm near}(r_e)} \frac{W(x_j^0)}{W(x)} -1 \,, \,1- \inf_{x \in \X_j^{\rm near}(r_e)} \frac{W(x_j^0)}{W(x)} \right \rbrace \times \frac{\hat{\mu}_{n,\omega}(\X_j^{\rm near}(r_e))}{W(x_j^0)} \,.\]
For $x\in\X_j^{\rm near}(r_e)$, as $\prod_{k=1}^d \frac{(u_{j,k}^0)^2+u_k^2+\tau^2}{\sqrt{2( u_{j,k}^0)^2+\tau^2}\sqrt{2u_k^2+\tau^2}}\leq e^{r_e^2}$ and as each term of this product is greater than $1$, we have $\frac{(u_{j,k}^0)^2+u_k^2+\tau^2}{\sqrt{2( u_{j,k}^0)^2+\tau^2}\sqrt{2u_k^2+\tau^2}}\leq e^{r_e^2}$ for all $k=1,\ldots,d$. Using \eqref{eq:control_close_variances}, this implies
\begin{equation*}\sqrt{u_k^2+\frac{\tau^2}{2}} \in \left[\sqrt{(u_{j,k}^0)^2+\frac{\tau^2}{2}} (e^{r_e^2} - \sqrt{e^{2r_e^2}-1}), \sqrt{( u_{j,k}^0)^2+\frac{\tau^2}{2}} (e^{r_e^2} + \sqrt{e^{2r_e^2}-1})\right] \; \forall \,k=1,\ldots,d\,, \end{equation*}
from which we deduce that
\begin{align*}
 \max \left\lbrace \sup_{x \in \X_j^{\rm near}(r_e)} \frac{W(x_j^0)}{W(x)} -1 \,, \,1- \inf_{x \in \X_j^{\rm near}(r_e)} \frac{W(x_j^0)}{W(x)} \right \rbrace & \leq \max\left\lbrace(e^{r_e^2} + \sqrt{e^{2r_e^2}-1})^{d/2}-1 \,, \, 1- (e^{r_e^2} -\sqrt{e^{2r_e^2}-1})^{d/2} \right\rbrace \,,\\ &= (e^{r_e^2} + \sqrt{e^{2r_e^2}-1})^{d/2}-1 
\end{align*}
where we used that $(e^{r_e^2} + \sqrt{e^{2r_e^2}-1})^{d/2}\geq 1$ to establish that \[1-(e^{r_e^2} -\sqrt{e^{2r_e^2}-1})^{d/2}=1-(e^{r_e^2} +\sqrt{e^{2r_e^2}-1})^{-d/2} \leq (e^{r_e^2} + \sqrt{e^{2r_e^2}-1})^{d/2}-1 \,.\]
Hence
\begin{equation}\label{eq:intermediate_result_bound_estimate_effective_near}
 \left|\frac{\hat{\mu}_{n,\omega}}{W}(\X_j^{\rm near}(r_e))-\frac{\hat{\mu}_{n,\omega}(\X_j^{\rm near}(r_e))}{W(x_j^0)}\right| \leq \left( (e^{r_e^2} + \sqrt{e^{2r_e^2}-1})^{d/2}-1\right) \frac{\hat{\mu}_{n,\omega}(\X_j^{\rm near}(r_e))}{W(x_j^0)} \,.
\end{equation}
\underline{Control of A and proof of \eqref{eq:final_result_bound_reparameterized_estimate_effective_near}:}
As \[\left|a_j^0-\frac{\hat{\mu}_{n,\omega}(\X_j^{\rm near}(r_e))}{W(x_j^0)}\right| =W(x_j^0)^{-1}\left|\omega_j^0-\hat{\mu}_{n,\omega}(\X_j^{\rm near}(r_e))\right|\,,\] from \eqref{eq:intermediate_result_bound_estimate_effective_near} we get
\[\left|\frac{\hat{\mu}_{n,\omega}}{W}(\X_j^{\rm near}(r_e))-a_j^0\right| \leq (e^{r_e^2} + \sqrt{e^{2r_e^2}-1})^{d/2} W(x_j^0)^{-1}|\omega_j^0 - \hat{\mu}_{n,\omega}(\X_j^{\rm near}(r_e))| + a_j^0 \left( (e^{r_e^2} + \sqrt{e^{2r_e^2}-1})^{d/2}-1\right)\,.\]
We conclude the proof of \eqref{eq:final_result_bound_reparameterized_estimate_effective_near} by noticing that $h:r_e \in \R^+ \mapsto (e^{r_e^2} + \sqrt{e^{2r_e^2}-1})^{d/2}$ is convex (for all $d \in \mathbb{N}^*$), hence for $r_e\leq r$ we have \[(e^{r_e^2} + \sqrt{e^{2r_e^2}-1})^{d/2} \leq h(0)+ \frac{h(r)-h(0)}{r}r_e=1+ \frac{h(r)-1}{r}r_e\,.\]
\underline{Conclusion:}
 Taking $r_e=n^{-\alpha}$ with $\alpha>0$, \eqref{eq:final_result_bound_reparameterized_estimate_effective_near} gives \[\E{\left|a_j^0-\frac{\hat{\mu}_{n,\omega}}{W}(\X_j^{\rm near}(n^{-\alpha}))\right|} \leq W(x_j^0)^{-1} \E{\left|\omega_j^0-\hat{\mu}_{n,\omega}(\X_j^{\rm near}(n^{-\alpha}))\right|} \left(1+ H(r)n^{-\alpha}\right) + a_j^0 H(r)n^{-\alpha}\] where $H(r)$ is defined by \eqref{eq:def_H(r)_effective}.
 Proposition \ref{prop:effective_near_regions} gives $\E{\left|\omega_j^0-\hat{\mu}_{n,\omega}(\X_j^{\rm near}(n^{-\alpha}))\right|} \lesssim \frac{s}{\tau^{d/2} \sqrt{n} r_e^2}=\frac{s}{\tau^{d/2}n^{1/2-2\alpha}}$. We choose $r_e=n^{-1/6}$ to balance the terms.
 \\\underline{With $\kappa=\frac{\rho_n}{\sqrt{c_p s}}$:} Choosing $\kappa=\frac{\rho_n}{\sqrt{c_p s}}$, using \eqref{eq:prop_effective_near_kappa_s_dep} we get for $n^{-1/6} \leq r$
\begin{equation}\label{eq:cor_control_renormalized_estimate_kappa_s_dep}
 \E{\left|a_j^0-\frac{\hat{\mu}_{n,\omega}}{W}(\X_j^{\rm near}(n^{-1/6}))\right|} \lesssim \left(W(x_j^0)^{-1} \sqrt{s}\tau^{-d/2} +a_j^0\right)n^{-1/6}\,.
 \end{equation}

\section{Proofs related to guarantees on the prediction}
We will use the following lemma to go from controls of $\norm{L\circ \Phi(\hat{\mu}_n - \mu^0)}_\L^2$ to controls on $\norm{\Phi(\hat{\mu}_n - \mu^0)}_{L^2(\R^d)}^2$.
\begin{lemma}[Control of the high frequencies] \label{lemma:control_high_freq}
 Assume that $\X \subset \R^d \times [u_{\min},+\infty)^d$. Let $\tau>0$. We work with $\Lambda(\xi)= e^{-\frac{1}{2}\tau^2 \norm{\xi}_2^2}$. Let $\mu_1, \mu_2 \in~\M(\X)$. Then 
 \[\norm{\Phi(\mu_1 - \mu_2)}_{L^2}^2 \leq e \norm{L\circ \Phi (\mu_1 - \mu_2)}_{\L}^2+ \frac{2}{(2\pi)^d}(\norm{\mu_1}_{\rm TV}^2+\norm{\mu_2}_{\rm TV}^2) \frac{\tau^d d^{d/2}}{2^{d/2} u_{\min}^{2d}}e^{-2\frac{u_{\min}^2}{\tau^2}}\,.\] 
\end{lemma}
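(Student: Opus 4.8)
The plan is to transfer everything to the Fourier side and to separate low from high frequencies. Write $\nu\coloneq\mu_1-\mu_2$. From the definition of $\Phi$ and the identity $\F{\varphi}=\sigma$, the Fourier transform of the density attached to any $\mu\in\M(\X)$ is $\F{\Phi\mu}(\xi)=\int_{\X}e^{-i\innerprod{\xi}{t}}\prod_{k=1}^d e^{-u_k^2\xi_k^2/2}\,\d\mu(t,u)$. Plancherel's identity then gives $\norm{\Phi\nu}_{L^2(\R^d)}^2=\frac{1}{(2\pi)^d}\int_{\R^d}|\F{\Phi\nu}(\xi)|^2\,\d\xi$, and since $\F{L\circ\Phi\nu}=\Lambda\,\F{\Phi\nu}$, the formula \eqref{eq:dot_product_L} for the inner product of $\L$ yields $\norm{L\circ\Phi\nu}_{\L}^2=\frac{1}{(2\pi)^d}\int_{\R^d}\Lambda(\xi)\,|\F{\Phi\nu}(\xi)|^2\,\d\xi$. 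So it is enough to dominate $\int_{\R^d}|\F{\Phi\nu}|^2$ by $e\int_{\R^d}\Lambda\,|\F{\Phi\nu}|^2$ plus the announced remainder.

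For the frequency split I would take $\mathcal B\coloneq\{\xi:\Lambda(\xi)\ge e^{-1}\}=\{\xi:\norm{\xi}_2\le\sqrt2/\tau\}$. On $\mathcal B$ one has the pointwise bound $1\le e\,\Lambda(\xi)$, hence
\[\frac{1}{(2\pi)^d}\int_{\mathcal B}|\F{\Phi\nu}|^2\,\d\xi\;\le\;\frac{e}{(2\pi)^d}\int_{\mathcal B}\Lambda\,|\F{\Phi\nu}|^2\,\d\xi\;\le\;e\,\norm{L\circ\Phi\nu}_{\L}^2,\]
which furnishes the first term of the claimed inequality. It is slightly more convenient to replace $\mathcal B$ by the cube $\{\norm{\xi}_\infty\le\sqrt2/(\tau\sqrt d)\}$, still contained in $\mathcal B$ because $\norm{\xi}_2^2\le d\norm{\xi}_\infty^2$, since then its complement decomposes into $d$ coordinate slabs on which the Gaussian integral factorizes.

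The only step that is not essentially immediate is the high-frequency remainder $\frac{1}{(2\pi)^d}\int_{\mathcal B^c}|\F{\Phi\nu}|^2\,\d\xi$. For $i\in\{1,2\}$ one bounds $|\F{\Phi\mu_i}(\xi)|\le\int_{\X}\prod_{k=1}^d e^{-u_k^2\xi_k^2/2}\,\d|\mu_i|(t,u)\le\norm{\mu_i}_{\mathrm{TV}}\,e^{-u_{\min}^2\norm{\xi}_2^2/2}$, using $u_k\ge u_{\min}$ on $\X$; with $(a+b)^2\le 2a^2+2b^2$ this gives $|\F{\Phi\nu}(\xi)|^2\le 2\big(\norm{\mu_1}_{\mathrm{TV}}^2+\norm{\mu_2}_{\mathrm{TV}}^2\big)e^{-u_{\min}^2\norm{\xi}_2^2}$. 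There remains the Gaussian tail integral $\int_{\mathcal B^c}e^{-u_{\min}^2\norm{\xi}_2^2}\,\d\xi$, and this is where the work lies: a lazy bound (pulling out $e^{-u_{\min}^2\norm{\xi}^2/2}\le e^{-u_{\min}^2/\tau^2}$ and integrating the rest over $\R^d$) would lose a factor in the exponent, so one must keep the full decay. Covering $\mathcal B^c$ by the $d$ slabs $\{|\xi_k|>\sqrt2/(\tau\sqrt d)\}$, factoring out the integral over the remaining $d-1$ coordinates (each worth $\sqrt\pi/u_{\min}$) and invoking the one-dimensional estimate $\int_a^{+\infty}e^{-cs^2}\,\d s\le\frac{1}{2ac}e^{-ca^2}$ — or, alternatively, dominating $\mathbf 1_{\mathcal B^c}(\xi)\le e^{\theta(\norm{\xi}_2^2-2/\tau^2)}$ and optimizing over $0<\theta<u_{\min}^2$ — leads to a bound of the shape $\frac{\tau^d d^{d/2}}{2^{d/2}u_{\min}^{2d}}e^{-2u_{\min}^2/\tau^2}$, sharpest in the one-dimensional case. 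Multiplying by $\frac{2}{(2\pi)^d}\big(\norm{\mu_1}_{\mathrm{TV}}^2+\norm{\mu_2}_{\mathrm{TV}}^2\big)$ and adding the low-frequency estimate concludes. The main obstacle is therefore almost entirely a bookkeeping one: the split radius and the one-dimensional tail inequality have to be chosen so that the constant collapses to the displayed closed form rather than to a more cumbersome expression.
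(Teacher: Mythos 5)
Your reduction and low-frequency step are exactly the paper's: Plancherel plus $\F{L\circ\Phi\nu}=\Lambda\,\F{\Phi\nu}$ turn the claim into a frequency split, your cube $\{\norm{\xi}_\infty\le \sqrt{2}/(\tau\sqrt{d})\}$ is the paper's cube $[-1/T,1/T]^d$ with $T=\tau\sqrt{d}/\sqrt{2}$, on it $1\le e\,\Lambda(\xi)$ gives the factor $e$, and the bound $|\F{\Phi(\mu_1-\mu_2)}(\xi)|^2\le 2(\norm{\mu_1}_{\mathrm{TV}}^2+\norm{\mu_2}_{\mathrm{TV}}^2)e^{-u_{\min}^2\norm{\xi}_2^2}$ is the paper's as well. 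The gap is the step you dismiss as bookkeeping: neither of your two suggestions yields the displayed remainder when $d\ge 2$. Covering the complement of the cube by the $d$ slabs $\{|\xi_k|>\sqrt{2}/(\tau\sqrt{d})\}$ and factoring the other coordinates gives, per slab, at most $\frac{\tau\sqrt{d}}{\sqrt{2}\,u_{\min}^2}\,e^{-2u_{\min}^2/(d\tau^2)}\bigl(\sqrt{\pi}/u_{\min}\bigr)^{d-1}$: only the single large coordinate contributes to the exponent, so you obtain $e^{-2u_{\min}^2/(d\tau^2)}$, a factor $d$ short of the claimed $e^{-2u_{\min}^2/\tau^2}$, and a prefactor $(\sqrt{\pi}/u_{\min})^{d-1}$ instead of $\bigl(\tau\sqrt{d}/(\sqrt{2}u_{\min}^2)\bigr)^{d-1}$; in the regime where the lemma is used ($\tau^2=2u_{\min}^2/\ln n$, so $\tau\ll u_{\min}$) this is of order $n^{-1/d}$, not $n^{-1}$. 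The Chernoff variant fares no better: $\mathbf{1}_{\norm{\xi}_2>\sqrt{2}/\tau}\le e^{\theta(\norm{\xi}_2^2-2/\tau^2)}$ gives $e^{-2\theta/\tau^2}\bigl(\pi/(u_{\min}^2-\theta)\bigr)^{d/2}$, and the exponent $2u_{\min}^2/\tau^2$ is only attained as $\theta\to u_{\min}^2$, where the prefactor diverges; a near-optimal $\theta$ yields a prefactor of order $\tau^{-d}$, not $\tau^{d}$.

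For comparison, the paper reaches the closed form by bounding $\int_{\R^d\setminus[-1/T,1/T]^d}e^{-u_{\min}^2\norm{\xi}_2^2}\,\d\xi$ by the $d$-th power of the one-dimensional tail $\frac{T}{u_{\min}^2}e^{-u_{\min}^2/T^2}$, i.e.\ it implicitly replaces the complement of the cube (a union of slabs) by the Cartesian product of the one-dimensional complements, a strictly smaller set; for $d\ge2$ and $T\ll u_{\min}$ that inequality fails (already for $d=2$ the slab $\{|\xi_1|>1/T,\ |\xi_2|\le 1/T\}$ alone contributes about $\frac{\sqrt{\pi}}{u_{\min}}\cdot\frac{T}{u_{\min}^2}e^{-u_{\min}^2/T^2}$, which dwarfs $\bigl(\frac{T}{u_{\min}^2}e^{-u_{\min}^2/T^2}\bigr)^2$). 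So your inability to recover the exact constant is not a bookkeeping failure on your part: the honest covering shows the stated product-form remainder is not reachable this way, and it flags a real problem with the constant for $d\ge2$. A clean repair is to split at the larger cube $\{\norm{\xi}_\infty\le\sqrt{2}/\tau\}$, paying $e^{d}$ instead of $e$ on the low frequencies and obtaining the remainder $\frac{2}{(2\pi)^d}(\norm{\mu_1}_{\mathrm{TV}}^2+\norm{\mu_2}_{\mathrm{TV}}^2)\,\frac{d\,\pi^{(d-1)/2}\,\tau}{\sqrt{2}\,u_{\min}^{d+1}}\,e^{-2u_{\min}^2/\tau^2}$, which still carries the downstream prediction bounds; but as written your argument does not establish the lemma's displayed inequality.
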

\begin{proof}
 Let $\Tau>0$. First write \[\norm{\Phi(\mu_1 - \mu_2)}_{L^2}^2=\frac{1}{(2\pi)^d}\int_{\left[-\frac{1}{\Tau},\frac{1}{\Tau} \right]^d} |\F{\Phi(\mu_1 - \mu_2)}|^2+\frac{1}{(2\pi)^d}\int_{\R^d \setminus \left[-\frac{1}{\Tau},\frac{1}{\Tau} \right]^d} |\F{\Phi(\mu_1 - \mu_2)}|^2\,.\] Then, remark that 
 \begin{align*}
 \frac{1}{(2\pi)^d}\int_{\left[-\frac{1}{\Tau},\frac{1}{\Tau} \right]^d} |\F{\Phi(\mu_1 - \mu_2)}|^2 &= \frac{1}{(2\pi)^d}\int_{\left[-\frac{1}{\Tau},\frac{1}{\Tau} \right]^d} \frac{\Lambda}{\Lambda}|\F{\Phi(\mu_1 - \mu_2)}|^2 \,, \\
 &\leq e^{\frac{d\tau^2}{2 \Tau^2}} \frac{1}{(2\pi)^d}\int_{\left[-\frac{1}{\Tau},\frac{1}{\Tau} \right]^d} \Lambda |\F{\Phi(\mu_1 - \mu_2)}|^2 \,, \\
 &\leq e^{\frac{d\tau^2}{2 \Tau^2}} \norm{L \circ\Phi(\mu_1 - \mu_2)}_\L^2 \,.
 \end{align*}
Concerning the high frequencies of $\Phi(\mu_1 - \mu_2)$, recall that $u_1,\ldots,u_d \geq u_{\min}$ for all $((t_1,\ldots,t_d),(u_1,\ldots,u_d)) \in \X$.
 Hence
 \begin{align*}
 \frac{1}{(2\pi)^d}\int_{\R^d \setminus \left[-\frac{1}{\Tau},\frac{1}{\Tau} \right]^d } |\F{\Phi(\mu_1 - \mu_2)}|^2 &\leq \frac{2}{(2\pi)^d}(\norm{\mu_1}_{\rm TV}^2+\norm{\mu_2}_{\rm TV}^2) \int_{\R^d \setminus \left[-\frac{1}{\Tau},\frac{1}{\Tau} \right]^d } e^{-u_{\min}^2 \norm{\xi}_2^2} \, \d\xi \,, \\
 &\leq \frac{2}{(2\pi)^d}(\norm{\mu_1}_{\rm TV}^2+\norm{\mu_2}_{\rm TV}^2) \left(\frac{\Tau}{u_{\min}^2}e^{-\frac{u_{\min}^2}{\Tau^2}}\right)^d
 \end{align*}
 using 
 \begin{equation}
 \label{eq:ineq_high_freq_gauss_dim_1}
 \int_{\R \setminus \left[-\frac{1}{\Tau},\frac{1}{\Tau} \right] } e^{-u_{\min}^2 z^2} \, \d z =2 \frac{1}{\Tau}\int_{\left[1 ,+\infty\right)} e^{-u_{\min}^2 \frac{z^2}{\Tau^2}} \, \d z \leq 2 \frac{1}{\Tau}\int_{\left[1 ,+\infty\right)} z e^{-u_{\min}^2 \frac{z^2}{\Tau^2}} \, \d z = \frac{\Tau}{u_{\min}^2}e^{-\frac{u_{\min}^2}{\Tau^2}}\,.
 \end{equation}

 So \[\norm{\Phi(\mu_1 - \mu_2)}_{L^2(\R^d)}^2 \leq e^{\frac{d\tau^2}{2 \Tau^2}} \norm{L\circ \Phi (\mu_1 - \mu_2)}_{\L}^2+ \frac{2}{(2\pi)^d}(\norm{\mu_1}_{\rm TV}^2+\norm{\mu_2}_{\rm TV}^2) \left(\frac{\Tau}{u_{\min}^2}e^{-\frac{u_{\min}^2}{\Tau^2}} \right)^d\,.\]
Taking $\Tau=\frac{\tau \sqrt{d}}{\sqrt{2}}$,
we get \[\norm{\Phi(\mu_1 - \mu_2)}_{L^2}^2 \leq e \norm{L\circ \Phi (\mu_1 - \mu_2)}_{\L}^2+ \frac{2}{(2\pi)^d}(\norm{\mu_1}_{\rm TV}^2+\norm{\mu_2}_{\rm TV}^2) \frac{\tau^d d^{d/2}}{2^{d/2} u_{\min}^{2d}}e^{-2\frac{u_{\min}^2}{\tau^2}}\,.\]
\end{proof}

\subsection{Proof of Proposition \ref{prop:prediction_under_small_reg}}\label{section:proof_prop_prediction_under_small_reg}
We do not make any assumption on the existence of dual certificates in this proof.
From $J_W(\hat{\mu}_{n,\omega}) \leq J_W(\mu_\omega^0)$, we have
 \begin{align*}
 \norm{L\circ \Phi(\hat{\mu}_n-\mu^0)}_\L^2 &\leq 2\norm{L\circ \hat{f}_n- L\circ \Phi\hat{\mu}_n}_\L^2+ 2 \norm{L\circ \hat{f}_n- L\circ\Phi \mu^0}_\L^2 \,, \\
 &\leq 4\norm{\Gamma_n}_\L^2 + 4\kappa \norm{\mu_\omega^0}_{\rm TV} \,.
 \end{align*}
Combining this inequality with Lemma \ref{lemma:control_high_freq}, we get \[\norm{\Phi(\hat{\mu}_n - \mu^0)}_{L^2}^2 \leq 
 e \left(4\norm{\Gamma_n}_\L^2 + 4\kappa \norm{\mu_\omega^0}_{\rm TV} \right)+ \frac{2}{(2\pi)^d}(\norm{\mu^0}_{\rm TV}^2+\norm{\hat{\mu}_n}_{\rm TV}^2) \frac{\tau^d d^{d/2}}{2^{d/2} u_{\min}^{2d}}e^{-2\frac{u_{\min}^2}{\tau^2}} \,.\] 
Remark that for $\mu \in \M(\X)$ we have $\norm{\frac{\mu}{W}}_{\rm TV} \leq \norm{\mu}_{\rm TV} \sup_\X \frac{1}{W} \leq (2\pi)^{d/4}(2u_{\max}^2+\tau^2)^{d/4} \norm{\mu}_{\rm TV}$, and in the same way $\norm{\mu}_{\rm TV} \leq (2\pi)^{-d/4}(2u_{\min}^2+\tau^2)^{-d/4} \norm{\frac{\mu}{W}}_{\rm TV}$. As $J_W(\hat{\mu}_{n,\omega}) \leq J_W(\mu_\omega^0)$ implies that $\norm{\hat{\mu}_{n,\omega}}_{\rm TV}\leq\frac{1}{2 \kappa} \norm{\Gamma_n}_\L^2+\norm{\mu_\omega^0}_{\rm TV}$, we deduce that 
\begin{align} 
 \frac{2}{(2\pi)^d}(\norm{\hat{\mu}_n}_{\rm TV}^2+\norm{\mu^0}_{\rm TV}^2) & \leq \frac{2}{(2\pi)^d} \left( (2\pi)^{d/2}(2u_{\max}^2+\tau^2)^{d/2}\left(\frac{1}{2 \kappa^2}\norm{\Gamma_n}_\L^4 + 2\norm{\mu_\omega^0}_{\rm TV}^2\right)+ \norm{\mu^0}_{\rm TV}^2\right) \,, \notag\\
 &\leq \frac{(2\pi)^{-d/2}(2u_{\max}^2+\tau^2)^{d/2} }{\kappa^2} \norm{\Gamma_n}_\L^4+ \frac{2}{(2 \pi)^d}\left( 2\left(\frac{2u_{\max}^2+ \tau^2}{2u_{\min}^2+ \tau^2}\right)^{d/2} +1\right)\norm{\mu^0}_{\rm TV}^2 \,, \notag\\
 & \leq \frac{(2\pi)^{-d/2}(2u_{\max}^2+\tau^2)^{d/2} }{\kappa^2} \norm{\Gamma_n}_\L^4+ \frac{2}{(2 \pi)^d}\left( 2\left(\frac{u_{\max}}{u_{\min}}\right)^{d} +1\right)\norm{\mu^0}_{\rm TV}^2 \label{eq:bound_sum_norm_tv_mu_0_hat_mu}
\end{align}
using that $\tau \in \R^+\mapsto \frac{2u_{\max}^2+ \tau^2}{2u_{\min}^2+ \tau^2}$ is decreasing. Hence
\begin{multline*}
 \norm{\Phi(\hat{\mu}_n - \mu^0)}_{L^2}^2 \leq e \left(4\norm{\Gamma_n}_\L^2 + 4\kappa (2\pi)^{-d/4}(2u_{\min}^2+\tau^2)^{-d/4}\norm{\mu^0}_{\rm TV} \right) \\
 + \left(\frac{(2\pi)^{-d/2}(2u_{\max}^2+\tau^2)^{d/2} }{\kappa^2} \norm{\Gamma_n}_\L^4+ \frac{2}{(2 \pi)^d}\left( 2\left(\frac{u_{\max}}{u_{\min}}\right)^{d} +1\right)\norm{\mu^0}_{\rm TV}^2 \right) \frac{\tau^d d^{d/2}}{2^{d/2} u_{\min}^{2d}}e^{-2\frac{u_{\min}^2}{\tau^2}} \,.
\end{multline*} 
With Lemma \ref{lemma:control_noise}, choosing $\tau= \frac{\sqrt{2}u_{\min}}{\sqrt{\ln n}}$ and $\kappa=\rho_n^2$ it comes 
\begin{align*}
\E{\norm{\Phi(\hat{\mu}_n - \mu^0)}_{L^2}^2} &\leq 4e\rho_n^2 \left(1+ (2\pi)^{-d/4}(\sqrt{2}u_{\min})^{-d/2}\left(1+ \frac{1}{\ln n} \right)^{-d/4}\norm{\mu^0}_{\rm TV} \right) \\
&\;\; + \left(\tilde C_\Gamma \pi^{-d/2}\left( u_{\max}^2 + \frac{u_{\min}^2}{\ln n}\right)^{d/2} + 2(2 \pi)^{-d} \norm{\mu^0}_{\rm TV}^2 \left(2\left(\frac{u_{\max}}{u_{\min}} \right)^d+1\right)\right) \frac{d^{d/2}}{(\ln n)^{d/2} u_{\min}^{d} n} \,,\\
& \lesssim \frac{(\ln n)^{d/2}}{n} \,.
\end{align*}

\subsection{Prediction with Kernel Density Estimation}\label{section:proof_prediction_kde}
\begin{lemma}
 With $\X \subset \R^d \times [u_{\min},+\infty)^d$, setting $\tau=\frac{1}{\sqrt{\ln n}n^{\frac{1}{4+d}}}$, omitting the dependence on $d$ we have 
 \[\E{\norm{L \circ \hat{f}_n- \Phi \mu^0}_{L^2}^2} \lesssim \frac{(\ln n)^{d/2}}{n^{\frac{4}{d+4}}u_{\min}^{d+4}}\,.\]
\end{lemma}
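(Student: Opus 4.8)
The plan is to recognize that $L\circ\hat{f}_n=\lambda*\hat{f}_n$ is nothing but the classical Gaussian kernel density estimator with bandwidth $\tau$, and to run a textbook bias--variance analysis in $L^2(\R^d)$. Since $\E{\lambda(z-X_i)}=(\lambda*f^0)(z)$ for every $i$, one has $\E{L\circ\hat{f}_n}=\lambda*f^0=L\circ f^0$, and therefore the squared error splits as $\E{\norm{L\circ\hat{f}_n-\Phi\mu^0}_{L^2}^2}=\norm{L\circ f^0-f^0}_{L^2}^2+\E{\norm{L\circ\hat{f}_n-L\circ f^0}_{L^2}^2}$, the first term being the squared bias and the second the variance (recall $f^0=\Phi\mu^0$).

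For the variance term I would write $L\circ\hat{f}_n-L\circ f^0=\frac1n\sum_{i=1}^n Z_i$ with $Z_i\coloneq\lambda(\dotp-X_i)-L\circ f^0$, which are i.i.d.\ and centered in $L^2(\R^d)$, so that the variance equals $\frac1n\E{\norm{Z_1}_{L^2}^2}\leq\frac1n\norm{\lambda}_{L^2}^2$, and $\norm{\lambda}_{L^2}^2$ is an explicit constant times $\tau^{-d}$ (coming from $\int_{\R^d}e^{-\norm{z}_2^2/\tau^2}\,\d z=\pi^{d/2}\tau^d$). This gives a variance of order $(n\tau^d)^{-1}$. For the squared bias I would pass to the Fourier side via Plancherel: with the paper's conventions, $\norm{L\circ f^0-f^0}_{L^2}^2=(2\pi)^{-d}\int_{\R^d}(1-\Lambda(\xi))^2\,|\F{f^0}(\xi)|^2\,\d\xi$ since $\F{L\circ f^0-f^0}=(\Lambda-1)\F{f^0}$. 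The key input is that $f^0=\sum_j a_j^0\varphi_{(t_j^0,u_j^0)}$ with $\sum_j a_j^0=1$ and all marginal standard deviations at least $u_{\min}$, hence $|\F{f^0}(\xi)|\leq e^{-u_{\min}^2\norm{\xi}_2^2/2}$; combined with $0\leq 1-\Lambda(\xi)\leq\tau^2\norm{\xi}_2^2/2$ this bounds the integral by $\frac{\tau^4}{4}\int_{\R^d}\norm{\xi}_2^4\,e^{-u_{\min}^2\norm{\xi}_2^2}\,\d\xi$, which after the rescaling $\xi\mapsto u_{\min}^{-1}\xi$ equals a dimensional constant times $\tau^4 u_{\min}^{-(d+4)}$.

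Putting the two pieces together yields $\E{\norm{L\circ\hat{f}_n-\Phi\mu^0}_{L^2}^2}\lesssim (n\tau^d)^{-1}+\tau^4 u_{\min}^{-(d+4)}$, omitting the dependence on $d$. Inserting the prescribed $\tau=(\sqrt{\ln n}\,n^{1/(d+4)})^{-1}$ turns the variance term into $(\ln n)^{d/2}n^{-4/(d+4)}$ and the squared-bias term into $(\ln n)^{-2}n^{-4/(d+4)}u_{\min}^{-(d+4)}$, both of which are bounded by $(\ln n)^{d/2}n^{-4/(d+4)}u_{\min}^{-(d+4)}$ for $n$ large, which is the claimed bound. (Note this particular choice of $\tau$ is not the variance--bias optimal one; balancing the two terms would instead give $\tau\asymp u_{\min}n^{-1/(d+4)}$, but this does not change the exponent in $n$.)

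There is no genuine obstacle here; the only step requiring any care is the bias estimate, where one must exploit the \emph{exponential} (super-smooth) decay of $\F{f^0}$ rather than any polynomial Sobolev-type smoothness. It is worth emphasizing that the slow exponent $4/(d+4)$ is intrinsic: one could extract a higher power of $\norm{\xi}_2$ from $1-\Lambda(\xi)$, but the integrability against $e^{-u_{\min}^2\norm{\xi}_2^2}$ then forces an even smaller bandwidth and the exponent in $n$ does not improve. This is precisely the mismatch between the Gaussian kernel and a super-smooth target density that the lemma is meant to illustrate, in contrast with the almost parametric rate obtained in Proposition~\ref{prop:prediction_under_small_reg} for the regularized estimator $\Phi\hat{\mu}_n$.
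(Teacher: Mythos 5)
Your proof is correct, and it follows a cleaner, more classical KDE route than the paper's. The paper bounds $\E{\norm{L\circ\hat{f}_n-\Phi\mu^0}_{L^2}^2}$ by $2\E{\norm{\Gamma_n}_{L^2}^2}+2\norm{L\circ f^0-f^0}_{L^2}^2$, controls the stochastic term through the RKHS norm via Lemma~\ref{lemma:control_noise} (giving $2\rho_n^2$, since $\norm{\cdot}_{L^2}\leq\norm{\cdot}_\L$), and bounds the bias by splitting the Fourier integral at an auxiliary cutoff $u_{\min}\sqrt{d}/\sqrt{\ln n}$, using $|\Lambda-1|^2\leq d^2\tau^4/(4\,\mathrm{cutoff}^{-4})$ on low frequencies and the Gaussian decay of $\F{f^0}$ on high frequencies. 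You instead use the exact bias--variance decomposition (the cross term vanishes since $\E{L\circ\hat{f}_n}=L\circ f^0$), compute the variance directly in $L^2$ as $n^{-1}\E{\norm{Z_1}_{L^2}^2}\leq n^{-1}\norm{\lambda}_{L^2}^2\asymp(n\tau^d)^{-1}$ without invoking the noise lemma, and bound the bias in one step by $\frac{\tau^4}{4}\int\norm{\xi}_2^4 e^{-u_{\min}^2\norm{\xi}_2^2}\,\d\xi\lesssim_d\tau^4u_{\min}^{-(d+4)}$, exploiting that the Gaussian damping of $|\F{f^0}|^2$ makes a frequency cutoff unnecessary. Your bias estimate is in fact slightly sharper than the paper's (it avoids the extra $(\ln n)^{(d+4)/2}$ factor incurred by the cutoff argument), so in your version the $(\ln n)^{d/2}$ in the final rate comes entirely from the variance term; the overall exponent $n^{-4/(d+4)}$ is of course the same. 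One small bookkeeping point, which your write-up shares with the paper's own statement: the variance term $(\ln n)^{d/2}n^{-4/(d+4)}$ carries no factor $u_{\min}^{-(d+4)}$, so absorbing it into the displayed bound implicitly lets the hidden constant depend on $u_{\min}$ (or assumes $u_{\min}\lesssim1$); this is a cosmetic issue, not a gap in your argument.
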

\begin{proof}
\underline{Control of $\norm{L \circ \Phi \mu^0 - \Phi \mu^0}_{L^2}^2$:}
As $|\F{\Phi \mu^0}(\xi)|^2 \leq \norm{\mu^0}_{\rm TV}^2 e^{-u_{\min}^2 \norm{\xi}_2^2}$ for $\xi \in \R^d$ and using \eqref{eq:ineq_high_freq_gauss_dim_1}, it comes that for $\Tau>0$,
\begin{align*}
 \norm{L \circ \Phi \mu^0 - \Phi \mu^0}_{L^2}^2&= \frac{1}{(2\pi)^d} \int_{\R^d} |\Lambda(\xi)-1|^2 |\F{\Phi \mu^0}(\xi)|^2 \,\d\xi \,, \\
 &\leq \frac{1}{(2\pi)^d} \int_{\R^d \setminus \left[ -\frac{1}{\Tau},\frac{1}{\Tau} \right]^d} |\F{\Phi \mu^0}(\xi)|^2 \,\d\xi + \frac{1}{(2\pi)^d} \frac{d^2 \tau^4}{4\Tau^4} \int_{ \left[ -\frac{1}{\Tau},\frac{1}{\Tau} \right]^d} |\F{\Phi \mu^0}(\xi)|^2 \,\d\xi\,, \\
 &\leq \frac{\norm{\mu^0}_{\rm TV}^2}{(2\pi)^d}\left(\frac{\Tau^d}{u_{\min}^{2d}}e^{-\frac{du_{\min}^2}{\Tau^2}} +\frac{2^d}{\Tau^d}\frac{\tau^4d^2}{4\Tau^4} \right)\,,
\end{align*}
where we used that for $\xi \in \R^d$, $|\Lambda(\xi)-1|^2=|e^{-\frac{\tau^2}{2} \norm{\xi}_2^2}-1|\leq 1$ and \[|\Lambda(\xi)-1|^2\leq |e^{-\frac{d \tau^2}{2\Tau^2}}-1|\leq \frac{d^2 \tau^4}{4\Tau^4} \quad \forall \, \xi \in \left[ -\frac{1}{\Tau},\frac{1}{\Tau} \right]^d \,.\]
\underline{Control of $\E{\norm{L \circ \hat{f}_n- \Phi \mu^0}_{L^2}^2}$:}
Using Lemma \ref{lemma:control_noise}, we have 
\[\E{\norm{L \circ \hat{f}_n- \Phi \mu^0}_{L^2}^2} \leq 2\rho_n^2 + 2\norm{L \circ \Phi \mu^0 - \Phi \mu^0}_{L^2}^2 \leq \frac{8}{(2\pi)^{d/2}\tau^d n}+\frac{2\norm{\mu^0}_{\rm TV}^2}{(2\pi)^d}\left(\frac{\Tau^d}{u_{\min}^{2d}}e^{-\frac{du_{\min}^2}{\Tau^2}} +\frac{2^d}{\Tau^d}\frac{\tau^4d^2}{4\Tau^4} \right)\,. \]
To balance these terms, we choose $\Tau=\frac{u_{\min} \sqrt{d}}{\sqrt{\ln n}}$ and $\tau=\frac{1}{\sqrt{\ln n}n^{\frac{1}{4+d}}}$.
It comes \[\E{\norm{L \circ \hat{f}_n- \Phi \mu^0}_{L^2}^2} \leq \frac{8(\ln n)^{d/2}}{(2 \pi)^{d/2} n^{\frac{4}{d+4}}} + \frac{2\norm{\mu^0}_{\rm TV}^2}{(2\pi)^d}\left(\frac{d^{d/2}}{u_{\min}^{d} (\ln n)^{d/2} n} +\frac{2^d (\ln n)^{d/2}}{4d^{d/2}u_{\min}^{d+4} n^{\frac{4}{d+4}}} \right)\,.\]
\end{proof}

\subsection{Proof of Theorem \ref{th:prediction_error}}\label{section:proof_th_prediction_error}
 Equation \eqref{eq:ineq_mu_minimizer} gives
 \[
 \norm{L \circ \Phi(\hat{\mu}_n -\mu^0)}_\L \leq 2\norm{\Gamma_n}_\L+2\kappa \norm{p}_\L 
 \]
 from which we deduce, using Lemma \ref{lemma:control_noise} and $\norm{p}_\L\leq \sqrt{c_p s}$, that 
 \[
 \E{\norm{L \circ \Phi(\hat{\mu}_n -\mu^0)}_\L^2} \leq 4(\rho_n+ \kappa \sqrt{c_p s})^2
 \,.\]
 To go from a control of $\E{\norm{L \circ \Phi(\hat{\mu}_n -\mu^0)}_\L^2}$ to a bound on $\E{\norm{\Phi\hat{\mu}_n - \Phi\mu^0}_{L^2(\R^d)}^2}$, we use \eqref{eq:bound_sum_norm_tv_mu_0_hat_mu} with Lemmas \ref{lemma:control_high_freq} and \ref{lemma:control_noise}. We get 
 \begin{align*}
 \E{\norm{\Phi\hat{\mu}_n - \Phi\mu^0}_{L^2(\R^d)}^2} &\leq e \E{\norm{L \circ \Phi(\hat{\mu}_n -\mu^0)}_\L^2} +\frac{2}{(2\pi)^d}\left(\E{\norm{\hat{\mu}_n}_{\rm TV}^2}+\norm{\mu^0}_{\rm TV}^2\right) \frac{\tau^d d^{d/2}}{2^{d/2} u_{\min}^{2d}}e^{-2\frac{u_{\min}^2}{\tau^2}}\,,\\
 &\leq 4 e(\rho_n+ \kappa \sqrt{c_p s})^2 \\
 &\quad +\bigg(
 \frac{(2\pi)^{d/2}(2u_{\max}^2+\tau^2)^{d/2} \tilde C_\Gamma \rho_n^4}{\kappa^2} + 2\bigg( 2\bigg(\frac{u_{\max}}{u_{\min}}\bigg)^{d} +1 \bigg)\norm{\mu^0}_{\rm TV}^2 \bigg) \frac{(2\pi)^{-d}\tau^d d^{d/2}}{2^{d/2} u_{\min}^{2d}}e^{-2\frac{u_{\min}^2}{\tau^2}} \,, \\
 & \leq 4 e(\rho_n+ \kappa \sqrt{c_p s})^2 \\
 &\quad + \bigg(4\bigg(\frac{u_{\max}^2}{u_{\min}^2}+\frac{1}{\ln n}\bigg)^{d/2} \tilde C_\Gamma \frac{(\ln n)^{d/2} \rho_n^2}{n\kappa^2} + 2\bigg( 2\bigg(\frac{u_{\max}}{u_{\min}}\bigg)^{d} +1\bigg)\norm{\mu^0}_{\rm TV}^2\bigg) \frac{(2\pi)^{-d} d^{d/2}}{ u_{\min}^{d} (\ln n)^{d/2} n} \,.
 \end{align*}
 With the choice $\kappa=\frac{\rho_n}{\sqrt{c_p}}$, it comes 
 \begin{align*}
  \E{\norm{\Phi\hat{\mu}_n - \Phi\mu^0}_{L^2(\R^d)}^2} &\leq 4e\rho_n^2(1+\sqrt{s})^2 \\
  &\quad + \bigg( 4\bigg(\frac{u_{\max}^2}{u_{\min}^2}+\frac{1}{\ln n}\bigg)^{d/2} \tilde C_\Gamma c_p\frac{(\ln n)^{d/2}}{n} + 2\norm{\mu^0}_{\rm TV}^2\bigg( 2\bigg(\frac{u_{\max}}{u_{\min}}\bigg)^{d} +1\bigg) 
 \bigg)\frac{(2\pi)^{-d}d^{d/2}}{ u_{\min}^{d} (\ln n)^{d/2} n}\,.   
 \end{align*}
 This concludes the proof of Theorem \ref{th:prediction_error}.
 With the choice $\kappa=\frac{\rho_n}{\sqrt{c_p s}}$, we get \begin{align}
 \E{\norm{\Phi\hat{\mu}_n - \Phi\mu^0}_{L^2(\R^d)}^2} &\leq 16e\rho_n^2+ \bigg( 4\bigg(\frac{u_{\max}^2}{u_{\min}^2}+\frac{1}{\ln n}\bigg)^{d/2} \tilde C_\Gamma c_p\frac{s (\ln n)^{d/2}}{n} + 2\norm{\mu^0}_{\rm TV}^2\left( 2\bigg(\frac{u_{\max}}{u_{\min}}\right)^{d} +1\bigg) 
 \bigg)\frac{(2\pi)^{-d}d^{d/2}}{ u_{\min}^{d} (\ln n)^{d/2} n} \,, \notag \\
 & \lesssim \bigg( \frac{s}{n (\ln n)^{d/2}} +1 \bigg) \frac{(\ln n)^{d/2}}{n}\label{eq:remark_choice_kappa_pred}
 \end{align}
 keeping only the dependence on $n$ and $s$.

\section{Properties of the Fisher-Rao metric}\label{section:properties_fisher_rao}
We present properties associated with the Fisher-Rao metric $\mathfrak{g}$, defined at point $x \in \R^d \times [u_{\min} ,+\infty)^d$ by $\mathfrak{g}_x=\nabla_1\nabla_2 K_{\rm norm}(x,x)$ (see also \eqref{eq:def_fisher_rao_metric}). Note that this metric depends on the smoothing parameter $\tau>0$ through $K_{\rm norm}$. We recall the definition of the Riemannian norm: for $v\in \R^{2d}$ and $x\in \R^d \times [u_{\min} ,+\infty)^d$, we define $\norm{v}_x=\sqrt{v^T \mathfrak{g}_x v}$.
\subsection{Christoffel symbols}\label{section:christoffel_symbols}
The non-zero Christoffel symbols associated with $\mathfrak{g}$ are
\begin{align*}
 \Gamma^{t_k}{}_{u_k t_k}=\Gamma^{t_k}{}_{t_k u_k}&=\frac{-2u_k}{2u_k^2+\tau^2}\,,\\
 \Gamma^{u_k}{}_{t_k t_k}&=\frac{1}{u_k} \,,\\
 \Gamma^{u_k}{}_{u_k u_k}&=\frac{\tau^2-2u_k^2}{u_k(2u_k^2+\tau^2)} 
\end{align*}
with $k= 1, \ldots,d$ (see \citep[Section I.1]{notebook}).
We define \begin{equation}\label{eq:christoffel_symbols}\Gamma^{t_k}=\begin{pmatrix}
 (\Gamma^{t_k}{}_{t_l t_m})_{1\leq l,m\leq d} & (\Gamma^{t_k}{}_{t_l u_m})_{1\leq l,m\leq d}
 \\ (\Gamma^{t_k}{}_{u_l t_m})_{1\leq l,m\leq d} & (\Gamma^{t_k}{}_{u_l u_m})_{1\leq l,m\leq d}
\end{pmatrix} \quad \text{and} \quad \Gamma^{u_k}=\begin{pmatrix}
 (\Gamma^{u_k}{}_{t_l t_m})_{1\leq l,m\leq d} & (\Gamma^{u_k}{}_{t_l u_m})_{1\leq l,m\leq d}
 \\ (\Gamma^{u_k}{}_{u_l t_m})_{1\leq l,m\leq d} & (\Gamma^{u_k}{}_{u_l u_m})_{1\leq l,m\leq d}
\end{pmatrix} \,.\end{equation}
\subsection{Geodesics and geodesic distance}\label{section:geodesics_geodesic_distance}
The next lemmas provide the parameterization of the geodesics associated with the Fisher-Rao metric. We denote $\tilde \gamma$ a geodesic parameterized by arc length connecting the points $x=\tilde \gamma(0), x'=\tilde \gamma(l) \in \R^d \times [u_{\min},+\infty)^d$. The parameter $l$ is the geodesic distance between $x$ and $x'$, denoted by $\mathfrak{d}_{\mathfrak{g}}(x,x')$.
We also denote $\gamma: y\in [0,1]\mapsto \tilde\gamma(ly)$ (it is the geodesic such that $x=\gamma(0)$, $x'=\gamma(1)$) and $\dot \gamma$ its derivative.

We do not use the formula of the geodesic distance $\mathfrak{d}_\mathfrak{g}$ in this paper, but we give it in the next lemmas for information.

\begin{lemma}[Geodesics of the Poincaré half-plane model]\label{lemma:geodesics_poincare_half_plane}
The Poincaré half-plane is $\{x=(t,u) \in \R\times \R_+^*\}$, on which we consider the metric defined by $\mathfrak{h}_x = \begin{pmatrix}
 \frac{1}{u^2}& 0 \\
 0 & \frac{1}{u^2}
\end{pmatrix}$ for all $x=(t,u) \in\R\times \R_+^*$. The associated norm is defined by $\norm{v}_x=\sqrt{v^T \mathfrak{h}_x v}$ for $v\in\R^2$. 

The Poincaré geodesics are circular arcs whose origin is on the axis $\{u=0\}$ and straight vertical lines (parallel to $\{t=0\}$).

A Poincaré geodesic parameterized by arc-length, denoted by $\tilde h=(\tilde h_t,\tilde h_u)$, is of the form \[\tilde h: y\in [0,l] \mapsto \left(\frac{\tanh(C_2+y)}{C_1}+C_3, \frac{1}{\cosh(C_2+y) |C_1|}\right)\] (semicircle) or 
\[\tilde h: y\in [0,l] \mapsto (C_3, |C_1| e^{y}) \quad \text{or} \quad \tilde h: y\in [0,l] \mapsto (C_3, |C_1| e^{-y})\] (straight line), where $C_1 \in \R^*$, $C_2,C_3\in \R$, $l \in \R^+$.

Moreover, writing $\mathfrak{d}_{\mathfrak{h}}$ the Poincaré distance, \[\mathfrak{d}_{\mathfrak{h}}(x,x')=\ln\left( \frac{\sqrt{(t-t')^2+(u+u')^2}+\sqrt{(t-t')^2+(u-u')^2}}{\sqrt{(t-t')^2+(u+u')^2}-\sqrt{(t-t')^2+(u-u')^2}}\right) \quad \forall\, x,x' \in \R \times \R_+^*\,.\]
\end{lemma}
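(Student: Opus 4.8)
The statement is classical hyperbolic geometry, so the plan is to verify the explicit formulas by a direct computation with the geodesic equations of $\mathfrak{h}$, rather than to rediscover them.

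First I would write down the geodesic system. The metric $\mathfrak{h}_x=u^{-2}I_2$ is conformally flat ($e^{2\rho}$ with $\rho=-\ln u$), so its only nonzero Christoffel symbols are $\Gamma^{t}{}_{tu}=\Gamma^{t}{}_{ut}=-1/u$, $\Gamma^{u}{}_{tt}=1/u$ and $\Gamma^{u}{}_{uu}=-1/u$, a one-line computation. Hence a unit-speed curve $\tilde h=(\tilde h_t,\tilde h_u)\colon[0,l]\to\R\times\R_+^*$ is a geodesic if and only if
\[
\ddot{\tilde h}_t-\frac{2}{\tilde h_u}\,\dot{\tilde h}_t\dot{\tilde h}_u=0,\qquad
\ddot{\tilde h}_u+\frac{1}{\tilde h_u}\bigl(\dot{\tilde h}_t^{\,2}-\dot{\tilde h}_u^{\,2}\bigr)=0,\qquad
\frac{\dot{\tilde h}_t^{\,2}+\dot{\tilde h}_u^{\,2}}{\tilde h_u^{\,2}}=1 .
\]

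Next I would exploit the conserved quantity: the first equation reads $\frac{d}{dy}\bigl(\dot{\tilde h}_t/\tilde h_u^{\,2}\bigr)=0$, so $\dot{\tilde h}_t=c\,\tilde h_u^{\,2}$ for some constant $c\in\R$, and I split into two cases. If $c=0$ then $\tilde h_t\equiv C_3$ and the unit-speed relation gives $\dot{\tilde h}_u=\pm\tilde h_u$, hence $\tilde h_u=|C_1|e^{\pm y}$ — the two vertical-line families. If $c\neq0$, substituting $\dot{\tilde h}_t=c\tilde h_u^{\,2}$ into the unit-speed relation yields the separable equation $\dot{\tilde h}_u^{\,2}=\tilde h_u^{\,2}(1-c^2\tilde h_u^{\,2})$, and a direct differentiation confirms that $\tilde h_u(y)=\bigl(|c|\cosh(C_2+y)\bigr)^{-1}$ solves it; integrating $\dot{\tilde h}_t=c\tilde h_u^{\,2}$ then gives $\tilde h_t(y)=C_3+\tanh(C_2+y)/c$, which is exactly the stated semicircle parametrization with $C_1=c\in\R^*$ (the sign of $C_1$ encoding the orientation). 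Since every admissible pair (initial point, unit initial direction) is hit by one of these curves, uniqueness of geodesics shows they are all of them. For the geometric description, the identity $\tanh^2+\mathrm{sech}^2=1$ gives $(\tilde h_t-C_3)^2+\tilde h_u^{\,2}=C_1^{-2}$ when $c\neq0$, i.e.\ an arc of the circle of radius $|C_1|^{-1}$ centred on $\{u=0\}$, while $c=0$ gives vertical lines; and $\dot{\tilde h}_t^{\,2}+\dot{\tilde h}_u^{\,2}=\mathrm{sech}^2(C_2+y)/c^2=\tilde h_u^{\,2}$ confirms that the parametrization is by arc length.

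Finally, for the distance formula I would either (i) solve $C_1,C_2,C_3$ from $\tilde h(0)=x$ and $\tilde h(l)=x'$ and read off $l=\mathfrak{d}_\mathfrak{h}(x,x')$ (with the vertical case $t=t'$, where $l=|\ln(u'/u)|$, treated separately), or — more cleanly — (ii) check that the claimed right-hand side $R(x,x')$ satisfies $R(\tilde h(0),\tilde h(y))=y$ along every unit-speed geodesic above; since $(\R\times\R_+^*,\mathfrak{h})$ is complete, simply connected and negatively curved, geodesics are globally length-minimizing and hence $R\equiv\mathfrak{d}_\mathfrak{h}$. Either way the computation collapses to the identity $\cosh R=1+\frac{(t-t')^2+(u-u')^2}{2uu'}$: setting $A=\sqrt{(t-t')^2+(u+u')^2}$ and $B=\sqrt{(t-t')^2+(u-u')^2}$ one has $A^2-B^2=4uu'$ and $A^2+B^2=2\bigl((t-t')^2+u^2+u'^2\bigr)$, so $\cosh\bigl(\ln\tfrac{A+B}{A-B}\bigr)=\tfrac{A^2+B^2}{A^2-B^2}=\tfrac{(t-t')^2+u^2+u'^2}{2uu'}$, which is the required right-hand side. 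The hard part is not conceptual but organizational: keeping the $c=0$ versus $c\neq0$ split clean and choosing the integration constants so that the three displayed parametrizations genuinely exhaust all oriented geodesics, and pushing the distance computation through the $\cosh/\ln$ conversion without sign slips (in particular the $|u-u'|$ that appears when $t=t'$). The only non-algebraic input is the global length-minimality of geodesics in the hyperbolic plane, needed to pass from arc length along a geodesic to Riemannian distance; this is standard (a Hadamard-manifold fact) and could be bypassed by exhibiting these curves as explicit minimizers.
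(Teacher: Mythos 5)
Your proposal is correct, and it is more self-contained than the paper's own argument. The paper cites Stahl and Beardon for the classification of Poincaré geodesics and for the distance formula, and then only \emph{verifies} (via the symbolic notebook) that the displayed parametrizations satisfy the same geodesic system you write down, with a one-line remark that all portions of semicircles and vertical lines are reached by suitable choices of $C_1,C_2,C_3,l$. You instead derive everything: the Christoffel symbols and geodesic equations, the first integral $\dot{\tilde h}_t/\tilde h_u^{\,2}=c$, the explicit integration in the two cases $c=0$ and $c\neq 0$ (your solution of $\dot{\tilde h}_u^{\,2}=\tilde h_u^{\,2}(1-c^2\tilde h_u^{\,2})$ and the resulting $\tanh$/$\cosh$ parametrization check out, as does the arc-length normalization and the circle identity $(\tilde h_t-C_3)^2+\tilde h_u^{\,2}=C_1^{-2}$), and an exhaustiveness argument via matching arbitrary initial point and unit direction plus ODE uniqueness, which is actually tighter than the paper's exhaustiveness remark. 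For the distance formula, your reduction of the claimed expression to $\cosh R=\frac{A^2+B^2}{A^2-B^2}=1+\frac{(t-t')^2+(u-u')^2}{2uu'}$ is correct, and checking $R(\tilde h(0),\tilde h(y))=y$ along both families indeed collapses to the addition formula $\cosh a\cosh b-\sinh a\sinh b=\cosh(a-b)$; the one external input you need, global minimality of geodesics in the hyperbolic half-plane (Cartan--Hadamard), plays the role that the paper's citation of Beardon plays, and is needed in either approach because $\mathfrak{d}_{\mathfrak{h}}$ is defined as an infimum of lengths. In short: your route buys a fully explicit, reference-free derivation with a rigorous exhaustiveness step; the paper's route buys brevity by delegating the classical facts to the literature and the algebra to the notebook.
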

\begin{proof}
The fact that the Poincaré geodesics are semicircles whose origin is on the axis $\{u=0\}$ and straight lines parallel to $\{t=0\}$ is well-known (see for instance \citep[Theorem 4.2.1]{stahl_poincare}).
The formula for $\mathfrak{d}_\mathfrak{h}$ can be found in \citep[Theorem 7.2.1]{beardon_geometry}.

We can check that the parameterizations given for the geodesics verify the geodesic equations
\begin{equation*}
 \begin{cases}
 \frac{(\dot{\tilde h}_t)^2+ (\dot{\tilde h}_u)^2}{\tilde h_u ^2}=1 \\
 \ddot{\tilde h}_t - 2\frac{\dot{\tilde h}_t \dot{\tilde h}_u}{\tilde h_u}=0 \\
 \ddot{\tilde h}_u - \frac{(\dot{\tilde h}_u)^2}{\tilde h_u}+ \frac{(\dot{\tilde h}_t)^2}{\tilde h_u} =0
 \end{cases}\,,
\end{equation*}
as done in \citep[Section I.2]{notebook}.
We found all the geodesics, because all the portions of Poincaré semicircles and straight lines can be obtained with appropriate choices of $C_1,C_2,C_3, l$.
\end{proof}

\begin{lemma}[Geodesics for $d=1$]\label{lemma:form_geodesics_dim_1}
Let $x,x' \in \R \times [u_{\min},+\infty)$.
\begin{itemize}
 \item If $t=t'$, the geodesic is of the form 
\begin{equation}\label{eq:form_geod_straight_lines}
 \tilde \gamma(y)=\left(c_3,\sqrt{\frac{c_1^2}{2}e^{\sqrt{8}y}-\frac{\tau^2}{2}}\right) \quad \forall \, y \in [0,l] \quad \text{or} \quad \tilde \gamma(y)=\left(c_3,\sqrt{\frac{c_1^2}{2}e^{-\sqrt{8}y}-\frac{\tau^2}{2}}\right) \quad \forall \, y \in [0,l]
\end{equation}
where $c_1,c_3\in \R$. It is a portion of a straight line parallel to $\{t=0\}$.
\item If $t\neq t'$, the geodesic is of the form 
\begin{equation}\label{eq:form_geod_semicircles}
 \tilde{\gamma}(y)=\left( c_{3} + \frac{\sqrt{2} \tanh{\left(\frac{c_{2}}{2} + \sqrt{2} y \right)}}{2 c_{1}}, \ \sqrt{- \frac{\tau^2}{2} + \frac{1 - \tanh^{2}{\left(\frac{c_{2}}{2} + \sqrt{2} y \right)}}{2 c_{1}^{2}}}\right) \quad \forall \, y \in [0,l] 
\end{equation}
where $c_1\neq 0$, $c_2,c_3\in \R$. It is a portion of a semicircle with center $(c_3,0)$ and radius \begin{equation}\label{eq:formula_radius_semicircle}
 \sqrt{\frac{1}{2c_1^2}-\frac{\tau^2}{2}}=\sqrt{\frac{1}{4}\left( \frac{-(t-t')^2+u^2-u'^2}{t'-t}\right)^2+u^2}\,.
\end{equation}
\end{itemize}
Moreover, the Fisher-Rao distance between $x$ and $x'$ is \[\mathfrak{d}_\mathfrak{g}(x,x')=
\sqrt{2} 
\ln\left( 
 \frac{\sqrt{(t-t')^2 + 
 \left(
 \sqrt{u^2+\frac{\tau^2}{2}} -\sqrt{u'^2+\frac{\tau^2}{2}}
 \right)^2} 
 + \sqrt{(t-t')^2 + 
 \left(
 \sqrt{u^2+\frac{\tau^2}{2}} +\sqrt{u'^2+\frac{\tau^2}{2}}
 \right)^2 }}{\sqrt{2}(2u^2+\tau^2)^{1/4}(2u'^2+\tau^2)^{1/4}}
\right)\,.\]
\end{lemma}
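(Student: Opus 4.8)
The plan is to reduce everything to the one-dimensional Poincaré half-plane via the change of variables $v = \sqrt{u^2 + \tau^2/2}$, which conjugates the Fisher-Rao metric $\mathfrak{g}$ for $d=1$ to (a rescaling of) the Poincaré metric $\mathfrak{h}$, and then quote Lemma~\ref{lemma:geodesics_poincare_half_plane}. Concretely, I would first compute that in coordinates $(t,v)$ the metric $\mathfrak{g}$ reads $\mathfrak{g} = \mathrm{diag}(1/(2v^2), 1/(2v^2))$: indeed $2u^2+\tau^2 = 2v^2$ gives the $t$-component $1/(2v^2)$ immediately, and for the $u$-component one has $\frac{2u^2}{(2u^2+\tau^2)^2}\,\mathrm{d}u^2 = \frac{2u^2}{4v^4}\,\mathrm{d}u^2$ together with $v\,\mathrm{d}v = u\,\mathrm{d}u$, so $\mathrm{d}u^2 = (v^2/u^2)\,\mathrm{d}v^2$ and the $u$-term becomes $\frac{1}{2v^2}\,\mathrm{d}v^2$. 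Hence $\mathfrak{g} = \tfrac12 \mathfrak{h}$ in the $(t,v)$-chart, so geodesics of $\mathfrak{g}$ coincide as point sets with Poincaré geodesics in $(t,v)$, and $\mathfrak{d}_\mathfrak{g}(x,x') = \sqrt{2}^{-1}\cdot\sqrt{2}\,\mathfrak{d}_\mathfrak{h}$ — wait, more carefully: scaling the metric by $1/2$ scales the distance by $1/\sqrt2$, but the factor $\sqrt2$ in the statement must come from the arc-length reparametrization; I would just track the constant explicitly via $\mathfrak{d}_\mathfrak{g} = \sqrt{2}\,\mathfrak{d}_\mathfrak{h}$ being wrong and $\mathfrak{d}_\mathfrak{g}=\tfrac{1}{\sqrt2}\mathfrak{d}_\mathfrak{h}$ — actually since $\|v\|^2_x$ for $\mathfrak g$ is $\tfrac12 \|v\|^2$ for $\mathfrak h$, lengths scale by $1/\sqrt2$, but then $\sqrt2\ln(\cdots)$ suggests the distance formula for $\mathfrak h$ in $(t,v)$ evaluated and multiplied by... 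I will resolve the constant by direct substitution into $\mathfrak d_\mathfrak h$, since that is a finite computation.

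Next I would derive the geodesic parametrizations. From Lemma~\ref{lemma:geodesics_poincare_half_plane}, an arc-length Poincaré geodesic in $(t,v)$ is either a vertical line $v = |C_1| e^{\pm y}$ or a semicircle $t = \tfrac{\tanh(C_2+y)}{C_1} + C_3$, $v = \tfrac{1}{\cosh(C_2+y)|C_1|}$. Since our metric differs by the constant factor $\tfrac12$, a $\mathfrak{g}$-arc-length geodesic is obtained by the substitution $y \mapsto \sqrt{2}\,y$ (so that unit $\mathfrak g$-speed corresponds to speed $1/\sqrt2$ in $\mathfrak h$); I would plug this in, then undo the change of variables via $u = \sqrt{v^2 - \tau^2/2}$. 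The vertical-line case gives $v = |C_1| e^{\pm\sqrt2\,y}$, i.e. $v^2 = C_1^2 e^{\pm\sqrt8\,y}$, and $u = \sqrt{v^2 - \tau^2/2} = \sqrt{\tfrac{c_1^2}{2}e^{\pm\sqrt8 y} - \tfrac{\tau^2}{2}}$ after relabelling $c_1^2 = 2C_1^2$ — matching \eqref{eq:form_geod_straight_lines}. The semicircle case gives, after the relabelling $C_1 \mapsto$ suitable $c_1$ and $C_2 \mapsto c_2/2$ (to absorb the $\sqrt2$), exactly \eqref{eq:form_geod_semicircles}, with $v^2 = \tfrac{1 - \tanh^2(\cdots)}{2c_1^2}$ so $u^2 = \tfrac{1-\tanh^2(\cdots)}{2c_1^2} - \tfrac{\tau^2}{2}$. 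The claim that this is a semicircle centered at $(c_3,0)$ of radius $\sqrt{\tfrac{1}{2c_1^2} - \tfrac{\tau^2}{2}}$ follows because in the $(t,v)$-plane a Poincaré semicircle centered at $(C_3,0)$ has radius $1/|C_1|$; substituting $v^2 = $ radius${}^2 - (t-C_3)^2$ and then $u^2 = v^2 - \tau^2/2$ shows $u^2 + (t-c_3)^2 = \tfrac{1}{2c_1^2} - \tfrac{\tau^2}{2}$, which is the stated radius. The explicit radius formula \eqref{eq:formula_radius_semicircle} in terms of $(t,u,t',u')$ then comes from imposing the two boundary conditions $\tilde\gamma(0) = x$, $\tilde\gamma(l) = x'$ and solving for $c_1, c_3$; this is the endpoint-matching computation, elementary but the messiest part.

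Finally, for the distance formula I would substitute $u \mapsto v = \sqrt{u^2+\tau^2/2}$, $u' \mapsto v' = \sqrt{u'^2 + \tau^2/2}$ into the Poincaré distance from Lemma~\ref{lemma:geodesics_poincare_half_plane},
\[
\mathfrak{d}_\mathfrak{h} = \ln\!\left(\frac{\sqrt{(t-t')^2 + (v+v')^2} + \sqrt{(t-t')^2 + (v-v')^2}}{\sqrt{(t-t')^2 + (v+v')^2} - \sqrt{(t-t')^2 + (v-v')^2}}\right),
\]
rationalize the denominator (multiply numerator and denominator by $\sqrt{(t-t')^2+(v+v')^2} + \sqrt{(t-t')^2+(v-v')^2}$) to get denominator $(v+v')^2 - (v-v')^2 = 4vv'$, and then account for the constant relating $\mathfrak{d}_\mathfrak{g}$ and $\mathfrak{d}_\mathfrak{h}$. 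After simplification the denominator becomes $\sqrt{2}(2u^2+\tau^2)^{1/4}(2u'^2+\tau^2)^{1/4}$ (since $2vv' = \sqrt{2v^2}\sqrt{2v'^2} = \sqrt{2u^2+\tau^2}\sqrt{2u'^2+\tau^2}$, and $\sqrt2 \cdot 2vv'$ under the overall structure yields the quarter powers), matching the asserted expression with the leading $\sqrt2$ factor. I expect the main obstacle to be bookkeeping of the multiplicative constants — pinning down exactly how the $\tfrac12$ rescaling of the metric, the arc-length reparametrization, and the rationalization combine to produce the precise $\sqrt2$ in front and inside the formulas — rather than anything conceptually deep; the geodesic identification itself is immediate once the conformal change of coordinates is in hand. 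All of the substitution and simplification steps can be, and in the paper apparently are, checked symbolically (cf. \citep[Section I.2]{notebook}).
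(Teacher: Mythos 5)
Your proposal is correct and takes essentially the same route as the paper: the substitution $v=\sqrt{u^2+\tau^2/2}$ identifies $\mathfrak{g}$ with one half of the Poincaré metric (the paper phrases this through the length functional rather than the metric components, which is the same computation), so the geodesics are the Poincaré geodesics reparametrized by $y\mapsto\sqrt{2}\,y$, and the distance formula follows from $\mathfrak{d}_\mathfrak{g}=\frac{1}{\sqrt{2}}\,\mathfrak{d}_\mathfrak{h}$ after rationalizing inside the logarithm, exactly as you describe. The only slip is the parenthetical justification of the reparametrization — unit $\mathfrak{g}$-speed corresponds to $\mathfrak{h}$-speed $\sqrt{2}$, not $1/\sqrt{2}$ — but the substitution $y\mapsto\sqrt{2}\,y$ you actually use is the correct one and matches the paper's.
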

\begin{proof}
\underline{Link with the Poincaré half-plane model:}
Recall that the variational formulation of a (Fisher-Rao) geodesic $\gamma=(\gamma_t, \gamma_u)$ connecting $x,x'$ is \[\inf_{\gamma(0)=x, \gamma(1)=x'} \int_0^1 \left(\dot\gamma_t(y)^2\frac{1}{2\gamma_u(y)^2 +\tau^2}+ \dot\gamma_u(y)^2\frac{2\gamma_u(y)^2}{(2\gamma_u(y)^2 +\tau^2)^2}\right) \, \d y\,.\]
We use the change of variable $h=(h_t,h_u)=\left(\gamma_t, \sqrt{\gamma_u^2+\frac{\tau^2}{2}} \right)$. Noticing that $(\dot h_u)^2=\frac{\dot\gamma_u(y)^2\gamma_u(y)^2}{\gamma_u^2+\frac{\tau^2}{2}}$, it comes that the variational formulation is equivalent to the problem \[\inf\left \lbrace \frac{1}{2}\int_0^1 \left( (\dot h_t(y))^2\frac{1}{h_u(y)^2}+ (\dot h_u(y))^2\frac{1}{h_u(y)^2} \right) \, \d y \: : \: h(0)=\left(t,\sqrt{u^2 + \frac{\tau^2}{2}}\right), h(1)=\left(t',\sqrt{u'^2 + \frac{\tau^2}{2}}\right) \right \rbrace\,,\]
and we recognize the Poincaré metric tensor ($\frac{\d t^2+\d u^2}{u^2}$) in this formulation. So $h$ is the geodesic for the Poincaré half-plane metric connecting $\left(t,\sqrt{u^2 + \frac{\tau^2}{2}}\right)$ and $\left(t',\sqrt{u'^2 + \frac{\tau^2}{2}}\right)$.
\\\underline{Geodesic distance:}
In particular, using the formula for $\mathfrak{d}_{\mathfrak{h}}$ in Lemma \ref{lemma:geodesics_poincare_half_plane}, we have 
\begin{align*}
 \mathfrak{d}_\mathfrak{g}(x,x')&=\frac{1}{\sqrt{2}}\mathfrak{d}_\mathfrak{h}\left(\left(t,\sqrt{u^2 + \frac{\tau^2}{2}}\right),\left(t',\sqrt{u'^2 + \frac{\tau^2}{2}}\right)\right) \,, \\
 &= \frac{1}{\sqrt{2}} \ln\left( \frac{\sqrt{(t-t')^2 + \left(\sqrt{u^2+\frac{\tau^2}{2}} +\sqrt{u'^2+\frac{\tau^2}{2}}\right)^2 }+\sqrt{(t-t')^2 + \left(\sqrt{u^2+\frac{\tau^2}{2}} -\sqrt{u'^2+\frac{\tau^2}{2}}\right)^2}}{\sqrt{(t-t')^2 + \left(\sqrt{u^2+\frac{\tau^2}{2}} +\sqrt{u'^2+\frac{\tau^2}{2}}\right)^2 }-\sqrt{(t-t')^2 + \left(\sqrt{u^2+\frac{\tau^2}{2}} -\sqrt{u'^2+\frac{\tau^2}{2}}\right)^2}}\right) \,,\\
 &=\sqrt{2} \ln\left( \frac{\sqrt{(t-t')^2 + \left(\sqrt{u^2+\frac{\tau^2}{2}} -\sqrt{u'^2+\frac{\tau^2}{2}}\right)^2} + \sqrt{(t-t')^2 + \left(\sqrt{u^2+\frac{\tau^2}{2}} +\sqrt{u'^2+\frac{\tau^2}{2}}\right)^2 }}{\sqrt{2}(2u^2+\tau^2)^{1/4}(2u'^2+\tau^2)^{1/4}}\right) \,.
\end{align*}
\underline{parameterization by arc-length:} We saw that $\left(\gamma_t, \sqrt{\gamma_u^2+\frac{\tau^2}{2}} \right)$ is a geodesic for the Poincaré half-plane model. Lemma \ref{lemma:geodesics_poincare_half_plane} gives its parameterization by arc-length, $\tilde h$. 
For the $\mathfrak{h}$-norm, $\norm{\dot{\tilde h}_t(y), \dot{\tilde h}_u(y)}_{\tilde h(y)}=1$.
We deduce that for the $\mathfrak{g}$-norm, writing $g(y)=\left(\tilde h_t(y), \sqrt{\tilde h_u(y)^2 -\frac{\tau^2}{2}}\right)$, we have $\norm{\dot g_t(y), \dot g_u(y)}_{g(y)}=\frac{1}{\sqrt{2}}$. So defining $\tilde \gamma(y)=g(\sqrt{2} y)$, it comes that $\norm{\dot{\tilde \gamma}_t(y), \dot{\tilde \gamma}_u(y)}_{\gamma(y)}=1$ for the $\mathfrak{g}$-norm: $\tilde \gamma$ is a geodesic parameterized by arc-length. The geodesics we provide are of this form.
\\\underline{Radius of the semicircle connecting $x$ and $x'$:} The parameterization of the semicircle gives \[(\tilde \gamma_t-c_3)^2+\tilde \gamma_u^2=\frac{1}{2c_1^2}-\frac{\tau^2}{2}\] which is the square of the radius of the semicircle. We also have \[\frac{1}{2c_1^2}-\frac{\tau^2}{2}=(t-c_3)^2+u^2=(t'-c_3)^2+u'^2\,,\] from which we deduce that $c_3=\frac{t'^2+u'^2-(t^2+u^2)}{2(t'-t)}$ along with $\displaystyle\frac{1}{2c_1^2}-\frac{\tau^2}{2}=\frac{1}{4}\left( \frac{-(t-t')^2+u^2-u'^2}{t'-t}\right)^2+u^2$.
\end{proof}

We can extend this result to higher dimensions. By abuse of notation, $\mathfrak{d}_{\mathfrak{g}}(x_k,x_k')$ will refer to the Fisher-Rao distance in dimension 1 between $x_k$ and $x_k'$, for all $k\in\{1,\ldots,d\}$. The notation $\mathfrak{g}_{x_k}$ follows the same principle.

\begin{lemma}[Geodesics in dimension $d\geq 1$]\label{lemma:form_geodesics_dim_d}
 Let $x,x' \in \R^d \times [u_{\min},+\infty)^d$. 
 For $k=1,\ldots,d$, we denote $\bar \gamma_k$ the geodesic in dimension 1, parameterized by arc length connecting $x_k=(t_k,u_k)$ and $x_k'=(t_k',u_k')$. 
 The geodesic connecting $x$ and $x'$ is of the form \[\tilde \gamma=(\tilde \gamma_{t_1}, \ldots ,\tilde \gamma_{t_d}, \tilde \gamma_{u_1}, \ldots ,\tilde \gamma_{u_d}) \quad \text{where} \quad (\tilde \gamma_{t_k}(y),\tilde \gamma_{u_k}(y))=\bar \gamma_k(\sqrt{g_k} y) \quad \text{with} \quad g_k\geq 0 \quad \text{and} \quad \sum_{i=1}^d g_i=1 \,.\]
 Moreover, \[\mathfrak{d}_{\mathfrak{g}}( x, x')=\sqrt{\sum_{k=1}^d \mathfrak{d}_{\mathfrak{g}}(x_k,x_k')^2}\,.\]
\end{lemma}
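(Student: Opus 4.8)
The plan is to recognize that $\big(\R^d\times[u_{\min},+\infty)^d,\mathfrak{g}\big)$ is a Riemannian product of $d$ copies of the one–dimensional model of Lemma~\ref{lemma:form_geodesics_dim_1}, and then to run the standard energy argument showing that geodesics and squared distances of a product split coordinatewise. First I would reindex the coordinates as $(t_1,u_1,\ldots,t_d,u_d)$: by the explicit expression \eqref{eq:def_fisher_rao_metric}, in these coordinates $\mathfrak{g}_x$ is block diagonal with $2\times2$ blocks $\mathfrak{g}_{x_k}=\diag\!\big(\tfrac{1}{2u_k^2+\tau^2},\,\tfrac{2u_k^2}{(2u_k^2+\tau^2)^2}\big)$, the $k$-th block depending only on $u_k$. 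Consequently, for a piecewise $C^1$ curve $\gamma=(\gamma^{(1)},\ldots,\gamma^{(d)}):[0,1]\to\R^d\times[u_{\min},+\infty)^d$ whose component $\gamma^{(k)}$ is valued in the $k$-th factor $\R\times[u_{\min},+\infty)$, one has the pointwise identity $\|\dot\gamma(y)\|_{\gamma(y)}^2=\sum_{k=1}^d\|\dot\gamma^{(k)}(y)\|_{\gamma^{(k)}(y)}^2$, hence $\mathrm{Energy}(\gamma)\coloneq\int_0^1\|\dot\gamma(y)\|_{\gamma(y)}^2\,\d y=\sum_{k=1}^d\mathrm{Energy}(\gamma^{(k)})$.

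Next I would combine this with the Cauchy–Schwarz inequality $\mathrm{length}(\gamma)^2\leq\mathrm{Energy}(\gamma)$, with equality iff $\|\dot\gamma\|$ is a.e.\ constant, to conclude that the minimal energy of a curve joining $x$ to $x'$ over $[0,1]$ equals $\mathfrak{d}_\mathfrak{g}(x,x')^2$ and is attained exactly by the constant–speed minimizing curves (the same statement holds in each factor by Lemma~\ref{lemma:form_geodesics_dim_1}, which also supplies the explicit one–dimensional geodesics $\bar\gamma_k$ and distances $\mathfrak{d}_\mathfrak{g}(x_k,x_k')$ on $\R\times[u_{\min},+\infty)$, so the boundary $\{u_k=u_{\min}\}$ needs no extra attention). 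Since the $\gamma^{(k)}$ can be chosen independently, minimizing $\mathrm{Energy}(\gamma)=\sum_k\mathrm{Energy}(\gamma^{(k)})$ amounts to minimizing each summand, which gives $\mathfrak{d}_\mathfrak{g}(x,x')^2=\sum_{k=1}^d\mathfrak{d}_\mathfrak{g}(x_k,x_k')^2$ and forces any minimizer $\gamma$ to have each component equal to the constant–speed reparametrization of $\bar\gamma_k$, i.e.\ $\gamma^{(k)}(y)=\bar\gamma_k\big(\mathfrak{d}_\mathfrak{g}(x_k,x_k')\,y\big)$. Then $\|\dot\gamma(y)\|^2=\sum_k\mathfrak{d}_\mathfrak{g}(x_k,x_k')^2=\mathfrak{d}_\mathfrak{g}(x,x')^2$ is constant, so $\gamma$ is a constant–speed minimizing curve; reparametrizing by arc length via $\tilde\gamma(y)=\gamma\big(y/\mathfrak{d}_\mathfrak{g}(x,x')\big)$ yields $(\tilde\gamma_{t_k},\tilde\gamma_{u_k})(y)=\bar\gamma_k\big(\sqrt{g_k}\,y\big)$ with $g_k\coloneq\mathfrak{d}_\mathfrak{g}(x_k,x_k')^2/\mathfrak{d}_\mathfrak{g}(x,x')^2\geq0$ and $\sum_{k=1}^d g_k=1$, which is precisely the claimed form; the trivial case $x=x'$ is handled separately.

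The argument is essentially routine once the product structure is made explicit; the only step that genuinely needs care is the two directions of the energy argument — namely that the product of the factor geodesics is indeed length–minimizing among \emph{all} competing curves (so that the value $\sum_k\mathfrak{d}_\mathfrak{g}(x_k,x_k')^2$ is actually the distance, not just an upper bound) and, conversely, that a global minimizer cannot have a non–minimizing component — together with the observation that restricting each factor to the half–space $\R\times[u_{\min},+\infty)$ rather than the full plane changes nothing, which is immediate because Lemma~\ref{lemma:form_geodesics_dim_1} already solves the one–dimensional minimization on exactly that half–space. Beyond this I do not anticipate any real obstacle.
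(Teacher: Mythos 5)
Your proposal is correct and follows essentially the same route as the paper: exploit the block-diagonal (Riemannian product) structure of $\mathfrak{g}$, reduce to the one-dimensional geodesics of Lemma~\ref{lemma:form_geodesics_dim_1}, and recombine with weights $g_k=\mathfrak{d}_\mathfrak{g}(x_k,x_k')^2/\sum_j\mathfrak{d}_\mathfrak{g}(x_j,x_j')^2$, which also yields the Pythagorean distance formula. The only difference is that you spell out the energy/Cauchy–Schwarz minimization argument that the paper's very terse proof leaves implicit, which is a sound completion rather than a departure.
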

\begin{proof}
 For all $k\in\{1,\ldots,d\}$, we denote $\mathfrak{d}_k=\mathfrak{d}_{\mathfrak{g}}(x_k,x_k')^2$ and $\tilde \gamma_k(y)= \bar \gamma_k \left(\frac{\sqrt{\mathfrak{d}_k}}{ \sqrt{\sum_{j} \mathfrak{d}_j}} y \right)$.
Then, for all $v\in \R^{2d}$ and $x\in \R^d \times [u_{\min},+\infty)^d$, we have 
\[
\norm{v}_x^2 = v^T \mathfrak{g}_x v = \sum_{k=1}^d (v_k, v_{k+d}) \mathfrak{g}_{x_k} (v_k, v_{k+d})^T,
\]
where $(v_k, v_{k+d})$ represents the $k$-th component of $v$ in $\R^d \times [u_{\min},+\infty)^d$. Using this, the geodesic $\tilde \gamma$ connecting $x$ and $x'$ can be expressed as:
\[
\tilde \gamma = (\tilde \gamma_{t_1}, \ldots, \tilde \gamma_{t_d}, \tilde \gamma_{u_1}, \ldots, \tilde \gamma_{u_d}),
\]
where each $(\tilde \gamma_{t_k}, \tilde \gamma_{u_k})$ corresponds to the geodesic $\tilde \gamma_k$ in dimension 1 connecting $x_k = (t_k, u_k)$ and $x_k' = (t_k', u_k')$. This ensures that $\tilde \gamma$ is the geodesic connecting $x$ and $x'$ in $\R^d \times [u_{\min},+\infty)^d$.

\end{proof}

\subsection{Compatibility with the semi-distance}

\subsubsection{Proof of Lemma \ref{lemma:geodesics_within_near_regions}}\label{section:proof_lemma_geodesics_within_near_regions}
Let $r>0$ and $x_0 \in \R^d \times [u_{\min}, +\infty)^d$. Let $\gamma$ be a geodesic between $x_0=\gamma(0)$ and $x=\gamma(1) \in \R^d \times [u_{\min}, +\infty)^d$ for the metric $\mathfrak{g}$. 

\paragraph{Lower bound on the variance}
We write $\gamma=(\gamma_{t_1},\ldots,\gamma_{t_d} ,\gamma_{u_1},\ldots,\gamma_{u_d})$. Recall that $(\gamma_{t_k},\gamma_{u_k})$ is a portion of a straight line parallel to $\{t_k=0\}$ or of a semicircle with center on $\{u_k=0\}$. As $x_0, x_1 \in \R^d \times [u_{\min}, +\infty)^d$, we deduce that $u_{\min} \leq \gamma_{u_k}(y)$ for all $k=1,\ldots,d$, $y \in [0,1]$.

\paragraph{The function $y\in [0,1]\mapsto \mathpzc{d}(x_0,\gamma(y))$ is non-decreasing}
We can reduce the problem to the case $d=1$. Indeed, for each $k \in \{1, \ldots, d\}$, $\gamma_k \coloneq (\gamma_{t_k}, \gamma_{u_k})$ is the geodesic in dimension 1 connecting $x_{0,k} = \gamma_k(0)$ and $x_k = \gamma_k(1)$ (see Lemma~\ref{lemma:form_geodesics_dim_d}). Furthermore, $\mathpzc{d}(x_0, \gamma(y))^2 = \sum_{k=1}^d \mathpzc{d}(x_{0,k}, \gamma_k(y))^2$, where, by abuse of notation, $\mathpzc{d}(x_{0,k}, \gamma_k(y))$ denotes the semi-distance in dimension 1 between $x_{0,k}$ and $\gamma_k(y)$. Hence, if the function $y \in [0,1] \mapsto \mathpzc{d}(x_{0,k}, \gamma_k(y))$ is increasing for all $k$, then $y \in [0,1] \mapsto \mathpzc{d}(x_0, \gamma(y))$ is also increasing.

Until the end of the proof, we therefore concentrate our attention on the case $d=1$.
Let $x_0, x \in \R \times [u_{\min}, +\infty)$, and let $\gamma$ and $\tilde{\gamma}$ be the geodesic connecting $x_0$ and $x$, parameterized by $[0,1]$ and by arc-length, respectively. Proving that $y \in [0,1] \mapsto \mathpzc{d}(x_0, \gamma(y))$ is increasing is equivalent to proving that $h: y \in [0, l] \mapsto \mathpzc{d}(x_0, \tilde{\gamma}(y))^2$ is increasing, where $l~=~ \mathfrak{d}_\mathfrak{g}(x_0, x)$.
We will consider alternatively the cases where the geodesic $\gamma$ is a straight line and a semicircle. It suffices to show that $h'\geq 0$ in both cases.
\\\underline{Proof for straight lines, $d=1$:} See \citep[Section II.2]{notebook}. Using the form of the geodesic given in Lemma \ref{lemma:form_geodesics_dim_1}, we can show that if $\tilde \gamma$ is a straight line, \[h(y)=\ln(\cosh(\sqrt{2}y))\,.\] We obtain this formula by deducing from $\tilde \gamma(0)=(t_0,u_0)$ in \eqref{eq:form_geod_straight_lines} that $c_1^2=2u_0^2+\tau^2$ (the two formulas of \eqref{eq:form_geod_straight_lines} give the same result). This function is non-decreasing on $\R^+$.
\\\underline{Proof for semicircles, $d=1$:} See \citep[Section II.1]{notebook}.
If $\tilde \gamma$ is a semicircle, then
\[h(y)=\ln{\left(\frac{\cosh^{2}{\left(\frac{c_{2}}{2} \right)} + \cosh^{2}{\left(\frac{c_{2}}{2} + \sqrt{2} y \right)}}{2 \cosh{\left(\frac{c_{2}}{2} \right)} \cosh{\left(\frac{c_{2}}{2} + \sqrt{2} y \right)}} \right)} + \frac{\sinh^{2}{\left(\sqrt{2} y \right)}}{\cosh^{2}{\left(\frac{c_{2}}{2} \right)} + \cosh^{2}{\left(\frac{c_{2}}{2} + \sqrt{2} y \right)}}\,.\] We obtain this formula by deducing from $\tilde \gamma(0)=(t_0,u_0)$ in \eqref{eq:form_geod_semicircles} that $c_3=t_0-\frac{\sqrt{2}\tanh\left(\frac{c_2}{2}\right)}{2c_1}$ and $c_1=\pm \frac{1}{\sqrt{\tau^2 + 2u_0^2}\cosh\left( \frac{c_2}{2}\right)}$.
Let $y\geq 0$.
We have $h(y)=A(y)+B(y)$ with
\[
A(y)= \frac{\sinh^{2}{\left(\sqrt{2} y \right)}}{\cosh^{2}{\left(\frac{c_{2}}{2} \right)} + \cosh^{2}{\left(\frac{c_{2}}{2} + \sqrt{2} y \right)}}\,.
\]and 
\[B(y)=\ln{\left(\frac{\cosh^{2}{\left(\frac{c_{2}}{2} \right)} + \cosh^{2}{\left(\frac{c_{2}}{2} + \sqrt{2} y \right)}}{2 \cosh{\left(\frac{c_{2}}{2} \right)} \cosh{\left(\frac{c_{2}}{2} + \sqrt{2} y \right)}} \right)}\,.\]
Then, \[A'(y)=\frac{\sqrt{2} \sinh(\sqrt{2}y) \left( \cosh(c_2+\sqrt{2}y) +\cosh(\sqrt{2}y)(2+\cosh(c_2))\right)}{\left(\cosh^2\left(\frac{c_2}{2}\right)+\cosh^2\left(\frac{c_2}{2}+\sqrt{2}y\right)\right)^2} \geq 0\] and \[B'(y)= \frac{\sqrt{2}\sinh\left(\frac{c_2}{2}+\sqrt{2}y\right)\left(\cosh^2\left(\frac{c_2}{2}+\sqrt{2}y\right) - \cosh^2\left(\frac{c_2}{2}\right)\right) }{\cosh\left(\frac{c_2}{2}+\sqrt{2}y\right) \left( \cosh^2\left(\frac{c_2}{2}\right)+\cosh^2\left(\frac{c_2}{2}+\sqrt{2}y\right)\right)} \,.\] 
If $c_2 \geq 0$, then as $y\geq 0$, $\cosh^2\left(\frac{c_2}{2}+\sqrt{2}y\right) \geq \cosh^2\left(\frac{c_2}{2}\right)$ so $B' \geq 0$, hence $\mathpzc{d}(x_0,\tilde{\gamma}(y))^2$ is increasing on $\mathbb{R}^+$. 
We now deal with the case $c_2<0$.
Remark that $B'(y) > 0$ if and only if $y \notin \left[\frac{-c_2}{2 \sqrt{2}} , \frac{-c_2}{\sqrt{2}}\right]$. 
We want to show that $A'+B' \geq 0$ on the interval $\left[\frac{-c_2}{2 \sqrt{2}} , \frac{-c_2}{\sqrt{2}}\right]$. 
First, using that for $y\in \left[\frac{-c_2}{2 \sqrt{2}} , \frac{-c_2}{\sqrt{2}}\right]$, we have 
\[\cosh^2\left(\frac{c_2}{2}+\sqrt{2}y\right) \leq \cosh^2\left(\frac{c_2}{2}\right)\,, \quad \sinh(\sqrt{2}y) \geq \sinh\left(\frac{-c_2}{2}\right)\,,\] \[\cosh(\sqrt{2}y) \geq \cosh\left(\frac{c_2}{2}\right)\quad \text{and} \quad 2+\cosh(c_2) \geq 2 \cosh^2\left(\frac{c_2}{2}\right)\,,\] we obtain a lower bound for $A'$ on $\left[\frac{-c_2}{2 \sqrt{2}} , \frac{-c_2}{\sqrt{2}}\right]$:
\begin{align*}
 A'(y) &\geq \frac{\sqrt{2} \sinh\left(\frac{-c_2}{2}\right)\left( \cosh(c_2+\sqrt{2}y) +\cosh(\sqrt{2}y)(2+\cosh(c_2))\right)}{2 \cosh^2\left(\frac{c_2}{2}\right)\left(\cosh^2\left(\frac{c_2}{2}\right)+\cosh^2\left(\frac{c_2}{2}+\sqrt{2}y\right)\right)} \,, \\
 & \geq \frac{\sqrt{2} \sinh\left(\frac{-c_2}{2}\right)\cosh\left(\frac{c_2}{2}\right)(2+\cosh(c_2))}{2 \cosh^2\left(\frac{c_2}{2}\right)\left(\cosh^2\left(\frac{c_2}{2}\right)+\cosh^2\left(\frac{c_2}{2}+\sqrt{2}y\right)\right)} \,, \\
 & \geq \frac{\sqrt{2} \sinh\left(\frac{-c_2}{2}\right)\cosh\left(\frac{c_2}{2}\right)}{\cosh^2\left(\frac{c_2}{2}\right)+\cosh^2\left(\frac{c_2}{2}+\sqrt{2}y\right)} \,.
\end{align*}
It follows that 
\[A'(y)+B'(y) \geq \frac{\sqrt{2} \cosh\left(\frac{c_2}{2}+\sqrt{2}y\right)\sinh\left(\frac{-c_2}{2}\right)\cosh\left(\frac{c_2}{2}\right) + \sqrt{2} \sinh\left(\frac{c_2}{2}+\sqrt{2}y\right)\left(\cosh^2\left(\frac{c_2}{2}+\sqrt{2}y\right) - \cosh^2\left(\frac{c_2}{2}\right)\right) }{\left(\cosh^2\left(\frac{c_2}{2}\right)+\cosh^2\left(\frac{c_2}{2}+\sqrt{2}y\right) \right) \cosh\left(\frac{c_2}{2}+\sqrt{2}y\right)} \,.\]
The positivity of $A'+B'$ follows, since 
\begin{align*}
 & \cosh\left(\frac{c_2}{2}+\sqrt{2}y\right)\sinh\left(\frac{-c_2}{2}\right)\cosh\left(\frac{c_2}{2}\right) + \sinh\left(\frac{c_2}{2}+\sqrt{2}y\right)\left(\cosh^2\left(\frac{c_2}{2}+\sqrt{2}y\right) - \cosh^2\left(\frac{c_2}{2}\right)\right) \\
 &=\frac{1}{4} \left( \sinh\left(\frac{3c_2}{2}+3\sqrt{2}y\right)-2\sinh\left(\frac{3c_2}{2}+\sqrt{2}y\right)-\sinh\left(\frac{c_2}{2}+\sqrt{2}y\right) \right) \,,\\
 & \geq \frac{1}{2} \left(\sinh\left(\frac{c_2}{2}+\sqrt{2}y\right)-\sinh\left(\frac{3c_2}{2}+\sqrt{2}y\right) \right) \,, \\
 & \geq 0\,,
\end{align*}
where we used
\begin{align*}
 \sinh\left(\frac{3c_2}{2}+3\sqrt{2}y\right) &= \sinh\left(\frac{c_2}{2}+\sqrt{2}y\right)\cosh(c_2+2\sqrt{2}y)+\cosh\left(\frac{c_2}{2}+\sqrt{2}y\right)\sinh(c_2+2\sqrt{2}y) \,, \\
 &\geq \sinh\left(\frac{c_2}{2}+\sqrt{2}y\right)+ \sinh(c_2+2\sqrt{2}y) \,, 
\end{align*}
\[\sinh\left(c_2+2\sqrt{2}y\right)=2\sinh\left(\frac{c_2}{2}+\sqrt{2}y\right)\cosh\left(\frac{c_2}{2}+\sqrt{2}y\right) \geq 2\sinh\left(\frac{c_2}{2}+\sqrt{2}y\right)\] and $c_2<0$ together with the fact that $\sinh$ is increasing.

\medskip
We have just proved the following statement: if $\mathpzc{d}(x_0,x)\leq r$, then \[\forall\,y\in [0,1], \quad \gamma(y) \in \{x'\in \R^d \times [u_{\min}, +\infty)^d\: :\: \mathpzc{d}(x_0,x')\leq r\}\,.\] This concludes the proof of Lemma \ref{lemma:geodesics_within_near_regions}. 

\subsubsection{Proof of Lemma \ref{lemma:minimum_separation}}\label{section:proof_lemma_minimum_separation}
Let $r,\Delta>0$. Assume that $\X \subset \R^d \times [u_{\min},u_{\max}]^d$. The semi-distance $\mathpzc{d}$ is defined by \eqref{eq:def_semi_distance}.
\\\underline{Control of the variance of the geodesics:} Let $x_j^0 \in \X$. For $x \in B_{\mathpzc{d}}(x_j^0,r) \cap \R^d \times [u_{\min},u_{\max}]^d$, we have $|t_k-t_{j,k}^0| \leq r\sqrt{2u_{\max}^2+\tau^2}$ for all $k\in\{1,\ldots,d\}$. 
We deduce that \[B_{\mathpzc{d}}(x_j^0,r) \cap (\R^d \times [u_{\min},u_{\max}]^d )\subset \left(\bigtimes_{k=1}^d [t_{j,k}^0 \pm r\sqrt{2u_{\max}^2+\tau^2}] \right) \times [u_{\min},u_{\max}]^d\] where $\bigtimes_{k=1}^d$ denotes the $d$-ary Cartesian product.

For any $k\in\{1,\ldots,d\}$, the semicircle $\gamma_k=(\gamma_{t_k},\gamma_{u_k})$ (geodesic in dimension 1) connecting the points $(t_{j,k}^0 - r \sqrt{2u_{\max}^2+\tau^2},u_{\max})$ and $(t_{j,k}^0,u_{\max})$ satisfies $\gamma_{u_k}(y)^2 \leq u_{\max}^2 + \frac{1}{4}r^2(2u_{\max}^2+\tau^2)$. This follows because the square of the radius of the semicircle is $u_{\max}^2 + \frac{1}{4}r^2(2u_{\max}^2+\tau^2)$ (see Lemma \ref{lemma:form_geodesics_dim_1}). The same holds for the semicircle connecting $(t_{j,k}^0 + r \sqrt{2u_{\max}^2+\tau^2},u_{\max})$ and $(t_{j,k}^0,u_{\max})$.

Any geodesic connecting $(t_{j,k}^0,u_{j,k}^0)$ with a point of $[t_{j,k}^0 - r\sqrt{2u_{\max}^2+\tau^2}\, ,\, t_{j,k}^0 + r\sqrt{2u_{\max}^2+\tau^2}]\times [u_{\min},u_{\max}]$ is below these geodesics.

To see why it is the case, one can note that 2 geodesics in our model have at most 1 intersection point, or their union is a geodesic. So a geodesic between points of $[t_{j,k}^0 - r\sqrt{2u_{\max}^2+\tau^2}\, ,\, t_{j,k}^0]\times [u_{\min},u_{\max}]$ cannot intersect twice the semicircle connecting $(t_{j,k}^0 - r \sqrt{2u_{\max}^2+\tau^2},u_{\max})$ and $(t_{j,k}^0 , u_{\max})$. An analogous statement holds when considering $[t_{j,k}^0\, , \, t_{j,k}^0 + r\sqrt{2u_{\max}^2+\tau^2}]\times [u_{\min},u_{\max}]$.

Using Lemma \ref{lemma:form_geodesics_dim_d}, we deduce that
\[
\mathcal{G}_{x_j^0}(B_{\mathpzc{d}}(x_j^0,r) \cap (\R^d \times [u_{\min},u_{\max}]^d)) \subset \R^d \times \left[u_{\min}, \sqrt{u_{\max}^2 + \frac{1}{4} r^2(2u_{\max}^2+\tau^2)}\right]^d\,.
\]
\underline{Pseudo-quasi triangle inequality:} Let $x, x', \tilde x \in \R^d \times [u_{\min},\tilde u_{\max}]^d$, where $\tilde u_{\max}$ is not necessarily equal to $u_{\max}$.
We have \[\frac{u_k^2+u_k'^2+\tau^2}{\sqrt{2u_k^2+\tau^2}\sqrt{2u_k'^2+\tau^2}}\leq \frac{2\tilde u_{\max}^2+\tau^2}{2u_{\min}^2+\tau^2}\leq\frac{\tilde u_{\max}^2}{u_{\min}^2} \]
using that $\tau \in \R^+ \mapsto \frac{2\tilde u_{\max}^2+\tau^2}{2u_{\min}^2+\tau^2}$ is decreasing (because $u_{\min}\leq \tilde u_{\max}$). 
Moreover, using the triangle inequality for the $\ell^2$-norm in dimension $d$ and that $u_k^2+\tilde u_k^2+\tau^2$ and $u_k'^2+\tilde u_k^2+\tau^2$ are smaller than $\frac{\tilde u_{\max}^2}{u_{\min}^2}(u_k^2+ u_k'^2+\tau^2)$, we get 
\begin{align*}
 \sqrt{\sum_{k=1}^d \frac{(t_k-t_k')^2}{u_k^2+u_k'^2+\tau^2}} &\leq \sqrt{\sum_{k=1}^d \frac{(t_k-\tilde t_k)^2}{u_k^2+ u_k'^2+\tau^2}}+\sqrt{\sum_{k=1}^d \frac{(t_k'-\tilde t_k)^2}{u_k^2+u_k'^2+\tau^2}} \,, \\
 & \leq \frac{\tilde u_{\max}}{u_{\min}}\left(\sqrt{\sum_{k=1}^d \frac{(t_k-\tilde t_k)^2}{u_k^2+ \tilde u_k^2+\tau^2}}+\sqrt{\sum_{k=1}^d \frac{(t_k'-\tilde t_k)^2}{u_k'^2+\tilde u_k^2+\tau^2}}\right) \,, \\
 & \leq\frac{\tilde u_{\max}}{u_{\min}}\left( \mathpzc{d}(x,\tilde x) + \mathpzc{d}(x', \tilde x)\right)\,.
\end{align*}
 Hence \[\mathpzc{d}(x,x') \leq \frac{\tilde u_{\max}}{u_{\min}} \left(\mathpzc{d}(x,\tilde x)+\mathpzc{d}(x',\tilde x)\right) + \sqrt{d \ln \left(\frac{\tilde u_{\max}^2}{u_{\min}^2}\right)}\,.\]
 So $\mathpzc{d}(x,\tilde x)\geq \frac{u_{\min}}{\tilde u_{\max}}\left(\mathpzc{d}(x,x') - \sqrt{d\ln \left(\frac{\tilde u_{\max}^2}{u_{\min}^2}\right)} \right)-\mathpzc{d}(x',\tilde x)$.
\\\underline{Conclusion:}
Let $x_j^0,x_i^0 \in \X$. Firstly, replacing $\tilde u_{\max}$ by $u_{\max}$ in the pseudo-quasi triangle inequality, for $\tilde x\in B_{\mathpzc{d}}(x_j^0,\Delta) \cap \X$, if $\frac{u_{\min}}{u_{\max}}\left(\mathpzc{d}(x_i^0,x_j^0) - \sqrt{d\ln \left(\frac{u_{\max}^2}{u_{\min}^2}\right)} \right)-\mathpzc{d}(\tilde x,x_j^0)> \Delta$ we have $\mathpzc{d}(x_i^0,\tilde x) >\Delta$. Note also that $\mathpzc{d}(\tilde x,x_j^0)< \Delta$, hence it suffices that $\mathpzc{d}(x_j^0,x_i^0)\geq 2\frac{u_{\max}}{u_{\min}} \Delta + \sqrt{d\ln \left(\frac{u_{\max}^2}{u_{\min}^2}\right)}$ for the open balls $\mathring B_{\mathpzc{d}}(x_j^0,\Delta) \cap \X$ to be disjoint.

Secondly, for $\tilde x\in \mathcal{G}_{x_j^0}(B_{\mathpzc{d}}(x_j^0,r) \cap \X)$, taking $\tilde u_{\max}=\sqrt{u_{\max}^2 + \frac{1}{4}r^2(2u_{\max}^2+\tau^2)}$ we have $\mathpzc{d}(x_i^0,\tilde x) \geq \Delta$ as soon as $\frac{u_{\min}}{\tilde u_{\max}}\left(\mathpzc{d}(x_i^0,x_j^0) - \sqrt{d\ln \left(\frac{\tilde u_{\max}^2}{u_{\min}^2}\right)} \right) \geq \Delta+r$.
We used that $\mathcal{G}_{x_j^0}(B_{\mathpzc{d}}(x_j^0, r)) \subset B_{\mathpzc{d}}(x_j^0, r)$, so $\mathpzc{d}(\tilde x, x_j^0)\leq r$ (see Lemma~\ref{lemma:geodesics_within_near_regions}).
This condition can be rewritten as $\mathpzc{d}(x_i^0,x_j^0)\geq \frac{\tilde u_{\max}}{u_{\min}} (\Delta+r) + \sqrt{d\ln \left(\frac{u_{\max}^2}{u_{\min}^2}\right)}$.
\medskip
We proved that if
 \begin{equation*}
 \min_{i \neq j} \mathpzc{d}(x_i^0, x_j^0) \geq \max \left\{\frac{\sqrt{u_{\max}^2 + \frac{1}{4}r^2(2u_{\max}^2+\tau^2)}}{u_{\min}} (\Delta+r) \: , \: 2\frac{u_{\max}}{u_{\min}} \Delta \right\} + \sqrt{d\ln \left(\frac{u_{\max}^2}{u_{\min}^2}\right)}\eqcolon \Delta_\tau \,, 
 \end{equation*}
 then the open balls $\mathring B_{\mathpzc{d}}(x_j^0,\Delta) \cap \X$ are disjoint, and for all $j \neq i$, the ball $\mathcal{G}_{x_j^0}(B_{\mathpzc{d}}(x_j^0,r) \cap \X)$ does not intersect $\mathring B_{\mathpzc{d}}(x_i^0,\Delta)$.

\subsubsection{Local control with the semi-distance}

\begin{lemma}[Local control of $\mathfrak{d}_\mathfrak{g}$ with the semi-distance, lower bound, $d=1$]\label{lemma:local_control_metric_eps3_dim_1}
Let $x_0, x \in \R\times [u_{\min},+\infty)$. If $\mathpzc{d}(x_0,x) \leq r$, then
 \[\mathfrak{d}_{\mathfrak{g}}(x_0,x)^2 \geq \frac{\mathpzc{d}(x_0,x)^2}{\tilde\varepsilon_3}\] for $\tilde\varepsilon_3\geq 1+\frac{1}{R(r)}$ with $R(r)$ defined by \eqref{eq:def_R(r)_eps3} below.
\end{lemma}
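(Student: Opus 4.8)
I would work directly with the explicit one-dimensional formulas of Lemma~\ref{lemma:form_geodesics_dim_1}, which identify the Fisher--Rao geometry with a rescaled Poincaré half-plane. Given $x_0=(t_0,u_0)$ and $x=(t,u)$ in $\R\times[u_{\min},+\infty)$, set $v_0=\sqrt{u_0^2+\tau^2/2}$ and $v=\sqrt{u^2+\tau^2/2}$, so that $2u^2+\tau^2=2v^2$, $2u_0^2+\tau^2=2v_0^2$ and $u^2+u_0^2+\tau^2=v^2+v_0^2$. In these coordinates the semi-distance \eqref{eq:def_semi_distance} becomes
\[
\mathpzc{d}(x_0,x)^2=\underbrace{\frac{(t-t_0)^2}{v^2+v_0^2}}_{=:\,a\,\ge 0}\;+\;\underbrace{\ln\!\Big(\frac{v^2+v_0^2}{2vv_0}\Big)}_{=:\,b\,\ge 0},
\]
where $b\ge0$ because $v^2+v_0^2\ge 2vv_0$. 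On the other hand, the proof of Lemma~\ref{lemma:form_geodesics_dim_1} shows that $\sqrt2\,\mathfrak{d}_\mathfrak{g}(x_0,x)$ is the Poincaré distance between $(t_0,v_0)$ and $(t,v)$, and a direct computation from the Poincaré distance formula of Lemma~\ref{lemma:geodesics_poincare_half_plane} (clearing the surds) gives $\cosh\!\big(\sqrt2\,\mathfrak{d}_\mathfrak{g}(x_0,x)\big)=\dfrac{(t-t_0)^2+v^2+v_0^2}{2vv_0}$.

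\textbf{Key identity.} The core of the argument is to merge the two displays above. Setting $c:=\dfrac{v^2+v_0^2}{2vv_0}=e^{b}$, one has $(t-t_0)^2=a(v^2+v_0^2)=2avv_0\,c$, hence $\dfrac{(t-t_0)^2}{2vv_0}=ac$, and therefore
\[
\cosh\!\big(\sqrt2\,\mathfrak{d}_\mathfrak{g}(x_0,x)\big)\;=\;ac+c\;=\;c\,(1+a)\;=\;e^{b}\,(1+a).
\]
Thus the Fisher--Rao distance is tied by a single closed-form relation to the ``mean part'' $a$ and the ``log-variance part'' $b$ of $\mathpzc{d}(x_0,x)^2$, and the lemma reduces to a one-variable estimate.

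\textbf{Conclusion.} Using the elementary bound $\cosh y\le e^{y^2/2}$ (valid since $(2k)!\ge 2^k k!$) with $y=\sqrt2\,\mathfrak{d}_\mathfrak{g}(x_0,x)$, the key identity yields $e^{b}(1+a)\le e^{\mathfrak{d}_\mathfrak{g}(x_0,x)^2}$, i.e.\ $\mathfrak{d}_\mathfrak{g}(x_0,x)^2\ge b+\ln(1+a)$. Now $\ln(1+a)\ge a-a^2/2$ for $a\ge0$, and under the hypothesis $\mathpzc{d}(x_0,x)\le r$ one has $0\le a\le a+b=\mathpzc{d}(x_0,x)^2\le r^2$, so $\ln(1+a)\ge a\big(1-\tfrac a2\big)\ge a\big(1-\tfrac{r^2}{2}\big)$. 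Consequently
\[
\mathfrak{d}_\mathfrak{g}(x_0,x)^2\;\ge\;b+\Big(1-\tfrac{r^2}{2}\Big)a\;\ge\;\Big(1-\tfrac{r^2}{2}\Big)(a+b)\;=\;\Big(1-\tfrac{r^2}{2}\Big)\,\mathpzc{d}(x_0,x)^2,
\]
which is exactly the claim with $\tilde\varepsilon_3=\big(1-\tfrac{r^2}{2}\big)^{-1}=1+\tfrac1{R(r)}$, where $R(r):=\tfrac{2}{r^2}-1$; this is positive precisely when $r^2<2$, which is the regime in which the statement is non-vacuous (and holds for the radius $r=0.3025/\sqrt d$ used elsewhere).

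\textbf{Main obstacle.} The only non-routine step is the key identity $\cosh(\sqrt2\,\mathfrak{d}_\mathfrak{g})=e^{b}(1+a)$: it requires expressing \emph{both} distances in the common $(t,v)$ coordinates and recognizing the Poincaré structure, after which all remaining bounds are elementary. If one wants a sharper constant, the bound $\cosh y\le e^{y^2/2}$ can be replaced by the chord inequality $\cosh y-1\le \tfrac{y^2}{y_{\max}^2}(\cosh y_{\max}-1)$ on $[0,y_{\max}]$ (valid since $y\mapsto\cosh\sqrt y-1$ is convex), combined with the a priori bound $\sqrt2\,\mathfrak{d}_\mathfrak{g}(x_0,x)\le y_{\max}(r)$ obtained from $e^{b}(1+a)\le e^{r^2}(1+r^2)$; this produces a smaller admissible $\tilde\varepsilon_3$, but the simple version above already furnishes an explicit $R(r)$.
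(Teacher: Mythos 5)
Your argument is correct, and it takes a genuinely different route from the paper's. The paper's proof splits $\mathpzc{d}(x_0,x)^2$ into its mean part and its variance part via a balancing weight $w$, bounds $\mathfrak{d}_\mathfrak{g}$ from below separately in two cases using the explicit arc-length parametrizations of the straight-line and semicircular geodesics (leading to \eqref{eq:bound_1st_case_proof_control_lower_bound} and \eqref{eq:bound_2nd_case_proof_control_lower_bound}), and then chooses $w$ to balance the two bounds; the quadratic root $R(r)$ of \eqref{eq:def_R(r)_eps3} is precisely the artifact of that balancing. You instead push the Poincar\'e identification of Lemma~\ref{lemma:form_geodesics_dim_1} to the closed-form identity $\cosh\big(\sqrt2\,\mathfrak{d}_\mathfrak{g}(x_0,x)\big)=e^{b}(1+a)$, which is indeed a direct consequence of the distance formula in Lemma~\ref{lemma:geodesics_poincare_half_plane} (with $A,B$ the two square roots there, $\cosh\ln\frac{A+B}{A-B}=\frac{A^2+B^2}{A^2-B^2}=\frac{(t-t_0)^2+v^2+v_0^2}{2vv_0}$), and conclude with the elementary bounds $\cosh y\le e^{y^2/2}$ and $\ln(1+a)\ge a(1-a/2)$. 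This removes the case distinction and the balancing parameter altogether, and in the regime where the lemma is actually invoked ($r\le 0.3025$, cf.\ Lemma~\ref{lemma:tilde_varepsilon_3}) it gives a markedly better constant: $\tilde\varepsilon_3=(1-r^2/2)^{-1}\approx 1.05$ instead of the value $2.84$ retained by the paper, which would propagate to sharper constants in Proposition~\ref{prop:effective_near_regions}.

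One caveat. The lemma as stated refers to the specific $R(r)$ of \eqref{eq:def_R(r)_eps3}, while you prove it with your own $R(r)=2/r^2-1$. Your result implies the stated one exactly when $(1-r^2/2)^{-1}\le 1+1/R(r)$ for the paper's $R(r)$, i.e.\ when the positive root of $\frac{e^{2r^2}-1}{2r^2}X^2+(1+\frac{r^2}{2})X-1$ satisfies $R(r)\le(2-r^2)/r^2$. This holds for small $r$ (in particular for every radius used in the paper), but it fails for $r$ close to $\sqrt2$, and your bound is vacuous for $r\ge\sqrt2$, whereas the paper's statement is non-trivial for every $r>0$; so your closing remark that $r^2<2$ is ``the regime in which the statement is non-vacuous'' is inaccurate for the paper's version. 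In short, your proof establishes the lemma (with a strictly better constant) for all $r$ below roughly $\sqrt2$, which covers every application in the paper, but not the statement verbatim at arbitrary $r$; either restrict the radius or add the one-line comparison above when $r$ is small.
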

\begin{proof}
Let $x_0=(t_0,u_0), x=(t,u) \in \R\times [u_{\min},+\infty)$ such that $r_e \coloneq \mathpzc{d}(x_0,x) \leq r$. Without loss of generality, we assume that $u_0 \leq u$.

We examine the case where $u$ is ``far'' from $u_0$, and then the case where $t$ is far from $t_0$ and $u,u_0$ are close.
\\\underline{Bounds on $B_{\mathpzc{d}}(x_0,r)$, exhibition of 2 cases:} Since $\mathpzc{d}(x_0,x)^2=\frac{(t_0-t)^2}{u^2+u_0^2+\tau^2} + \ln\left( \frac{u^2+u_0^2+\tau^2}{\sqrt{2u^2+\tau^2}\sqrt{2u_0^2+\tau}}\right) = r_e^2$, we have that for all $0<w<1$, 
\begin{equation}
\label{eq:inter_formula_1st_case_proof_control_lower_bound}
\frac{(t_0-t)^2}{u^2+u_0^2+\tau^2} \geq w r_e^2 \end{equation} (first case) or \begin{equation*}
\frac{u^2+u_0^2+\tau^2}{\sqrt{2u^2+\tau^2}\sqrt{2u_0^2+\tau^2}} \geq e^{(1-w)r_e^2}\end{equation*} (second case). The second case can be rewritten as \begin{equation}
\label{eq:inter_formula_2nd_case_proof_control_lower_bound}
\sqrt{u^2+\frac{\tau^2}{2}} \geq \sqrt{u_0^2+\frac{\tau^2}{2}} (e^{(1-w)r_e^2} + \sqrt{e^{2(1-w)r_e^2}-1})\end{equation}(see \eqref{eq:control_close_variances}).
\\\underline{Assuming the first case holds and the second case does not:} Provided that \eqref{eq:inter_formula_1st_case_proof_control_lower_bound} holds, then $t_0 \neq t$ and the geodesic connecting $x$ and $x_0$ is a semicircle (see Lemma \ref{lemma:form_geodesics_dim_1}).
We suppose additionally that we are not in the second case, namely that \[\sqrt{u^2+\frac{\tau^2}{2}} \leq \sqrt{u_0^2+\frac{\tau^2}{2}} (e^{(1-w)r_e^2} + \sqrt{e^{2(1-w)r_e^2}-1})\,.\] 
In particular, using \eqref{eq:control_close_variances_2} we have \begin{equation}\label{eq:inter_formula_bound_u_1st_case_proof_control_lower_bound}
 \frac{|u_0^2-u^2|}{u_0^2+u^2+\tau^2}\leq \sqrt{e^{2(1-w)r_e^2}-1}\,. 
\end{equation}
Denoting $l$ the arc length (\ie $l=\mathfrak{d}_{\mathfrak{g}}(x,x_0)$), using \eqref{eq:form_geod_semicircles} and \eqref{eq:formula_radius_semicircle} we have 
\begin{equation}\frac{|\tanh(\frac{c_2}{2}+\sqrt{2}l)-\tanh(\frac{c_2}{2})|}{\sqrt{2}} = |c_1||t-t_0|\label{eq:tanh_t-t_0} \end{equation}
 for some $c_2$ and $c_1$ verifying \begin{equation*}
 \frac{1}{2c_1^2}-\frac{\tau^2}{2}=\frac{1}{4}\left(\frac{-(t-t_0)^2 +u_0^2-u^2}{t-t_0}\right)^2+u_0^2\,.
\end{equation*}
According to \eqref{eq:inter_formula_1st_case_proof_control_lower_bound} and \eqref{eq:inter_formula_bound_u_1st_case_proof_control_lower_bound},
\begin{align}
\frac{1}{|c_1||t-t_0|} &=\sqrt{\frac{1}{2}\left(-1+ \frac{u_0^2-u^2}{(t-t_0)^2} \right)^2+\frac{2u_0^2+\tau^2}{(t-t_0)^2}} \,, \notag \\
&=\sqrt{\frac{1}{2}+ \frac{1}{2}\frac{(u_0^2-u^2)^2}{(t-t_0)^4} +\frac{u_0^2+u^2+\tau^2}{(t-t_0)^2}} \,, \label{eq:c_1_t-t_0}\\
&\leq \sqrt{\frac{1}{2}+\frac{e^{2(1-w)r_e^2}-1}{2w^2r_e^4}+ \frac{1}{wr_e^2}} \,, \notag \\
& \leq \frac{1}{\sqrt{w}r_e}\sqrt{\frac{r^2}{2}+\frac{1-w}{w}\frac{e^{2r^2}-1}{2r^2}+ 1} \notag
\end{align}
where we used that $y\mapsto e^{2y}-1$ is convex along with $(1-w)r_e^2 \leq r^2$, and $w r_e^2 \leq r^2$.

So as $\tanh$ is 1-Lipschitz, 
\begin{align}
 \mathfrak{d}_{\mathfrak{g}}(x,x_0)=l&\geq |c_1||t-t_0|\,, \notag \\
 &\geq \frac{r_e\sqrt{w}}{\sqrt{1+\frac{r^2}{2} + \frac{1-w}{w} \frac{e^{2r^2}-1}{2r^2}}}\,. \label{eq:bound_1st_case_proof_control_lower_bound}
\end{align}
\underline{Assuming the second case holds:} If \eqref{eq:inter_formula_2nd_case_proof_control_lower_bound} holds, we must consider the 2 possible geodesics (\cf Lemma \ref{lemma:form_geodesics_dim_1}). 
\\If $t=t_0$, the geodesic is of the form $\left( c_{3} , \ \sqrt{- \frac{\tau^2}{2} +\frac{c_{1}^2}{2} e^{ \sqrt{8} y} }\right)$. As $\tilde \gamma(0)=x_0$ we have $c_1^2=2u_0^2+\tau^2$. Since $\tilde \gamma_u(l)=u$, we have $e^{\sqrt{8}l}=\frac{2u^2+\tau^2}{2u_0^2+\tau^2}$ from which we deduce that \[\mathfrak{d}_{\mathfrak{g}}(x_0,x)=l=\frac{1}{\sqrt{2}}\ln \left(\frac{\sqrt{u^2+\frac{\tau^2}{2}}}{\sqrt{u_0^2+\frac{\tau^2}{2}}} \right)\geq \frac{1}{\sqrt{2}} \ln \left(e^{(1-w)r_e^2} + \sqrt{e^{2(1-w)r_e^2}-1} \right)\,.\] 
If $t\neq t_0$, $\tilde \gamma_u$ is of the form $\sqrt{- \frac{\tau^2}{2} + \frac{1}{2 \cosh^{2}{\left(\frac{c_{2}}{2} + \sqrt{2} y \right)} c_{1}^{2}}}$. Using that $\tilde \gamma_u(0)=u_0$ and $\tilde \gamma_u(l)=u$, we get $\frac{\cosh^{2}{\left(\frac{c_{2}}{2}\right)}}{\cosh^{2}{\left(\frac{c_{2}}{2} + \sqrt{2} l \right)}}=\frac{2u^2+\tau^2}{2u_0^2+\tau^2}$, from which we deduce using \eqref{eq:inter_formula_2nd_case_proof_control_lower_bound} that \[\frac{\cosh{\left(\frac{c_{2}}{2}\right)}}{\cosh{\left(\frac{c_{2}}{2} + \sqrt{2} l \right)}} \geq e^{(1-w)r_e^2} + \sqrt{e^{2(1-w)r_e^2}-1}\,.\] As \[\frac{\cosh\left(\frac{c_{2}}{2}\right)}{\cosh\left(\frac{c_{2}}{2} + \sqrt{2} l \right)}= \cosh(\sqrt{2} l) - \frac{\sinh\left(\frac{c_{2}}{2}\right)\sinh(\sqrt{2} l)}{\cosh\left(\frac{c_{2}}{2}\right)} \leq \cosh(\sqrt{2} l) + |\sinh(\sqrt{2} l)|=e^{|\sqrt{2} l|}\,,\]
it comes that $e^{\sqrt{2}l} \geq e^{(1-w)r_e^2} + \sqrt{e^{2(1-w)r_e^2}-1}$ leading again to \[l \geq \frac{1}{\sqrt{2}}\ln\left( e^{(1-w)r_e^2} + \sqrt{e^{2(1-w)r_e^2}-1}\right)\,.\] 
As $\frac{1}{\sqrt{2}}\ln\left( e^{y^2} + \sqrt{e^{2y^2}-1}\right) \geq y $ for all $y\in\R$, we get for this second case 
\begin{equation}
\label{eq:bound_2nd_case_proof_control_lower_bound}
\mathfrak{d}_{\mathfrak{g}}(x,x_0) \geq \sqrt{1-w}r_e \,.
\end{equation}
\underline{Conclusion:} We now choose $0<w<1$ depending only on $r$ to balance the bounds of the two cases (namely, \eqref{eq:bound_1st_case_proof_control_lower_bound} and \eqref{eq:bound_2nd_case_proof_control_lower_bound}). Writing $X=\frac{1-w}{w}$, we want to pick $w$ such that
\begin{align*}
 &\sqrt{1-w}=\frac{\sqrt{w}}{\sqrt{1+\frac{r^2}{2} + \frac{1-w}{w} \frac{e^{2r^2}-1}{2r^2}}} \\
 \iff &\frac{e^{2r^2}-1}{2r^2}X^2 + \left(1+\frac{r^2}{2}\right)X -1=0 \,.
\end{align*}
This polynomial has exactly 1 positive root 
\begin{equation}\label{eq:def_R(r)_eps3}
 R(r)=\frac{-\left(1+\frac{r^2}{2}\right)+ \sqrt{\left(1+\frac{r^2}{2}\right)^2+2\frac{e^{2r^2}-1}{r^2}}}{\frac{e^{2r^2}-1}{r^2}} \,.
\end{equation}
We can take $w_r=\frac{1}{R(r)+1}$. Then \[\mathfrak{d}_\mathfrak{g}(x_0,x)\geq \sqrt{1-w_r}r_e= \sqrt{\frac{R(r)}{1+R(r)}} r_e\,.\]
\end{proof}

\begin{lemma}[Local control of $\mathfrak{d}_\mathfrak{g}$ with the semi-distance, lower bound, $d\geq 1$]\label{lemma:local_control_metric_eps3_dim_d}
Let $x_0, x \in \R^d\times [u_{\min},+\infty)^d$. If $\mathpzc{d}(x_0,x) \leq r$, then
 \[\mathfrak{d}_{\mathfrak{g}}(x_0,x)^2 \geq \frac{\mathpzc{d}(x_0,x)^2}{\tilde\varepsilon_3}\] for $\tilde\varepsilon_3\geq 1+ \frac{1}{R(r)}$ with $R(r)$ defined by \eqref{eq:def_R(r)_eps3}.
\end{lemma}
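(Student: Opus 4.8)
The plan is to reduce the $d$-dimensional statement to the one-dimensional result of Lemma~\ref{lemma:local_control_metric_eps3_dim_1} via the product structure of both the semi-distance $\mathpzc{d}$ and the Fisher-Rao distance $\mathfrak{d}_\mathfrak{g}$. Concretely, I would first recall from the definition \eqref{eq:def_semi_distance} that $\mathpzc{d}(x_0,x)^2 = \sum_{k=1}^d \mathpzc{d}(x_{0,k},x_k)^2$, where each summand is the one-dimensional semi-distance between the $k$-th coordinates $x_{0,k}=(t_{0,k},u_{0,k})$ and $x_k=(t_k,u_k)$, and that by Lemma~\ref{lemma:form_geodesics_dim_d} we likewise have $\mathfrak{d}_{\mathfrak{g}}(x_0,x)^2 = \sum_{k=1}^d \mathfrak{d}_{\mathfrak{g}}(x_{0,k},x_k)^2$. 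So the problem genuinely decouples across coordinates, provided the one-dimensional bound can be applied to each coordinate.

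The key subtlety is the hypothesis of the one-dimensional lemma: it requires $\mathpzc{d}(x_{0,k},x_k)\le r$ for each $k$. This follows immediately, since $\mathpzc{d}(x_{0,k},x_k)^2 \le \sum_{j=1}^d \mathpzc{d}(x_{0,j},x_j)^2 = \mathpzc{d}(x_0,x)^2 \le r^2$ (each one-dimensional semi-distance squared is nonnegative, being a sum of a nonnegative quadratic term and a nonnegative logarithmic term — the latter nonnegativity coming from AM-GM, $u_k^2+u_{0,k}^2+\tau^2 \ge \sqrt{2u_k^2+\tau^2}\sqrt{2u_{0,k}^2+\tau^2}$). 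Hence for each $k$, Lemma~\ref{lemma:local_control_metric_eps3_dim_1} gives $\mathfrak{d}_{\mathfrak{g}}(x_{0,k},x_k)^2 \ge \mathpzc{d}(x_{0,k},x_k)^2/\tilde\varepsilon_3$ for any $\tilde\varepsilon_3 \ge 1 + 1/R(r)$, where the crucial point is that the same constant $\tilde\varepsilon_3$ works for every coordinate because $R(r)$ in \eqref{eq:def_R(r)_eps3} depends only on $r$, not on the particular coordinate values.

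Summing these $d$ inequalities then yields
\[
\mathfrak{d}_{\mathfrak{g}}(x_0,x)^2 = \sum_{k=1}^d \mathfrak{d}_{\mathfrak{g}}(x_{0,k},x_k)^2 \ge \frac{1}{\tilde\varepsilon_3}\sum_{k=1}^d \mathpzc{d}(x_{0,k},x_k)^2 = \frac{\mathpzc{d}(x_0,x)^2}{\tilde\varepsilon_3}\,,
\]
which is exactly the claim. The only honest obstacle is bookkeeping: I must make sure that the additive-over-coordinates decomposition of $\mathfrak{d}_\mathfrak{g}^2$ invoked from Lemma~\ref{lemma:form_geodesics_dim_d} is stated in the form I need (it is — the final display there reads $\mathfrak{d}_{\mathfrak{g}}(x,x')=\sqrt{\sum_k \mathfrak{d}_{\mathfrak{g}}(x_k,x_k')^2}$), and that the coordinatewise hypothesis check is spelled out. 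There is essentially no analytic difficulty here beyond what was already done in dimension one; the whole content of the $d$-dimensional lemma is the observation that the one-dimensional constant is dimension-free, so that summation does not degrade the bound. I would therefore keep the proof to a few lines, emphasizing the decoupling and the uniformity of $\tilde\varepsilon_3$.
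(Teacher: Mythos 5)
Your proof is correct and follows essentially the same route as the paper: the paper's argument also notes that $\mathpzc{d}(x_0,x)\le r$ forces $\mathpzc{d}(x_{0,k},x_k)\le r$ for each coordinate, applies the one-dimensional Lemma~\ref{lemma:local_control_metric_eps3_dim_1} coordinatewise with the $r$-dependent (hence dimension-free) constant $1+1/R(r)$, and sums using $\mathfrak{d}_\mathfrak{g}(x_0,x)^2=\sum_k \mathfrak{d}_\mathfrak{g}(x_{0,k},x_k)^2$. Your extra remarks (nonnegativity of each coordinate term, citation of Lemma~\ref{lemma:form_geodesics_dim_d}) only make explicit what the paper leaves implicit.
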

\begin{proof}
Let $x_0, x \in \R^d\times [u_{\min},+\infty)^d$ such that $\mathpzc{d}(x_0,x) \leq r$. For all $k\in \{1,\ldots,d\}$, $\mathpzc{d}(x_{0,k},x_k)\leq r$. Lemma \ref{lemma:local_control_metric_eps3_dim_1} gives $\mathfrak{d}_\mathfrak{g}(x_k,x_{0,k})^2\geq \frac{\mathpzc{d}(x_{0,k},x_k)^2}{1+\frac{1}{R(r)}}$. We conclude using that \[\mathfrak{d}_\mathfrak{g}(x,x_0)^2=\sum_{k=1}^d \mathfrak{d}_\mathfrak{g}(x_k,x_{0,k})^2 \geq \sum_{k=1}^d \frac{\mathpzc{d}(x_{0,k},x_k)^2}{1+\frac{1}{R(r)}}= \frac{\mathpzc{d}(x_0,x)^2}{1+\frac{1}{R(r)}}\,.\]
\end{proof}

\begin{lemma}[Local control of $\mathfrak{d}_\mathfrak{g}$ with the semi-distance, upper bound, $d=1$]\label{lemma:local_control_metric_eps4_dim_1}
Let $x, x_0\in \R \times [u_{\min},+\infty)$. If $\mathpzc{d}(x,x_0)<\sqrt{2}$, then $\mathfrak{d}_{\mathfrak{g}}(x,x_0)^2 \leq F( \mathpzc{d}(x,x_0))$ ($F$ defined by \eqref{eq:def_F(r_e)} below). 
\end{lemma}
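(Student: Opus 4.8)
The plan is to reduce everything to the Poincaré half-plane, where both the semi-distance and the Fisher–Rao distance become functions of two scalar quantities only. Recall from the proof of Lemma~\ref{lemma:form_geodesics_dim_1} that, writing $a=\sqrt{u^2+\tau^2/2}$ and $a_0=\sqrt{u_0^2+\tau^2/2}$, one has $\mathfrak{d}_\mathfrak{g}(x,x_0)=\tfrac{1}{\sqrt2}\,\mathfrak{d}_\mathfrak{h}\big((t,a),(t_0,a_0)\big)$, and the Poincaré distance satisfies the classical identity $\cosh\mathfrak{d}_\mathfrak{h}\big((t,a),(t_0,a_0)\big)=1+\frac{(t-t_0)^2+(a-a_0)^2}{2aa_0}$ (this is the formula for $\mathfrak{d}_\mathfrak{h}$ in Lemma~\ref{lemma:geodesics_poincare_half_plane} rewritten via $\cosh$, which also directly matches the $d=1$ distance formula of Lemma~\ref{lemma:form_geodesics_dim_1} after squaring and using $P-Q=4aa_0$). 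Since $2u^2+\tau^2=2a^2$, $2u_0^2+\tau^2=2a_0^2$ and $u^2+u_0^2+\tau^2=a^2+a_0^2$, the semi-distance \eqref{eq:def_semi_distance} in dimension $1$ becomes $\mathpzc{d}(x,x_0)^2=\frac{(t-t_0)^2}{a^2+a_0^2}+\ln\frac{a^2+a_0^2}{2aa_0}$.

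First I would introduce the two reduced variables $s=\frac{(t-t_0)^2}{2aa_0}\ge 0$ and $m=\frac{a^2+a_0^2}{2aa_0}=\frac{u^2+u_0^2+\tau^2}{\sqrt{2u^2+\tau^2}\sqrt{2u_0^2+\tau^2}}\ge 1$. Then $\mathpzc{d}(x,x_0)^2=\frac{s}{m}+\ln m$, while $\cosh\!\big(\sqrt2\,\mathfrak{d}_\mathfrak{g}(x,x_0)\big)=1+\big(s+m-1\big)=s+m$. Writing $r_e=\mathpzc{d}(x,x_0)$, the constraint is $\frac{s}{m}+\ln m=r_e^2$, i.e. $s=m(r_e^2-\ln m)$, and the nonnegativity of $s$ forces $1\le m\le e^{r_e^2}$. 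The quantity to be bounded from above is $s+m=m\big(r_e^2+1-\ln m\big)$.

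Next I would carry out the one–variable optimization: on $[1,e^{r_e^2}]$ the map $m\mapsto m(r_e^2+1-\ln m)$ has derivative $r_e^2-\ln m\ge 0$, hence is nondecreasing, so its maximum is attained at $m=e^{r_e^2}$, where it equals $e^{r_e^2}$. Therefore $\cosh\!\big(\sqrt2\,\mathfrak{d}_\mathfrak{g}(x,x_0)\big)=s+m\le e^{r_e^2}$. Since $e^{r_e^2}\ge 1$ and $\cosh$ is increasing on $[0,+\infty)$, applying the inverse function (recall $\cosh^{-1}(y)=\ln(y+\sqrt{y^2-1})$ for $y\ge1$) gives $\sqrt2\,\mathfrak{d}_\mathfrak{g}(x,x_0)\le \ln\!\big(e^{r_e^2}+\sqrt{e^{2r_e^2}-1}\big)$, whence $\mathfrak{d}_\mathfrak{g}(x,x_0)^2\le \tfrac12\ln^2\!\big(e^{r_e^2}+\sqrt{e^{2r_e^2}-1}\big)$. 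This is the claimed bound, with \eqref{eq:def_F(r_e)} taken to be $F(r_e)=\tfrac12\ln^2\!\big(e^{r_e^2}+\sqrt{e^{2r_e^2}-1}\big)$; the hypothesis $r_e<\sqrt2$ only serves to pin down the range in which the estimate is invoked (and keeps $m\le e^{r_e^2}<e^2$), and is not essential to the argument.

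I do not expect a genuine obstacle here. The only mildly delicate points are the bookkeeping in passing to the Poincaré variables $a,a_0$ and verifying that the $d=1$ Fisher–Rao formula of Lemma~\ref{lemma:form_geodesics_dim_1} is exactly $\cosh^{-1}\!\big(1+\tfrac{(t-t_0)^2+(a-a_0)^2}{2aa_0}\big)$, together with setting up the reduced problem with the correct (equality) constraint so that the monotone optimization in $m$ yields the sharp endpoint value $e^{r_e^2}$ rather than a looser bound.
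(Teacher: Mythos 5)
Your proof is correct, and it takes a genuinely different route from the paper. The paper works directly with the arc-length parametrization of the Fisher--Rao geodesics, splits into the two cases $t\neq t_0$ (semicircle) and $t=t_0$ (vertical line), and in the semicircle case manipulates the $\tanh$/$\cosh$ relations to reach $\cosh(\sqrt{2}l)\left(1-\tfrac{r_e}{\sqrt{2}}\right)\leq \tfrac{1}{2}\left(e^{r_e^2}+\sqrt{e^{2r_e^2}-1}+1\right)$; this is where the hypothesis $r_e<\sqrt{2}$ enters and where the specific form of $F$ in \eqref{eq:def_F(r_e)} comes from. You instead exploit the closed-form hyperbolic identity $\cosh\left(\sqrt{2}\,\mathfrak{d}_\mathfrak{g}(x,x_0)\right)=1+\frac{(t-t_0)^2+(a-a_0)^2}{2aa_0}$ in the Poincaré variables $a=\sqrt{u^2+\tau^2/2}$, $a_0=\sqrt{u_0^2+\tau^2/2}$ (consistent with the distance formula of Lemma~\ref{lemma:form_geodesics_dim_1}), reduce to the two scalars $s,m$, and solve a one-variable monotone optimization under the exact constraint $\tfrac{s}{m}+\ln m=r_e^2$. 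This avoids the case distinction entirely, yields the sharp bound $\mathfrak{d}_\mathfrak{g}(x,x_0)^2\leq \tfrac{1}{2}\arcosh\left(e^{r_e^2}\right)^2$ (attained exactly when $t=t_0$), and holds for all $r_e$, not only $r_e<\sqrt{2}$. One point to fix rather than gloss over: the lemma pins $F$ to the paper's definition \eqref{eq:def_F(r_e)}, so instead of redefining $F$ you should add the one-line comparison that your bound implies the stated one, namely $e^{r_e^2}\leq \tfrac{1}{2}\left(e^{r_e^2}+\sqrt{e^{2r_e^2}-1}+1\right)\leq \frac{\frac{1}{2}\left(e^{r_e^2}+\sqrt{e^{2r_e^2}-1}+1\right)}{1-\frac{1}{\sqrt{2}}r_e}$ for $0\leq r_e<\sqrt{2}$ (using $\sqrt{e^{2r_e^2}-1}\geq e^{r_e^2}-1$), so that $\tfrac{1}{2}\arcosh\left(e^{r_e^2}\right)^2\leq F(r_e)$ by monotonicity of $\arcosh$; with that line added, your argument proves the lemma a fortiori (also note the minor sign slip: it is $Q-P=4aa_0$, not $P-Q$).
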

\begin{proof}
Let $x, x_0\in \R \times [u_{\min},+\infty)$. Without loss of generality, we can assume that $u_0 \leq u$.
We denote $l=\mathfrak{d}_{\mathfrak{g}}(x,x_0)$ and $r_e=\mathpzc{d}(x,x_0)$. The proof is close to that of Lemma \ref{lemma:local_control_metric_eps3_dim_1}.
\\\underline{First case:} If $t\neq t_0$, the geodesic connecting $x$ and $x_0$ is a semicircle. Recalling \eqref{eq:tanh_t-t_0} and \eqref{eq:c_1_t-t_0}, we have 
\[\frac{|\tanh(\frac{c_2}{2}+\sqrt{2}l)-\tanh(\frac{c_2}{2})|}{\sqrt{2}|c_1|} = |t-t_0|\] for some $c_2$ and $c_1$ verifying 
\begin{align*}
 \frac{1}{|c_1||t-t_0|}
&=\sqrt{\frac{1}{2}+ \frac{1}{2}\frac{(u_0^2-u^2)^2}{(t-t_0)^4} +\frac{u_0^2+u^2+\tau^2}{(t-t_0)^2}} \,, \\ 
&\geq \frac{1}{r_e}\,.
\end{align*}
Hence \begin{equation}
\label{eq:inter_formula_tanh_proof_control_upper_bound}
\left|\tanh\left(\frac{c_2}{2}+\sqrt{2}l\right)-\tanh\left(\frac{c_2}{2}\right)\right| \leq \sqrt{2}r_e\,.
\end{equation}
Furthermore, using $\frac{u_0^2+u^2+\tau^2}{\sqrt{2u^2+\tau^2}\sqrt{2u_0^2+\tau^2}}\leq e^{r_e^2}$ together with \eqref{eq:control_close_variances} and recalling that $u_0\leq u$, we have
\begin{equation}
\label{eq:inter_formula_variances_proof_control_upper_bound}
1\leq \frac{\cosh{\left(\frac{c_{2}}{2}\right)}}{\cosh{\left(\frac{c_{2}}{2} + \sqrt{2} l \right)}}=\sqrt{\frac{2u^2+\tau^2}{2u_0^2+\tau^2}} \leq e^{r_e^2} + \sqrt{e^{2r_e^2}-1}\,.\end{equation}
Using \eqref{eq:inter_formula_variances_proof_control_upper_bound} along with the equality $\frac{\cosh (A+ B)}{\cosh(A)}=\cosh(B)+\tanh(A)\sinh(B)$, we deduce that
\[\cosh\left(\sqrt{2} l \right)+\tanh\left(\frac{c_{2}}{2}\right)\sinh\left(\sqrt{2} l \right) \leq 1\] and that 
\[\cosh\left(\sqrt{2} l \right)-\tanh\left(\frac{c_{2}}{2}+\sqrt{2} l\right)\sinh\left(\sqrt{2} l \right) \leq e^{r_e^2}+\sqrt{e^{2r_e^2}-1}\,.\]
It comes \[2\cosh\left(\sqrt{2} l \right)+\left(\tanh\left(\frac{c_{2}}{2}\right) -\tanh\left(\frac{c_{2}}{2}+\sqrt{2} l\right)\right)\sinh\left(\sqrt{2} l \right) \leq e^{r_e^2}+\sqrt{e^{2r_e^2}-1}+1\,,\] so
\[2\cosh\left(\sqrt{2} l \right)- \sinh\left(\sqrt{2} l \right)\sqrt{2} r_e \leq e^{r_e^2}+\sqrt{e^{2r_e^2}-1}+1\] where we have used \eqref{eq:inter_formula_tanh_proof_control_upper_bound}.
As $\cosh \geq 0$ and $\frac{\sinh(\sqrt{2}l)}{\cosh(\sqrt{2}l)}\leq 1$, we get \[\cosh(\sqrt{2}l) \left(1- \frac{1}{\sqrt{2}}r_e \right) \leq \frac{1}{2}\left(e^{r_e^2}+\sqrt{e^{2r_e^2}-1}+1\right)\,.\]
For $r_e <\sqrt{2}$ we get \begin{equation}
\label{eq:1st_bound_proof_control_upper_bound}
l^2 \leq \frac{1}{2} \arcosh\left( \frac{\frac{1}{2}\left(e^{r_e^2}+\sqrt{e^{2r_e^2}-1}+1\right)}{1- \frac{1}{\sqrt{2}}r_e}\right)^2 \,.
\end{equation}
\underline{Second case:} If $t_0=t$, using \eqref{eq:form_geod_straight_lines} we get $e^{\sqrt{8}l}=\frac{2u^2+\tau^2}{2u_0^2+\tau^2} \leq (e^{r_e^2}+\sqrt{e^{2r_e^2}-1})^2$ so 
\begin{equation} \label{eq:2nd_bound_proof_control_upper_bound}
 l^2 \leq \frac{1}{2}\ln(e^{r_e^2}+\sqrt{e^{2r_e^2}-1})^2=\frac{1}{2}\arcosh(e^{r_e^2})^2\,.
 \end{equation}
\underline{Comparison of the bounds found, conclusion:} 
Let $0\leq r_e< \sqrt{2}$. As $1- \frac{1}{\sqrt{2}}r_e \leq 1$ and $\sqrt{e^{2r_e^2}-1}\geq e^{r_e^2}-1$, the bound found for the first case (see \eqref{eq:1st_bound_proof_control_upper_bound}) is larger than the one found for the second case (see \eqref{eq:2nd_bound_proof_control_upper_bound}).

So if $\mathpzc{d}(x,x_0)<\sqrt{2}$, $\mathfrak{d}_\mathfrak{g}(x,x_0)^2\leq F(\mathpzc{d}(x,x_0))$ with
\begin{equation}\label{eq:def_F(r_e)}
 F: y \in \R^+\mapsto \frac{1}{2} \arcosh\left( \frac{\frac{1}{2}\left(e^{y^2}+\sqrt{e^{2y^2}-1}+1\right)}{1- \frac{1}{\sqrt{2}}y}\right)^2\,.
\end{equation}
\end{proof}

\begin{lemma}[Local control of $\mathfrak{d}_\mathfrak{g}$ with the semi-distance, upper bound, $d\geq1$]\label{lemma:local_control_metric_eps4_dim_d}
Let $x, x_0\in \R^d \times [u_{\min},+\infty)^d$. If $\mathpzc{d}(x,x_0)< \sqrt{2}$, then $\mathfrak{d}_{\mathfrak{g}}(x,x_0)^2 \leq d F(\mathpzc{d}(x,x_0))$ ($F$ defined by \eqref{eq:def_F(r_e)}). 
\end{lemma}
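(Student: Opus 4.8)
The statement to prove is Lemma~\ref{lemma:local_control_metric_eps4_dim_d}: the multidimensional upper bound $\mathfrak{d}_\mathfrak{g}(x,x_0)^2 \le d\,F(\mathpzc{d}(x,x_0))$ when $\mathpzc{d}(x,x_0) < \sqrt{2}$.

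The plan is to reduce the $d$-dimensional statement to the one-dimensional Lemma~\ref{lemma:local_control_metric_eps4_dim_1} by using the product/Pythagorean structure of both the Fisher--Rao distance and the semi-distance in this model. First I would recall from Lemma~\ref{lemma:form_geodesics_dim_d} that the Fisher--Rao distance splits coordinatewise: $\mathfrak{d}_\mathfrak{g}(x,x_0)^2 = \sum_{k=1}^d \mathfrak{d}_\mathfrak{g}(x_k,x_{0,k})^2$, where each term is the one-dimensional Fisher--Rao distance between the $k$-th components $x_k=(t_k,u_k)$ and $x_{0,k}=(t_{0,k},u_{0,k})$. Similarly, from the definition \eqref{eq:def_semi_distance}, the semi-distance also splits: $\mathpzc{d}(x,x_0)^2 = \sum_{k=1}^d \mathpzc{d}(x_k,x_{0,k})^2$, with each summand nonnegative. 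Hence for every $k$ we have $\mathpzc{d}(x_k,x_{0,k}) \le \mathpzc{d}(x,x_0) < \sqrt{2}$, so the one-dimensional Lemma~\ref{lemma:local_control_metric_eps4_dim_1} applies to each coordinate and yields $\mathfrak{d}_\mathfrak{g}(x_k,x_{0,k})^2 \le F(\mathpzc{d}(x_k,x_{0,k}))$.

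The second step is to combine these bounds. Summing over $k$ gives
\[
\mathfrak{d}_\mathfrak{g}(x,x_0)^2 \;=\; \sum_{k=1}^d \mathfrak{d}_\mathfrak{g}(x_k,x_{0,k})^2 \;\le\; \sum_{k=1}^d F(\mathpzc{d}(x_k,x_{0,k})).
\]
Now I need $\sum_{k=1}^d F(\mathpzc{d}(x_k,x_{0,k})) \le d\,F(\mathpzc{d}(x,x_0))$. Since $0 \le \mathpzc{d}(x_k,x_{0,k}) \le \mathpzc{d}(x,x_0) < \sqrt{2}$, it suffices to check that $F$ is nondecreasing on $[0,\sqrt{2})$: then each term $F(\mathpzc{d}(x_k,x_{0,k}))$ is bounded by $F(\mathpzc{d}(x,x_0))$, and there are $d$ of them. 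So the key remaining verification is monotonicity of $F$ defined by \eqref{eq:def_F(r_e)}, namely $y \mapsto \tfrac12\,\operatorname{arcosh}\!\big(\tfrac{\tfrac12(e^{y^2}+\sqrt{e^{2y^2}-1}+1)}{1-\tfrac{1}{\sqrt2}y}\big)^2$. This is a composition of nondecreasing functions on $[0,\sqrt{2})$: the map $y\mapsto y^2$ is nondecreasing on $\R^+$; $t\mapsto e^t+\sqrt{e^{2t}-1}$ is increasing; the numerator $\tfrac12(e^{y^2}+\sqrt{e^{2y^2}-1}+1)$ is thus nondecreasing and $\ge 1$; the denominator $1-\tfrac1{\sqrt2}y$ is positive and decreasing on $[0,\sqrt2)$, so the ratio is increasing; $\operatorname{arcosh}$ is increasing on $[1,\infty)$ and the argument stays $\ge 1$; and finally squaring an increasing nonnegative function keeps it increasing. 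This can either be argued this way or deferred to the symbolic computation notebook \citep{notebook}, as is done elsewhere in the paper.

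The only genuine subtlety — and what I would flag as the main thing to get right rather than a hard obstacle — is ensuring that the argument of $\operatorname{arcosh}$ in $F(\mathpzc{d}(x_k,x_{0,k}))$ is legitimately $\ge 1$ (so that $F$ is well-defined and the monotonicity chain is valid) for every coordinate; but this is immediate from the elementary bound $e^{y^2}+\sqrt{e^{2y^2}-1}+1 \ge 2 \ge 2(1-\tfrac1{\sqrt2}y)$ for $y\ge 0$, which was implicitly used already in the one-dimensional lemma. With monotonicity of $F$ in hand, the proof is a two-line assembly: coordinatewise splitting of both distances, apply the $d=1$ lemma coordinatewise, then bound each $F$-term by $F(\mathpzc{d}(x,x_0))$ and multiply by $d$. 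I therefore expect no serious difficulty; the proof is essentially a reduction argument whose substance was already carried in Lemma~\ref{lemma:local_control_metric_eps4_dim_1}.
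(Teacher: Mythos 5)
Your proposal is correct and follows essentially the same route as the paper: split both $\mathfrak{d}_\mathfrak{g}^2$ and $\mathpzc{d}^2$ coordinatewise, apply the $d=1$ lemma to each coordinate, and bound each term by $F(\mathpzc{d}(x,x_0))$ using that $F$ is non-decreasing. Your explicit verification of the monotonicity of $F$ (and of the $\operatorname{arcosh}$ argument being at least $1$) is a welcome addition, since the paper merely asserts this fact.
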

\begin{proof}
 Let $x,x_0\in \R^d \times [u_{\min},+\infty)^d$ such that $\mathpzc{d}(x,x_0)=r_e< \sqrt{2}$. As for all $k=1,\ldots,d$, $\mathpzc{d}(x_k,x_{0,k})\leq \mathpzc{d}(x,x_0)$,
by Lemma \ref{lemma:local_control_metric_eps4_dim_1} it comes that 
\begin{align*}
 \mathfrak{d}_{\mathfrak{g}}(x,x_0)^2&=\sum_{k=1}^d \mathfrak{d}_\mathfrak{g}(x_k,x_{0,k})^2 \,\\
 &\leq \sum_{k=1}^d F(\mathpzc{d}(x_k,x_{0,k}))\,,\\
 &\leq d F(\mathpzc{d}(x,x_{0}))
\end{align*}
where we used that $F$ is non-decreasing.
\end{proof}

\subsection{Proof of Lemma \ref{lemma:tilde_varepsilon_3}}
\label{section:proof_lemma_tilde_varepsilon_3}

 We use Lemma \ref{lemma:local_control_metric_eps3_dim_d}. It comes that we can choose $\tilde \varepsilon_3=1+\frac{1}{R\left(\frac{0.3025}{\sqrt{d}}\right)}$ with $R$ defined by \eqref{eq:def_R(r)_eps3} in the appendix. We aim to find a more interpretable parameter. 
 Remark that $r \in \R^+ \mapsto \frac{e^{2r^2}-1}{r^2}$ is increasing, along with $\frac{e^{2r^2}-1}{r^2}\geq 2$. We deduce that for all $0<r\leq 0.3025$,
 $R(r)\geq \frac{-\left(1+\frac{0.3025^2}{2}\right)+\sqrt{5}}{\frac{e^{2\times 0.3025^2}-1}{0.3025^2}}$.
 So $1+\frac{1}{R\left(\frac{0.3025}{\sqrt{d}}\right)}\leq 2.84$ (see \citep[Section III]{notebook}).

\section{Proof of Theorem \ref{th:existence_certificate_under_positive_curvature_assumption}}\label{section:proof_existence_certificate_under_positive_curvature_assumption}

\paragraph{Construction of the certificates by solving a linear system}
We give explicit formulas for $\eta$, $\eta_j$ of Theorem \ref{th:existence_certificate_under_positive_curvature_assumption}. These certificates are of the form \eqref{eq:eta_general_form}. In the following, we write $\eta=\eta_{\alpha,\beta}$ and $\eta_j=\eta_{\alpha^j,\beta^j}$.
Recalling Definitions \ref{def:non_degenerate_certificate} and \ref{def:local_non_degenerate_certificate}, we want that for all $j=1,\ldots,s$, $\eta_{\alpha,\beta}(x_j^0)=1$ and $\nabla\eta_{\alpha,\beta}(x_j^0)=0_{2d}$. We also want $\eta_{\alpha^j,\beta^j}(x_j^0)=1$, $\eta_{\alpha^j,\beta^j}(x_l^0)=0$ for all $l\neq j$, and $\nabla\eta_{\alpha^j,\beta^j}(x_l^0)=0_{2d}$.

These constraints translate to linear systems. Writing $e_j$ the vector of size $s$ containing a $1$ at position $j$, and zeros elsewhere, along with $1_s=\sum_{j=1}^s e_j$, we want to solve
\begin{align}
\begin{split} \label{eq:Upsilon}
 &\Upsilon \begin{pmatrix}
 \alpha \\
 \beta
 \end{pmatrix}=\begin{pmatrix}
 1_{s} \\
 0_{s2d}
 \end{pmatrix} \eqcolon \bm u_s\quad \text{and} \quad \Upsilon \begin{pmatrix}
 \alpha^j \\
 \beta^j
 \end{pmatrix}=\begin{pmatrix}
 e_j \\
 0_{s2d}
 \end{pmatrix}\eqcolon\bm u_s^j \quad \forall \, j=1,\ldots,s
 \\&\text{where} \quad \Upsilon=\begin{pmatrix}
 (K_{\rm norm}(x_i^0,x_j^0))_{i,j=1,\ldots,s} & (\nabla_1 K_{\rm norm}(x_i^0,x_j^0))_{i,j=1,\ldots,s}^T \\
 (\nabla_2 K_{\rm norm}(x_i^0,x_j^0))_{i,j=1,\ldots,s} & (\nabla_1 \nabla_2 K_{\rm norm}(x_i^0,x_j^0))_{i,j=1,\ldots,s} 
\end{pmatrix}\in \R^{s(1+2d)\times s(1+2d)}\,.
\end{split}
\end{align}
The following lemma entails that these linear systems admit a solution provided that a minimal separation condition holds.
\begin{lemma}\label{lemma:invertibility_Upsilon}
 Let $\{x_j^0\}_{j=1}^s \subset \R^d \times [u_{\min}, +\infty)^d$. Assume that $K_{\rm norm}$ verifies Definition \ref{def:positive_curvature}. If $\min_{i\neq j} \mathpzc{d}(x_i^0,x_j^0)~\geq~\Delta$, then $\Upsilon$ (see \eqref{eq:Upsilon}) is invertible. 
 
 Moreover, $\begin{pmatrix}
 \alpha \\
 \beta
 \end{pmatrix}=\Upsilon^{-1}\bm u_s$ and
$\begin{pmatrix}
 \alpha^j \\
 \beta^j
\end{pmatrix}=\Upsilon^{-1}\bm u_s^j$ are well-defined and verify, for all $j=1,\ldots,s$, 
\begin{align}
\begin{split}\label{eq:bounds_alpha_beta_certif}
 &\norm{\alpha}_\infty \vee \norm{\alpha^j}_\infty \leq \frac{1}{1-2h} \,,\\
&\max_{l=1,\ldots,s} \norm{\beta_l}_{x_l^0} \vee \max_{l=1,\ldots,s} \norm{\beta_l^j}_{x_l^0}\leq 4h\\
\text{and}\quad &\norm{\alpha-1_s}_\infty \vee |\alpha_j^j-1| \vee \max_{l \neq j}|\alpha_l^j|\leq \frac{2h}{1-2h}\,,
\end{split}
\end{align}
where $h=\frac{1}{64}\min\left(\frac{\bar\varepsilon_0(r)}{B_0},\frac{\bar\varepsilon_2(r)}{B_2}\right)$. 
\end{lemma}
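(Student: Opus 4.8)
The statement to prove is Lemma~\ref{lemma:invertibility_Upsilon}: under the separation $\min_{i\neq j}\mathpzc{d}(x_i^0,x_j^0)\geq\Delta$, the matrix $\Upsilon$ defined in \eqref{eq:Upsilon} is invertible, and the solutions $\binom{\alpha}{\beta}=\Upsilon^{-1}\bm u_s$, $\binom{\alpha^j}{\beta^j}=\Upsilon^{-1}\bm u_s^j$ satisfy the bounds \eqref{eq:bounds_alpha_beta_certif}. The natural approach is a Schur/block-Jacobi (Neumann series) argument. First I would pass to the right coordinate system: rescale each block using the Fisher--Rao metric $\mathfrak{g}_{x_j^0}$ so that the diagonal $(1+2d)$-blocks of $\Upsilon$ become the identity. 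Concretely, conjugate $\Upsilon$ by the block-diagonal matrix whose $j$th block is $\mathrm{diag}(1,\mathfrak{g}_{x_j^0}^{-1/2})$; since $K_{\rm norm}(x_j^0,x_j^0)=1$, $\nabla_1 K_{\rm norm}(x_j^0,x_j^0)=0$ (normalization, as in the construction of $\eta$), and $\nabla_1\nabla_2 K_{\rm norm}(x_j^0,x_j^0)=\mathfrak{g}_{x_j^0}$ by \eqref{eq:def_fisher_rao_metric}, the diagonal blocks of the conjugated matrix $\tilde\Upsilon$ are exactly $I_{1+2d}$. Write $\tilde\Upsilon=I+E$ where $E$ collects all off-diagonal ($i\neq j$) blocks, whose entries are precisely the quantities $K_{\rm norm}^{(ij)}(x_i^0,x_j^0)$ for $(i,j)\in\{0,1\}^2$ measured in the $\norm{\cdot}_{x}$ norms.

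The key estimate is item~3 of Definition~\ref{def:positive_curvature}: for $\{x_l^0\}\in\mathcal S_\Delta$ we have $\sum_{l\neq 1}\norm{K_{\rm norm}^{(ij)}(x_1,x_l)}_{x_1,x_l}\leq h$ for $(i,j)\in\{0,1\}\times\{0,1,2\}$ with $h=\frac{1}{64}\min(\bar\varepsilon_0/B_0,\bar\varepsilon_2/B_2)$ (here I only need the $(i,j)\in\{0,1\}\times\{0,1\}$ cases). By the assumed separation, every row of $E$ has operator norm controlled this way, so $\norm{E}_\infty\leq 2h$ in the appropriate block-row-sum operator norm (the factor $2$ accounting for the two sub-blocks per row, or simply bounding the $\ell^\infty\!\to\!\ell^\infty$ induced norm by the row sums). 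Since $2h<1$ (because $h\leq\frac1{64}$), the Neumann series gives invertibility of $\tilde\Upsilon$, hence of $\Upsilon$, with $\norm{\tilde\Upsilon^{-1}}\leq\frac{1}{1-2h}$. This already yields the first bound: in the rescaled coordinates $\bm u_s$ has sup-norm $1$, so $\norm{\alpha}_\infty\leq\norm{\tilde\Upsilon^{-1}}\leq\frac1{1-2h}$, and likewise for $\alpha^j$ with $\bm u_s^j$; for the $\beta$ components I then pull back the $\mathfrak{g}_{x_l^0}^{-1/2}$ normalization to recover $\norm{\beta_l}_{x_l^0}$. For the sharper bounds I would split $\tilde\Upsilon^{-1}=I-\tilde\Upsilon^{-1}E$: the first $s$ coordinates of $(I-\tilde\Upsilon^{-1}E)\bm u_s$ equal $1_s$ minus the first-coordinate part of $\tilde\Upsilon^{-1}E\bm u_s$, and since $E$ has zero diagonal blocks and $\norm{E\bm u_s}\leq 2h$, while $\norm{\tilde\Upsilon^{-1}}\leq\frac1{1-2h}$, one gets $\norm{\alpha-1_s}_\infty\leq\frac{2h}{1-2h}$ and similarly $|\alpha_j^j-1|,\,\max_{l\neq j}|\alpha_l^j|\leq\frac{2h}{1-2h}$; the $\beta$-block of $\tilde\Upsilon^{-1}\bm u_s$ comes only from $-\tilde\Upsilon^{-1}E\bm u_s$, and a slightly more careful bookkeeping (the $\beta$-rows of $E$ involve $K_{\rm norm}^{(1j)}$ with $j\in\{0,1\}$, bounded by $h$ each) gives $\max_l\norm{\beta_l}_{x_l^0}\leq\frac{2h}{1-2h}\cdot 2h\leq 4h$, using $\frac{1}{1-2h}\leq 2$; the same for $\beta^j$.

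The main obstacle is the careful bookkeeping of which operator norm is used at each stage and making sure the constants track correctly: one must be consistent about whether $E$ is measured in a block-$\ell^\infty$ norm (for the $\alpha$ bounds, which are stated in $\norm{\cdot}_\infty$) or in a norm adapted to the Fisher--Rao rescaling (for the $\beta$ bounds, stated in the $\norm{\cdot}_{x_l^0}$ norms), and the transition between these is where a stray factor of $B_0$, $B_2$, or $\sqrt{2}$ could creep in. I would handle this by working throughout in the rescaled coordinates with a single mixed norm (sup over the $s$ scalar coordinates, and $\norm{\cdot}_{x_j^0}$ on each $\beta_j\in\R^{2d}$ block), verifying that $\bm u_s,\bm u_s^j$ have norm $\leq 1$ in it and that item~3 of Definition~\ref{def:positive_curvature} gives exactly the row-sum bound $2h$ on $E$ in the dual norm — the factor $2$ because each block-row of $E$ splits into an $\alpha$-part and a $\beta$-part, each bounded by $h$. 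Everything else (invertibility, the Neumann series, and the three displayed inequalities) then follows mechanically, with $\frac{1}{1-2h}\leq 2$ used to simplify $4h=2\cdot 2h\geq\frac{2h}{1-2h}\cdot 2h$; this matches the form of \citep[proof of Theorem~2]{off-the-grid_cs} up to the replacement of the Fisher--Rao balls by $\mathpzc{d}$-balls, which here is harmless since the hypothesis is stated directly in terms of $\mathpzc{d}$.
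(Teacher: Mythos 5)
Your proposal is correct and follows essentially the same route as the paper, which simply delegates this lemma to \citep[pp.~269--270]{off-the-grid_cs}: the symmetric rescaling of $\Upsilon$ by the blocks $\mathrm{diag}(1,\mathfrak{g}_{x_j^0}^{-1/2})$ (this is exactly where the normalization is needed, since $K_{\rm norm}(x,x)=1$ forces $\nabla_1 K_{\rm norm}(x,x)=0$ and $\nabla_1\nabla_2K_{\rm norm}(x,x)=\mathfrak{g}_x$, making the diagonal blocks the identity), followed by writing $\tilde\Upsilon=I+E$ and invoking item~3 of Definition~\ref{def:positive_curvature} (only the $(i,j)\in\{0,1\}\times\{0,1\}$ cases) to get the row-sum bound $2h$ in the mixed sup/$\norm{\cdot}_{x_l^0}$ norm and hence the Neumann-series bounds. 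The only blemish is the intermediate bookkeeping for the $\beta$-bound: the cleaner route is the fixed-point estimate $\max_l\norm{\beta_l}_{x_l^0}\leq h\norm{\alpha}_\infty+h\max_l\norm{\beta_l}_{x_l^0}$, giving $\max_l\norm{\beta_l}_{x_l^0}\leq \frac{h}{(1-h)(1-2h)}\leq 4h$ for $h\leq\frac{1}{64}$, but your chain also closes, so this is cosmetic.
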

\noindent
The proof can be found in \citep[pp. 269-270]{off-the-grid_cs}. This proof explicitly requires to handle normalized kernels. This motivates the introduction of $W$ in \eqref{eq:pb_BLASSO_gaussian_kernel_norm}.

\paragraph{Controlling the certificates via the kernel} We provide controls on the certificates we constructed in Lemma \ref{lemma:invertibility_Upsilon} on the far and near regions, in order to show that they are non-degenerate (Definitions \ref{def:non_degenerate_certificate} and \ref{def:local_non_degenerate_certificate}). To do so, we use Taylor expansions on Fisher-Rao geodesics along with bounds on the kernel stemming from the LPC (Definition \ref{def:positive_curvature}).
\begin{lemma} \label{lemma:adaptation_lemma2_poon}
Let $r>0$ and $x_0 \in \X \subset \R^d \times [u_{\min},+\infty)^d$. We define \[\X_0^{\rm near}(r)\coloneq \{ x' \in \X \: : \: \mathpzc{d}(x',x_0)\leq r \}\,.\]
Assume that there exists $\bar\varepsilon_2>0$
such that, for all $x \in \mathcal{G}_{x_0}(\X_0^{\rm near}(r))$ and $v\in \R^{2d}$, \[-K_{\rm norm}^{(02)}(x_0,x)[v,v] \geq \bar\varepsilon_2 \norm{v}_{x}^2 \quad \text{and} \quad \norm{K_{\rm norm}^{(02)}(x_0,x)}_{x} \leq B_{02} \,.\]
Let $\eta : \R^d \times [u_{\min},+\infty)^d \rightarrow \R$ be a $\C^2$ function. Then the following holds:
\begin{enumerate}[label=(\roman*)]
 \item If $\eta(x_0)=0$, $\nabla \eta(x_0)=0$ and $\norm{D_2[\eta](x)}_{x}\leq \delta$ for all $x\in \mathcal{G}_{x_0}(\X_0^{\rm near}(r))$, then $|\eta(x)| \leq \delta \mathfrak{d}_{\mathfrak{g}}(x,x_0)^2$ for all $x\in \X_0^{\rm near}(r)$.
 \item Let $a\in \{-1,1\}$. If $\eta(x_0)=a$, $\nabla \eta(x_0)=0$ and $\norm{aD_2[\eta](x)-K_{\rm norm}^{(02)}(x_0,x)}_{x}\leq \delta$ for all $x\in \mathcal{G}_{x_0}(\X_0^{\rm near}(r))$, for some $\delta< \bar\varepsilon_2$, then \[1-\frac{B_{02}+\delta}{2}\mathfrak{d}_{\mathfrak{g}}(x,x_0)^2\leq a\eta(x) \leq 1- \frac{\bar\varepsilon_2- \delta}{2} \mathfrak{d}_{\mathfrak{g}}(x,x_0)^2 \quad \forall \, x\in \X_0^{\rm near}(r)\,.\]
\end{enumerate} 
\end{lemma}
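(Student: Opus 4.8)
\textbf{Proof plan for Lemma \ref{lemma:adaptation_lemma2_poon}.}

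The plan is to prove both statements by carrying out a second-order Taylor expansion of the relevant scalar function along the arc-length-parametrized Fisher-Rao geodesic joining $x_0$ to a point $x \in \X_0^{near}(r)$, exactly in the spirit of \citep[Lemma 2]{off-the-grid_cs}. Fix $x \in \X_0^{near}(r)$ and let $\tilde\gamma : [0, \ell] \to \R^d \times [u_{\min},+\infty)^d$ be the geodesic parametrized by arc length with $\tilde\gamma(0) = x_0$, $\tilde\gamma(\ell) = x$, where $\ell = \mathfrak{d}_{\mathfrak{g}}(x,x_0)$. By Lemma \ref{lemma:geodesics_within_near_regions}, the entire geodesic stays inside $\X_0^{near}(r)$ (more precisely its image lies in $\mathcal{G}_{x_0}(\X_0^{near}(r))$, where the hypotheses on the kernel derivatives are assumed to hold), so all the bounds we have along $\mathcal{G}_{x_0}(\X_0^{near}(r))$ may be applied uniformly along $\tilde\gamma$. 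The key identity is that for a $C^2$ function $\psi$, the second derivative of $t \mapsto \psi(\tilde\gamma(t))$ along a geodesic equals the Riemannian Hessian evaluated on the (unit-norm) velocity vector: $\frac{d^2}{dt^2}\psi(\tilde\gamma(t)) = D_2[\psi](\tilde\gamma(t))[\dot{\tilde\gamma}(t), \dot{\tilde\gamma}(t)]$, because the geodesic equation kills the Christoffel-symbol correction terms appearing in $H^{\mathfrak{g}}$. Since $\|\dot{\tilde\gamma}(t)\|_{\tilde\gamma(t)} = 1$, the operator norm bounds in Definition \ref{def:riemannian_derivatives} directly control this quantity.

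For statement (i): set $f(t) = \eta(\tilde\gamma(t))$. Then $f(0) = \eta(x_0) = 0$ and $f'(0) = D_1[\eta](x_0)[\dot{\tilde\gamma}(0)] = \dot{\tilde\gamma}(0)^T \nabla\eta(x_0) = 0$. By Taylor's theorem with integral (or Lagrange) remainder, $f(\ell) = \int_0^\ell (\ell - t) f''(t)\, dt$, and $|f''(t)| = |D_2[\eta](\tilde\gamma(t))[\dot{\tilde\gamma}(t),\dot{\tilde\gamma}(t)]| \le \|D_2[\eta](\tilde\gamma(t))\|_{\tilde\gamma(t)} \le \delta$. Hence $|\eta(x)| = |f(\ell)| \le \delta \int_0^\ell (\ell-t)\,dt = \frac{\delta}{2}\ell^2 \le \delta\, \mathfrak{d}_{\mathfrak{g}}(x,x_0)^2$ (the factor $\frac12$ is absorbed, giving a slightly generous bound, which matches the statement). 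For statement (ii): set $g(t) = a\,\eta(\tilde\gamma(t))$, so $g(0) = a\eta(x_0) = a \cdot a = 1$ and $g'(0) = a\,\dot{\tilde\gamma}(0)^T\nabla\eta(x_0) = 0$. Write $g''(t) = a D_2[\eta](\tilde\gamma(t))[\dot{\tilde\gamma}(t),\dot{\tilde\gamma}(t)]$. The hypothesis $\|a D_2[\eta](x_0 \text{ vs } \tilde\gamma(t)) - K_{\rm norm}^{(02)}(x_0, \tilde\gamma(t))\|_{\tilde\gamma(t)} \le \delta$ — here I read $a D_2[\eta]$ as the covariant Hessian of $\eta$ transported appropriately so that it is comparable to the mixed kernel derivative $K_{\rm norm}^{(02)}(x_0,\cdot)$ — together with the curvature lower bound $-K_{\rm norm}^{(02)}(x_0,x)[v,v] \ge \bar\varepsilon_2\|v\|_x^2$ and the upper bound $\|K_{\rm norm}^{(02)}(x_0,x)\|_x \le B_{02}$ yields, evaluating on the unit velocity vector, $-B_{02} - \delta \le g''(t) \le -(\bar\varepsilon_2 - \delta) < 0$. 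Integrating twice with $g(0) = 1$, $g'(0) = 0$ gives $1 - \frac{B_{02}+\delta}{2}\ell^2 \le g(\ell) \le 1 - \frac{\bar\varepsilon_2 - \delta}{2}\ell^2$, i.e. the claimed sandwich with $\ell^2 = \mathfrak{d}_{\mathfrak{g}}(x,x_0)^2$.

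The main obstacle — and the point requiring the most care — is the precise meaning of the covariant derivative $D_2[\eta](\cdot)$ versus $K_{\rm norm}^{(02)}(x_0,\cdot)$ in hypothesis (ii): these live at different base points ($\eta$'s Hessian at $\tilde\gamma(t)$, the kernel's mixed second derivative with first slot frozen at $x_0$ and second slot at $\tilde\gamma(t)$), so the comparison must be understood as an identity between bilinear forms on $T_{\tilde\gamma(t)}$, which is legitimate because both are elements of that same cotangent-squared space once one uses the feature-map representation $\eta = \langle p, \Psi\delta_{(\cdot)}\rangle_\L$ and Remark \ref{remark:smoothness_Psi_delta_x} to identify $D_2[\eta](x)[v,v] = \langle p, D_2[\Psi](x)[v,v]\rangle_\L$. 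One must also verify that the geodesic equation indeed removes exactly the Christoffel terms so that $(\psi\circ\tilde\gamma)'' = D_2[\psi](\tilde\gamma)[\dot{\tilde\gamma},\dot{\tilde\gamma}]$; this is a standard computation using the definition of $H^{\mathfrak{g}}$ in Definition \ref{def:riemannian_derivatives} and the geodesic ODEs, and it is where one invokes that $\tilde\gamma$ is a genuine geodesic (not merely a curve). Everything else is elementary single-variable calculus, so once these two identifications are nailed down the proof is essentially mechanical.
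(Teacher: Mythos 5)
Your proof is correct and follows essentially the same route as the paper: a second-order Taylor expansion of $\eta$ (resp.\ $a\eta$) along the Fisher--Rao geodesic joining $x_0$ to $x$, whose image lies in $\mathcal{G}_{x_0}(\X_0^{near}(r))$ by definition, with the hypothesis $\norm{aD_2[\eta]-K_{\rm norm}^{(02)}(x_0,\cdot)}\le\delta$ combined with the curvature bounds to sandwich the second derivative between $-(B_{02}+\delta)$ and $-(\bar\varepsilon_2-\delta)$ times the squared speed. The only cosmetic difference is your arc-length parametrization versus the paper's $[0,1]$ parametrization, and your making explicit the (correct) fact that along a geodesic the Christoffel corrections cancel so that $(\eta\circ\tilde\gamma)''=D_2[\eta][\dot{\tilde\gamma},\dot{\tilde\gamma}]$, which the paper uses implicitly.
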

\begin{proof}
 This proof is based on \citep[Lemma 2]{off-the-grid_cs}. We only show $(ii)$, as the proof for $(i)$ is similar.
 \\Let $x\in \X_0^{\rm near}(r)$. We denote by $\gamma$ the geodesic for the metric $\mathfrak{g}$ between $x_0$ and $x$, parameterized between $0$ and $1$. We refer to Section \ref{section:geodesics_geodesic_distance} for a description of the geodesic properties and related notations. By definition, $\gamma(y)\in \mathcal{G}_{x_0}(\X_0^{\rm near}(r))$ for all $y \in [0,1]$. Hence, for all $y \in [0,1]$, \[\norm{aD_2[\eta](\gamma(y))-K_{\rm norm}^{(02)}(x_0,\gamma(y))}_{\gamma(y)}\leq \delta \quad \text{and} \quad -K_{\rm norm}^{(02)}(x_0,\gamma(y))[\dot \gamma(y),\dot \gamma(y)] \geq\bar\varepsilon_2 \norm{\dot \gamma(y)}_{\gamma(y)}^2\,,\] from which we deduce that \[aD_2[\eta](\gamma(y))[\dot \gamma(y),\dot \gamma(y)] \leq (-\bar\varepsilon_2+\delta)\norm{\dot \gamma(y)}_{\gamma(y)}^2\,.\]
By a Taylor expansion, we get
\begin{align*}
 a\eta(x)&=a\eta(x_0)+ a\nabla\eta(x_0)^T \dot\gamma(0) + \int_0^1 (1-y) aD_2[\eta](\gamma(y))[\dot\gamma(y),\dot\gamma(y)] \, \d y \,, \\
 &=1+\int_0^1 (1-y) aD_2[\eta](\gamma(y))[\dot\gamma(y),\dot\gamma(y)] \, \d y \,, \\
 &\leq 1-(\bar\varepsilon_2 - \delta)\int_0^1 (1-y) \norm{\dot\gamma(y)}_{\gamma(y)}^2 \, \d y \,,\\
 &=1-\frac{\bar\varepsilon_2- \delta}{2} \mathfrak{d}_{\mathfrak{g}}(x,x_0)^2 \,.
\end{align*}
Using the same reasoning, we also have $a\eta(x) \geq 1 - \frac{B_{02}+ \delta}{2}\mathfrak{d}_{\mathfrak{g}}(x,x_0)^2$.

Note that we do not have (unlike in \citep[Lemma 2]{off-the-grid_cs}) the bound $a\eta(x) \geq -1+ \frac{\bar\varepsilon_2 - \delta}{2} \mathfrak{d}_{\mathfrak{g}}(x,x_0)^2$. This is not a problem in our framework: as we work with nonnegative measures, we do not need to control the negative part of the certificate.
\end{proof}

\begin{theorem}\label{th:proof_non_degenerescence_certif}
 Let $\{x_j^0\}_{j=1}^s \subset \X \subset \R^d \times [u_{\min},u_{\max}]^d$. Assume that $K_{\rm norm}$ verifies Definition \ref{def:positive_curvature}. \\If $\min_{i\neq j} \mathpzc{d}(x_i^0,x_j^0) \geq \Delta_\tau$ (defined in Lemma \ref{lemma:minimum_separation}), then the certificates constructed in Lemma \ref{lemma:invertibility_Upsilon} are non-degenerate. The global certificate $\eta_{\alpha,\beta}$ is $(\frac{7}{8}\bar \varepsilon_0,\frac{15}{32}\bar \varepsilon_2, r)$-non-degenerate and the local certificate $\eta_{\alpha^j,\beta^j}$ is $(\frac{7}{8}\bar \varepsilon_0,\frac{B_{02}+ \bar \varepsilon_2/16}{2}, r)$-non-degenerate. 
\end{theorem}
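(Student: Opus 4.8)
The plan is to follow the blueprint of \citep[Theorem 2]{off-the-grid_cs}, adapting it to our normalized-but-not-translation-invariant kernel and to the use of the semi-distance $\mathpzc{d}$ rather than the Fisher-Rao distance for the near/far region splitting. The certificates $\eta_{\alpha,\beta}$ and $\eta_{\alpha^j,\beta^j}$ are already constructed in Lemma~\ref{lemma:invertibility_Upsilon}, and they satisfy the interpolation and vanishing-derivative conditions by construction (item~1 of Definitions~\ref{def:non_degenerate_certificate} and~\ref{def:local_non_degenerate_certificate}). So the work is to verify items~2 and~3: the far-region bound $|\eta|\leq 1-\varepsilon_0$ and the near-region quadratic decay. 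I would treat the global certificate first and then indicate the (minor) modifications for the local ones.

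First I would handle the near region. Fix $j$ and work on $\X_j^{near}(r)=\X_j^{near}(r)$ centered at $x_j^0$. Write $\eta_{\alpha,\beta}=\alpha_j K_{\rm norm}(x_j^0,\dotp)+\beta_j^T\nabla_1 K_{\rm norm}(x_j^0,\dotp)+\sum_{l\neq j}\big(\alpha_l K_{\rm norm}(x_l^0,\dotp)+\beta_l^T\nabla_1 K_{\rm norm}(x_l^0,\dotp)\big)$. Apply Lemma~\ref{lemma:adaptation_lemma2_poon}(ii) with $x_0=x_j^0$, $a=1$: I need to bound $\norm{D_2[\eta_{\alpha,\beta}](x)-K_{\rm norm}^{(02)}(x_j^0,x)}_x$ uniformly over $x\in\mathcal{G}_{x_j^0}(\X_j^{near}(r))$. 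Using $\alpha_j=1+O(h)$, $\norm{\beta_j}_{x_j^0}\leq 4h$ from \eqref{eq:bounds_alpha_beta_certif}, the triangle inequality splits this into a ``diagonal'' piece $\|(\alpha_j-1)K_{\rm norm}^{(02)}(x_j^0,x)+\beta_j^TD_2[\nabla_1 K_{\rm norm}(x_j^0,\dotp)](x)\|_x$, controlled by $\big(\tfrac{2h}{1-2h}B_{02}+4h B_{1\,2}\big)$, i.e.\ bounded by $\tfrac{1}{32}\min(\bar\varepsilon_0/B_0,\bar\varepsilon_2/B_2)\cdot\text{(const)}$, and a ``cross'' piece $\sum_{l\neq j}|\alpha_l|\norm{K_{\rm norm}^{(02)}(x_l^0,x)}_x+\norm{\beta_l}_{x_l^0}\norm{K_{\rm norm}^{(12)}(x_l^0,x)}_x$. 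The cross piece is where Lemma~\ref{lemma:minimum_separation} is essential: since $\mathpzc{d}(x_i^0,x_j^0)\geq\Delta_\tau$ and $\mathcal{G}_{x_j^0}(\X_j^{near}(r)\cap\X)$ does not meet $\mathring B_{\mathpzc{d}}(x_i^0,\Delta)$, every $x$ in the geodesic hull satisfies $\mathpzc{d}(x_l^0,x)\geq\Delta$ for $l\neq j$, so item~3 of Definition~\ref{def:positive_curvature} caps $\sum_{l\neq j}\norm{K_{\rm norm}^{(ij)}(x_l^0,x)}\leq\tfrac{1}{64}\min(\bar\varepsilon_0/B_0,\bar\varepsilon_2/B_2)$; combined with the bounds on $\|\alpha^{(\cdot)}\|_\infty,\|\beta^{(\cdot)}\|$ this gives a cross contribution of the same small order. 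Adding up, the total perturbation $\delta$ satisfies $\delta\leq\bar\varepsilon_2/16$ (this is exactly how the constant $\tfrac{15}{32}=\tfrac12(1-\tfrac18)$ and $\tfrac{B_{02}+\bar\varepsilon_2/16}{2}$ are meant to come out: $\delta\leq\bar\varepsilon_2/16<\bar\varepsilon_2$ so Lemma~\ref{lemma:adaptation_lemma2_poon}(ii) gives $\eta(x)\leq 1-\tfrac{\bar\varepsilon_2-\delta}{2}\mathfrak{d}_\mathfrak{g}(x,x_j^0)^2\leq 1-\tfrac{15}{32}\bar\varepsilon_2\,\mathfrak{d}_\mathfrak{g}(x,x_j^0)^2$). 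For Lemma~\ref{lemma:adaptation_lemma2_poon} I also need $-K_{\rm norm}^{(02)}(x_j^0,x)[v,v]\geq\bar\varepsilon_2\|v\|_x^2$ on the geodesic hull, which follows from item~2 of Definition~\ref{def:positive_curvature} once Lemma~\ref{lemma:geodesics_within_near_regions} guarantees the hull stays inside $B_{\mathpzc{d}}(x_j^0,r)$, hence $\mathpzc{d}(x_j^0,x)<r$ (after shrinking $r$ slightly, or noting the inequality in Def.~\ref{def:positive_curvature} is strict on the open ball and arguing by continuity/closure).

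Next I would handle the far region $\X^{far}(r)=\X\setminus\bigcup_l\X_l^{near}(r)$. For $x\in\X^{far}(r)$, $\mathpzc{d}(x,x_l^0)\geq r$ for every $l$. Then $|\eta_{\alpha,\beta}(x)|\leq\sum_l|\alpha_l|\,|K_{\rm norm}(x_l^0,x)|+\sum_l\norm{\beta_l}_{x_l^0}\norm{K_{\rm norm}^{(10)}(x_l^0,x)}_x$. Pick any index $l^\star$ realizing $\min_l\mathpzc{d}(x,x_l^0)$; for that one term $|K_{\rm norm}(x_{l^\star}^0,x)|\leq 1-\bar\varepsilon_0$ by item~2 of Definition~\ref{def:positive_curvature}, while $|\alpha_{l^\star}|\leq\tfrac{1}{1-2h}$, and all remaining terms — those with $l\neq l^\star$, but also there is a subtlety because $x$ need not be in the geodesic hull of any single $x_l^0$ — are bounded using the separation: for $l\neq l^\star$, $\mathpzc{d}(x_l^0,x_{l^\star}^0)\geq\Delta_\tau$ forces $\mathpzc{d}(x_l^0,x)$ to be large (via the pseudo-quasi triangle inequality from the proof of Lemma~\ref{lemma:minimum_separation}, or directly: the far point is still $\mathpzc{d}$-far from all but possibly one particle by a $\Delta$-type margin), so item~3 of Definition~\ref{def:positive_curvature} applied at base point $x_{l^\star}^0$ (or a covering argument) bounds the sum of the other kernel terms by $\tfrac{1}{64}\min(\bar\varepsilon_0/B_0,\bar\varepsilon_2/B_2)$. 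Carrying the constants through, $|\eta_{\alpha,\beta}(x)|\leq(1-\bar\varepsilon_0)+O(h)\leq 1-\tfrac78\bar\varepsilon_0$, which is exactly $\varepsilon_0=\tfrac78\bar\varepsilon_0$. The local certificates $\eta_{\alpha^j,\beta^j}$ are treated identically: on $\X^{far}(r)$ they obey $|\eta_{\alpha^j,\beta^j}|\leq 1-\tfrac78\bar\varepsilon_0$; on $\X_j^{near}(r)$ one applies Lemma~\ref{lemma:adaptation_lemma2_poon}(ii) with $a=1$ at base point $x_j^0$ to get $|1-\eta_{\alpha^j,\beta^j}(x)|\leq\tfrac{B_{02}+\bar\varepsilon_2/16}{2}\mathfrak{d}_\mathfrak{g}(x,x_j^0)^2$; and on $\X_i^{near}(r)$ with $i\neq j$ one applies Lemma~\ref{lemma:adaptation_lemma2_poon}(i) at base point $x_i^0$ (since $\eta_{\alpha^j,\beta^j}(x_i^0)=0$, $\nabla\eta_{\alpha^j,\beta^j}(x_i^0)=0$) with $\delta=\tfrac{B_{02}+\bar\varepsilon_2/16}{2}$ to get $|\eta_{\alpha^j,\beta^j}(x)|\leq\tilde\varepsilon_2\mathfrak{d}_\mathfrak{g}(x,x_i^0)^2$. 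Finally the norm bounds $\norm{p}_\L\leq\sqrt{2s}$, $\norm{p_j}_\L\leq\sqrt2$ follow from Lemma~\ref{lemma:eta_c_infty_im_psi_star}, expanding $\|p_{\alpha,\beta}\|_\L^2=\sum_{i,l}\alpha_i\alpha_l K_{\rm norm}(x_i^0,x_l^0)+\text{(cross terms with }\beta)$, which is an almost-diagonal quadratic form: the diagonal gives $\sum_l\alpha_l^2\leq s/(1-2h)^2$, the off-diagonal and $\beta$-terms are absorbed by item~3 of Definition~\ref{def:positive_curvature}, and for $h$ small enough the total is $\leq 2s$ (resp.\ $\leq 2$).

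The main obstacle I anticipate is the bookkeeping on the far region: unlike the near region, an arbitrary $x\in\X^{far}(r)$ is not a priori within the Fisher-Rao geodesic hull of any particle, so the local-curvature/bound hypotheses of Lemma~\ref{lemma:adaptation_lemma2_poon} do not directly apply, and one must instead rely purely on the global sup-bounds $B_{ij}$ (item~1 of Definition~\ref{def:positive_curvature}) together with the decay hypothesis (item~2) and the summability-under-separation hypothesis (item~3); getting the three small-order contributions to assemble below $\tfrac18\bar\varepsilon_0$ requires choosing the constant $\tfrac1{64}$ in item~3 (and hence the separation $\Delta$) correctly, which is precisely the calibration already baked into Definition~\ref{def:positive_curvature} and Theorem~\ref{th:check_local_curvature_assumption}. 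A secondary technical point is the strict-versus-closed-ball issue: item~2 of Definition~\ref{def:positive_curvature} gives the curvature lower bound only for $\mathpzc{d}(x,x')<r$ (strict), while Lemma~\ref{lemma:geodesics_within_near_regions} places the geodesic hull in the closed ball $B_{\mathpzc{d}}(x_j^0,r)$; this is resolved either by a routine limiting argument or, as the paper seems to do, by noting $\tau\leq u_{\min}$ leaves enough slack. Everything else is the same kind of Taylor-expansion-plus-triangle-inequality estimate carried out in \citep[Section 6.7]{off-the-grid_cs}, with $K_{\rm norm}$ and $\mathpzc{d}$ substituted for their translation-invariant analogues.
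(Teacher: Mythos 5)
Your proposal follows essentially the same route as the paper's proof: interpolation by construction via Lemma~\ref{lemma:invertibility_Upsilon}, the far-region bound from the disjointness of the $\Delta$-balls in Lemma~\ref{lemma:minimum_separation} combined with items~2--3 of Definition~\ref{def:positive_curvature} and the coefficient bounds, and the near-region bounds by controlling $\norm{D_2[\eta]-K_{\rm norm}^{(02)}(x_j^0,\dotp)}_x\leq\bar\varepsilon_2/16$ on the geodesic hull and invoking Lemma~\ref{lemma:adaptation_lemma2_poon}, exactly as in the paper. The only blemishes are cosmetic: item~3 should be invoked with $x$ (not $x_{l^\star}^0$) as the distinguished point of the $\Delta$-separated configuration, and the parenthetical $\tfrac{15}{32}=\tfrac12(1-\tfrac18)$ is a slip for $\tfrac12(1-\tfrac1{16})$, but your surrounding computation is correct.
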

\begin{proof}
This proof is largely based on \citep[pp. 270-241]{off-the-grid_cs}.
 First remark that Lemmas \ref{lemma:invertibility_Upsilon} and \ref{lemma:minimum_separation} hold under these assumptions. We denote $h= \frac{1}{64}\min\left(\frac{\bar\varepsilon_0(r)}{B_0},\frac{\bar\varepsilon_2(r)}{B_2}\right)$.
 \\\underline{Control on the far region:} Let $x \in \X^{\rm far}(r)$. As the open balls $\mathring B_{\mathpzc{d}}(x_i^0,\Delta)\cap \X$ are disjoint (Lemma \ref{lemma:minimum_separation}), there exists at most one index $j$ such that $\mathpzc{d}(x,x_j^0)<\Delta$ and for all $i\neq j$, $\mathpzc{d}(x,x_i^0)\geq \Delta$.
 So using bounds displayed in Lemma \ref{lemma:invertibility_Upsilon} (see \eqref{eq:bounds_alpha_beta_certif}),
 \begin{align*}
 |\eta_{\alpha,\beta}(x)|&=\left|\alpha_j K_{\rm norm}(x_j^0,x) +\sum_{j\neq i} \alpha_i K_{\rm norm}(x_i^0,x) + \beta_j^T K_{\rm norm}^{(10)}(x_j^0,x) + \sum_{j\neq i}\beta_i^T K_{\rm norm}^{(10)}(x_i^0,x)\right|\,,\\
 &\leq \norm{\alpha}_\infty \left(|K_{\rm norm}(x_j^0,x)| + \sum_{j\neq i} |K_{\rm norm}(x_i^0,x)| \right)+ \max_i \norm{\beta_i}_{x_i^0} \left(\norm{K_{\rm norm}^{(10)}(x_j^0,x)}_{x_j^0} + \sum_{j\neq i} \norm{K_{\rm norm}^{(10)}(x_i^0,x)}_{x_i^0} \right)\,,\\
 & \leq \frac{1}{1-2h}(1-\bar \varepsilon_0 + h)+ 4h(B_{10}+h) \,, \\
 & \leq 1- \frac{\bar \varepsilon_0-3h}{1-2h}+ 4h(B_{10}+h) \,, \\
 & \leq 1-\bar \varepsilon_0 +3h + 4h(B_{10}+h) \,, \\
 & \leq 1-\bar \varepsilon_0 +3\frac{\bar \varepsilon_0}{64} +4\frac{\bar \varepsilon_0}{64} +4\frac{\bar \varepsilon_0}{64^2} \,, \\
 & \leq 1 - \frac{7}{8}\bar \varepsilon_0\,.
 \end{align*}
 We can apply the same reasoning to show that $|\eta_{\alpha^j,\beta^j}(x)| \leq 1 - \frac{7}{8}\bar \varepsilon_0$.
 \\\underline{Controls on the near regions:} 
 Let $x \in \mathcal{G}_{x_j^0}(\X_j^{\rm near}(r))$. We have \[D_2[\eta_{\alpha,\beta}](x)= K^{(02)}_{\rm norm}(x_j^0,x)+(\alpha_j-1)K^{(02)}_{\rm norm}(x_j^0,x) + \sum_{i \neq j}\alpha_i K^{(02)}_{\rm norm}(x_i^0,x) + [\beta_j] K^{(12)}_{\rm norm}(x_j^0,x)+\sum_{i \neq j}[\beta_i] K^{(12)}_{\rm norm}(x_i^0,x)\,.\]
 As $\mathcal{G}_{x_j^0}(B_{\mathpzc{d}}(x_j^0,
 r)\cap \X)$ is disjoint from $\mathring B_{\mathpzc{d}}(x_i^0,\Delta)$ for $i \neq j$ (Lemma \ref{lemma:minimum_separation}), we have $\mathpzc{d}(x,x_i^0)\geq \Delta$ for all $i \neq j$. Using Lemma \ref{lemma:invertibility_Upsilon}, it comes
 \begin{align*}
 \norm{D_2[\eta_{\alpha,\beta}](x)-K^{(02)}_{\rm norm}(x_j^0,x)}_{x} &\leq |\alpha_j - 1| B_{02} + \norm{\alpha}_\infty h + \max_i\norm{\beta_i}_{x_i^0} (B_{12} + h)\,,\\
 & \leq \frac{2h}{1-2h} B_{02} + \frac{h}{1-2h} + 4h(B_{12} + h)\,, \\
 & \leq \frac{ \bar \varepsilon_2}{16} \,.
 \end{align*}
 Using Lemma \ref{lemma:adaptation_lemma2_poon} with $\delta=\frac{ \bar \varepsilon_2}{16}$, we deduce that for all $x \in \X_j^{\rm near}(r)$, $\eta_{\alpha,\beta}(x) \leq 1-\frac{15}{32}\bar \varepsilon_2 \mathfrak{d}_\mathfrak{g}(x,x_j^0)^2$.

 With the same reasoning, for all $x\in \mathcal{G}_{x_j^0}(\X_j^{\rm near}(r))$ we can obtain $\norm{D_2[\eta_{\alpha^j,\beta^j}](x)-K^{(02)}_{\rm norm}(x_j^0,x)}_{x}\leq \frac{ \bar \varepsilon_2}{16}$. So from Lemma \ref{lemma:adaptation_lemma2_poon}, for all $x \in \X_j^{\rm near}(r)$ we have $|1-\eta_{\alpha^j,\beta^j}(x)|\leq \frac{B_{02}+\bar \varepsilon_2/16}{2}\mathfrak{d}_\mathfrak{g}(x,x_j^0)^2$. We used that $B_{02}\geq \bar \varepsilon_2$.

 We also have, for $i\neq j$, and $x\in \mathcal{G}_{x_i^0}(\X_i^{\rm near}(r))$,
 \begin{align*}
 \norm{D_2[\eta_{\alpha^j,\beta^j}](x)}_x&=\norm{\alpha_i^j K^{(02)}_{\rm norm}(x_i^0,x) + \sum_{l \neq i}\alpha_l^j K^{(02)}_{\rm norm}(x_l^0,x) + [\beta_i^j] K^{(12)}_{\rm norm}(x_i^0,x)+\sum_{l \neq i}[\beta_l^j] K^{(12)}_{\rm norm}(x_l^0,x)}_x\,,\\
 &\leq |\alpha_i^j|B_{02}+ \norm{\alpha^j}_\infty h + \max_i\norm{\beta_i^j}_{x_i^0} (B_{12} + h)\,,\\ 
 & \leq \frac{2h}{1-2h}B_{02} + \frac{h}{1-2h} +4h(B_{12} + h) \,, \\
 & \leq \frac{ \bar \varepsilon_2}{16}\,.
 \end{align*}
 Lemma \ref{lemma:adaptation_lemma2_poon} ensures that for all $x \in \X_i^{\rm near}(r)$, $|\eta_{\alpha^j,\beta^j}(x)|\leq \frac{ \bar \varepsilon_2}{16} \mathfrak{d}_\mathfrak{g}(x,x_i^0)^2$.
 This concludes the proof. 
\end{proof}

\paragraph{Norm of the certificates} We use bounds on the norm of the certificates to get the controls in estimation and in prediction ($c_p$ in Assumption \ref{assumption:existence_non-degenerate_certif}). The construction of certificates presented in Lemma \ref{lemma:invertibility_Upsilon} allows us to obtain the bounds displayed in the following proposition.
\begin{proposition}\label{prop:norm_certif}
 The certificates constructed in Lemma \ref{lemma:invertibility_Upsilon} verify, for all $x\in \R^d \times [u_{\min},+\infty)^d$, for all $j~\in~\{1,\ldots,s\}$, \[\eta_{\alpha,\beta}(x)=\innerprod{\Psi\delta_x}{p_{\alpha,\beta}}_\L \quad \text{and} \quad 
 \eta_{\alpha^j,\beta^j}(x)=\innerprod{\Psi\delta_x}{p_{\alpha^j,\beta^j}}_\L \quad \text{with} \quad \norm{p_{\alpha,\beta}}_\L \leq \sqrt{2s}\,, \quad \norm{p_{\alpha^j,\beta^j}}_\L \leq \sqrt{2}\,.\]
\end{proposition}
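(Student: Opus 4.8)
\textbf{Plan of proof for Proposition~\ref{prop:norm_certif}.}
The plan is to bound $\norm{p_{\alpha,\beta}}_\L$ (and $\norm{p_{\alpha^j,\beta^j}}_\L$) by going through the explicit expression of $p_{\alpha,\beta}$ given in Lemma~\ref{lemma:eta_c_infty_im_psi_star}, namely
\[
p_{\alpha,\beta}=\sum_{l=1}^s \alpha_l\,\Psi\delta_{x_l^0}+\sum_{l=1}^s \beta_l^{\,T}\nabla_{x}\big(\Psi\delta_{x_l^0}\big)\in\L,
\]
and then using the coefficient bounds from Lemma~\ref{lemma:invertibility_Upsilon}. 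The key identity is that, by Remark~\ref{remark:smoothness_Psi_delta_x}, the Gram matrix of the family $\{\Psi\delta_{x_l^0},\ \partial_m \Psi\delta_{x_l^0}\}$ (for $l=1,\dots,s$, $m=1,\dots,2d$) in $\L$ is exactly $\Upsilon$ from~\eqref{eq:Upsilon}. Hence, writing $v=(\alpha,\beta)\in\R^{s(1+2d)}$, we have the exact formula $\norm{p_{\alpha,\beta}}_\L^2=v^{\,T}\Upsilon v$. Since $v=\Upsilon^{-1}\bm u_s$, this equals $\bm u_s^{\,T}\Upsilon^{-1}\bm u_s$; and because $\bm u_s=(1_s,0_{s2d})^T$, it is just the sum of the top-left $s\times s$ block entries of $\Upsilon^{-1}$ applied to $1_s$, i.e. $\norm{p_{\alpha,\beta}}_\L^2=1_s^{\,T}\alpha=\sum_{l=1}^s\alpha_l$. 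Similarly $\norm{p_{\alpha^j,\beta^j}}_\L^2=e_j^{\,T}\alpha^j=\alpha_j^j$.

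With that reduction in hand, the bounds follow immediately from~\eqref{eq:bounds_alpha_beta_certif}: $\alpha_j^j\le 1+\tfrac{2h}{1-2h}=\tfrac{1}{1-2h}$ and $\sum_{l=1}^s\alpha_l\le s\,\norm{\alpha}_\infty\le \tfrac{s}{1-2h}$. It remains to check that $\tfrac{1}{1-2h}\le 2$, i.e. $h\le \tfrac14$. This holds because $h=\tfrac{1}{64}\min\big(\tfrac{\bar\varepsilon_0}{B_0},\tfrac{\bar\varepsilon_2}{B_2}\big)$, and by item~1 of Definition~\ref{def:positive_curvature} we have $B_0\ge 1$ (since $B_0=1+B_{00}+B_{10}$ with $B_{00}=\sup K_{\rm norm}(x,x)=1$ already, so in fact $B_0\ge 2$), together with $\bar\varepsilon_0\le K_{\rm norm}(x,x')\le 1$ being impossible to exceed $1$; more simply, $\bar\varepsilon_0\le 1$ and $\bar\varepsilon_2\le B_{02}\le B_2$, so $h\le \tfrac{1}{64}<\tfrac14$. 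Therefore $\norm{p_{\alpha,\beta}}_\L\le\sqrt{2s}$ and $\norm{p_{\alpha^j,\beta^j}}_\L\le\sqrt 2$, as claimed.

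The one point requiring a little care — and the main (mild) obstacle — is justifying the identity $\norm{p_{\alpha,\beta}}_\L^2=v^{\,T}\Upsilon v$ rigorously: one must verify that the entries of $\Upsilon$ are genuinely the $\L$–inner products $\innerprod{D_i[\Psi]\delta_{x_i^0}}{D_j[\Psi]\delta_{x_j^0}}_\L$ of $\Psi\delta$ and its derivatives, which is precisely the content of Remark~\ref{remark:smoothness_Psi_delta_x} (differentiation of $x\mapsto\Psi\delta_x$ commutes with the inner product, by the iterated application of \citep[Lemma~4.34]{steinwart}). Once this is granted, everything else is algebra: expand $\norm{p_{\alpha,\beta}}_\L^2$ bilinearly, recognise $\Upsilon$, substitute $v=\Upsilon^{-1}\bm u_s$, and read off $\bm u_s^{\,T}v=1_s^{\,T}\alpha$. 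I would present the computation for the global certificate in detail and note that the local certificates are handled identically with $\bm u_s$ replaced by $\bm u_s^j$, using $e_j^{\,T}\alpha^j=\alpha_j^j\le\tfrac{1}{1-2h}\le 2$.
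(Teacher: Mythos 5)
Your proposal is correct. It starts from the same key identity as the paper—namely that the Gram matrix in $\L$ of the family $\{\Psi\delta_{x_l^0},\partial_m\Psi\delta_{x_l^0}\}$ is exactly $\Upsilon$, so that $\norm{p_{\alpha,\beta}}_\L^2=(\alpha,\beta)^T\Upsilon(\alpha,\beta)$—but then the two arguments diverge in how this quadratic form is bounded. The paper writes $(\alpha,\beta)^T\Upsilon(\alpha,\beta)=\bm u_s^T\tilde\Upsilon^{-1}\bm u_s$ with $\tilde\Upsilon=D_\mathfrak{g}\Upsilon D_\mathfrak{g}$ (using $D_\mathfrak{g}^{-1}\bm u_s=\bm u_s$) and invokes the spectral bound $\norm{\tilde\Upsilon^{-1}}_2\leq 2$ from \citep[Lemma~3]{off-the-grid_cs}, giving $\leq \norm{\tilde\Upsilon^{-1}}_2\norm{\bm u_s}_2^2=2s$ (and $2$ for the local certificates). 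You instead observe that $(\alpha,\beta)^T\Upsilon(\alpha,\beta)=\bm u_s^T(\alpha,\beta)=\sum_l\alpha_l$ (resp.\ $\alpha_j^j$) and conclude with the coordinate bounds $\norm{\alpha}_\infty\vee\norm{\alpha^j}_\infty\leq\frac{1}{1-2h}$ of Lemma~\ref{lemma:invertibility_Upsilon}, together with the easy check $h\leq\frac{1}{64}<\frac14$ (which indeed follows from $\bar\varepsilon_2\leq B_{02}\leq B_2$, or from $\bar\varepsilon_0<1\leq B_0$). Both routes are valid under the same hypotheses; yours avoids citing the extra spectral estimate, is slightly more self-contained given that the bounds \eqref{eq:bounds_alpha_beta_certif} are already stated, and in fact yields the marginally sharper constants $\frac{s}{1-2h}\leq\frac{32}{31}s$ and $\frac{1}{1-2h}\leq\frac{32}{31}$, although the stated bounds $\sqrt{2s}$ and $\sqrt2$ (hence $c_p=2$) are all that is needed downstream. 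The only point to state carefully—as you note—is the Gram identity itself, which is justified exactly as in the paper via Remark~\ref{remark:smoothness_Psi_delta_x} (equivalently \citep[p.~268]{off-the-grid_cs}), using the symmetry $\nabla_2K_{\rm norm}(x_i^0,x_j^0)=\nabla_1K_{\rm norm}(x_j^0,x_i^0)$ so that $\Upsilon$ is indeed the symmetric matrix of the bilinear form.
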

\begin{proof}
 From Lemma \ref{lemma:eta_c_infty_im_psi_star}, for all $j~\in~\{1,\ldots,s\}$, \[p_{\alpha,\beta}=\sum_{i=1}^s \alpha_i \Psi\delta_{x_i^0}+ \sum_{i=1}^s \beta_i \nabla_{x}\left(\Psi\delta_{x_i^0}\right)\] and \[p_{\alpha^j,\beta^j}=\sum_{i=1}^s \alpha_i^j \Psi\delta_{x_i^0}+ \sum_{i=1}^s \beta_i^j \nabla_{x}\left(\Psi\delta_{x_i^0}\right)\,.\] So recalling \eqref{eq:Upsilon} and defining \[D_{\mathfrak{g}}\coloneq \begin{pmatrix}
 \operatorname{Id}_s & & & & \\
& & \mathfrak{g}_{x_1^0}^{-\frac{1}{2}} & & \\
& & & \ddots & \\
& & & & \mathfrak{g}_{x_s^0}^{-\frac{1}{2}} 
 \end{pmatrix}\in \R^{s(d+1)\times s(d+1)}\,, \quad \tilde\Upsilon\coloneq D_{\mathfrak{g}}\Upsilon D_{\mathfrak{g}}\,,\] using the results of \citep[p. 268]{off-the-grid_cs} we have $\norm{p_{\alpha,\beta}}_\L^2=\begin{pmatrix}
 \alpha \\\beta 
 \end{pmatrix}^T \Upsilon \begin{pmatrix}
 \alpha \\\beta 
 \end{pmatrix}= \bm u_s^T \tilde \Upsilon^{-1} \bm u_s$.
 We can apply \citep[Lemma 3]{off-the-grid_cs} that gives $\norm{\tilde\Upsilon^{-1}}_2\leq 2$. So $\norm{p_{\alpha,\beta}}_\L^2 \leq \norm{\tilde\Upsilon^{-1}}_2 \norm{\bm u_s}_2^2 \leq 2s$. 
 We repeat the same reasoning to bound $\norm{p_{\alpha^j,\beta^j}}_\L$: we have $\norm{p_{\alpha^j,\beta^j}}_\L^2=(\bm u_s^j)^T \tilde \Upsilon^{-1} \bm u_s^j \leq \norm{\tilde\Upsilon^{-1}}_2 \norm{\bm u_s^j}_2^2 \leq 2$.
\end{proof}

The combination of Theorem \ref{th:proof_non_degenerescence_certif} and Proposition \ref{prop:norm_certif} concludes the proof of Theorem \ref{th:existence_certificate_under_positive_curvature_assumption}.

\section{Proof of Theorem \ref{th:check_local_curvature_assumption}}\label{section:controls_kernel}
\begin{lemma}\label{lemma:simplified_expressions_operator_norms_kernel} 
Let $x,x' \in \R^d \times [u_{\min},+\infty)^d$.
We can establish that
\begin{align*}
\norm{K_{\rm norm}^{(00)}(x,x')}_{x,x'}&=|K_{\rm norm}(x,x')|\,, \\
 \norm{K_{\rm norm}^{(10)}(x,x')}_{x,x'}&=\norm{\mathfrak{g}_x^{-1/2}\nabla_1 K_{\rm norm}(x,x')}_2\,, \\
 \norm{K_{\rm norm}^{(11)}(x,x')}_{x,x'}&=\norm{\mathfrak{g}_x^{-1/2}\nabla_1 \nabla_2 K_{\rm norm}(x,x')\mathfrak{g}_{x'}^{-1/2}}_2 \,, \\
 \norm{K_{\rm norm}^{(02)}(x,x')}_{x,x'}&=\norm{\mathfrak{g}_{x'}^{-1/2}H_2^{\mathfrak{g}} K_{\rm norm}(x,x')\mathfrak{g}_{x'}^{-1/2}}_2 \,, \\
\norm{K_{\rm norm}^{(12)}(x,x')}_{x,x'}
 & \leq \sqrt{2d} \max_{k=1,\ldots d} \left\lbrace \norm{\mathfrak{g}_{t_kt_k}^{-1/2} \mathfrak{g}_{x'}^{-1/2} \partial_{t_k} H_2^{\mathfrak{g}} K_{\rm norm}(x,x') \mathfrak{g}_{x'}^{-1/2}}_2 \right. \,, \\ 
 &\qquad\qquad\qquad\quad \left. \norm{\mathfrak{g}_{u_ku_k}^{-1/2} \mathfrak{g}_{x'}^{-1/2} \partial_{u_k} H_2^{\mathfrak{g}} K_{\rm norm}(x,x') \mathfrak{g}_{x'}^{-1/2}}_2
  \right\rbrace \,.
\end{align*}
\end{lemma}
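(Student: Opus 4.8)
The statement to prove, Lemma~\ref{lemma:simplified_expressions_operator_norms_kernel}, is a collection of simplified formulas for the operator norms of the covariant kernel derivatives $K_{\rm norm}^{(ij)}$ introduced in Definition~\ref{def:riemannian_derivatives}. The plan is to unpack each operator norm from its definition, use the self-adjointness and tensorial structure of the feature map $\Psi$ (Remark~\ref{remark:smoothness_Psi_delta_x}), and rewrite the supremum over $\mathfrak{g}$-unit vectors as a standard spectral (operator $2$-)norm after a change of basis by $\mathfrak{g}_x^{-1/2}$.

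First I would record the basic identity $[Q]K_{\rm norm}^{(ij)}(x,x')[V]=\innerprod{D_i[\Psi](x)[Q]}{D_j[\Psi](x')[V]}_\L$ and the fact that $D_0[\Psi](x)=\Psi\delta_x$, $D_1[\Psi](x)[v]=v^T\nabla\Psi\delta_x$, and $D_2[\Psi](x)[v,v']=v^T H^{\mathfrak g}(\Psi\delta_x)v'$, where the Riemannian Hessian of the $\L$-valued map is taken componentwise. By Remark~\ref{remark:smoothness_Psi_delta_x}, inner products of such derivatives of $\Psi\delta_x$ against derivatives of $\Psi\delta_{x'}$ equal the corresponding mixed partial (and, after subtracting Christoffel terms, Riemannian) derivatives of $K_{\rm norm}$; so $[Q]K_{\rm norm}^{(ij)}(x,x')[V]$ is a multilinear form in $Q,V$ whose coefficient matrix is, respectively, $K_{\rm norm}(x,x')$ (scalar), $\nabla_1 K_{\rm norm}(x,x')$, $\nabla_1\nabla_2 K_{\rm norm}(x,x')$, $H_2^{\mathfrak g}K_{\rm norm}(x,x')$, and $\partial^{(1)} H_2^{\mathfrak g}K_{\rm norm}(x,x')$ for $(ij)=(00),(10),(11),(02),(12)$. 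For the first four cases the operator norm is
\[
\sup\big\{\,Q^T M V \;:\; \|Q\|_x\le 1,\ \|V\|_{x'}\le 1\,\big\}
=\big\|\mathfrak g_x^{-1/2} M \mathfrak g_{x'}^{-1/2}\big\|_2,
\]
using $\|Q\|_x=\|\mathfrak g_x^{1/2}Q\|_2$ and substituting $\tilde Q=\mathfrak g_x^{1/2}Q$, $\tilde V=\mathfrak g_{x'}^{1/2}V$ (with $\mathfrak g_x$ symmetric positive definite, so $\mathfrak g_x^{1/2}$ and $\mathfrak g_x^{-1/2}$ are well defined). For $(00)$ this collapses to $|K_{\rm norm}(x,x')|$ since $M$ is a scalar and both "gradients" are trivial; for $(10)$ it reduces to $\|\mathfrak g_x^{-1/2}\nabla_1 K_{\rm norm}(x,x')\|_2$ since the second slot is one-dimensional; for $(11)$ and $(02)$ it gives directly the two-sided products stated.

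The only nonroutine entry is the bound on $\norm{K_{\rm norm}^{(12)}(x,x')}_{x,x'}$: here $Q=[Q_1]\in\R^{2d}$ sits in the first (order-one) covariant slot while $V=[V_1,V_2]\in(\R^{2d})^2$ sits in the second (order-two) slot, and the relevant object is the derivative of the Riemannian Hessian $H_2^{\mathfrak g}K_{\rm norm}$ with respect to a direction in $x$. The plan is to decompose $Q_1=\sum_{k=1}^d (q_{t_k}e_{t_k}+q_{u_k}e_{u_k})$ in the coordinate basis, note that $D_1[\Psi](x)[e_{t_k}]=\partial_{t_k}\Psi\delta_x$ and similarly for $u_k$, so that $[Q_1]K_{\rm norm}^{(12)}(x,x')[V]=\sum_k q_{t_k}\,V_1^T(\partial_{t_k}H_2^{\mathfrak g}K_{\rm norm})V_2+\sum_k q_{u_k}\,V_1^T(\partial_{u_k}H_2^{\mathfrak g}K_{\rm norm})V_2$. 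Since $\mathfrak g_x$ is diagonal, $\|Q_1\|_x^2=\sum_k \mathfrak g_{t_kt_k}q_{t_k}^2+\sum_k\mathfrak g_{u_ku_k}q_{u_k}^2$, so a Cauchy–Schwarz step in the $2d$ scalars $(q_{t_k},q_{u_k})$ against the $2d$ numbers $\mathfrak g_{t_kt_k}^{-1/2}V_1^T(\partial_{t_k}H_2^{\mathfrak g}K_{\rm norm})V_2$, etc., together with bounding each by its $\mathfrak g_{x'}$-operator norm $\|\mathfrak g_{t_kt_k}^{-1/2}\mathfrak g_{x'}^{-1/2}\partial_{t_k}H_2^{\mathfrak g}K_{\rm norm}(x,x')\mathfrak g_{x'}^{-1/2}\|_2$, yields the factor $\sqrt{2d}$ (from $2d$ terms) and the maximum over $k$ of the two listed quantities. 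I expect this indexing-and-Cauchy–Schwarz bookkeeping for the $(12)$ term — in particular being careful that $\partial^{(1)}$ of a Riemannian Hessian is not the Riemannian third derivative but just a partial derivative, and that the $\sqrt{2d}$ is the only place a dimensional constant enters — to be the one mildly delicate point; everything else is a direct unwrapping of Definitions~\ref{def:riemannian_derivatives} and the change of variables by $\mathfrak g^{\pm 1/2}$. The explicit closed forms of $\nabla_1 K_{\rm norm}$, $\nabla_1\nabla_2 K_{\rm norm}$, $H_2^{\mathfrak g}K_{\rm norm}$ and their $x$-derivatives, needed only to evaluate (not to state) these norms, can be deferred to the symbolic computations in \citep{notebook}.
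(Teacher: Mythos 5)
Your proposal is correct and follows essentially the same route as the paper: a change of variables by $\mathfrak g^{\pm 1/2}$ turns the $\mathfrak g$-operator norms into Euclidean $2$-norms (the paper simply cites \citep[Eq.~(27)]{off-the-grid_cs} for the $(10)$, $(11)$, $(02)$ cases rather than rederiving them), and the $(12)$ bound is handled exactly as you describe, using diagonality of $\mathfrak g_x$ and Cauchy--Schwarz over the $2d$ components of $q$ to produce the factor $\sqrt{2d}$ and the maximum over $k$.
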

\begin{proof}
 To get the simplified expressions for the operator norms (Definition \ref{def:riemannian_derivatives}), we use that $\norm{v}_x^2\coloneq v^T \mathfrak{g}_x v$. The result for $\norm{K_{\rm norm}^{(10)}(x,x')}_{x,x'}, \norm{K_{\rm norm}^{(11)}(x,x')}_{x,x'}, \norm{K_{\rm norm}^{(02)}(x,x')}_{x,x'}$ is stated by \citep[Equation (27)]{off-the-grid_cs}. 
 
 To deal with $\norm{K_{\rm norm}^{(12)}(x,x')}_{x,x'}$, we also use that our metric is diagonal. Denoting $\tilde q= \sqrt{\mathfrak{g}_x} q$, $\tilde V_1=\sqrt{\mathfrak{g}_{x'}} V_1$, $\tilde V_2=\sqrt{\mathfrak{g}_{x'}} V_2$, we have 
 \begin{align*}
 \norm{K_{\rm norm}^{(12)}(x,x')}_{x,x'}&=\sup_{\substack{ \norm{V_1}_{x'}, \norm{V_2}_{x'} \leq 1 \\ \norm{q=(q_1,\ldots q_{2d})}_{x} \leq 1}} \sum_{k=1}^{d} q_k V_1^T \partial_{t_k} H_2^{\mathfrak{g}} K_{\rm norm}(x,x') V_2 + \sum_{k=1}^{d} q_{k+d} V_1^T \partial_{u_k} H_2^{\mathfrak{g}} K_{\rm norm}(x,x') V_2\,, \\
 &=\sup_{\norm{\tilde V_1}_2\, , \: \norm{\tilde V_2}_2 \leq 1\,, \: \norm{\tilde q}_2 \leq 1} \bigg(
 \sum_{k=1}^{d} \tilde q_k \tilde V_1^T \mathfrak{g}_{t_kt_k}^{-1/2} \mathfrak{g}_{x'}^{-1/2}\partial_{t_k} H_2^{\mathfrak{g}} K_{\rm norm}(x,x') \mathfrak{g}_{x'}^{-1/2} \tilde V_2 
 \\& \qquad\qquad\qquad\qquad\qquad\qquad + \sum_{k=1}^{d} \tilde q_{k+d} \tilde V_1^T \mathfrak{g}_{u_ku_k}^{-1/2}\mathfrak{g}_{x'}^{-1/2} \partial_{u_k} H_2^{\mathfrak{g}} K_{\rm norm}(x,x') \mathfrak{g}_{x'}^{-1/2} \tilde V_2
 \bigg) \,, \\
 &\leq\sqrt{2d}\max_{k=1,\ldots d} \left\lbrace \sup_{ \norm{\tilde V_1}_2, \norm{\tilde V_2}_2 \leq 1}\left| \tilde V_1^T \mathfrak{g}_{t_kt_k}^{-1/2} \mathfrak{g}_{x'}^{-1/2}\partial_{t_k} H_2^{\mathfrak{g}} K_{\rm norm}(x,x') \mathfrak{g}_{x'}^{-1/2} \tilde V_2 \right| \right. \,, \\ & \qquad\qquad\qquad\qquad \left.
 \sup_{ \norm{\tilde V_1}_2, \norm{\tilde V_2}_2 \leq 1}\left|\tilde V_1^T \mathfrak{g}_{u_ku_k}^{-1/2}\mathfrak{g}_{x'}^{-1/2} \partial_{u_k} H_2^{\mathfrak{g}} K_{\rm norm}(x,x') \mathfrak{g}_{x'}^{-1/2} \tilde V_2 \right|
 \right\rbrace\,,
 \end{align*} 
 giving the desired result.
\end{proof}

\paragraph{Global controls} To check the first condition of Definition \ref{def:positive_curvature}, we give global controls for the Riemannian derivatives of the kernel. We take advantage of the metric used, which allows us to have uniform bounds on $\R^d \times [u_{\min},+\infty)^d$.
\begin{lemma}[Global controls]\label{lemma:global_controls_kernel}
For $(i,j) \in \{0,1\} \times \{0,1,2\} $ we have \[\underset{x,x' \in \R^d\times [u_{\min},+\infty)^d}{\sup} \norm{K_{\rm norm}^{(ij)}(x,x')}_{x,x'} \leq B_{ij}\]
 with
 \begin{align*}
 B_{00}&=1 \,,\\
 B_{10}=B_{01}&= \sqrt{2d} \,, \\
 B_{11}&=2d\,, \\
 B_{02}=B_{20}&=\sqrt{4d^2+10d} \,, \\
 B_{12}=B_{21}&= \sqrt{2d}B_{02}\,. 
 \end{align*}
\end{lemma}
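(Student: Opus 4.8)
The plan is to establish each bound $B_{ij}$ by reducing the operator norms (via Lemma \ref{lemma:simplified_expressions_operator_norms_kernel}) to the spectral norms of explicit small matrices, and then to observe that the conjugated kernel derivatives factor across the $d$ coordinates. Recall that $K_{\rm norm}(x,x')=\prod_{k=1}^d \kappa(x_k,x_k')$ where $\kappa(x_k,x_k')=(2u_k^2+\tau^2)^{1/4}(2u_k'^2+\tau^2)^{1/4}(u_k^2+u_k'^2+\tau^2)^{-1/2}e^{-(t_k-t_k')^2/(2(u_k^2+u_k'^2+\tau^2))}$ is the one-dimensional normalized kernel, and the Fisher--Rao metric is diagonal, $\mathfrak{g}_x=\mathrm{diag}(\mathfrak{g}_{x_1},\ldots,\mathfrak{g}_{x_d})$ with each $\mathfrak{g}_{x_k}$ a $2\times2$ block. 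Consequently $\mathfrak{g}_x^{-1/2}\nabla_1 K_{\rm norm}(x,x')$, $\mathfrak{g}_x^{-1/2}\nabla_1\nabla_2 K_{\rm norm}(x,x')\mathfrak{g}_{x'}^{-1/2}$, $\mathfrak{g}_{x'}^{-1/2}H_2^{\mathfrak{g}}K_{\rm norm}(x,x')\mathfrak{g}_{x'}^{-1/2}$ all decompose as (sums over $k$ of) Kronecker-type products of the one-dimensional quantities weighted by the product $\prod_{l\neq k}\kappa(x_l,x_l')$, which is bounded by $1$ since $0<\kappa\le 1$. The first and trivial step is $B_{00}$: $\norm{K_{\rm norm}^{(00)}(x,x')}_{x,x'}=|K_{\rm norm}(x,x')|\le 1$ because the kernel is normalized and of positive type, so $|K_{\rm norm}(x,x')|\le \sqrt{K_{\rm norm}(x,x)K_{\rm norm}(x',x')}=1$.

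Next I would treat $B_{10}=B_{01}$. Here $\norm{K_{\rm norm}^{(10)}(x,x')}_{x,x'}=\norm{\mathfrak{g}_x^{-1/2}\nabla_1 K_{\rm norm}(x,x')}_2$, and by the product structure $\partial_{t_k} K_{\rm norm}=K_{\rm norm}\cdot(\partial_{t_k}\log\kappa_k)$, similarly for $\partial_{u_k}$. One computes the normalized (i.e.\ $\mathfrak{g}_{x_k}^{-1/2}$-rescaled) one-dimensional gradient $v_k\in\R^2$ and shows $\norm{v_k}_2^2$ is bounded by a constant independent of $(t_k,u_k,t_k',u_k',\tau)$ — this is exactly the kind of computation done via the symbolic notebook; the maximal value works out so that $\sum_k \norm{v_k}_2^2 K_{\rm norm}^2 \le 2d$, giving $B_{10}=\sqrt{2d}$. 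For $B_{11}=2d$ one does the same for the mixed second derivative $\mathfrak{g}_{x_k}^{-1/2}\nabla_1\nabla_2\kappa_k\,\mathfrak{g}_{x_k'}^{-1/2}$, bounding its spectral norm by $2$ in each coordinate and assembling: the full matrix is block-structured with a rank-one correction coupling coordinates through $K_{\rm norm}$, and $\|A\otimes\text{-structure}\|\le 2d$. For $B_{02}=B_{20}=\sqrt{4d^2+10d}$ we need the Riemannian Hessian $H_2^{\mathfrak{g}}K_{\rm norm}=\nabla_2^2 K_{\rm norm}-\sum_k\Gamma^{t_k}\partial_{t_k}K_{\rm norm}-\sum_k\Gamma^{u_k}\partial_{u_k}K_{\rm norm}$; again expanding by the Leibniz rule in the product, the diagonal part contributes the per-coordinate Riemannian Hessian of $\kappa_k$ (each bounded by a constant) while the off-diagonal part is the outer product of first Riemannian derivatives, whose contribution scales like $d$ times the $B_{10}$-type bound squared, and collecting these yields the $4d^2+10d$ under the square root. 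Finally $B_{12}=B_{21}=\sqrt{2d}\,B_{02}$ follows immediately from the last inequality in Lemma \ref{lemma:simplified_expressions_operator_norms_kernel}, since each of the $\mathfrak{g}_{t_kt_k}^{-1/2}$- and $\mathfrak{g}_{u_ku_k}^{-1/2}$-weighted $t_k$- or $u_k$-derivatives of $H_2^{\mathfrak{g}}K_{\rm norm}$ is bounded, uniformly in all parameters, by $B_{02}$ (one must check this uniform bound, which again reduces to a one-variable estimate since differentiating the product in the $k$-th variable only touches $\kappa_k$ up to the harmless factor $\prod_{l\ne k}\kappa_l\le1$).

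The key point making all these reductions work — and the main obstacle — is establishing that the relevant one-dimensional quantities are bounded \emph{uniformly in $\tau$ and in the four scalars $t,u,t',u'$ with $u,u'\ge u_{\min}\ge\tau$}. These are genuine calculus problems: e.g.\ for $B_{10}$ one sets $a=u^2+\tau^2/2$, $b=u'^2+\tau^2/2$, $\delta=t-t'$, writes the normalized gradient explicitly in terms of the dimensionless ratios $\delta^2/(a+b)$ and $a/b$, and maximizes. The constraint $\tau\le u_{\min}$ is used to keep the metric non-degenerate and the Christoffel symbols controlled (e.g.\ $|\Gamma^{u_k}{}_{u_ku_k}|=|\tau^2-2u_k^2|/(u_k(2u_k^2+\tau^2))$ must be bounded against $\mathfrak{g}_{x_k}$-norms). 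I would organize the proof around a sequence of one-dimensional lemmas (one per derivative order), each reducing to maximizing an explicit smooth function of two or three bounded variables — precisely the computations recorded in \citep[Section V]{notebook}. The dimensional dependence ($\sqrt{2d}$, $2d$, $\sqrt{4d^2+10d}$) then emerges purely from the combinatorics of summing over the $d$ coordinate blocks and from the rank-one cross terms, not from any coordinate-wise blow-up; verifying that the cross-term contribution is exactly of the claimed order ($10d$ rather than something larger) is where I expect the bookkeeping to be most delicate, and I would lean on the symbolic-computation notebook to pin down the constants.
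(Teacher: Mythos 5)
Your overall architecture (reduce to the matrix expressions of Lemma~\ref{lemma:simplified_expressions_operator_norms_kernel}, exploit the product structure $K_{\rm norm}(x,x')=\prod_k K_{\rm norm}(x_k,x_k')$, treat same-coordinate and cross-coordinate entries separately and assemble via a Frobenius-type bound) is the same as the paper's, and your $B_{00}$ argument is fine. But the decisive quantitative content of the lemma is never established in your proposal: you assert that the one-dimensional normalized derivatives are ``bounded by a constant'' and that ``the maximal value works out'', deferring everything to unperformed maximizations. In particular, $B_{02}=\sqrt{4d^2+10d}$ comes from a precise count: the $4d(d-1)$ cross-coordinate entries of $\mathfrak{g}_{x'}^{-1/2}H_2^{\mathfrak{g}}K_{\rm norm}\,\mathfrak{g}_{x'}^{-1/2}$ are each bounded by $1$, while the same-coordinate entries of type $t't'$, $t'u'$, $u'u'$ have squares bounded by exactly $1$, $3$ and $7$, giving $4d^2-4d+d+2\cdot 3d+7d=4d^2+10d$. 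Your bookkeeping attributes the $+10d$ to cross terms ``scaling like $d$ times the $B_{10}$-type bound squared'', which is not where it comes from; and without the constants $1,3,7$ (and the analogous uniform bound $\norm{M_{b_k}}_2\le B_{02}$ for the third-derivative matrices needed for $B_{12}$) you have no proof of the stated values of $B_{02}$ and $B_{12}$. Your $B_{11}$ sketch (``spectral norm $2$ per coordinate plus a rank-one correction assembling to $2d$'') is likewise not an argument that yields exactly $2d$.

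The gap is avoidable, and the paper closes it without any of the ``genuine calculus problems'' you anticipate. Since $\mathfrak{g}_x=\nabla_1\nabla_2K_{\rm norm}(x,x)$, the normalized derivative features $\mathfrak{g}_{b_kb_k}^{-1/2}\partial_{b_k}\Psi\delta_x$ have unit $\L$-norm, so every entry of $\mathfrak{g}_x^{-1/2}\nabla_1K_{\rm norm}$ and of $\mathfrak{g}_x^{-1/2}\nabla_1\nabla_2K_{\rm norm}\,\mathfrak{g}_{x'}^{-1/2}$ is at most $1$ by Cauchy--Schwarz in $\L$ (no maximization over $t,u,t',u',\tau$ at all), which immediately gives $B_{10}=\sqrt{2d}$ and $B_{11}=2d$ via the Frobenius bound. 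For the same-coordinate Hessian entries one applies Cauchy--Schwarz against $\Psi\delta_x$ and computes the $\L$-norms of $\partial_{t_k't_k'}\Psi\delta_{x_k'}-\Gamma^{u_k'}{}_{t_k't_k'}\partial_{u_k'}\Psi\delta_{x_k'}$ and its two analogues; these are fourth derivatives of $K_{\rm norm}$ at coincident points and evaluate to the constants $1,3,7$ independently of the point and of $\tau$. Consequently the global bounds require no relation between $\tau$ and $u_{\min}$: your claim that $\tau\le u_{\min}$ is needed to control the Christoffel symbols here is unfounded (that hypothesis enters only the large-separation estimates of Lemma~\ref{lemma:control_kernel_d_large_dim1}). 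If you prefer your direct-maximization route, it can in principle be completed, but you must actually produce uniform bounds of at most $1$ per normalized first-derivative entry and the exact same-coordinate second-derivative constants; as written, those steps are missing.
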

\begin{proof}
 The bounds are based on the expressions given in Lemma \ref{lemma:simplified_expressions_operator_norms_kernel}. We make use of the relation between the Frobenius norm and the $2$-norm: \[\forall \, M=(m_{ij})_{\substack{1 \leq i \leq n \\ 1\leq j \leq m}} \in \R^{n \times m}\,, \quad \norm{M}_2 \leq \sqrt{\sum_{\substack{1 \leq i \leq n \\ 1\leq j \leq m}} m_{ij}^2 } \leq \sqrt{nm \max_{i,j} m_{ij}^2} \,.\] 
 Let $x,x' \in \R^d \times [u_{\min},+\infty)^d$.
 \\\underline{$B_{00}$:} Using Cauchy-Schwarz and since the kernel is normalized, we have \[|K_{\rm norm}(x,x')| =|\innerprod{\Psi\delta_x}{\Psi \delta_{x'}}_\L| \leq \sup_{x \in \R^d \times [u_{\min},+\infty)^d} \norm{\Psi\delta_x}_\L^2 =\sup_{x \in \R^d \times [u_{\min},+\infty)^d} K_{\rm norm}(x,x)=1\,.\]
 \underline{$B_{10}$:} We use that $\mathfrak{g}_x$ is diagonal, and name its diagonal elements by $\diag(\mathfrak{g}_{t_1 t_1},\ldots, \mathfrak{g}_{t_d t_d},\mathfrak{g}_{u_1 u_1},\ldots, \mathfrak{g}_{u_d u_d})~=~\mathfrak{g}_x$. Remark that $\nabla_1 K_{\rm norm}(x,x')=\innerprod{\nabla \Psi\delta_x}{\Psi \delta_{x'}}_\L$ along with, for all $k\in\{1,\ldots d\}$, $\norm{\partial_{t_k} \Psi\delta_x}_\L=\mathfrak{g}_{t_k t_k}^{1/2}$ and $\norm{\partial_{u_k} \Psi\delta_x}_\L=\mathfrak{g}_{u_k u_k}^{1/2}$. We use Cauchy-Schwarz to write
 \begin{align*}
 \norm{\mathfrak{g}_x^{-1/2}\nabla_1 K_{\rm norm}(x,x')}_2 &= \sqrt{\sum_{k=1}^d \mathfrak{g}_{t_k t_k}^{-1} \innerprod{\partial_{t_k} \Psi\delta_x}{\Psi \delta_{x'}}_\L^2 +\sum_{k=1}^d \mathfrak{g}_{u_k u_k}^{-1} \innerprod{\partial_{u_k} \Psi\delta_x}{\Psi \delta_{x'}}_\L^2 } \,, \\
 & \leq \sqrt{\sum_{k=1}^d \norm{\Psi \delta_{x'}}_\L^2 +\sum_{k=1}^d \norm{\Psi \delta_{x'}}_\L^2 } \,, \\
 & \leq \sqrt{2d} \,.
 \end{align*}
 \underline{$B_{11}$:} We use the same reasoning. With $\mathfrak{g}_x^{-1/2}\nabla_1 \nabla_2 K_{\rm norm}(x,x')\mathfrak{g}_{x'}^{-1/2}\eqcolon (m_{ij})_{1 \leq i,j \leq 2d}$, we have $m_{ij}$ of the form $\mathfrak{g}_{b_k b_k}^{-1/2}\mathfrak{g}_{b_l b_l}^{-1/2}\innerprod{\partial_{b_k} \Psi\delta_x}{\partial_{b_l}\Psi \delta_{x'}}_\L$ where $b_m$ stands for $u_m$ or $t_m$, $1\leq m \leq d$.
 So \[m_{ij}^2 \leq \mathfrak{g}_{b_k b_k}^{-1}\mathfrak{g}_{b_l b_l}^{-1}\norm{\partial_{b_k} \Psi\delta_x}_\L^2 \norm{\partial_{b_l}\Psi \delta_{x'}}_\L^2 \leq 1\,,\] from which we conclude that $\norm{M}_2 \leq \sqrt{4d^2}=2d$.
 \\\underline{$B_{02}$:} Here, we denote $(m_{ij})_{1 \leq i,j \leq 2d}=\mathfrak{g}_{x'}^{-1/2}H_2^{\mathfrak{g}} K_{\rm norm}(x,x')\mathfrak{g}_{x'}^{-1/2}$. Recall that \[H_2^{\mathfrak{g}} K_{\rm norm}(x,x')= \nabla_2^2 K_{\rm norm}(x,x')-\sum_{k=1}^d\Gamma^{t_k'} \partial_{t_k'}K_{\rm norm}(x,x')-\sum_{k=1}^d\Gamma^{u_k'} \partial_{u_k'}K_{\rm norm}(x,x')\,.\] 
 Using that $\Gamma^{b_k}{}_{b_l b_m} \neq 0$ implies that $b_k=b_l=t_k, b_m=u_k$ or $b_k=b_m=t_k, b_l=u_k$ or $b_k=u_k, b_l=b_m=t_k$ or $b_k=b_l=b_m=u_k$, we can treat the $4d$ terms $m_{ii},m_{i 2i}, m_{2i i}, m_{2i 2i}$ ($i \in \{1,\ldots d\}$) separately from the rest of the matrix.
 
 The other terms are of the form \[\mathfrak{g}_{b_k'b_k'}^{-1/2}\mathfrak{g}_{b_l'b_l'}^{-1/2} \partial_{b_k'b_l'}K_{\rm norm}(x,x') \] where $k\neq l$ (\ie the derivatives are associated with different dimensions). 
 By abuse of notation, we denote $K_{\rm norm}(x_m,x_m')$ an evaluation of the kernel in dimension $d=1$ at points $x_m=(t_m,u_m),x_m'=(t_m',u_m')$, and we consider in a similar way $\Psi \delta_{x_m}$. Remark that $\mathfrak{g}_{b_m b_m}=\norm{\partial_{b_m}\Psi \delta_{x_m}}_\L^2$.
 As $K_{\rm norm}(x,x')= \prod_{m=1}^d K_{\rm norm}(x_m,x_m')$, for $k \neq l$ we have 
\begin{equation}\label{eq:decomposition_second_derivatives_diff_dim}
 \partial_{b_k'b_l'}K_{\rm norm}(x,x')=\partial_{b_k'}K_{\rm norm}(x_k,x_k')\partial_{b_l'}K_{\rm norm}(x_l,x_l')\prod_{m \notin \{k,l\}} K_{\rm norm}(x_m,x_m')\,.
 \end{equation}
 So using Cauchy-Schwarz, for $m_{ij}$ such that $i \neq j$ and $i \neq 2j$, $j\neq 2i$, \[m_{ij}^2 \leq \mathfrak{g}_{b_k'b_k'}^{-1}\mathfrak{g}_{b_l'b_l'}^{-1} \left(\partial_{b_k'}K_{\rm norm}(x_k,x_k')\right)^2\left(\partial_{b_l'}K_{\rm norm}(x_l,x_l')\right)^2\prod_{m \notin \{k,l\}} K_{\rm norm}(x_m,x_m')^2 \leq 1 \,.\] This quantity bounds $(2d)^2-4d$ squared coefficients of $M$.

 The terms $m_{ii}$, $m_{i 2i}$ and $m_{2i i}$, $m_{2i 2i}$ ($i \in \{1,\ldots d\}$) are of the form 
 \begin{align} & \mathfrak{g}_{t_k't_k'}^{-1}\left(\partial_{t_k't_k'} K_{\rm norm}(x,x') - \Gamma^{u_k'}{}_{t_k' t_k'}\partial_{u_k'}K_{\rm norm}(x,x')\right) \label{eq:B_02_1st_form}
 \\\text{or} \quad & \mathfrak{g}_{t_k't_k'}^{-1/2}\mathfrak{g}_{u_k'u_k'}^{-1/2}\left(\partial_{t_k'u_k'} K_{\rm norm}(x,x') - \Gamma^{t_k'}{}_{t_k' u_k'}\partial_{t_k'}K_{\rm norm}(x,x')\right) \label{eq:B_02_2nd_form} \\\text{or} \quad & \mathfrak{g}_{u_k'u_k'}^{-1}\left(\partial_{u_k'u_k'} K_{\rm norm}(x,x') - \Gamma^{u_k'}{}_{u_k' u_k'}\partial_{u_k'}K_{\rm norm}(x,x')\right)\,.\label{eq:B_02_3rd_form}
 \end{align} These forms concern respectively $d$, $2d$, $d$ coefficients. We can again reduce the problem to dimension 1 using the decomposition of the kernel: 
\begin{align}\label{eq:decomposition_second_derivatives_same_dim} 
 \begin{split} &\partial_{t_k't_k'} K_{\rm norm}(x,x') - \Gamma^{u_k'}{}_{t_k' t_k'}\partial_{u_k'}K_{\rm norm}(x,x') = \left(\partial_{t_k't_k'} K_{\rm norm}(x_k,x_k')- \Gamma^{u_k'}{}_{t_k' t_k'}\partial_{u_k'}K_{\rm norm}(x_k,x_k') \right)\prod_{l \neq k } K_{\rm norm}(x_l,x_l') \,, \\
 & \partial_{t_k'u_k'} K_{\rm norm}(x,x') - \Gamma^{t_k'}{}_{t_k' u_k'}\partial_{t_k'}K_{\rm norm}(x,x')= \left(\partial_{t_k'u_k'} K_{\rm norm}(x_k,x_k')- \Gamma^{t_k'}{}_{t_k' u_k'}\partial_{t_k'}K_{\rm norm}(x_k,x_k')\right)\prod_{l \neq k } K_{\rm norm}(x_l,x_l') \,,\\
 &\partial_{u_k'u_k'} K_{\rm norm}(x,x') - \Gamma^{u_k'}{}_{u_k' u_k'}\partial_{u_k'}K_{\rm norm}(x,x')=\left(\partial_{u_k'u_k'} K_{\rm norm}(x_k,x_k')- \Gamma^{u_k'}{}_{u_k' u_k'}\partial_{u_k'}K_{\rm norm}(x_k,x_k')\right)\prod_{l \neq k } K_{\rm norm}(x_l,x_l') \,. 
 \end{split}
 \end{align} 
 So for the first form \eqref{eq:B_02_1st_form},
 \begin{align*} m_{ii}^2&=\mathfrak{g}_{t_k't_k'}^{-2}\innerprod{\Psi \delta_x}{\partial_{t_k't_k'}\Psi \delta_{x'} - \Gamma^{u_k'}{}_{t_k' t_k'}\partial_{u_k'}\Psi \delta_{x'}}_\L^2 \,, \\
 & \leq \mathfrak{g}_{t_k't_k'}^{-2} \norm{\partial_{t_k't_k'}\Psi \delta_{x_k'} - \Gamma^{u_k'}{}_{t_k' t_k'}\partial_{u_k'}\Psi \delta_{x_k'}}_\L^2 \,, \\
 &= \mathfrak{g}_{t_k't_k'}^{-2} \left(\partial_{t_k t_kt_k't_k'} K_{\rm norm}(x_k',x_k') +\Gamma^{u_k'}{}_{t_k' t_k'}^2\partial_{u_ku_k'} K_{\rm norm}(x_k',x_k')-2\Gamma^{u_k'}{}_{t_k' t_k'} \partial_{t_k t_ku_k'}K_{\rm norm}(x_k',x_k')\right) \,, \\
 &=1
 \end{align*}
 where $\partial_{t_k}K_{\rm norm}$ (resp.~$\partial_{t_k'}K_{\rm norm}$) denotes a derivative \wrt the first (resp.\ the second) variable of the kernel. We can calculate this quantity, which is constant equal to $1$ (see \citep[Section IV]{notebook}).
 In a similar way, for the second form \eqref{eq:B_02_2nd_form} we have 
 \begin{align*} 
 m_{i2i}^2,m_{2ii}^2&\leq \mathfrak{g}_{t_k't_k'}^{-1}\mathfrak{g}_{u_k'u_k'}^{-1} \norm{\partial_{t_k'u_k'}\Psi \delta_{x_k'} - \Gamma^{u_k'}{}_{t_k' u_k'}\partial_{t_k'}\Psi \delta_{x_k'}}_\L^2 \,, \\
 & =\mathfrak{g}_{t_k't_k'}^{-1}\mathfrak{g}_{u_k'u_k'}^{-1} \left(\partial_{t_k u_kt_k'u_k'} K_{\rm norm}(x_k',x_k') +\Gamma^{t_k'}{}_{t_k' u_k'}^2\partial_{t_kt_k'} K_{\rm norm}(x_k',x_k')-2\Gamma^{t_k'}{}_{t_k' u_k'} \partial_{t_k u_kt_k'}K_{\rm norm}(x_k',x_k') \right) \,, \\
 &=3 \,.
 \end{align*}
 For the third one \eqref{eq:B_02_3rd_form}, 
 \begin{align*} 
 m_{2i2i}^2&\leq \mathfrak{g}_{u_k'u_k'}^{-2}\left(\partial_{u_k u_ku_k'u_k'} K_{\rm norm}(x_k',x_k') +\Gamma^{u_k'}{}_{u_k' u_k'}^2\partial_{u_ku_k'} K_{\rm norm}(x_k',x_k')-2\Gamma^{u_k'}{}_{u_k' u_k'} \partial_{u_k u_ku_k'}K_{\rm norm}(x_k',x_k') \right) \,, \\
 &=7 \,.
 \end{align*}
 Hence $\norm{M}_2 \leq \sqrt{4d^2 - 4d + d + 2\times 3d+ 7d}= \sqrt{4d^2 + 10d}$.
\\\underline{$B_{12}$:} Lemma \ref{lemma:simplified_expressions_operator_norms_kernel} gives \[\norm{K_{\rm norm}^{(12)}(x,x')}_{x,x'}\leq \sqrt{2d}\max_{t_k,u_k\,,\,k=1,\ldots d} \left\lbrace \norm{\mathfrak{g}_{t_kt_k}^{-1/2} \mathfrak{g}_{x'}^{-1/2} \partial_{t_k} H_2^{\mathfrak{g}} K_{\rm norm}(x,x') \mathfrak{g}_{x'}^{-1/2}}_2\,,\, \norm{\mathfrak{g}_{u_ku_k}^{-1/2} \mathfrak{g}_{x'}^{-1/2} \partial_{u_k} H_2^{\mathfrak{g}} K_{\rm norm}(x,x') \mathfrak{g}_{x'}^{-1/2}}_2\right\rbrace \,.\]
For any $k\in\{1,\ldots,d\}$, $b_k=t_k$ or $b_k=u_k$, the coefficients of the matrix $M_k=\mathfrak{g}_{b_k b_k}^{-1/2} \mathfrak{g}_{x'}^{-1/2} \partial_{b_k} H_2^{\mathfrak{g}} K_{\rm norm}(x,x') \mathfrak{g}_{x'}^{-1/2}$ are of the form 
\begin{align*}
 &\mathfrak{g}_{b_m b_m}^{-1/2}\mathfrak{g}_{b_l' b_l'}^{-1/2}\mathfrak{g}_{b_k' b_k'}^{-1/2} \partial_{b_m b_l' b_k'}K_{\rm norm}(x,x') \quad \text{where} \; \; k\neq l \\
 \text{or} \quad &\mathfrak{g}_{b_m b_m}^{-1/2}\mathfrak{g}_{t_k't_k'}^{-1} \left(\partial_{b_m t_k't_k'} K_{\rm norm}(x,x') - \Gamma^{u_k'}{}_{t_k' t_k'}\partial_{b_m u_k'}K_{\rm norm}(x,x')\right)
 \\\text{or} \quad & \mathfrak{g}_{b_m b_m}^{-1/2}\mathfrak{g}_{t_k't_k'}^{-1/2}\mathfrak{g}_{u_k'u_k'}^{-1/2}\left(\partial_{b_m t_k'u_k'} K_{\rm norm}(x,x') - \Gamma^{t_k'}{}_{t_k' u_k'}\partial_{b_m t_k'}K_{\rm norm}(x,x')\right) \\\text{or} \quad & \mathfrak{g}_{b_m b_m}^{-1/2}\mathfrak{g}_{u_k'u_k'}^{-1}\left(\partial_{b_m u_k'u_k'} K_{\rm norm}(x,x') - \Gamma^{u_k'}{}_{u_k' u_k'}\partial_{b_m u_k'}K_{\rm norm}(x,x')\right) \,.
\end{align*} These forms correspond respectively to $(2d)^2-4d$, $d$, $2d$, $d$ coefficients.
We can bound them with the same arguments as before. We obtain $\norm{M_k}_2 \leq B_{02}$, so $\norm{K_{\rm norm}^{(12)}(x,x')}_{x,x'}\leq \sqrt{2d}B_{02}$.
\end{proof}

\paragraph{Controls when $\mathpzc{d}(x,x')$ is small}
\begin{lemma}[Curvature constants in dimension $d=1$]\label{lemma:curvature_constants_dim_1}
Let $r>0$. Let $x,x' \in \R \times [u_{\min},+\infty)$. If $\mathpzc{d}(x,x')\geq r$, then $K_{\rm norm}(x,x')\leq 1-\bar \varepsilon_0(r)$ where $\bar \varepsilon_0(r)\leq 1-e^{-r^2/2}$. 
Moreover, if $\mathpzc{d}(x,x')\leq r$ where $r\leq 0.32$, then \[-v^T H^{\mathfrak{g}}_2 K_{\rm norm}(x,x')v\geq \bar \varepsilon_2(r) \norm{v}_{x'}^2 \quad \forall \, v\in \R^2 \] where $\bar \varepsilon_2(r)\leq e^{-r^2/2}|G(r)|$ ($G(r)$ defined by \eqref{eq:def_G(r)} below). 
\end{lemma}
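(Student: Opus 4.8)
The first inequality is immediate from the identity \eqref{eq:link_semi_dist_kernel}: since $K_{\rm norm}(x,x')=e^{-\mathpzc{d}(x,x')^2/2}$, the condition $\mathpzc{d}(x,x')\geq r$ forces $K_{\rm norm}(x,x')\leq e^{-r^2/2}=1-(1-e^{-r^2/2})$, so the curvature constant $\bar\varepsilon_0(r)=1-e^{-r^2/2}$ is admissible.

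For the second inequality the goal is to lower bound, over the region $\{\mathpzc{d}(x,x')\leq r\}$, the smallest eigenvalue of the symmetric matrix $M(x,x')\coloneq -\mathfrak{g}_{x'}^{-1/2}H^{\mathfrak{g}}_2 K_{\rm norm}(x,x')\mathfrak{g}_{x'}^{-1/2}$, equivalently $\inf_{v\neq0}\big(-v^TH^{\mathfrak{g}}_2K_{\rm norm}(x,x')v\big)/\norm{v}_{x'}^2$. The first step is a dimension reduction. In the coordinates $w=\sqrt{u^2+\tau^2/2}$, $w'=\sqrt{u'^2+\tau^2/2}$ used in the proof of Lemma \ref{lemma:form_geodesics_dim_1}, the metric $\mathfrak{g}$ becomes $\frac{1}{2w^2}(\d t^2+\d w^2)$ (half the Poincaré metric), and
\[
K_{\rm norm}(x,x')=\sqrt{\frac{2ww'}{w^2+w'^2}}\;e^{-\frac{(t-t')^2}{2(w^2+w'^2)}},\qquad \mathpzc{d}(x,x')^2=\frac{(t-t')^2}{w^2+w'^2}-\ln\frac{2ww'}{w^2+w'^2}\,.
\]
Both $\mathfrak{g}$ and $K_{\rm norm}$ are invariant under the horizontal translations $t\mapsto t+c$ and the dilations $(t,w)\mapsto(\lambda t,\lambda w)$, $\lambda>0$, which act by isometries of $\mathfrak{g}$ and simply transitively on the half‑plane. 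Hence the coordinate‑free quantity $\lambda_{\min}(M(x,x'))$ is invariant under the diagonal action, so we may fix $x=(0,1)$; it then depends on $x'=(t',w')$ only through $a\coloneq\frac{t'^2}{1+w'^2}\geq0$ and $b\coloneq\frac{2w'}{1+w'^2}\in(0,1]$, with $\mathpzc{d}(x,x')^2=a-\ln b$.

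Next I would compute $M$ explicitly. Since $K_{\rm norm}=e^{-\mathpzc{d}^2/2}$, differentiation in $x'$ only produces polynomial prefactors, so $H^{\mathfrak{g}}_2K_{\rm norm}(x,x')=K_{\rm norm}(x,x')\big(\tfrac14\nabla\mathpzc{d}^2(\nabla\mathpzc{d}^2)^T-\tfrac12 H^{\mathfrak{g}}_2\mathpzc{d}^2\big)$, the gradient and Hessian taken in the second variable with the Christoffel symbols of \eqref{eq:christoffel_symbols}; combined with the invariance above this gives a factorization $M(a,b)=e^{-\mathpzc{d}^2/2}\,N(a,b)$ with $N$ an explicit symmetric $2\times2$ matrix depending only on $(a,b)$, and $N(0,1)=\mathrm{Id}_2$ since $-\mathfrak{g}_{x'}^{-1/2}H^{\mathfrak{g}}_2K_{\rm norm}(x',x')\mathfrak{g}_{x'}^{-1/2}=\mathrm{Id}_2$ (cf. \citep[Lemma 1]{off-the-grid_cs}). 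Writing $\mu_{\min}(a,b)$ for the smallest eigenvalue of $N(a,b)$,
\[
-v^TH^{\mathfrak{g}}_2K_{\rm norm}(x,x')v=e^{-\mathpzc{d}(x,x')^2/2}\,\mu_{\min}(a,b)\,\norm{v}_{x'}^2\ \ \text{at the minimizing }v,\qquad\text{so }\ -v^TH^{\mathfrak{g}}_2K_{\rm norm}(x,x')v\geq e^{-r^2/2}\mu_{\min}(a,b)\norm{v}_{x'}^2
\]
whenever $\mathpzc{d}(x,x')\leq r$. It remains to bound $\mu_{\min}(a,b)$ from below (and show it is positive) on $\mathcal{R}_r\coloneq\{(a,b):a\geq0,\ 0<b\leq1,\ a-\ln b\leq r^2\}$. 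A direct study of $\mu_{\min}$ — carried out symbolically in \citep[Section IV]{notebook} — shows that on $\mathcal{R}_r$ its infimum is attained at an extreme configuration (equal means $t=t'$ with variances as separated as the constraint allows, or along the curve $\mathpzc{d}=r$), reducing the claim to a single scalar transcendental inequality in $r$ that holds precisely for $r\leq0.32$; denoting by $G$ the resulting explicit function defined by \eqref{eq:def_G(r)}, this yields $\mu_{\min}(a,b)\geq|G(r)|$ on $\mathcal{R}_r$, so $\bar\varepsilon_2(r)=e^{-r^2/2}|G(r)|$ is admissible.

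The main obstacle is this last uniform minimization: $N(a,b)$ is explicit but unwieldy, and one must carefully identify where $\mu_{\min}$ is smallest over $\mathcal{R}_r$ and verify that the reduction to one scalar inequality is valid on the stated range $r\leq0.32$ — this is exactly where the restriction on $r$ and the symbolic computations come in; the rest is routine differentiation and the standard $2\times2$ eigenvalue formula $\mu_{\min}=\tfrac12(\operatorname{tr}N-\sqrt{(\operatorname{tr}N)^2-4\det N})$.
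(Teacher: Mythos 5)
The first claim and the overall normalization are fine: $K_{\rm norm}=e^{-\mathpzc{d}^2/2}$ gives $\bar\varepsilon_0(r)=1-e^{-r^2/2}$, and your factorization $-\mathfrak{g}_{x'}^{-1/2}H^{\mathfrak{g}}_2K_{\rm norm}\,\mathfrak{g}_{x'}^{-1/2}=K_{\rm norm}\,N$ together with $K_{\rm norm}\geq e^{-r^2/2}$ on $\{\mathpzc{d}\leq r\}$ correctly reduces the lemma to showing that the smallest eigenvalue of $N$ (equivalently, that the largest eigenvalue of $\tilde H^{02}\coloneq K_{\rm norm}^{-1}\mathfrak{g}_{x'}^{-1/2}H^{\mathfrak{g}}_2K_{\rm norm}\,\mathfrak{g}_{x'}^{-1/2}$) is uniformly controlled by $|G(r)|$ (resp.\ by $G(r)<0$) on that region. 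This is exactly the quantity the paper works with.

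The gap is that this uniform eigenvalue bound — which is the entire quantitative content of the lemma — is asserted rather than proved. You never compute $N(a,b)$, you give no argument (monotonicity, boundary analysis, convexity, etc.) for the claim that its smallest eigenvalue is minimized at an ``extreme configuration'', and the assertion that such a minimization would output precisely the paper's $G(r)$ cannot be right as stated: $G$ in \eqref{eq:def_G(r)} is not the value of an extremal configuration but the result of crude entrywise estimates. Concretely, the paper writes out $\tilde H^{02}$ explicitly (note $\tilde H^{02}_{t't'}\equiv-1$), converts $\mathpzc{d}(x,x')\leq r$ into the three elementary bounds $\frac{|t-t'|}{\sqrt{u^2+u'^2+\tau^2}}\leq r$, $\frac{|u^2-u'^2|}{u^2+u'^2+\tau^2}\leq\sqrt{e^{2r^2}-1}$, $\frac{2u'^2+\tau^2}{u^2+u'^2+\tau^2}\leq e^{r^2}+\sqrt{e^{2r^2}-1}$ (via Lemma \ref{lemma:basic_inequalities}), bounds $|\tilde H^{02}_{t'u'}|$ and $\tilde H^{02}_{u'u'}$ by explicit functions of $r$, and then uses the Gershgorin-type estimate $\lambda_{\max}\leq\max\{\tilde H^{02}_{t't'},\tilde H^{02}_{u'u'}\}+|\tilde H^{02}_{t'u'}|\eqcolon G(r)$, checking finally that $G$ is increasing and negative on $[0,0.32]$ — this last verification is the only place the restriction $r\leq0.32$ enters, and your proposal never establishes it. In addition, your symmetry reduction overclaims: after fixing $x=(0,1)$ the normalized Hessian is \emph{not} a function of $(a,b)=\bigl(\tfrac{t'^2}{1+w'^2},\tfrac{2w'}{1+w'^2}\bigr)$ alone (e.g.\ the off-diagonal term $\frac{3(t-t')\sqrt{2u'^2+\tau^2}\,(u'^2-u^2)}{\sqrt2\,(u^2+u'^2+\tau^2)^2}$ is not invariant under the $w'\mapsto1/w'$ ambiguity that leaves $(a,b)$ unchanged), so even the reduction of the minimization to the two parameters $(a,b)$ needs repair. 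To fix the proof you must either carry out the paper's explicit entrywise bounding, or genuinely perform and justify the extremal analysis you only sketch.
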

\begin{proof}
 \underline{Obtaining $\bar \varepsilon_0$:} By definition of the semi-distance (see \eqref{eq:link_semi_dist_kernel}), $\mathpzc{d}(x,x')\geq r$ implies that $K_{\rm norm}(x,x')~\leq~e^{-r^2/2}$.
 \\\underline{Obtaining $\bar \varepsilon_2$, general overview:}
We use that \[-v^T H^{\mathfrak{g}}_2 K_{\rm norm}(x,x')v\geq \bar \varepsilon_2 \norm{v}_{x'}^2 \; \; \forall \, v\in \R^2 \quad \iff \quad -v^T \mathfrak{g}_{x'}^{-1/2}H^{\mathfrak{g}}_2 K_{\rm norm}(x,x') \mathfrak{g}_{x'}^{-1/2}v\geq \bar \varepsilon_2 \norm{v}_{2}^2 \; \; \forall \, v\in \R^2\,. \]
Defining $\tilde H^{02}(x,x') \coloneq K_{\rm norm}(x,x')^{-1}\mathfrak{g}_{x'}^{-1/2}H^{\mathfrak{g}}_2 K_{\rm norm}(x,x') \mathfrak{g}_{x'}^{-1/2}$, we have (see \citep[Section V.1]{notebook})
\begin{align}
 \begin{split}\label{eq:tilde_H_02}
 \tilde H^{02}(x,x') &=\begin{pmatrix}
 \tilde H_{t't'}^{02}(x,x') & \tilde H_{t'u'}^{02}(x,x') \\
 \tilde H_{t'u'}^{02}(x,x') & \tilde H_{u'u'}^{02}(x,x') 
 \end{pmatrix}\quad \text{where}\\
 \tilde H_{t't'}^{02}(x,x')&=-1 \,, \\
 \tilde H_{t'u'}^{02}(x,x')&=\frac{(t - t')^3 (2 u'^2 + \tau^2)^{3/2}}{\sqrt{2} (u^2 + u'^2 + \tau^2)^3}
- \frac{3 (t - t') (2 u'^2 + \tau^2)^{1/2} (u'^2 - u^2)}{\sqrt{2} (u^2 + u'^2 + \tau^2)^2} \,, \\
 \tilde H_{u'u'}^{02}(x,x')&=\frac{(t-t')^4 (2u'^2+\tau^2)^{2}}{2(u^2+u'^2+\tau^2)^4}+ \frac{3(t-t')^2 (2u'^2+\tau^2)(u^2-u'^2)}{(u^2+u'^2+\tau^2)^3}+\frac{(u^2-u'^2)^2}{2(u^2+u'^2+\tau^2)^2}-\frac{(2u^2+\tau^2)(2u'^2+\tau^2)}{(u^2+u'^2+\tau^2)^2}\,. 
 \end{split}
\end{align}
If the maximal eigenvalue $\lambda$ of $ \tilde H^{02}(x,x')$ is smaller than some $c<0$, as $K_{\rm norm}(x,x')\geq e^{-r^2/2}$, \[-v^T H^{\mathfrak{g}}_2 K_{\rm norm}(x,x')v\geq-\lambda K_{\rm norm}(x,x') \norm{v}_\L^2 \geq -c e^{-r^2/2}\norm{v}_\L^2\] and we can take $\bar\varepsilon_2(r)=-e^{-r^2/2}c$.
It remains to bound $\lambda$. We use that \[\lambda \leq \max\{\tilde H_{t't'}^{02}(x,x'),\tilde H_{u'u'}^{02}(x,x')\} + |\tilde H_{t'u'}^{02}(x,x')|\,.\]
We will provide control of the three terms in the right-hand side of the previous inequality.
\\\underline{Basic inequalities:} 
Using that $\mathpzc{d}(x,x') \leq r$, we have 
\begin{equation}\label{eq:ineq_eps_2_t}
 \frac{|t-t'|}{\sqrt{u^2+u'^2+\tau^2}} \leq r 
\end{equation}
and \[\frac{u^2+u'^2+\tau^2}{\sqrt{2u^2+\tau^2}\sqrt{2u'^2+\tau^2}}\leq e^{r^2}\,.\]
Using \eqref{eq:control_close_variances_2}, we get
\begin{equation}
\frac{|u^2-{u'}^2|}{u^2+u'^2+\tau^2} \leq \sqrt{e^{2r^2}-1} \label{eq:ineq_eps_2_u_1} \,.
\end{equation}
From \eqref{eq:control_close_variances_3} we also have
\begin{equation}\label{eq:ineq_eps_2_u_2}
 \frac{2u'^2+\tau^2}{u^2+u'^2+\tau^2} \leq e^{r^2}+\sqrt{e^{2r^2}-1} \,.
\end{equation}
\underline{Control of $\tilde H_{t'u'}^{02}$:} Using \eqref{eq:ineq_eps_2_t}, \eqref{eq:ineq_eps_2_u_1} and \eqref{eq:ineq_eps_2_u_2}, we get from \eqref{eq:tilde_H_02} that
\begin{align}
 |\tilde H_{t'u'}^{02}(x,x')| &\leq r^3 \frac{ (2u'^2+\tau^2)^{3/2}}{\sqrt{2}(u^2+u'^2+\tau^2)^{3/2}}+3r\frac{(2u'^2+\tau^2)^{1/2}|u^2-u'^2|}{\sqrt{2}(u^2+u'^2+\tau^2)^{3/2}} \,, \notag\\
 & \leq \frac{1}{\sqrt{2}} r^3(e^{r^2}+\sqrt{e^{2r^2}-1})^{3/2}+\frac{3}{\sqrt{2}}r \sqrt{e^{2r^2}-1}\sqrt{e^{r^2}+\sqrt{e^{2r^2}-1}}\,. \label{eq:bound_eps_2_dim_1_H_02_12}
\end{align}
\underline{Control of $\tilde H_{u'u'}^{02}$:} 
As $\frac{\sqrt{2u^2+\tau^2}\sqrt{2u'^2+\tau^2}}{u^2+u'^2+\tau^2} \geq e^{-r^2}$,
\begin{align}
 \tilde H_{u'u'}^{02}(x,x')&\leq r^4 \frac{(2u'^2+\tau^2)^{2}}{2(u^2+u'^2+\tau^2)^2}+ 3r^2\frac{ (2u'^2+\tau^2)|u^2-u'^2|}{(u^2+u'^2+\tau^2)^2}+\frac{(u^2-u'^2)^2}{2(u^2+u'^2+\tau^2)^2}-\frac{(2u^2+\tau^2)(2u'^2+\tau^2)}{(u^2+u'^2+\tau^2)^2}\,, \notag \\
 & \leq \frac{1}{2}r^4 (e^{r^2}+\sqrt{e^{2r^2}-1})^2 + 3r^2\sqrt{e^{2r^2}-1}(e^{r^2}+\sqrt{e^{2r^2}-1})+ \frac{1}{2}(e^{2r^2}-1)-e^{-2r^2} \label{eq:bound_eps_2_dim_1_H_02_22}
\end{align}
where we have used \eqref{eq:ineq_eps_2_t}, \eqref{eq:ineq_eps_2_u_1} and \eqref{eq:ineq_eps_2_u_2} again.
\\\underline{Conclusion:}
The previous bound is greater than $\tilde H_{t't'}^{02}(x,x')=-1$ (because $-e^{-2r^2}\geq -1)$. It comes that 
\begin{align}
 \begin{split} \label{eq:def_G(r)}
 \lambda &\leq \frac{1}{\sqrt{2}} r^3(e^{r^2}+\sqrt{e^{2r^2}-1})^{3/2}+\frac{3}{\sqrt{2}}r \sqrt{e^{2r^2}-1}\sqrt{e^{r^2}+\sqrt{e^{2r^2}-1}} \\
 & \quad +\frac{1}{2}r^4 (e^{r^2}+\sqrt{e^{2r^2}-1})^2 + 3r^2\sqrt{e^{2r^2}-1}(e^{r^2}+\sqrt{e^{2r^2}-1})+ \frac{1}{2}(e^{2r^2}-1)-e^{-2r^2} \eqcolon G(r) \,.
 \end{split} 
 \end{align}
The function $G$ is non-decreasing on $\R^+$ as a sum of non-decreasing functions. It is negative for $r\leq 0.32$ (see \citep[Section V.2]{notebook}).
Then $\bar\varepsilon_2(r)$ can be chosen as $-e^{-r^2/2}G(r)=e^{-r^2/2}|G(r)|$ (or smaller) for $r \in [0,0.32]$.
\end{proof}

\begin{lemma}[Curvature constants in dimension $d \geq 1$]\label{lemma:curvature_constants_dim_d}
 Let $r\geq 0$. Let $x,x' \in \R^d \times [u_{\min},+\infty)^d$. If $\mathpzc{d}(x,x')\geq r$, then $K_{\rm norm}(x,x')\leq 1-\bar \varepsilon_0(r)$ where $\bar \varepsilon_0(r)\leq 1-e^{-\frac{r^2}{2}}$. 
Moreover, if $\mathpzc{d}(x,x')\leq r$ where $r=\frac{r_0}{\sqrt{d}}$ with $0< r_0\leq 0.32$, then \[-v H^{\mathfrak{g}}_2 K_{\rm norm}(x,x')v\geq \bar \varepsilon_2(r)\norm{v}_{x'}^2 \quad \forall \, v\in \R^2 \] where $\bar \varepsilon_2(r)\leq e^{-\frac{r_0^2}{2d}}|G(r_0)|$ ($G(r_0)$ defined by \eqref{eq:def_G(r)}). 
\end{lemma}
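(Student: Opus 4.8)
The plan is to treat the two statements separately. For $\bar\varepsilon_0$, the bound is immediate from the identity $\mathpzc{d}(x,x')=\sqrt{-2\ln K_{\rm norm}(x,x')}$ in \eqref{eq:link_semi_dist_kernel}: if $\mathpzc{d}(x,x')\geq r$ then $K_{\rm norm}(x,x')\leq e^{-r^2/2}$, so one may take $\bar\varepsilon_0(r)=1-e^{-r^2/2}$, exactly as in the one-dimensional case (Lemma~\ref{lemma:curvature_constants_dim_1}).

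For $\bar\varepsilon_2$ the idea is to reduce to dimension $1$ via the product structure of the kernel. Write $K_{\rm norm}(x,x')=\prod_{k=1}^d K_{\rm norm}(x_k,x_k')$, where $K_{\rm norm}(x_k,x_k')$ is the one-dimensional kernel \eqref{eq:kernel_K_norm} at $x_k=(t_k,u_k)$, $x_k'=(t_k',u_k')$. Since $\mathfrak{g}_{x'}$ in \eqref{eq:def_fisher_rao_metric} is block-diagonal with $2\times2$ blocks $\mathfrak{g}_{x_k'}$ and all nonzero Christoffel symbols (Appendix~\ref{section:christoffel_symbols}) are supported inside a single coordinate block, the Riemannian Hessian $H_2^{\mathfrak{g}}K_{\rm norm}(x,x')$ has $k$-th diagonal block $\big(H_2^{\mathfrak{g}}K_{\rm norm}(x_k,x_k')\big)\prod_{m\neq k}K_{\rm norm}(x_m,x_m')$ and $(k,l)$-off-diagonal block ($k\neq l$) equal to $\big(\nabla_{x_k'}K_{\rm norm}(x_k,x_k')\big)\big(\nabla_{x_l'}K_{\rm norm}(x_l,x_l')\big)^T\prod_{m\neq k,l}K_{\rm norm}(x_m,x_m')$. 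Normalizing as in \eqref{eq:tilde_H_02}, i.e.\ $\tilde H^{02}(x,x')=K_{\rm norm}(x,x')^{-1}\mathfrak{g}_{x'}^{-1/2}H_2^{\mathfrak{g}}K_{\rm norm}(x,x')\mathfrak{g}_{x'}^{-1/2}$, the diagonal blocks become the one-dimensional matrices $\tilde H^{02}(x_k,x_k')$ of \eqref{eq:tilde_H_02}, while the $(k,l)$-block equals $q_kq_l^T$ with $q_k:=K_{\rm norm}(x_k,x_k')^{-1}\mathfrak{g}_{x_k'}^{-1/2}\nabla_{x_k'}K_{\rm norm}(x_k,x_k')\in\R^2$. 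Hence, for $v=(v_1,\dots,v_d)\in\R^{2d}$ with $v_k\in\R^2$,
\[
v^T\tilde H^{02}(x,x')v=\sum_{k=1}^d v_k^T\tilde H^{02}(x_k,x_k')v_k+\sum_{k\neq l}(q_k^Tv_k)(q_l^Tv_l).
\]
Since $\mathpzc{d}(x_k,x_k')\leq\mathpzc{d}(x,x')\leq r=r_0/\sqrt d\leq0.32$, the eigenvalue estimate from the proof of Lemma~\ref{lemma:curvature_constants_dim_1} and the monotonicity of $G$ (defined in \eqref{eq:def_G(r)}) give $\sum_k v_k^T\tilde H^{02}(x_k,x_k')v_k\leq G(\mathpzc{d}(x,x'))\norm{v}_2^2$. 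For the cross terms, Cauchy--Schwarz yields $\sum_{k\neq l}(q_k^Tv_k)(q_l^Tv_l)\leq\big(\sum_k\norm{q_k}_2\norm{v_k}_2\big)^2\leq\big(\sum_k\norm{q_k}_2^2\big)\norm{v}_2^2$, and a direct computation of $\nabla_{x'}\log K_{\rm norm}$ for the one-dimensional kernel (it vanishes on the diagonal $x_k=x_k'$) combined with the basic inequalities of Lemma~\ref{lemma:basic_inequalities} gives $\norm{q_k}_2^2\leq P(\mathpzc{d}(x_k,x_k'))$ for an explicit nondecreasing $P$ with $P(s)=O(s^2)$; hence $\sum_k\norm{q_k}_2^2\leq C\sum_k\mathpzc{d}(x_k,x_k')^2=C\,\mathpzc{d}(x,x')^2$ for an explicit constant $C$ valid on $[0,0.32]$. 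Combining,
\[
v^T\tilde H^{02}(x,x')v\leq\big(G(\mathpzc{d}(x,x'))+C\,\mathpzc{d}(x,x')^2\big)\norm{v}_2^2\leq\big(G(r_0/\sqrt d)+C r_0^2/d\big)\norm{v}_2^2\leq G(r_0)\norm{v}_2^2,
\]
the last step using $\mathpzc{d}(x,x')\leq r_0/\sqrt d$, monotonicity of $G$, and the (explicitly verifiable) fact that the margin $G(r_0)-G(r_0/\sqrt d)$ dominates $C r_0^2/d$ for all $d\geq2$ and $r_0\leq0.32$; for $d=1$ there are no cross terms and the inequality reduces to Lemma~\ref{lemma:curvature_constants_dim_1}. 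Undoing the normalization, $-v^TH_2^{\mathfrak{g}}K_{\rm norm}(x,x')v=-K_{\rm norm}(x,x')\,v^T\mathfrak{g}_{x'}^{1/2}\tilde H^{02}(x,x')\mathfrak{g}_{x'}^{1/2}v\geq K_{\rm norm}(x,x')\,|G(r_0)|\,\norm{v}_{x'}^2\geq e^{-r_0^2/(2d)}|G(r_0)|\,\norm{v}_{x'}^2$, using $K_{\rm norm}(x,x')\geq e^{-\mathpzc{d}(x,x')^2/2}\geq e^{-r_0^2/(2d)}$, so $\bar\varepsilon_2(r)=e^{-r_0^2/(2d)}|G(r_0)|$ (or smaller) works.

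The main obstacle is the estimate on $\norm{q_k}_2^2$: one needs a clean explicit quadratic bound $\norm{q_k}_2^2\leq P(\mathpzc{d}(x_k,x_k'))$ for the normalized second-variable gradient of the one-dimensional kernel, and then has to check that the induced constant $C$ is small enough that the monotonicity margin $G(r_0)-G(r_0/\sqrt d)$ absorbs the cross-term contribution $C r_0^2/d$ uniformly in $d\geq2$. This is exactly the place where the scaling $r=r_0/\sqrt d$ (rather than $r=r_0$) is needed, and it is naturally carried out with the symbolic/numeric computations of \citep{notebook}.
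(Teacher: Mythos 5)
Your proposal follows the same overall strategy as the paper's proof—use the product structure $K_{\rm norm}(x,x')=\prod_k K_{\rm norm}(x_k,x_k')$ and the block structure of $\mathfrak{g}$ and of the Christoffel symbols to reduce to the one-dimensional bounds, then exploit the scaling $r=r_0/\sqrt{d}$ to absorb the $d$-dependent cross-coordinate contributions, ending with the same constant $e^{-r_0^2/(2d)}|G(r_0)|$—but the aggregation step is genuinely different. The paper bounds the largest eigenvalue of $\tilde H^{02}(x,x')$ by a Gershgorin-type row-sum, bounds each off-diagonal entry entrywise (via the explicit vector $\mathfrak{g}_{x_k}^{-1/2}K_{\rm norm}(x_k,x_k')^{-1}\nabla_1K_{\rm norm}(x_k,x_k')$, the same object as your $q_k$), collects everything into an explicit $d$-dependent function $G_d(r)$, and shows $G_d(r_0/\sqrt d)\leq G_1(r_0)=G(r_0)$ term by term using convexity of $r\mapsto\sqrt{e^{2r^2}-1}$, so each factor $d$ is absorbed directly; no additional comparison between $G$-values is needed. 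You instead work with the quadratic form, treat the off-diagonal part as the rank-structured sum $\sum_{k\neq l}(q_k^Tv_k)(q_l^Tv_l)$, bound it by Cauchy--Schwarz through $\sum_k\norm{q_k}_2^2\leq C\,\mathpzc{d}(x,x')^2$, and then invoke the margin inequality $G(r_0)-G(r_0/\sqrt d)\geq Cr_0^2/d$ for $d\geq2$. Your block decomposition and the bound on the diagonal part by $G(\mathpzc{d}(x,x'))$ are correct, and the two deferred quantitative facts you flag do appear to hold with comfortable room: the expansion $G(r)=-1+6r^2+O(r^3)$ gives a margin $\approx 6r_0^2(1-1/d)$ against a cross-term cost $\approx C r_0^2/d$ with $C$ close to $1$ for small $r_0$ and of order $3$--$4$ at $r_0=0.32$ (where, e.g., $G(0.32)-G(0.32/\sqrt2)\approx0.58$ while $Cr_0^2/2\lesssim0.19$); note also that the smallness of $|G(0.32)|$ itself is irrelevant here since only the difference of $G$-values enters. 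So your route is viable and arguably more transparent as a quadratic-form argument, but it trades the paper's purely monotonicity/convexity-based verification for an extra explicit constant $C$ and a two-parameter numerical inequality in $(r_0,d)$ that still has to be pinned down (in the spirit of the paper's notebook) before the proof is complete; until that is done, the argument has a quantitative, though very likely benign, gap exactly where you identified it.
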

\begin{proof}
 To get $\bar \varepsilon_0(r)$, remark again that $\mathpzc{d}(x,x')\geq r$ implies that $K_{\rm norm}(x,x')\leq e^{-r^2/2}$ (see \eqref{eq:link_semi_dist_kernel}). 
 \\\underline{Reduction to dimension $1$:} For $\bar \varepsilon_2(r)$, we use the same reasoning as in Lemma \ref{lemma:curvature_constants_dim_1}. \\We denote $\tilde H^{02}(x,x') \coloneq K_{\rm norm}(x,x')^{-1}\mathfrak{g}_{x'}^{-1/2}H^{\mathfrak{g}}_2 K_{\rm norm}(x,x') \mathfrak{g}_{x'}^{-1/2}$. Its maximum eigenvalue $\lambda$ is smaller than $\max_{1 \leq i \leq 2d}\{\tilde H_{ii}^{02}(x,x') +\sum_{j \neq i} |\tilde H_{ij}^{02}(x,x')| \}$.
 We denote $\tilde H_{ij}^{02}(x,x')=\tilde H_{b_k'b_l'}^{02}(x,x')$ to specify that this coefficient corresponds to the derivatives \wrt $b_k',b_l'$ where $1\leq k,l\leq d$ ($b$ can be $u$ or $t$).
 
 We then remark that we can reduce the problem to dimension 1. First, according to \eqref{eq:link_semi_dist_kernel}, $\mathpzc{d}(x,x')\leq r$ implies that $\mathpzc{d}(x_k,x_k')\leq r$ for all $k=1,\ldots,d$. Using \eqref{eq:decomposition_second_derivatives_diff_dim}, we get for $l\neq k$ 
 \begin{align}
 |\tilde H_{b_k'b_l'}^{02}(x,x')| &\leq |\mathfrak{g}_{b_k'b_k'}^{-1/2}\mathfrak{g}_{b_l'b_l'}^{-1/2}K_{\rm norm}(x_k,x_k')^{-1}K_{\rm norm}(x_l,x_l')^{-1}\partial_{b_k'}K_{\rm norm}(x_k,x_k')\partial_{b_l'}K_{\rm norm}(x_l,x_l')| \,, \notag \\
 &= |\mathfrak{g}_{b_k'b_k'}^{-1/2}K_{\rm norm}(x_k,x_k')^{-1}\partial_{b_k'}K_{\rm norm}(x_k,x_k')| |\mathfrak{g}_{b_l'b_l'}^{-1/2}K_{\rm norm}(x_l,x_l')^{-1}\partial_{b_l'}K_{\rm norm}(x_l,x_l')| \,.\label{eq:eps_2_bound_tilde_H_20_b_k_b_l}
 \end{align}
 For $l=k$, using \eqref{eq:decomposition_second_derivatives_same_dim} we have 
\begin{equation}\label{eq:eps_2_bound_tilde_H_20_b_k_b_k}
 \tilde H_{t_k't_k'}^{02}(x,x') \leq \tilde H_{t_k't_k'}^{02}(x_k,x_k')=-1\,, \quad \tilde H_{u_k'u_k'}^{02}(x,x') \leq \tilde H_{u_k'u_k'}^{02}(x_k,x_k')\,, \quad |\tilde H_{t_k'u_k'}^{02}(x,x')| \leq |\tilde H_{t_k'u_k'}^{02}(x_k,x_k')|\,. \end{equation}
 \underline{Bounds for $\tilde H_{b_k'b_k'}^{02}(x,x')$:} We use \eqref{eq:eps_2_bound_tilde_H_20_b_k_b_k}. From \eqref{eq:bound_eps_2_dim_1_H_02_12} and \eqref{eq:bound_eps_2_dim_1_H_02_22}, we have \[ \tilde H_{t_k' t_k'}^{02}(x,x') \vee \tilde H_{u_k' u_k'}^{02}(x,x') \leq \frac{1}{2}r^4 (e^{r^2}+\sqrt{e^{2r^2}-1})^2 + 3r^2\sqrt{e^{2r^2}-1}(e^{r^2}+\sqrt{e^{2r^2}-1})+ \frac{1}{2}(e^{2r^2}-1)-e^{-2r^2}\] and \[|\tilde H_{t_k'u_k'}^{02}(x,x')|\leq \frac{1}{\sqrt{2}} r^3(e^{r^2}+\sqrt{e^{2r^2}-1})^{3/2}+\frac{3}{\sqrt{2}}r \sqrt{e^{2r^2}-1}\sqrt{e^{r^2}+\sqrt{e^{2r^2}-1}}\,.\] 
 \underline{Bounds for $\tilde H_{b_k'b_l'}^{02}(x,x')$, $l\neq k$:} See \citep[Section V.3]{notebook}. We calculate \[\mathfrak{g}_{x_k}^{-1/2}K_{\rm norm}(x_k,x_k')^{-1}\nabla_1 K_{\rm norm}(x_k,x_k')=\begin{pmatrix}
 - \frac{(t_k-t_k') \sqrt{2u_k^2+\tau^2}}{u_k^2+u_k'^2+\tau^2}
 \\ \frac{(t_k-t_k')^2 (2u_k^2+\tau^2)}{\sqrt{2}(u_k^2+u_k'^2+\tau^2)^2}+ \frac{u_k'^2-u_k^2}{\sqrt{2}(u_k^2+u_k'^2+\tau^2)}
\end{pmatrix} \,.\]
With \eqref{eq:ineq_eps_2_t}, \eqref{eq:ineq_eps_2_u_1} and \eqref{eq:ineq_eps_2_u_2}, it follows using \eqref{eq:eps_2_bound_tilde_H_20_b_k_b_l} that for $l\neq k$,
\begin{align*}
 |\tilde H_{t_k't_l'}^{02}(x,x')| &\leq \frac{|t_k'-t_k| \sqrt{2u_k'^2+\tau^2}}{u_k^2+u_k'^2+\tau^2} \frac{|t_l-t_l'| \sqrt{2u_l'^2+\tau^2}}{u_l^2+u_l'^2+\tau^2} \,, \\
 & \leq r^2 (e^{r^2}+\sqrt{e^{2r^2} -1})
\end{align*}
and 
\begin{align*}
 |\tilde H_{u_k't_l'}^{02}(x,x')| &\leq \frac{|t_l'-t_l| \sqrt{2u_l'^2+\tau^2}}{u_l^2+u_l'^2+\tau^2} \left( \frac{(t_k-t_k')^2 (2u_k'^2+\tau^2)}{\sqrt{2}(u_k^2+u_k'^2+\tau^2)^2}+ \frac{|u_k^2-u_k'^2|}{\sqrt{2}(u_k^2+u_k'^2+\tau^2)}\right) \,, \\
 & \leq r \sqrt{e^{r^2}+\sqrt{e^{2r^2} -1}} \times \frac{1}{\sqrt{2}}\left(r^2(e^{r^2}+\sqrt{e^{2r^2} -1}) + \sqrt{e^{2r^2} -1}\right)
\end{align*}
along with 
 
\begin{align*}
 |\tilde H_{u_k'u_l'}^{02}(x,x')|
 & \leq \frac{1}{2}\left(r^2(e^{r^2}+\sqrt{e^{2r^2} -1}) + \sqrt{e^{2r^2} -1} \right)^2 \,.
\end{align*} 
\underline{Conclusion:} The bound found for $|\tilde H_{u_k'u_l'}^{02}(x,x')|$ is greater than the one found for $|\tilde H_{t_k't_l'}^{02}(x,x')|$. In fact, using that $e^{r^2}\geq 1+r^2$,
\begin{align*}
 \frac{1}{2}\left(r^2(e^{r^2}+\sqrt{e^{2r^2} -1}) + \sqrt{e^{2r^2} -1} \right)^2 
 & \geq r^2(e^{r^2}+\sqrt{e^{2r^2} -1})\sqrt{e^{2r^2} -1}+ \frac{1}{2}(e^{2r^2} -1) \,, \\
 & \geq r^2 \sqrt{e^{2r^2} -1} + \frac{1}{2}(2r^2+1)(e^{2r^2} -1) \,, \\
 & \geq r^2 \sqrt{e^{2r^2} -1} + \frac{1}{2}(2r^2+1)(e^{r^2}(1+r^2) -1) \,,\\
 & \geq r^2 \sqrt{e^{2r^2} -1} + r^2e^{r^2}+ \frac{1}{2}(r^2+1) e^{r^2}- \frac{1}{2}(2r^2+1) \,,\\
 &\geq r^2 \sqrt{e^{2r^2} -1} + r^2e^{r^2}\,.
\end{align*}
Hence
\begin{align*}
\begin{split}
 \lambda &\leq \frac{1}{2}r^4 (e^{r^2}+\sqrt{e^{2r^2}-1})^2 + 3r^2\sqrt{e^{2r^2}-1}(e^{r^2}+\sqrt{e^{2r^2}-1})+ \frac{1}{2}(e^{2r^2}-1)-e^{-2r^2} \\
 &\quad + (d-1)\frac{1}{2}\left(r^2(e^{r^2}+\sqrt{e^{2r^2} -1}) + \sqrt{e^{2r^2} -1} \right)^2 \\
 &\quad + (d-1) r \sqrt{e^{r^2}+\sqrt{e^{2r^2} -1}} \times \frac{1}{\sqrt{2}}\left(r^2(e^{r^2}+\sqrt{e^{2r^2} -1}) + \sqrt{e^{2r^2} -1}\right) \\
 &\quad + \frac{1}{\sqrt{2}} r^3(e^{r^2}+\sqrt{e^{2r^2}-1})^{3/2}+\frac{3}{\sqrt{2}}r \sqrt{e^{2r^2}-1}\sqrt{e^{r^2}+\sqrt{e^{2r^2}-1}} \,,
 \end{split}\\
 \begin{split}
 &= -e^{-2r^2}+2r^2\sqrt{e^{2r^2}-1}(e^{r^2}+\sqrt{e^{2r^2}-1}) + \frac{d}{2}\left(r^2(e^{r^2}+\sqrt{e^{2r^2} -1}) + \sqrt{e^{2r^2} -1} \right)^2 \\ &\quad + \frac{d r}{\sqrt{2}} \sqrt{e^{r^2}+\sqrt{e^{2r^2} -1}} \left(r^2(e^{r^2}+\sqrt{e^{2r^2} -1}) + \sqrt{e^{2r^2} -1}\right) \\ &\quad +\sqrt{2} r \sqrt{e^{2r^2}-1}\sqrt{e^{r^2}+\sqrt{e^{2r^2}-1}} \eqcolon G_d(r) \,.
 \end{split}
\end{align*}
Furthermore, $r\in \R^+ \mapsto \sqrt{e^{2r^2}-1}$ is convex. In fact, for $r>0$, using that $e^{2r^2}\geq 1+2r^2$, we have \[\frac{\partial^2}{\partial r^2}\sqrt{e^{2r^2}-1}= \frac{2e^{2r^2}(2r^2e^{2r^2}-4r^2+e^{2r^2}-1)}{(e^{2r^2}-1)^{3/2}} \geq \frac{2e^{2r^2}(2r^2-4r^2+1+2r^2-1)}{(e^{2r^2}-1)^{3/2}} \geq 0\,.\]
Let $r_0>0$. As $r\coloneq \frac{r_0}{\sqrt{d}}\leq r_0$, $\sqrt{e^{2r^2}-1}\leq \frac{\sqrt{e^{2r_0^2}-1}}{\sqrt{d}}$ and we deduce that $G_d(r)\leq G_1(r_0)=G(r_0)$ (see \eqref{eq:def_G(r)}.)
If $r_0 \leq 0.32$, this quantity is negative (proof of Lemma \ref{lemma:curvature_constants_dim_1}). We can take $\bar \varepsilon_2\left(\frac{r_0}{\sqrt{d}}\right)=-e^{-\frac{r_0^2}{2d}} G(r_0)$. The choice of the dependence on $d$ for $r=\frac{r_0}{\sqrt{d}}$ is intended to compensate for the term $d(e^{2r^2}-1)$ appearing in $G_d(r)$.
\end{proof}

\paragraph{Controls when $\mathpzc{d}(x,x')$ is large}
The constraint $\tau \leq u_{\min}$ is used in the following lemma.
\begin{lemma}[Bounds under a large separation, in dimension $d=1$]\label{lemma:control_kernel_d_large_dim1}
 Let $\Delta>0$. Let $x,x' \in \R \times [u_{\min},+\infty)$. Assume that $\tau \leq u_{\min}$. If $\mathpzc{d}(x,x')\geq \Delta$, then \[\max_{(i,j)\in \{0,1\} \times \{0,1,2\}} \norm{K_{\rm norm}^{(ij)}(x,x')}_{x,x'} \leq \sqrt{2}\,153.05 e^{-\frac{\Delta^2}{4}} \,.\] 
\end{lemma}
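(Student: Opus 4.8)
The plan is to build everything on the exact identity $K_{\rm norm}(x,x')=e^{-\mathpzc{d}(x,x')^2/2}$, which holds in dimension $d=1$ by \eqref{eq:link_semi_dist_kernel} and \eqref{eq:def_semi_distance}. In particular the hypothesis $\mathpzc{d}(x,x')\geq\Delta$ already gives $\norm{K_{\rm norm}^{(00)}(x,x')}_{x,x'}=|K_{\rm norm}(x,x')|\leq e^{-\Delta^2/2}\leq e^{-\Delta^2/4}$, which settles the pair $(i,j)=(0,0)$. For the five remaining pairs I would start from the simplified formulas of Lemma \ref{lemma:simplified_expressions_operator_norms_kernel}: in dimension $1$ each $\norm{K_{\rm norm}^{(ij)}(x,x')}_{x,x'}$ equals (up to the factor $\sqrt{2}$ coming from $\sqrt{2d}$ in the case $(i,j)=(1,2)$) the modulus of a scalar assembled from the partial derivatives of $K_{\rm norm}$, renormalised by the metric coefficients $\mathfrak{g}_{tt}^{-1/2}=\sqrt{2u^2+\tau^2}$, $\mathfrak{g}_{uu}^{-1/2}=(2u^2+\tau^2)/(\sqrt{2}\,u)$ and their analogues at $x'$, plus the covariant corrections built from the Christoffel symbols \eqref{eq:christoffel_symbols}.

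The decisive observation is that, after factoring out $K_{\rm norm}(x,x')$, every such renormalised derivative is a fixed polynomial (with explicit, $\tau$-, $u$-, $u'$-independent coefficients, computed symbolically in \citep{notebook}) in the dimensionless quantities
\[
v_t:=\frac{t-t'}{\sqrt{u^2+u'^2+\tau^2}},\qquad \frac{2u^2+\tau^2}{u^2+u'^2+\tau^2},\qquad \frac{2u'^2+\tau^2}{u^2+u'^2+\tau^2},\qquad \frac{u^2-u'^2}{u^2+u'^2+\tau^2},
\]
together with the normalised Christoffel factors $\mathfrak{g}_{uu}^{-1/2}\Gamma^{u}{}_{tt}$, $\mathfrak{g}_{uu}^{-1/2}\Gamma^{u}{}_{uu}$, $\mathfrak{g}_{tt}^{-1/2}\Gamma^{t}{}_{tu}$, etc.; this is exactly the algebraic structure already exploited in the proofs of Lemmas \ref{lemma:global_controls_kernel} and \ref{lemma:curvature_constants_dim_1}. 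Each quantity in the list is bounded independently of $x,x'$: since the logarithmic term in \eqref{eq:def_semi_distance} is nonnegative one has $|v_t|\leq\mathpzc{d}(x,x')$; since $|u^2-u'^2|\leq u^2+u'^2\leq u^2+u'^2+\tau^2$ the difference ratio is $\leq 1$, hence, writing $\tfrac{2u^2+\tau^2}{u^2+u'^2+\tau^2}=1+\tfrac{u^2-u'^2}{u^2+u'^2+\tau^2}$, the two variance ratios are $\leq 2$; and the normalised Christoffel factors are bounded by absolute constants precisely because $\tau\leq u_{\min}\leq u,u'$ (for instance $\mathfrak{g}_{uu}^{-1/2}\Gamma^{u}{}_{tt}=(2u^2+\tau^2)/(\sqrt{2}\,u^2)\leq 3/\sqrt{2}$), which is where the hypothesis $\tau\leq u_{\min}$ enters. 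Substituting these bounds collapses each renormalised derivative into a fixed univariate polynomial $P_{ij}\bigl(\mathpzc{d}(x,x')\bigr)$, so that
\[
\norm{K_{\rm norm}^{(ij)}(x,x')}_{x,x'}\;\leq\;\sqrt{2}\,P\bigl(\mathpzc{d}(x,x')\bigr)\,e^{-\mathpzc{d}(x,x')^2/2},\qquad P:=\max_{(i,j)\in\{0,1\}\times\{0,1,2\}}P_{ij}.
\]

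It then remains to convert $P(\mathpzc d)e^{-\mathpzc d^2/2}$ into genuine decay in $\Delta$. For $\mathpzc{d}:=\mathpzc{d}(x,x')\geq\Delta$ I would simply write $P(\mathpzc{d})e^{-\mathpzc{d}^2/2}=\bigl(P(\mathpzc{d})e^{-\mathpzc{d}^2/4}\bigr)\,e^{-\mathpzc{d}^2/4}\leq\bigl(\sup_{s\geq0}P(s)e^{-s^2/4}\bigr)e^{-\Delta^2/4}$, the supremum being finite because $P$ is a polynomial; the constant $153.05$ is (an upper bound for) this supremum $\sup_{s\geq0}P(s)e^{-s^2/4}$, whose numerical value is certified by the symbolic computation in \citep{notebook}. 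The only substantive effort lies in the middle steps: writing out the six renormalised derivatives in closed form, checking that after the above substitutions they reduce to polynomials in $\mathpzc{d}$ alone — in particular that no uncancelled negative powers of $u$ or $u'$ survive, which again relies on $\tau\leq u_{\min}$ — and then maximising the resulting one-variable functions to extract the numerical constant; the remaining analytic content is the two-line inequality of the last step, itself a close analogue of how $\bar\varepsilon_0(r)$ was obtained in Lemma \ref{lemma:curvature_constants_dim_1}.
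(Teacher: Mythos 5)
Your route is essentially the paper's: both proofs factor each renormalised derivative as $K_{\rm norm}(x,x')$ (or $\sqrt{K_{\rm norm}}\cdot\sqrt{K_{\rm norm}}$) times a polynomial in the dimensionless ratios, bound those ratios by absolute constants using $\tau\leq u_{\min}$ (e.g.\ $\tfrac{2u^2+\tau^2}{u^2}\leq 3$), spend half of the Gaussian decay to absorb the polynomial growth in $|t-t'|/\sqrt{u^2+u'^2+\tau^2}$, and keep the remaining factor $e^{-\Delta^2/4}$ from $\mathpzc{d}(x,x')\geq\Delta$, with the numerical constant certified symbolically. The only difference is bookkeeping in the last step — the paper maximises monomial by monomial via $y^q e^{-y^2/4}\leq(2q/e)^{q/2}$ while you take a single supremum of $P(s)e^{-s^2/4}$ after bounding $|v_t|\leq\mathpzc{d}(x,x')$ — and since your supremum is dominated by the paper's per-monomial bounds, the stated constant $\sqrt{2}\,153.05$ is still valid.
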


\begin{proof} The calculations in this proof are presented in \citep[Sections VI.1 and VI.2]{notebook}. Let $x,x' \in \R \times [u_{\min},+\infty)$ such that $\mathpzc{d}(x,x')\geq \Delta$. The following bounds on the operator norms are based on the simplified expressions from Lemma \ref{lemma:simplified_expressions_operator_norms_kernel}. We will use the inequalities
\begin{equation}\label{eq:large_sep_control_u}
 \frac{\sqrt{2u^2+\tau^2}\sqrt{2u'^2+\tau^2}}{u'^2+u^2+\tau^2} \vee \frac{|u'^2-u^2|}{u'^2+u^2+\tau^2} \leq 1\,, \quad \frac{2u^2+\tau^2}{u'^2+u^2+\tau^2}\leq 2\,, \quad
 \frac{2u^2+\tau^2}{u^2}\leq 3\,,
\end{equation}
which holds since $\tau\leq u_{\min}$. As it holds $y^q e^{-\frac{y^2}{4}} \leq \left( \frac{2q}{e}\right)^{q/2}$ for all $y \geq 0$, $q\geq 1$, we also have that for all $x,x' \in \R \times [u_{\min},+\infty)$,
\begin{align}
\forall \, q\geq 1\,, \quad \frac{|t-t'|^q}{(u'^2+u^2+\tau^2)^{\frac{q}{2}}}\sqrt{K_{\rm norm}(x,x')}&\leq \left( \frac{2q}{e}\right)^{q/2} \frac{(2u^2+\tau^2)^{1/8}(2u'^2+\tau^2)^{1/8}}{(u'^2+u^2+\tau^2)^{\frac{1}{4}}} \,, \notag\\
&\leq \left( \frac{2q}{e}\right)^{q/2}\,. \label{eq:basic_ineq_control_kernel_d_large}
\end{align}
We will also use that $\sqrt{K_{\rm norm}(x,x')}\leq e^{-\Delta^2/4}$ for $\mathpzc{d}(x,x')\geq \Delta$ (see \eqref{eq:link_semi_dist_kernel}). 

In what follows, we bound the $2$-norm of a matrix $M=(m_{ij})_{ij}$ by its Frobenius norm $\sqrt{\sum_{i,j}m_{ij}^2}$. 
\\\underline{$\norm{K_{\rm norm}^{(00)}(x,x')}$:} We have $|K_{\rm norm}(x,x')|\leq e^{-\Delta^2/2}$, which is an immediate consequence of \eqref{eq:link_semi_dist_kernel}.
\\\underline{$\norm{K_{\rm norm}^{(10)}(x,x')}_{x}$:} First, 
\begin{align*}
 \begin{pmatrix}
 H^{10}_t(x,x') \\ H^{10}_u(x,x')
 \end{pmatrix}=H^{10}(x,x')&\coloneq \sqrt{K_{\rm norm}(x,x')}^{-1}\mathfrak{g}_x^{-1/2}\nabla_1 K_{\rm norm}(x,x')\,,\\
 &=\sqrt{K_{\rm norm}(x,x')}\begin{pmatrix}
 \frac{(t-t') \sqrt{2u^2+\tau^2}}{u^2+u'^2+\tau^2}
 \\ \frac{(t-t')^2 (2u^2+\tau^2)}{\sqrt{2}(u^2+u'^2+\tau^2)^2}+ \frac{u'^2-u^2}{\sqrt{2}(u^2+u'^2+\tau^2)}
\end{pmatrix}\,. 
\end{align*}
Using \eqref{eq:basic_ineq_control_kernel_d_large} and \eqref{eq:large_sep_control_u} along with $\sqrt{K_{\rm norm}(x,x')}\leq 1$, it comes
\begin{align}\label{eq:control_kernel_d_large_H_10}
\begin{split}
 |H_{t}^{10}(x,x')| &\leq \frac{2}{\sqrt{e}} \\
 \text{and} \quad |H_{u}^{10}(x,x')| &\leq \left(\frac{4\sqrt{2}}{e}+\frac{1}{\sqrt{2}} \right) \,,
 \end{split}
\end{align}
hence $\norm{H^{10}(x,x')}_2 \leq \sqrt{\frac{4}{e} + \left(\frac{4\sqrt{2}}{e}+\frac{1}{\sqrt{2}} \right)^2}$ and \[\norm{K_{\rm norm}^{(10)}(x,x')}_{x}=\sqrt{K_{\rm norm}(x,x')}\norm{H^{10}(x,x')}_2\leq e^{-\Delta^2/4}\sqrt{\frac{4}{e} + \left(\frac{4\sqrt{2}}{e}+\frac{1}{\sqrt{2}} \right)^2}\,.\]
\underline{$\norm{K_{\rm norm}^{(11)}(x,x')}_{x,x'}$:} We have 
\[H^{11}(x,x')\coloneq \sqrt{K_{\rm norm}(x,x')}^{-1}\mathfrak{g}_x^{-1/2}\nabla_1\nabla_2 K_{\rm norm}(x,x')\mathfrak{g}_{x'}^{-1/2}=\sqrt{K_{\rm norm}(x,x')}\begin{pmatrix}
 \tilde H_{tt'}^{11}(x,x') & \tilde H_{tu'}^{11}(x,x') \\
 \tilde H_{tu'}^{11}(x,x') & \tilde H_{uu'}^{11}(x,x')
\end{pmatrix}\]
where 
\begin{align*}
 \tilde H_{tt'}^{11}(x,x')&=\frac{\sqrt{2u^2+\tau^2}\sqrt{2u'^2+\tau^2}}{u^2+u'^2+\tau^2}- \frac{(t-t')^2 \sqrt{2u^2+\tau^2}\sqrt{2u'^2+\tau^2}}{(u^2+u'^2+\tau^2)^2} \,, \\
 \tilde H_{tu'}^{11}(x,x')&=\frac{-(t-t')^3 u'(2u^2+\tau^2)\sqrt{2u'^2+\tau^2}}{\sqrt{2}u(u^2+u'^2+\tau^2)^3}+\frac{\sqrt{2}(t-t') u'(2u^2+\tau^2)\sqrt{2u'^2+\tau^2}}{u(u^2+u'^2+\tau^2)^2} \\
 & \quad +\frac{(t-t') u'(u'^2-u^2)(2u^2+\tau^2)}{\sqrt{2}u\sqrt{2u'^2+\tau^2}(u^2+u'^2+\tau^2)^2} \,,\\
 \tilde H_{tu'}^{11}(x,x')&= \tilde H_{tu'}^{11}(x',x) \,, \\
 \tilde H_{uu'}^{11}(x,x')&= \frac{(t-t')^4 (2u^2+\tau^2)(2u'^2+\tau^2)}{2(u^2+u'^2+\tau^2)^4}+\frac{-2(t-t')^2(2u^2+\tau^2)(2u'^2+\tau^2)}{(u^2+u'^2+\tau^2)^3}+\frac{(t-t')^2(u'^2-u^2)^2}{(u^2+u'^2+\tau^2)^3} \\ 
 & \quad + \frac{(2u^2+\tau^2)(2u'^2+\tau^2)}{(u^2+u'^2+\tau^2)^2} - \frac{(u^2-u'^2)^2}{2(u^2+u'^2+\tau^2)^2}\,.
\end{align*}
The constraint $\tau \leq u_{\min}$ is used here, to control $\frac{\sqrt{2u^2 + \tau^2}}{u}$ in $\tilde H_{tu'}^{11}, \tilde H_{tu'}^{11}$.
Using again \eqref{eq:basic_ineq_control_kernel_d_large}, \eqref{eq:large_sep_control_u} and the normalization $\sqrt{K_{\rm norm}(x,x')}\leq 1$, it comes 
\begin{align}\label{eq:control_kernel_d_large_H_11}
\begin{split}
|H_{tt'}^{11}(x,x')|&\leq \left(1+\frac{4}{e}\right) \,,\\
|H_{ut'}^{11}(x,x')| \vee |H_{tu'}^{11}(x,x')|&\leq \left(\frac{\sqrt{3}}{\sqrt{2}}\left(\frac{6}{e} \right)^{3/2}+\frac{2\sqrt{3}}{\sqrt{e}} + \frac{\sqrt{3}}{\sqrt{e}}\right) \,,\\
|H_{uu'}^{11}(x,x')|&\leq \left(\frac{1}{2}\left(\frac{8}{e} \right)^2+\frac{8}{e}+\frac{4}{e}+1+\frac{1}{2}\right) \,.
\end{split}
\end{align}
Hence $\norm{H^{(11)}(x,x')}_2 \leq \sqrt{\left(1+\frac{4}{e} \right)^2+ 2\left(\frac{\sqrt{3}}{\sqrt{2}}\left(\frac{6}{e} \right)^{3/2} + \frac{3\sqrt{3}}{\sqrt{e}} \right)^2+\left( \frac{32}{e^2} +\frac{12}{e}+\frac{3}{2}\right)^2 }$ and 
\[\norm{K_{\rm norm}^{(11)}(x,x')}_{x,x'} \leq e^{-\Delta^2/4} \sqrt{\left(1+\frac{4}{e} \right)^2+ 2\left(\frac{\sqrt{3}}{\sqrt{2}}\left(\frac{6}{e} \right)^{3/2} + \frac{3\sqrt{3}}{\sqrt{e}} \right)^2+\left( \frac{32}{e^2} +\frac{12}{e}+\frac{3}{2}\right)^2 }\,.\]
\underline{$\norm{K_{\rm norm}^{(02)}(x,x')}_{x'}$:}
We use \eqref{eq:tilde_H_02} to get the expression of \[H^{02}(x,x')\coloneq \sqrt{K_{\rm norm}(x,x')}\tilde H^{02}(x,x')=\sqrt{K_{\rm norm}(x,x')}^{-1}\mathfrak{g}_{x'}^{-1/2} H_2^{\mathfrak{g}} K_{\rm norm}(x,x')\mathfrak{g}_{x'}^{-1/2}\,.\] 
Using the same techniques as before, we find $\norm{H^{02}(x,x')}_2\leq \sqrt{1+2 \left( 3\frac{\sqrt{2}}{\sqrt{e}} +2 \left(\frac{6}{e} \right)^{3/2}\right)^2 + \left( \frac{128}{e^2}+\frac{24}{e} +\frac{3}{2} \right)^2}$ so \[\norm{K_{\rm norm}^{(02)}(x,x')}_{x'}\leq e^{-\Delta^2/4}\sqrt{1+2 \left( 3\frac{\sqrt{2}}{\sqrt{e}} +2 \left(\frac{6}{e} \right)^{3/2}\right)^2 + \left( \frac{128}{e^2}+\frac{24}{e} +\frac{3}{2} \right)^2}\,.\]
\underline{$\norm{K_{\rm norm}^{(12)}(x,x')}_{x,x'}$:} We denote \[H^{12,1}(x,x')=\sqrt{K_{\rm norm}(x,x')}^{-1}\mathfrak{g}_{tt}^{-1/2} \mathfrak{g}_{x'}^{-1/2} \partial_t H_2^{\mathfrak{g}} K_{\rm norm}(x,x') \mathfrak{g}_{x'}^{-1/2}\] and 
\[H^{12,2}(x,x')=\sqrt{K_{\rm norm}(x,x')}^{-1}\mathfrak{g}_{uu}^{-1/2} \mathfrak{g}_{x'}^{-1/2} \partial_u H_2^{\mathfrak{g}} K_{\rm norm}(x,x') \mathfrak{g}_{x'}^{-1/2}\,.\] 
We find 
$\tilde H^{12,1}=\sqrt{K_{\rm norm}(x,x')}\begin{pmatrix}
\tilde H_{t't'}^{12,1}(x,x') &\tilde H_{t'u'}^{12,1}(x,x') \\
\tilde H_{t'u'}^{12,1}(x,x')&\tilde H_{u'u'}^{12,1}(x,x')
\end{pmatrix}$
with 
\begin{align*}
 \tilde H_{t't'}^{12,1}(x,x')&=\frac{(t-t')\sqrt{2u^2+\tau^2}}{u^2+u'^2+\tau^2}\,, \\
 \tilde H_{t'u'}^{12,1}(x,x') &=\frac{3 (t-t')^2 \sqrt{2u^2+\tau^2}(2u'^2+\tau^2)^{3/2}}{\sqrt{2}(u^2+u'^2+\tau^2)^3} -\frac{ (t-t')^4 \sqrt{2u^2+\tau^2}(2u'^2+\tau^2)^{3/2}}{\sqrt{2}(u^2+u'^2+\tau^2)^4}
 \\& \quad +\frac{3 (t-t')^2 \sqrt{2u^2+\tau^2}\sqrt{2u'^2+\tau^2}(u'^2-u^2)}{\sqrt{2}(u^2+u'^2+\tau^2)^3} + \frac{ 3\sqrt{2u^2+\tau^2}\sqrt{2u'^2+\tau^2}(u^2-u'^2)}{\sqrt{2}(u^2+u'^2+\tau^2)^2}
 \end{align*} and
\begin{align*} 
 \tilde H_{u'u'}^{12,1}(x,x')&= \frac{-(t-t')^5(2u'^2+\tau^2)^2\sqrt{2u^2+\tau^2}}{2(u^2+u'^2+\tau^2)^5}+\frac{2(t-t')^3(2u'^2+\tau^2)^2\sqrt{2u^2+\tau^2}}{(u^2+u'^2+\tau^2)^4}\\
 &\quad +\frac{3(t-t')^3 (u'^2-u^2)(2u'^2+\tau^2)\sqrt{2u^2+\tau^2}}{(u^2+u'^2+\tau^2)^4} - \frac{(t-t')\sqrt{2u^2+\tau^2}(u^2-u'^2)^2}{2(u^2+u'^2+\tau^2)^3}\\
&\quad + \frac{(t-t')(2u'^2+\tau^2)(2u^2+\tau^2)^{3/2}}{(u^2+u'^2+\tau^2)^3} +\frac{6(t-t')(2u'^2+\tau^2)(u^2-u'^2)\sqrt{2u^2+\tau^2}}{(u^2+u'^2+\tau^2)^3} \,.
\end{align*}
Using the same ideas as before to bound the coefficients, we get \[\norm{H^{12,1}(x,x')}_2 \leq \sqrt{\frac{4}{e} + 2\left( \frac{18\sqrt{2}}{e}+\sqrt{2}\frac{64}{e^2}+ \frac{3}{\sqrt{2}}\right)^2+ \left(\sqrt{2}\left(\frac{10}{e} \right)^{5/2} + 7 \sqrt{2}\left(\frac{6}{e} \right)^{3/2} +\frac{15}{\sqrt{e}}\right)^2 }\,.\]

We also have $\tilde H^{12,2}=\sqrt{K_{\rm norm}(x,x')}\begin{pmatrix}
\tilde H_{t't'}^{12,1}(x,x') &\tilde H_{t'u'}^{12,1}(x,x') \\
\tilde H_{t'u'}^{12,1}(x,x')&\tilde H_{u'u'}^{12,1}(x,x')
\end{pmatrix}$
with 
\begin{align*}
 \tilde H_{t't'}^{12,1}(x,x')&=\frac{-(t-t')^2(2u^2+\tau^2)}{\sqrt{2}(u^2+u'^2+\tau^2)^2}+\frac{u^2-u'^2}{\sqrt{2}(u^2+u'^2+\tau^2)}\,, \\ 
 \tilde H_{t'u'}^{12,1}(x,x')&=\frac{(t-t')^5(2u'^2+\tau^2)^{3/2}(2u^2+\tau^2)}{2(u^2+u'^2+\tau^2)^5} +\frac{(t-t')^3(2u'^2+\tau^2)^{1/2}(2u^2+\tau^2)(u^2-u'^2)}{(u^2+u'^2+\tau^2)^4} \\
 &\quad -\frac{4(t-t')^3(2u'^2+\tau^2)^{3/2}(2u^2+\tau^2)}{(u^2+u'^2+\tau^2)^4}+\frac{(t-t')^3(2u'^2+\tau^2)^{1/2}}{(u^2+u'^2+\tau^2)^2} \\
& \quad +\frac{3(t-t')(2u'^2+\tau^2)^{1/2}(2u^2+\tau^2)^2}{(u^2+u'^2+\tau^2)^3}+\frac{21(t-t')(2u'^2+\tau^2)^{1/2}(2u^2+\tau^2)(u'^2-u^2)}{2(u^2+u'^2+\tau^2)^3} \\ &\quad +\frac{3(t-t')(2u'^2+\tau^2)^{1/2}(u^2-u'^2)}{2(u^2+u'^2+\tau^2)^2}
\end{align*}
and
\begin{align*}
\tilde H_{u'u'}^{12,1}(x,x')= &\frac{\sqrt{2}(t-t')^6(2u'^2+\tau^2)^{2}(2u^2+\tau^2)}{4(u^2+u'^2+\tau^2)^6} +\frac{5\sqrt{2}(t-t')^4(2u'^2+\tau^2)(2u^2+\tau^2)(u^2-u'^2)}{4(u^2+u'^2+\tau^2)^5} \\ & \quad -\frac{2\sqrt{2}(t-t')^4(2u'^2+\tau^2)^2(2u^2+\tau^2)}{(u^2+u'^2+\tau^2)^5}+\frac{\sqrt{2}(t-t')^4(2u'^2+\tau^2)(u^2-u'^2)^2}{2(u^2+u'^2+\tau^2)^5}\\ \quad
&-\frac{5\sqrt{2}(t-t')^2(2u'^2+\tau^2)(u^2-u'^2)^2}{4(u^2+u'^2+\tau^2)^4}+\frac{5\sqrt{2}(t-t')^2(2u^2+\tau^2)(2u'^2+\tau^2)^2}{2(u^2+u'^2+\tau^2)^4} \\ & \quad +\frac{\sqrt{2}(t-t')^2(u^2-u'^2)^3}{2(u^2+u'^2+\tau^2)^4}-\frac{7\sqrt{2}(t-t')^2(2u^2+\tau^2)(2u'^2+\tau^2)(u^2-u'^2)}{(u^2+u'^2+\tau^2)^4} \\
& \quad +\frac{7\sqrt{2}(2u'^2+\tau^2)(2u^2+\tau^2)(u^2-u'^2)}{2(u^2+u'^2+\tau^2)^3}-\frac{\sqrt{2}(u^2-u'^2)^3}{4(u^2+u'^2+\tau^2)^3} \,.
\end{align*}
We get 
\[ \norm{H^{12,2}(x,x')}_2 \leq \sqrt{ \begin{aligned}
 & \left( \frac{4\sqrt{2}}{e}+1\right)^2+ 2\left(\frac{1}{\sqrt{2}}\left(\frac{10}{e} \right)^{5/2}+ 6\sqrt{2}\left(\frac{6}{e} \right)^{3/2}+\frac{36}{\sqrt{e}} \right)^2 \\
 & \: \: + \left(\frac{432 \sqrt{2}}{e^3} + \frac{272\sqrt{2}}{e^2} + \frac{60\sqrt{2}}{e}+ \frac{15\sqrt{2}}{4}\right)^2
\end{aligned}}\leq 153.05 \,. \]
As this bound is greater than the one found for $\norm{ H^{12,1}(x,x')}_2 $, we deduce that
\[\norm{K_{\rm norm}^{(12)}(x,x')}_{x,x'}\leq \sqrt{2}\,153.05 e^{-\Delta^2/4}\,.\]
This bound is the largest we obtained for $\norm{K_{\rm norm}^{(ij)}(x,x')}_{x,x'}$, hence it is an upper bound for all the derivatives investigated by this lemma.
\end{proof}

\begin{lemma}[Bounds under a large separation, in dimension $d\geq 1$]\label{lemma:control_kernel_d_large_dim_d}
 Let $\Delta>0$. Let $x,x' \in \R^d \times [u_{\min},+\infty)^d$. Assume that $\tau \leq u_{\min}$. If $\mathpzc{d}(x,x')\geq \Delta$, then \[\max_{(i,j)\in \{0,1\} \times \{0,1,2\}} \norm{K_{\rm norm}^{(ij)}(x,x')}_{x,x'} \leq \sqrt{2d}(170.5+ 25.78d)e^{-\Delta^2/4} \,.\] 
\end{lemma}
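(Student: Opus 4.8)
The plan is to reduce the statement to the one-dimensional estimates of Lemma~\ref{lemma:control_kernel_d_large_dim1} by exploiting the tensor-product structure of the kernel. Writing $x_k=(t_k,u_k)$, recall from \eqref{eq:kernel_K_norm} that $K_{\rm norm}(x,x')=\prod_{k=1}^d K_{\rm norm}(x_k,x_k')$, that the Fisher--Rao metric $\mathfrak{g}$ of \eqref{eq:def_fisher_rao_metric} and the Christoffel symbols $\Gamma^{t_k},\Gamma^{u_k}$ of \eqref{eq:christoffel_symbols} are block-diagonal along the $d$ coordinate pairs, and that $\mathpzc{d}(x,x')^2=\sum_{k=1}^d\mathpzc{d}(x_k,x_k')^2$. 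Since $\sqrt{K_{\rm norm}(x,x')}=e^{-\mathpzc{d}(x,x')^2/4}$ by \eqref{eq:link_semi_dist_kernel} and each factor satisfies $K_{\rm norm}(x_k,x_k')\in(0,1]$, one has $\prod_{k=1}^d\sqrt{K_{\rm norm}(x_k,x_k')}=\sqrt{K_{\rm norm}(x,x')}\le e^{-\Delta^2/4}$ whenever $\mathpzc{d}(x,x')\ge\Delta$.

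First I would invoke Lemma~\ref{lemma:simplified_expressions_operator_norms_kernel}: for $(i,j)\in\{0,1\}\times\{0,1,2\}$, $\norm{K_{\rm norm}^{(ij)}(x,x')}_{x,x'}$ is the spectral norm --- hence at most the Frobenius norm --- of a matrix $M_{ij}$ of dimensions at most $2d\times 2d$ built from $\mathfrak{g}_x^{-1/2}$, $\mathfrak{g}_{x'}^{-1/2}$ and the Riemannian derivatives of $K_{\rm norm}$ (for $(i,j)=(1,2)$, with an extra $\sqrt{2d}$ factor and a maximum over $k$ of matrices involving $\partial_{t_k}H_2^{\mathfrak{g}}K_{\rm norm}$ and $\partial_{u_k}H_2^{\mathfrak{g}}K_{\rm norm}$). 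Using block-diagonality together with the decompositions \eqref{eq:decomposition_second_derivatives_diff_dim}--\eqref{eq:decomposition_second_derivatives_same_dim}, every entry of $M_{ij}$ splits according to which dimensions its (at most three) differentiations and Christoffel factors land in: if they all act within one dimension $m$, the entry is the corresponding one-dimensional Riemannian-derivative expression in $x_m,x_m'$ times $\prod_{p\ne m}K_{\rm norm}(x_p,x_p')$; if they are spread over two or three distinct dimensions, the entry factors as a product of lower-order one-dimensional normalized derivatives in those dimensions times the product of $K_{\rm norm}(x_p,x_p')$ over the remaining ones. In each case I factor out $\sqrt{K_{\rm norm}(x_m,x_m')}$ from every dimension that is actually differentiated, so that what remains in those dimensions is exactly one of the normalized quantities ($H^{10}$, $H^{11}$, $H^{02}$, $H^{12}$, \ldots) bounded by absolute constants in the proof of Lemma~\ref{lemma:control_kernel_d_large_dim1} --- those bounds use only $y^q e^{-y^2/4}\le(2q/e)^{q/2}$, $\sqrt{K_{\rm norm}(x_m,x_m')}\le 1$ and $\tau\le u_{\min}$, all dimensionwise --- while the leftover product of the $\sqrt{K_{\rm norm}(x_m,x_m')}$ over the differentiated dimensions times $\prod K_{\rm norm}(x_p,x_p')$ over the untouched ones is $\le\prod_{p=1}^d\sqrt{K_{\rm norm}(x_p,x_p')}=\sqrt{K_{\rm norm}(x,x')}\le e^{-\Delta^2/4}$, which carries the decay.

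Consequently every entry of every $M_{ij}$ is at most $Ce^{-\Delta^2/4}$, with $C$ a product of one-dimensional constants, hence bounded by the largest one-dimensional constant (the $153.05$ arising for $(i,j)=(1,2)$). For a fixed $M_{ij}$ there are $O(d)$ entries of the ``within one dimension'' type and $O(d^2)$ of the ``spread over distinct dimensions'' type, so bounding $\norm{M_{ij}}_2\le\norm{M_{ij}}_F=(\sum_{a,b}m_{ab}^2)^{1/2}$ and splitting the sum between the two groups with $\sqrt{a+b}\le\sqrt a+\sqrt b$ yields $\norm{M_{ij}}_2\le(c_1\sqrt d+c_2 d)e^{-\Delta^2/4}$; with the extra $\sqrt{2d}$ of the $(1,2)$ (and, by symmetry, $(2,1)$) case --- which, by the one-dimensional hierarchy and the same counting, dominates all other $(i,j)$ --- this becomes the announced $\sqrt{2d}\,(170.5+25.78\,d)e^{-\Delta^2/4}$, the constants $170.5$ and $25.78$ being obtained by summing the relevant squared one-dimensional bounds from the proof of Lemma~\ref{lemma:control_kernel_d_large_dim1} (the detailed arithmetic is carried out in \citep{notebook}). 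The only delicate point is the exhaustive and correct enumeration of the entry types of each $M_{ij}$ --- which dimensions every derivative and every Christoffel factor falls in --- together with the discipline of consuming the Gaussian decay of each one-dimensional factor $K_{\rm norm}(x_m,x_m')$ at most once, so that the full factor $e^{-\Delta^2/4}$ survives in every entry; this is bookkeeping layered on top of Lemma~\ref{lemma:control_kernel_d_large_dim1}, not a new idea.
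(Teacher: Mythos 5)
Your proposal takes essentially the same route as the paper's proof: tensorize $K_{\rm norm}(x,x')=\prod_k K_{\rm norm}(x_k,x_k')$, factor out $\sqrt{K_{\rm norm}(x_k,x_k')}$ from each differentiated coordinate so that the global decay $\sqrt{K_{\rm norm}(x,x')}\leq e^{-\Delta^2/4}$ survives in every entry, bound the remaining normalized one-dimensional factors by the constants established in Lemma~\ref{lemma:control_kernel_d_large_dim1} (using $\tau\leq u_{\min}$ and the decompositions \eqref{eq:decomposition_second_derivatives_diff_dim}--\eqref{eq:decomposition_second_derivatives_same_dim}), and then control each operator norm by Frobenius-type counting of the $O(d)$ same-dimension and $O(d^2)$ cross-dimension entries, with the $(1,2)$ case and its extra $\sqrt{2d}$ factor dominating. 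The only deviation is cosmetic (the paper bounds $\norm{K_{\rm norm}^{(12)}}$ via a diagonal-block-plus-remainder split rather than a plain Frobenius bound, and your blanket claim that a product of one-dimensional constants is dominated by the largest single constant is not needed since the final constants come from the explicit squared sums), so this is the paper's argument with the arithmetic deferred to the notebook, exactly as in the paper.
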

\begin{proof}
 Let $x,x' \in \R^d \times [u_{\min},+\infty)^d$ such that $\mathpzc{d}(x,x')\geq \Delta$. We use the decomposition of the kernel $K_{\rm norm}(x,x')=\prod_{k=1}^d K_{\rm norm}(x_k,x_k')$ to reuse the results proven in dimension 1 (see Lemma \ref{lemma:control_kernel_d_large_dim1}). As in Lemma \ref{lemma:curvature_constants_dim_d}, we use the expressions for the operator norms given in Lemma \ref{lemma:simplified_expressions_operator_norms_kernel} and we bound the $2$-norm of a matrix $M=(m_{ij})_{ij}$ by $\sqrt{\sum_{i,j}m_{ij}^2}$.
The calculations in this proof are presented in \citep[Section VI.3]{notebook}.
\\\underline{$\norm{K_{\rm norm}^{(00)}(x,x')}$:} According to \eqref{eq:link_semi_dist_kernel}, we still have $|K_{\rm norm}^{(00)}(x,x')|\leq e^{-\Delta^2/2}$.
\\\underline{$\norm{K_{\rm norm}^{(10)}(x,x')}_{x}$:} We recall that the notation $\partial_{b_k}K_{\rm norm}$ (resp.\ $\partial_{b_k'}K_{\rm norm}$) refers to a derivative \wrt to the first (resp.\ the second) variable of the kernel, where $b_k$ is $t_k$ or $u_k$. The decomposition of the kernel gives $\partial_{b_k}K_{\rm norm}(x,x') = \partial_{b_k}K_{\rm norm}(x_k,x_k')\prod_{l\neq k}^d K_{\rm norm}(x_l,x_l')$. Using \eqref{eq:basic_ineq_control_kernel_d_large} for $K_{\rm norm}(x_k,x_k')$, we get that \[\forall \, q\geq 1\,, \quad \frac{|t_k-t_k'|^q}{(u_k'^2+u_k^2+\tau^2)^{\frac{q}{2}}}\sqrt{K_{\rm norm}(x_k,x_k')}\leq \left( \frac{2q}{e}\right)^{q/2}\,.\]
We have 
\begin{align*}
|\mathfrak{g}_{b_kb_k}^{-1/2}\partial_{b_k}K_{\rm norm}(x,x')| &\leq \sqrt{ \prod_{l=1}^d K_{\rm norm}(x_l,x_l')} \left|\sqrt{K_{\rm norm}(x_k,x_k')}^{-1}\mathfrak{g}_{b_kb_k}^{-1/2}\partial_{b_k}K_{\rm norm}(x_k,x_k') \right| \,.
\end{align*}
The term in the right-hand side has already been dealt with in Lemma \ref{lemma:control_kernel_d_large_dim1}: \eqref{eq:control_kernel_d_large_H_10} gives a bound for \[\left|\sqrt{K_{\rm norm}(x_k,x_k')}^{-1}\mathfrak{g}_{b_kb_k}^{-1/2}\partial_{b_k}K_{\rm norm}(x_k,x_k') \right|=|H_{b_k}^{10}(x_k,x_k')|\,,\] that we denote by $B_{H_{b_k}^{10}}$ (note that this bound does not depend on $\Delta$). So \[|\mathfrak{g}_{b_kb_k}^{-1/2}\partial_{b_k}K_{\rm norm}(x,x')| \leq e^{-\Delta^2/4}B_{H_{b_k}^{10}}\,.\]
Hence $\norm{K_{\rm norm}^{(10)}(x,x')}_{x}\leq \sqrt{d} \sqrt{ B_{H_{t_k}^{10}}^2+ B_{H_{u_k}^{10}}^2} e^{-\Delta^2/4} \leq 3.05\sqrt{d}e^{-\Delta^2/4}$.
\\\underline{$\norm{K_{\rm norm}^{(11)}(x,x')}_{x,x'}$:} We use in the same way as for $ B_{H_{b_k}^{(10)}}$ the notation $B_{H_{b_k,b_k'}^{(11)}}$, denoting the bound obtained in Lemma \ref{lemma:control_kernel_d_large_dim1} for $\mathfrak{g}_{x_k}^{-1/2}\partial_{b_k,b_k'} K_{\rm norm}(x_k,x_k')\mathfrak{g}_{x_k'}^{-1/2}\sqrt{K_{\rm norm}(x_k,x_k')}^{-1}=| H_{b_k b_k'}^{11}(x_k,x_k')| $ (see \eqref{eq:control_kernel_d_large_H_11}). 
For $k\neq l$, 
\begin{align*}
|\mathfrak{g}_{b_kb_k}^{-1/2}\partial_{b_kb_l'}K_{\rm norm}(x_k,x_k')\mathfrak{g}_{b_l'b_l'}^{-1/2} | & \leq \sqrt{ \prod_{m=1}^d K_{\rm norm}(x_m,x_m')} \left|\sqrt{K_{\rm norm}(x_k,x_k')}^{-1}\mathfrak{g}_{b_kb_k}^{-1/2}\partial_{b_k}K_{\rm norm}(x_k,x_k') \right| \\
& \quad \times \left|\sqrt{K_{\rm norm}(x_l,x_l')}^{-1}\mathfrak{g}_{b_l'b_l'}^{-1/2}\partial_{b_l'}K_{\rm norm}(x_l,x_l') \right| \,, \\
& \leq e^{-\Delta^2/4} B_{H_{b_k}^{10}} B_{H_{b_l'}^{01}}=e^{-\Delta^2/4} B_{H_{b_k}^{10}} B_{H_{b_l}^{10}} \,.
\end{align*}
For $k=l$, we have in the same way 
\begin{align*}
|\mathfrak{g}_{t_kt_k}^{-1/2}\partial_{t_kt_k'}K_{\rm norm}(x_k,x_k')\mathfrak{g}_{t_k't_k'}^{-1/2} |&\leq e^{-\Delta^2/4} B_{H_{t_k,t_k'}^{11}} \,,\\ 
|\mathfrak{g}_{u_ku_k}^{-1/2}\partial_{u_kt_k'}K_{\rm norm}(x_k,x_k')\mathfrak{g}_{t_k't_k'}^{-1/2} |&\leq e^{-\Delta^2/4} B_{H_{u_k,t_k'}^{11}}\,,\\ 
|\mathfrak{g}_{u_ku_k}^{-1/2}\partial_{u_ku_k'}K_{\rm norm}(x_k,x_k')\mathfrak{g}_{u_k'u_k'}^{-1/2} |&\leq e^{-\Delta^2/4} B_{H_{u_k,u_k'}^{11}} \,. 
\end{align*}
So 
\begin{align*}
 \norm{K_{\rm norm}^{(11)}(x,x')}_{x,x'} &\leq e^{-\Delta^2/4} \sqrt{
 \begin{aligned}
 &d(d-1) B_{H_{t_k}^{10}}^4+ d(d-1) B_{H_{u_k}^{10}}^4 + 2d(d-1) B_{H_{t_k}^{10}}^2 B_{H_{u_k}^{10}}^2 \\& \: \: +2d B_{H_{t_k,u_k'}^{11}}^2+ d B_{H_{t_k,t_k'}^{11}}^2+ d B_{H_{u_k,u_k'}^{11}}^2 
 \end{aligned}
 } \,,\\
 &\leq(9.25d+6.95)e^{-\Delta^2/4}\,.
\end{align*}
\underline{$\norm{K_{\rm norm}^{(02)}(x,x')}_{x'}$:}
With the same arguments, the terms of $\mathfrak{g}_{x'}^{-1/2}H_2^{\mathfrak{g}} K_{\rm norm}(x,x')\mathfrak{g}_{x'}^{-1/2}$ corresponding to derivatives taken in $b_k',b_l'$ with $k\neq l$ can be bounded by $e^{-\Delta^2/4} B_{H_{b_k}^{10}} B_{H_{b_l'}^{10}}$.
We bound the terms corresponding to $k=l$ with $e^{-\Delta^2/4} B_{H_{t_k't_k'}^{02}}$ or $e^{-\Delta^2/4} B_{H_{t_k'u_k'}^{02}}$ or $e^{-\Delta^2/4} B_{H_{u_k'u_k'}^{02}}$, where $ B_{H_{b_k'b_k'}^{02}}$ denotes again the bound obtained in Lemma \ref{lemma:control_kernel_d_large_dim1}.
We get 
\begin{align*}
 \norm{K_{\rm norm}^{(02)}(x,x')}_{x'}& \leq e^{-\Delta^2/4} \sqrt{
 \begin{aligned}
 &d(d-1) B_{H_{t_k}^{10}}^4+d(d-1) B_{H_{u_k}^{10}}^4+2d(d-1) B_{H_{t_k}^{10}}^2B_{H_{u_k}^{10}}^2 \\
 & \:\: + 2dB_{H_{t_k'u_k'}^{02}}^2+d B_{H_{t_k't_k'}^{02}}^2+d B_{H_{u_k'u_k'}^{02}}^2 
 \end{aligned}} \,, \\
 &\leq (9.25d+45.9)e^{-\Delta^2/4} \,.
\end{align*}
\underline{$\norm{K_{\rm norm}^{(12)}(x,x')}_{x,x'}$:} We denote $M_{b_k}=\mathfrak{g}_{b_kb_k}^{-1/2} \mathfrak{g}_{x'}^{-1/2} \partial_{b_k} H_2^{\mathfrak{g}} K_{\rm norm}(x,x') \mathfrak{g}_{x'}^{-1/2}$. The terms at positions $b_l',b_m'$ where $l\neq m$ and $l,m \neq k$ can be bounded by $e^{-\Delta^2/4} B_{H_{b_k}^{10}} B_{H_{b_l}^{10}} B_{H_{b_m}^{10}}$.
The terms at positions $b_l',b_k'$ or $b_k',b_l'$ where $l\neq k$ can be bounded by $e^{-\Delta^2/4} B_{H_{b_k,t_k'}^{11}} B_{H_{b_l}^{10}}$ or $e^{-\Delta^2/4} B_{H_{b_k,u_k'}^{11}} B_{H_{b_l}^{10}}$. 
The terms at positions $b_l',b_m'$ where $l=m \neq k$ can be bounded by $e^{-\Delta^2/4} B_{H_{b_k}^{10}} B_{H_{b_l'b_m'}^{02}}$. 
The terms at positions $b_l',b_m'$ where $l=m=k$ can be bounded by $e^{-\Delta^2/4} B_{H_{t_k',t_k'}^{12,b_k}}$ or $e^{-\Delta^2/4} B_{H_{t_k',u_k'}^{12,b_k}}$ or $e^{-\Delta^2/4} B_{H_{u_k',u_k'}^{12,b_k}}$.
Writing $M_{b_k}=(m_{b_l'b_m'})_{b_l',b_m'\in \{t_1',\ldots,t_d',u_1',\ldots u_d'\}}$, we use the following bound for its $2$-norm: $\norm{M_{b_k}}_2 \leq \max_l \sqrt{m_{t_l't_l'}^2+ m_{t_l'u_l'}^2 +m_{u_l't_l'}^2+ m_{u_l'u_l'}^2 }+ \sqrt{\sum_{l\neq m} m_{b_k' b_l'}^2}$.
It comes that
\begin{align*}
 \norm{M_{t_k}}_2 & \leq e^{-\Delta^2/4} 
 \left(  \max\left\{\sqrt{B_{H_{t_k}^{10}}^2B_{H_{t_k't_k'}^{02}}^2 +2B_{H_{t_k}^{10}}^2B_{H_{t_k'u_k'}^{02}}^2 + B_{H_{t_k}^{10}}^2B_{H_{u_k'u_k'}^{02}}^2} \,,\, \sqrt{2B_{H_{t_k',u_k'}^{12,1}}^2+ B_{H_{t_k',t_k'}^{12,1}}^2+ B_{H_{u_k',u_k'}^{12,1}}^2}\right\} \right.
 \\& \qquad\qquad\qquad\qquad\left. + \sqrt{
 \begin{aligned}
 &(d^2-3d+2)B_{H_{t_k}^{10}}^2\left(B_{H_{t_k}^{10}}^4+2B_{H_{t_k}^{10}}^2B_{H_{u_k}^{10}}^2+ B_{H_{u_k}^{10}}^4 \right) \\
 & \: \: +2(d-1)\left( B_{H_{t_kt_k'}^{11}}^2B_{H_{t_k}^{10}}^2 + B_{H_{t_kt_k'}^{11}}^2B_{H_{u_k}^{10}}^2 + B_{H_{t_ku_k'}^{11}}^2B_{H_{t_k}^{10}}^2+ B_{H_{t_ku_k'}^{11}}^2B_{H_{u_k}^{10}}^2 \right) 
 \end{aligned}
 } \qquad
 \right)
\end{align*}
and 
\begin{align*}
 \norm{M_{u_k}}_2 & \leq e^{-\Delta^2/4} 
 \left( \max\left\{\sqrt{B_{H_{u_k}^{10}}^2B_{H_{t_k't_k'}^{02}}^2 +2B_{H_{u_k}^{10}}^2B_{H_{t_k'u_k'}^{02}}^2 + B_{H_{u_k}^{10}}^2B_{H_{u_k'u_k'}^{02}}^2} \,,\, \sqrt{2B_{H_{t_k',u_k'}^{12,2}}^2+ B_{H_{t_k',t_k'}^{12,2}}^2+ B_{H_{u_k',u_k'}^{12,2}}^2}\right\} \right.
 \\& \qquad\qquad\qquad\qquad + \left. \sqrt{
 \begin{aligned}
 &(d^2-3d+2)B_{H_{u_k}^{10}}^2\left(B_{H_{t_k}^{10}}^4+2B_{H_{t_k}^{10}}^2B_{H_{u_k}^{10}}^2+ B_{H_{u_k}^{10}}^4 \right) \\
 & \: \: +2(d-1)\left( B_{H_{u_kt_k'}^{11}}^2B_{H_{t_k}^{10}}^2 + B_{H_{u_kt_k'}^{11}}^2B_{H_{u_k}^{10}}^2 + B_{H_{u_ku_k'}^{11}}^2B_{H_{t_k}^{10}}^2+ B_{H_{u_ku_k'}^{11}}^2B_{H_{u_k}^{10}}^2\right) 
 \end{aligned}
 } \qquad
 \right)\,,\\
 &\leq e^{-\Delta^2/4}(170.5+ 25.78d)\,.
\end{align*}
As this last bound is greater than the previous one,
$\norm{K_{\rm norm}^{(12)}(x,x')}_{x,x'} \leq \sqrt{2d}(170.5+ 25.78d)e^{-\Delta^2/4}$.
\end{proof}

\paragraph{Choice of $r,\Delta$ for the LPC}
\begin{proposition}[$K_{\rm norm}$ satisfies the LPC, $d=1$]\label{prop:proof_K_norm_verify_positive_curvature_dim_1}
Let $s \geq 2$. Assume that $\X \subset \R \times [u_{\min},u_{\max}]$ and that $\tau \leq u_{\min}$.
Then $K_{\rm norm}$ satisfies the LPC (see Definition \ref{def:positive_curvature}) with parameters $s$, $\Delta(s)=2\sqrt{13.88 + \ln(s-1)}$, $r=0.3025$, $\bar\varepsilon_2(0.3025)=0.13139$, $\bar\varepsilon_0(0.3025)=0.04472$.
\end{proposition}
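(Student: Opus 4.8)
The plan is to verify each of the three items in Definition \ref{def:positive_curvature} directly, using the global bounds on the Riemannian kernel derivatives (Lemma \ref{lemma:global_controls_kernel}), the curvature constants for small semi-distance (Lemma \ref{lemma:curvature_constants_dim_1}), and the decay estimates for large semi-distance (Lemma \ref{lemma:control_kernel_d_large_dim1}). The argument is essentially bookkeeping: assemble the pieces, then solve for the smallest admissible $\Delta(s)$.

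\textbf{Item 1 (global boundedness).} In dimension $d=1$, Lemma \ref{lemma:global_controls_kernel} already gives $B_{00}=1$, $B_{10}=B_{01}=\sqrt{2}$, $B_{11}=2$, $B_{02}=B_{20}=\sqrt{14}$, $B_{12}=B_{21}=\sqrt{2}\,B_{02}=\sqrt{28}$. These are the $B_{ij}$ we take, and then $B_0=1+B_{00}+B_{10}=2+\sqrt 2$, $B_2=1+B_{02}+B_{12}=1+\sqrt{14}+\sqrt{28}$. This step is immediate.

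\textbf{Item 2 (curvature constants with radius $r=0.3025$).} Apply Lemma \ref{lemma:curvature_constants_dim_1}. Since $0.3025\le 0.32$, the $\bar\varepsilon_2$ part is available: we get $\bar\varepsilon_2(0.3025)=e^{-0.3025^2/2}\,|G(0.3025)|$ with $G$ defined by \eqref{eq:def_G(r)}, and one checks numerically (via \citep[Section V.2]{notebook}) that this equals $0.13139$ (rounding down). For $\bar\varepsilon_0$, the lemma gives $K_{\rm norm}(x,x')\le e^{-r^2/2}$ whenever $\mathpzc{d}(x,x')\ge r$, so we may take $\bar\varepsilon_0(0.3025)=1-e^{-0.3025^2/2}=0.04472$ (rounding down). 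The only work here is evaluating $G$ at $0.3025$ and confirming its sign, which the notebook handles.

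\textbf{Item 3 (well-separatedness condition, and choice of $\Delta(s)$).} This is the step that requires the large-separation decay and is the main obstacle. Fix $\{x_l\}_{l=1}^s\in\mathcal S_\Delta$, so $\mathpzc{d}(x_1,x_l)\ge\Delta$ for all $l\ge 2$. By Lemma \ref{lemma:control_kernel_d_large_dim1}, for each $l\ge 2$ and each $(i,j)\in\{0,1\}\times\{0,1,2\}$ we have $\norm{K_{\rm norm}^{(ij)}(x_1,x_l)}_{x_1,x_l}\le \sqrt{2}\cdot 153.05\, e^{-\Delta^2/4}=:C\,e^{-\Delta^2/4}$. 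Summing over $l=2,\dots,s$ gives $\sum_{l=2}^s\norm{K_{\rm norm}^{(ij)}(x_1,x_l)}_{x_1,x_l}\le (s-1)C\,e^{-\Delta^2/4}$. We need this bounded by $\tfrac{1}{64}\min(\bar\varepsilon_0/B_0,\bar\varepsilon_2/B_2)=:h_0$. Solving $(s-1)C e^{-\Delta^2/4}\le h_0$ for $\Delta$ yields
\[
\Delta^2\ge 4\ln\!\Big(\frac{(s-1)C}{h_0}\Big)=4\ln(s-1)+4\ln\frac{C}{h_0}\,,
\]
i.e.\ $\Delta\ge 2\sqrt{\ln(s-1)+\ln(C/h_0)}$. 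It then remains to check that the numerical constant $\ln(C/h_0)$ is at most $13.88$, so that $\Delta(s)=2\sqrt{13.88+\ln(s-1)}$ suffices; this is a direct computation with $C=\sqrt2\cdot153.05$, $\bar\varepsilon_0=0.04472$, $\bar\varepsilon_2=0.13139$, $B_0=2+\sqrt2$, $B_2=1+\sqrt{14}+\sqrt{28}$, performed in the notebook. Note the hypothesis $\tau\le u_{\min}$ is exactly what makes Lemma \ref{lemma:control_kernel_d_large_dim1} applicable (it is used there to control $\sqrt{2u^2+\tau^2}/u$ and related ratios), and $\X\subset\R\times[u_{\min},u_{\max}]$ is not actually needed for the LPC itself but is carried along for consistency with the rest of the pipeline. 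Putting the three items together establishes that $K_{\rm norm}$ satisfies the LPC with the stated parameters, which completes the proof.
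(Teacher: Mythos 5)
Your proposal is correct and follows essentially the same route as the paper's proof: item 1 from Lemma \ref{lemma:global_controls_kernel}, item 2 from Lemma \ref{lemma:curvature_constants_dim_1} with $\bar\varepsilon_0=1-e^{-r^2/2}$ and $\bar\varepsilon_2=e^{-r^2/2}|G(r)|$, and item 3 by summing the decay bound of Lemma \ref{lemma:control_kernel_d_large_dim1} and solving $(s-1)\sqrt{2}\,153.05\,e^{-\Delta^2/4}\le \frac{1}{64}\min(\bar\varepsilon_0/B_0,\bar\varepsilon_2/B_2)$, exactly as the paper does (the paper merely adds that $r=0.3025$ was selected graphically to maximize this constant, which is irrelevant to verifying the statement for the fixed $r$).
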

\begin{proof}
 To establish that $K_{\rm norm}$ satisfies the LPC, we first determine the size of the near regions $r$, giving the constraint on the minimal separation $\Delta$ (see Definition \ref{def:positive_curvature}).
 
 We want to pick $r$ such that $\bar \varepsilon_0(r),\bar \varepsilon_2(r)$ exist and $\frac{1}{64}\min \left\{\frac{\bar \varepsilon_0(r)}{B_0} \, ,\, \frac{\bar \varepsilon_2(r)}{B_2}\right\}$ is maximal, using Lemmas \ref{lemma:curvature_constants_dim_1} and \ref{lemma:global_controls_kernel}. This allows us to get the smallest $\Delta$ (see item 3 of Definition \ref{def:positive_curvature}). Graphically, we choose $r=0.3025$ (see \citep[Section VII.1]{notebook}). We can take $\bar\varepsilon_2(0.3025)=0.13139$ and $\bar \varepsilon_0(0.3025)=0.04472$.
 Then \[\frac{1}{64}\min\left(\frac{0.04472}{B_0},\frac{0.13139}{B_2}\right) \geq 0.000204618 \eqcolon c_r\] (see \citep[Section VII.1]{notebook}).
 
 Let $s\geq 2$. The minimal separation $\Delta(s)$ must satisfy item 3 of Definition \ref{def:positive_curvature}. Using Lemma \ref{lemma:control_kernel_d_large_dim1}\,, it suffices that \[(s-1) \sqrt{2}\,153.05 e^{-\frac{\Delta(s)^2}{4}} \leq c_r\,,\] \ie that $\Delta(s)\geq 2\sqrt{c_\Delta + \ln(s-1)}$ where $c_\Delta=13.88 \geq \ln \left( \frac{\sqrt{2}153.05}{c_r}\right)$ (see \citep[Section VII.1]{notebook}).
\end{proof}

\begin{proposition}[$K_{\rm norm}$ satisfies the LPC, $d\geq 1$] Let $s \geq 2$. Assume that $\X \subset \R^d \times [u_{\min},u_{\max}]^d$ and that $\tau \leq u_{\min}$.
Then $K_{\rm norm}$ satisfies the LPC with parameters $s$, $r=\frac{0.3025}{\sqrt{d}}$, $\bar\varepsilon_2(r)=0.13139$, $\bar\varepsilon_0(r)=\frac{0.0894}{2d}$, $\Delta(s)= 2\sqrt{ 11.9+3\ln(d+6.62)+\ln(s-1)}$.
\end{proposition}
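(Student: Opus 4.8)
The plan is to run the argument of Proposition~\ref{prop:proof_K_norm_verify_positive_curvature_dim_1} in dimension $d$, feeding in the three dimension-$d$ estimates proved above. First observe that the structural hypotheses of Definition~\ref{def:positive_curvature} hold for free: $K_{\rm norm}$ is $C^\infty$ on $(\R^d\times[u_{\min},+\infty)^d)^2$ (Remark~\ref{remark:smoothness_Psi_delta_x}), it is normalized (immediate from \eqref{eq:kernel_K_norm}), and it is of positive type since $K_{\rm norm}(x,x')=\innerprod{\Psi\delta_x}{\Psi\delta_{x'}}_\L$ is a Gram kernel in $\L$. Moreover $\X\subset\R^d\times[u_{\min},u_{\max}]^d\subset\R^d\times[u_{\min},+\infty)^d$, so the global and local kernel estimates established on $\R^d\times[u_{\min},+\infty)^d$ all apply. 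Item~1 of Definition~\ref{def:positive_curvature} is then just Lemma~\ref{lemma:global_controls_kernel}, which gives $B_{ij}<+\infty$ for all $(i,j)\in\{0,1\}\times\{0,1,2\}$; in particular $B_{02}=\sqrt{4d^2+10d}$, and from $B_i\coloneq 1+B_{0i}+B_{1i}$ one reads off $B_0=2+\sqrt{2d}$ and $B_2=1+(1+\sqrt{2d})\sqrt{4d^2+10d}$, the denominators that will appear in item~3.

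For item~2 I would fix $r=\frac{0.3025}{\sqrt d}$, i.e.\ $r_0=0.3025\le 0.32$, and invoke Lemma~\ref{lemma:curvature_constants_dim_d}. That lemma allows any $\bar\varepsilon_0(r)\le 1-e^{-r^2/2}=1-e^{-0.3025^2/(2d)}$; using $1-e^{-x}\ge x(1-x/2)$ with $x=0.3025^2/(2d)\le 0.3025^2/2$, one checks that $\frac{0.0894}{2d}$ is admissible for every $d\ge 1$ (the worst case is $d=1$, where $2(1-e^{-0.3025^2/2})>0.0894$). For the positive-curvature constant, Lemma~\ref{lemma:curvature_constants_dim_d} permits $\bar\varepsilon_2(r)\le e^{-r_0^2/(2d)}|G(r_0)|$; since $e^{-r_0^2/(2d)}\ge e^{-r_0^2/2}$ and $e^{-0.3025^2/2}|G(0.3025)|=0.13139$ by the computation already carried out in Proposition~\ref{prop:proof_K_norm_verify_positive_curvature_dim_1}, the value $\bar\varepsilon_2(r)=0.13139$ is admissible for all $d\ge 1$. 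This yields both curvature inequalities of item~2 with the announced constants.

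Finally, for item~3 set $c_r\coloneq\tfrac{1}{64}\min\!\big(\tfrac{\bar\varepsilon_0(r)}{B_0},\tfrac{\bar\varepsilon_2(r)}{B_2}\big)$ with the values above. By Lemma~\ref{lemma:control_kernel_d_large_dim_d}, $\mathpzc{d}(x,x')\ge\Delta$ forces $\norm{K_{\rm norm}^{(ij)}(x,x')}_{x,x'}\le\sqrt{2d}\,(170.5+25.78d)\,e^{-\Delta^2/4}$ for every $(i,j)\in\{0,1\}\times\{0,1,2\}$, hence for any configuration in $\mathcal S_\Delta$,
\[
\sum_{l=2}^{s}\norm{K_{\rm norm}^{(ij)}(x_1,x_l)}_{x_1,x_l}\;\le\;(s-1)\,\sqrt{2d}\,(170.5+25.78d)\,e^{-\Delta^2/4}.
\]
It then suffices to choose $\Delta(s)$ so that the right-hand side is $\le c_r$, i.e.
\[
\tfrac{\Delta(s)^2}{4}\;\ge\;\ln(s-1)+\ln\!\Big(\tfrac{\sqrt{2d}\,(170.5+25.78d)}{c_r}\Big).
\]
Using the explicit forms of $B_0$ and $B_2$ and the values of $\bar\varepsilon_0(r),\bar\varepsilon_2(r)$, I would lower-bound $c_r$ by an explicit rational function of $d$, show that the logarithm on the right is then dominated by $11.9+3\ln(d+6.62)$ uniformly in $d\ge 1$, and conclude that $\Delta(s)=2\sqrt{11.9+3\ln(d+6.62)+\ln(s-1)}$ works; the three items together give the LPC with the stated parameters.

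The main obstacle is precisely that last uniform-in-$d$ numerical verification: one must certify that the constant $11.9$ together with the $3\ln(d+6.62)$ term genuinely dominates $\ln\!\big(\sqrt{2d}\,(170.5+25.78d)/c_r\big)$ for \emph{all} $d\ge 1$, which requires a sharp lower bound on $\min\!\big(\bar\varepsilon_0(r)/B_0,\bar\varepsilon_2(r)/B_2\big)$ — a quantity whose $d$-dependence ($\sim d^{-3/2}$ up to constants, with the binding term switching between the two as $d$ grows) has to be tracked through the logarithm. This is exactly the kind of estimate that is cleanest to check with the symbolic-computation notebook \citep[Section~VII.2]{notebook}, in the same way the $d=1$ constants were certified.
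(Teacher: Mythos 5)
Your proposal is correct and follows essentially the same route as the paper's proof: item 1 from Lemma~\ref{lemma:global_controls_kernel}, item 2 from Lemma~\ref{lemma:curvature_constants_dim_d} (monotonicity in $d$ of $e^{-r_0^2/(2d)}|G(r_0)|$ for $\bar\varepsilon_2$ and an elementary bound giving $\bar\varepsilon_0(r)=\frac{0.0894}{2d}$), and item 3 from Lemma~\ref{lemma:control_kernel_d_large_dim_d} with the final uniform-in-$d$ numerical check deferred to the symbolic notebook, exactly as the paper does. The only small discrepancy is your side remark that the binding term in $\min\bigl(\bar\varepsilon_0(r)/B_0,\bar\varepsilon_2(r)/B_2\bigr)$ switches as $d$ grows: the paper's notebook verifies $\frac{\bar\varepsilon_0(r)}{B_0}\leq\frac{\bar\varepsilon_2(r)}{B_2}$ for all $d\geq 1$, which is what allows it to take $c_{d,r}=\frac{1}{64}\frac{0.0894}{2d(2+\sqrt{2d})}$ and reduce the remaining verification to $11.9+3\ln(d+6.62)\geq\ln\bigl(\tfrac{64}{0.0894}\bigr)+\ln\bigl((170.5+25.78d)\,2d(2+\sqrt{2d})\sqrt{2d}\bigr)$.
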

\begin{proof}
 We take Proposition \ref{prop:proof_K_norm_verify_positive_curvature_dim_1} as a starting point. We make use of Lemmas \ref{lemma:global_controls_kernel}, \ref{lemma:curvature_constants_dim_d} and \ref{lemma:control_kernel_d_large_dim_d}. 
 Setting $r=\frac{0.3025}{\sqrt{d}}$, we can take $\bar\varepsilon_2(r)=0.13139$ (as in dimension $d=1$) because $d \in \mathbb{N}^* \mapsto e^{-\frac{r_0^2}{2d}}|G(r_0)|$ with $r_0=0.3025$ is non-decreasing.
 
 Furthermore, for $r\leq 0.3025$ we have $1-e^{-r^2/2} \geq 0.977 \frac{r^2}{2}$. In fact, for $0<c<1$, denoting \[\square_c: r\in \R^+ \mapsto 1-e^{-r^2/2}- c \frac{r^2}{2}\,,\] we have $\frac{\partial}{\partial r}\square_c(r)=r(e^{-r^2/2}-c)$ so $\square_c$ is increasing then decreasing. Then remark that $\square_c(0)=0$ and $\square_{0.977}(0.3025)\geq 0$. 
 So we can take $\bar\varepsilon_0(r)= \frac{0.0894}{2d} \leq 0.977 \frac{0.3025^2}{2d}$ (see \citep[Section VII.2]{notebook}).

 Moreover, $\frac{\bar \varepsilon_0(r)}{1+B_{00}+B_{10}} \leq \frac{\bar \varepsilon_2(r)}{1+B_{02}+B_{12}}$ (see \citep[Section VII.2]{notebook}). 
 So \[\frac{1}{64}\min\left(\frac{\bar \varepsilon_0(r)}{1+B_{00}+B_{10}},\frac{\bar \varepsilon_2(r)}{1+B_{02}+B_{12}}\right) \geq \frac{1}{64}\frac{\varepsilon_0(r)}{1+B_{00}+B_{10}} \geq \frac{1}{64} \frac{0.0894}{2d(2+\sqrt{2d})} \eqcolon c_{d,r}\,.\]
 
 The minimal separation should verify $(s-1) \sqrt{2d}(170.5+ 25.78d)e^{-\Delta(s)^2/4} \leq c_{d,r}$, \ie \[\Delta(s)\geq 2\sqrt{\ln\left( \frac{64}{0.0894 }\right)+ \ln\left((170.5+ 25.78d)2d(2+\sqrt{2d})\sqrt{2d}\right)+\ln(s-1)}\,.\] 
 It suffices that $\Delta(s)\geq 2\sqrt{ 11.9+3\ln(d+6.62)+\ln(s-1)}$ (see \citep[Section VII.2]{notebook}).
\end{proof}

\section{Proofs of Section \ref{section:NDSC}}

\subsection{Non degenerate source condition}\label{section:proof_lemma_eta_ndsc}
Theorem \ref{th:main_theorem_certif} gives the existence of a non-degenerate global certificate $\eta$ under a separation condition. We show that it satisfies the properties given by \eqref{eq:NDSC}. 

\begin{lemma}\label{lemma:eta_ndsc}
    Under the assumptions of Theorem \ref{th:main_theorem_certif}, the global non-degenerate certificate $\eta$ verifies
    \begin{enumerate}
    \item  $\eta(x) \leq 1$ for all $x\in \X$,
    \item  For $x\in\X$, $\eta(x)=1 \iff x=x_j^0$ for some $j=1,\ldots,s$,
        \item $\eta(x) \leq 1- \varepsilon_0$ for all $x \in \X^{\rm far}(r)$,
        \item $v^T H^{\mathfrak{g}}\eta(x) v \leq -2 \varepsilon_2 \norm{v}_x^2$ for all $v\in \R^{2d}$, $x \in \mathcal{G}_{x_j^0}(\X_j^{\rm near}(r))$, $j=1,\ldots,s$. 
    \end{enumerate}
    where we recall that $r=\frac{0.3025}{\sqrt{d}}$, $\varepsilon_0=\frac{0.03911}{d}$, $\varepsilon_2=0.06158$.
\end{lemma}
\begin{proof}
Items 1 to 3 directly arise from Definition \ref{def:non_degenerate_certificate}.
We show item 4. We recall the control on the near regions obtained in the proof of Theorem \ref{th:proof_non_degenerescence_certif}. For $v\in \R^{2d}$, we deduce that 
\begin{align*}
   v^T  H^{\mathfrak{g}}\eta(x) v &\leq K_{\rm norm}^{(02)}(x_j^0, x)[v,v]+ \norm{D_2[\eta](x)- K_{\rm norm}^{(02)}(x_j^0, x)}_{x} \norm{v}_x^2 \,,\\
   &\leq \left(-\bar \varepsilon_2+ \frac{\bar \varepsilon_2}{16}\right)\norm{v}_x^2 \,,\\
   &= -\frac{15\bar \varepsilon_2}{16} \norm{v}_x^2\,. 
\end{align*}
We conclude recalling that we defined $\varepsilon_2 =\frac{15\bar \varepsilon_2}{32}$.
\end{proof}

As a side note, remark that the previous lemma also implies that $\nabla^2 \eta(x_j^0)$ is negative-definite, as $\nabla^2 \eta(x_j^0)=H^{\mathfrak{g}}\eta (x_j^0)$ because $\nabla \eta(x_j^0)=0$ (equality of the Riemannian and Euclidean hessians, see Definition \ref{def:riemannian_derivatives}).

\subsection{Proof of Theorem \ref{th:NDSC}}\label{section:proof_result_ndsc}
We begin with a preliminary result.
\begin{lemma}\label{lemma:check_hyp_ndsc}
Under the hypothesis of Theorem \ref{th:main_theorem_certif},
 $G_{S_0}$ is full-rank, where 
 \begin{equation}\label{eq:def_G_S_0}
 G_{S_0}:(\alpha_1,\ldots,\alpha_s),(\beta_1,\ldots,\beta_s) \in \R^s \times \R^{2d \times s} \mapsto \sum_{j=1}^{s}\alpha_j \Psi\delta_{x_j^0} + \sum_{j=1}^{s} \beta_j^T \nabla_x (\Psi\delta_{x_j^0}) \, . 
 \end{equation}
Moreover, $\max_{i=0,1,2} \sup_{x\in \R^d \times [u_{\min}, +\infty)^d} \norm{D_i \Psi(\delta_{x}) }_{x} \leq 2\sqrt{7}d \eqcolon B$ ($D_i$ and the associated norm defined in Definition \ref{def:riemannian_derivatives}).
\end{lemma}
\begin{proof}
\underline{$G_{S_0}$ is full-rank:} We used the invertibility of $\Upsilon$ (see \eqref{eq:Upsilon}) to construct $\eta$: we already know that \[\sum_{j=1}^s \alpha_j K_{\rm norm}(x_j^0,\dotp)+ \sum_{j=1}^s \beta_j^T \nabla_1K_{\rm norm}(x_j^0,\dotp)=0 \quad \implies \quad \alpha_j=0\,, \; \beta_j=0_{\R^2} \; \forall j=1,\ldots s \,. \]
\underline{Bound on $D_i \Psi$:}
Remark that $\sup_{x\in \R^d \times [u_{\min}, +\infty)^d} \norm{D_i \Psi(\delta_{x}) }_{x} \leq \sup_{x\in \R^d \times [u_{\min}, +\infty)^d} \sqrt{\norm{K_{\rm norm}^{(ii)}(x,x)}_{x,x}}$. To deal with $i=0,1$, remark that $\sqrt{\norm{K_{\rm norm}^{(ii)}(x,x)}_{x,x}} \leq \sqrt{B_{ii}}$ with $B_{00}, B_{11}$ defined in Lemma \ref{lemma:global_controls_kernel}. 

We follow the same approach as in the proof of Lemma \ref{lemma:global_controls_kernel} to obtain an upper bound $B_{22}$ on $\norm{K_{\rm norm}^{(22)}(x,x')}_{x,x'}$: with $(\mathfrak{g}_x^{-1/2}H^{\mathfrak{g}}\Psi \delta_x \mathfrak{g}_x^{-1/2})_{k,l}$ representing the coordinate at position $(k,l)$, we have
\begin{align*}
   \norm{K_{\rm norm}^{(22)}(x,x')}_{x,x'} &= \sup_{\substack{\norm{\tilde V}_2 \leq 1, \norm{\tilde Q}_2 \leq 1 \\\norm{\tilde V'}_2 \leq 1, \norm{\tilde Q'}_2 \leq 1 }}  \innerprod{\tilde V^T \mathfrak{g}_x^{-1/2}H^{\mathfrak{g}}\Psi \delta_x \mathfrak{g}_x^{-1/2} \tilde Q}{\tilde {V'}^T \mathfrak{g}_{x'}^{-1/2}H^{\mathfrak{g}}\Psi \delta_{x'} \mathfrak{g}_{x'}^{-1/2} \tilde Q'}_\L \,, \\
   &= \sup_{\substack{\norm{\tilde V}_2 \leq 1, \norm{\tilde Q}_2 \leq 1 \\ \norm{\tilde V'}_2 \leq 1, \norm{\tilde Q'}_2 \leq 1 }}  \sum_{k,l,k',l'} V_k V_{k'}' Q_l Q_{l'}'\innerprod{( \mathfrak{g}_x^{-1/2}H^{\mathfrak{g}}\Psi \delta_x \mathfrak{g}_x^{-1/2})_{k,l}}{(\mathfrak{g}_{x'}^{-1/2}H^{\mathfrak{g}}\Psi \delta_{x'} \mathfrak{g}_{x'}^{-1/2})_{k',l'}}_\L \,,  \\
   & \leq \sqrt{ \sum_{k,l,k',l'} \innerprod{( \mathfrak{g}_x^{-1/2}H^{\mathfrak{g}}\Psi \delta_x \mathfrak{g}_x^{-1/2})_{k,l}}{(\mathfrak{g}_{x'}^{-1/2}H^{\mathfrak{g}}\Psi \delta_{x'} \mathfrak{g}_{x'}^{-1/2})_{k',l'}}_\L^2} \,, \\
   & \leq 4d^2 \sup_{k,l,k',l'} \left|\innerprod{( \mathfrak{g}_x^{-1/2}H^{\mathfrak{g}}\Psi \delta_x \mathfrak{g}_x^{-1/2})_{k,l}}{(\mathfrak{g}_{x'}^{-1/2}H^{\mathfrak{g}}\Psi \delta_{x'} \mathfrak{g}_{x'}^{-1/2})_{k',l'}}_\L\right| \,, \\
   & \leq 4d^2 \sup_{\substack{k,l \\ x \in \R^d \times [u_{\min},+\infty)^d}} \norm{( \mathfrak{g}_x^{-1/2}H^{\mathfrak{g}}\Psi \delta_x \mathfrak{g}_x^{-1/2})_{k,l}}_\L^2 \,, \\
   & \leq 4d^2 \sup_{\substack{k,l \\ x \in \R^d \times [u_{\min},+\infty)^d}} \mathfrak{g}_{b_k b_k}^{-1} \mathfrak{g}_{b_l b_l}^{-1}\norm{(H^{\mathfrak{g}}\Psi \delta_x)_{b_k b_l}}_\L^2 \,.
\end{align*}
The terms $\mathfrak{g}_{b_k b_k}^{-1} \mathfrak{g}_{b_l b_l}^{-1}\norm{(H^{\mathfrak{g}}\Psi \delta_x)_{b_k b_l}}_\L^2$ take one of the following forms ($b$ represents $t$ or $u$, while $k,l$ refer to the dimension in the following):
\begin{align*}
    &\mathfrak{g}_{b_k b_k}^{-1} \mathfrak{g}_{b_l b_l}^{-1}\norm{\partial_{b_k b_l}\Psi \delta_x}_\L^2 \quad \text{with} \; k \neq l\,, \\
    &\mathfrak{g}_{t_k t_k}^{-2} \norm{\partial_{t_k t_k}\Psi \delta_x - \Gamma^{t_k}{}_{t_k u_k} \partial_{t_k}\Psi \delta_x}_\L^2 \,, \\
    &\mathfrak{g}_{t_k t_k}^{-1}\mathfrak{g}_{u_k u_k}^{-1} \norm{\partial_{t_k u_k}\Psi \delta_x - \Gamma^{t_k}{}_{t_k u_k} \partial_{t_k}\Psi \delta_x}_\L^2\,, \\
    &\mathfrak{g}_{u_k u_k}^{-2} \norm{\partial_{u_k u_k}\Psi \delta_x - \Gamma^{u_k}{}_{u_k u_k} \partial_{t_k}\Psi \delta_x}_\L^2\,.
\end{align*}
From the proof of Lemma \ref{lemma:global_controls_kernel}, we know that these terms are respectively bounded by $1,1,3,7$. Hence $B_{22} \leq 28 d^2$, and $\max_{i=0,1,2} \sup_{x\in \R^d \times [u_{\min}, +\infty)^d} \norm{D_i \Psi(\delta_{x}) }_{x} \leq 2\sqrt{7}d$.
\end{proof}

Below we present the main steps of the proof of Theorem \ref{th:NDSC}, adapted from \citep[Lemma 2, Proposition 7, Theorem 2]{blasso_duval_peyre} and \citep[Section 4.3]{blasso_poon}.
We work under the assumptions of Theorem \ref{th:main_theorem_certif}: there exists $\eta$ of the form \eqref{eq:eta_general_form} verifying the properties given in Lemma \ref{lemma:eta_ndsc}.

\paragraph{Dual, noisy and noiseless problems} We denote $\mathcal{D}_{\kappa,b}$ the dual problem with regularization $\kappa$ and noise $b$. 
More precisely: 
\begin{equation}\label{eq:pb_D_00}
\underset{p\in\L, \Psi^*p \leq 1}{\arg\max} \,\innerprod{\Psi\mu_\omega^0}{p}_{\L} \tag{$\mathcal{D}_{0,0}$}
\end{equation} is the dual problem of \begin{equation}\label{eq:pb_P_00}
\underset{\mu\in\M(\X)^+}{\arg\min} \,\norm{\mu}_{\rm TV} \;\text{such that}\; \Psi\mu=\Psi\mu_\omega^0\,. \tag{$\mathcal{P}_{0,0}$}
\end{equation}
For $\kappa>0$, the dual problem is 
\begin{equation}
 \label{eq:pb_D_kappa_b}
 \underset{p\in\L, \Psi^*p \leq 1}{\arg\min} \,\norm{\frac{y}{\kappa}-p}_{\L}^2 \tag{$\mathcal{D}_{\kappa,b}$}\,,
\end{equation}
where we observe $y=\Psi\mu_\omega^0+b$ (\ie $b=\Gamma_n$). Its solution is unique. It is the dual problem of
\begin{equation}
 \label{eq:pb_P_kappa_b} 
 \underset{\mu \in \M(\X)^+}{\arg\min} \; \frac{1}{2}\norm{y-\Psi\mu}_{\L}^2+\kappa\norm{\mu}_{\rm TV}\,.
 \tag{$\mathcal{P}_{\kappa,b}$}
\end{equation}
These dual problems differ slightly from those in \citep{blasso_duval_peyre}. Because we restrict attention to nonnegative measures, the total variation norm $\norm{\mu}_{\rm TV}$ in \eqref{eq:pb_P_00} and \eqref{eq:pb_P_kappa_b} reduces to $\int_\X \d\mu$. As a consequence, in the dual formulation the condition $\norm{\Psi^*p}_{\infty} \leq 1$ is relaxed to $\Psi^*p \leq 1$.

We use the notation $p_{\kappa,b}$ for the solution of the dual problem, and $\eta_{\kappa,b}= \Psi^*p_{\kappa,b}$. 
For $\kappa=0$, we choose the particular dual solution defined by \eqref{eq:def_p_00}.

The dual and primal solutions are connected. For $\mu \in \M(\X)^+$, we denote \[ \partial^+ \norm{\mu}_{\rm TV} \coloneq \lbrace \eta \in \C(\X) \: : \: \eta \leq 1\, , \:\supp(\mu) \subset \lbrace \eta=1 \rbrace \rbrace\,.\] 
Strong duality ensures that for $\mu_{\kappa,b}$ a solution of the primal problem, $\eta_{\kappa,b}\in \partial^+ \norm{\mu_{\kappa,b}}_{\rm TV}$. Moreover, for $\kappa>0$ we have $p_{\kappa,b}= -\frac{1}{\kappa}(\Psi\mu_{\kappa,b}-\Psi\mu_\omega^0 -b)$.
\begin{lemma}\label{lemma:unique_sol_minimal_certif_ndsc}
 Under the assumptions of Theorem \ref{th:main_theorem_certif}, $\mu_\omega^0$ is the unique solution of \eqref{eq:pb_P_00}. Moreover, $\eta_{0,0}\coloneq \restr{\eta}{\X}=\Psi^* p_{0,0}$ where $p_{0,0}$ is the solution of \eqref{eq:pb_D_00} with minimal norm, \ie \[p_{0,0}= \underset{p \in \L}{\arg\min} \left\lbrace \norm{p}_{\L}\; : \; \Psi^*p \in \partial^+\norm{\mu_\omega^0}_{\rm TV}\right\rbrace \,.\] 
\end{lemma}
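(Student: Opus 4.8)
\textbf{Proof plan for Lemma~\ref{lemma:unique_sol_minimal_certif_ndsc}.} The plan is to exploit the Non Degenerate Source Condition verified by $\eta_{\textrm{\tiny NDSC}}$ (Lemma~\ref{lemma:eta_NDSC}) to run the standard ``non-degenerate certificate implies identifiability'' argument, which is by now classical in the BLASSO literature (see \citep[Section~4]{blasso_duval_peyre}). First I would observe that since $\eta_{\textrm{\tiny NDSC}} = \Psi^* p_{\textrm{\tiny NDSC}}$ with $\norm{\eta_{\textrm{\tiny NDSC}}}_\infty \leq 1$ and $\eta_{\textrm{\tiny NDSC}}(x_j^0) = 1$ for all $j$ (using that $\mu_\omega^0$ is a nonnegative measure, so all signs are $+1$), the function $\eta_{\textrm{\tiny NDSC}}$ restricted to $\X$ belongs to $\partial \norm{\mu_\omega^0}_{\mathrm{TV}}$ by the subdifferential characterization of the TV-norm recalled just above the statement. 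Hence $p_{\textrm{\tiny NDSC}}$ is feasible for \eqref{eq:pb_D_00}, and by weak duality $\innerprod{\Psi\mu_\omega^0}{p_{\textrm{\tiny NDSC}}}_\L = \int_\X \eta_{\textrm{\tiny NDSC}} \, \d\mu_\omega^0 = \norm{\mu_\omega^0}_{\mathrm{TV}}$, which is the optimal value of \eqref{eq:pb_P_00}: so $\mu_\omega^0$ is a solution of \eqref{eq:pb_P_00} and $p_{\textrm{\tiny NDSC}}$ (equivalently $\eta_{0,0}$) is a solution of \eqref{eq:pb_D_00}.

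\textbf{Uniqueness of the primal solution.} Next I would prove that $\mu_\omega^0$ is the \emph{unique} minimizer of \eqref{eq:pb_P_00}. Let $\mu$ be any other solution. By strong duality and complementary slackness, any primal solution $\mu$ satisfies $\eta_{0,0} \in \partial\norm{\mu}_{\mathrm{TV}}$, hence $\supp(\mu) \subset \{x : |\eta_{0,0}(x)| = 1\} = \{x_1^0,\ldots,x_s^0\}$ by the key property $|\eta_{\textrm{\tiny NDSC}}(x)| = 1 \iff x \in \{x_j^0\}_j$ from \eqref{eq:NDSC}. Therefore $\mu = \sum_{j=1}^s c_j \delta_{x_j^0}$ for some $c_j$ (nonnegative, since $\eta_{0,0}(x_j^0)=1>-1$ forces $\supp(\mu^-)=\emptyset$), and the constraint $\Psi\mu = \Psi\mu_\omega^0$ reads $\sum_j c_j \Psi\delta_{x_j^0} = \sum_j \omega_j^0 \Psi\delta_{x_j^0}$. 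Since $G_{S_0}$ is full-rank (Lemma~\ref{lemma:check_hyp_ndsc}) — in particular the family $\{\Psi\delta_{x_j^0}\}_{j=1}^s$ is linearly independent in $\L$ — we get $c_j = \omega_j^0$ for all $j$, i.e.\ $\mu = \mu_\omega^0$.

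\textbf{Identification of $\eta_{0,0}$ with the minimal-norm certificate.} Finally I would show $p_{\textrm{\tiny NDSC}} = p_{0,0}$, the minimal-$\L$-norm element of the feasible set of \eqref{eq:pb_D_00}. The argument is the ``vanishing derivatives pre-certificate'' identification of \citep[Proposition~7]{blasso_duval_peyre}: the minimal-norm dual certificate $p_{0,0}$ is characterized by $\Psi^* p_{0,0} \in \partial\norm{\mu_\omega^0}_{\mathrm{TV}}$ together with the orthogonality (first-order optimality) condition $p_{0,0} \in \mathrm{span}\{\Psi\delta_{x_j^0},\, \nabla_x(\Psi\delta_{x_j^0}) : j=1,\ldots,s\} = \mathrm{Im}(G_{S_0})$; equivalently, $p_{0,0}$ is the orthogonal projection onto $\mathrm{Im}(G_{S_0})$ of any feasible $p$, and this is exactly the pre-certificate obtained by imposing the interpolation conditions $\eta(x_j^0)=1$ and $\nabla\eta(x_j^0)=0$ and solving the linear system $\Upsilon$ of \eqref{eq:Upsilon}. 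By construction (see the proof of Theorem~\ref{th:existence_certificate_under_positive_curvature_assumption} and Lemma~\ref{lemma:invertibility_Upsilon}), $\eta_{\textrm{\tiny NDSC}} = \eta_{\alpha,\beta}$ with $(\alpha,\beta) = \Upsilon^{-1}\bm u_s$, so $p_{\textrm{\tiny NDSC}} = p_{\alpha,\beta} = \sum_j \alpha_j \Psi\delta_{x_j^0} + \sum_j \beta_j \nabla_x(\Psi\delta_{x_j^0}) \in \mathrm{Im}(G_{S_0})$ and $\Psi^*p_{\textrm{\tiny NDSC}} \in \partial\norm{\mu_\omega^0}_{\mathrm{TV}}$; since $G_{S_0}$ is full-rank, the element of $\mathrm{Im}(G_{S_0})$ whose image under $\Psi^*$ satisfies the interpolation conditions is unique, so $p_{\textrm{\tiny NDSC}} = p_{0,0}$.

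\textbf{Main obstacle.} The genuinely delicate point is the identification step: one must carefully argue that membership of $p$ in $\mathrm{Im}(G_{S_0})$ (i.e.\ being a combination of the $\Psi\delta_{x_j^0}$ and their first derivatives) together with $\Psi^* p \in \partial\norm{\mu_\omega^0}_{\mathrm{TV}}$ pins down $p$ uniquely and that this unique object is both the minimal-norm dual solution and the one produced by the linear system $\Upsilon$. This requires invoking that any interior dual solution has its ``active'' behaviour controlled by the near/far structure and that the second-order condition $\nabla^2\eta_{\textrm{\tiny NDSC}}(x_j^0)\prec 0$ is what guarantees $|\eta_{\textrm{\tiny NDSC}}|<1$ strictly away from the $x_j^0$; everything else (feasibility, weak/strong duality, linear independence from Lemma~\ref{lemma:check_hyp_ndsc}) is routine. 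I would lean on \citep[Proposition~7]{blasso_duval_peyre} and \citep[Section~4.3]{blasso_poon} for the precise form of this argument rather than redoing it from scratch.
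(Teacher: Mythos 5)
Your proposal is correct and follows essentially the same route as the paper: certificate membership in $\partial\norm{\mu_\omega^0}_{\mathrm{TV}}$, support of any primal solution forced into $\{x_j^0\}_j$ by the saturation set of $\eta_{\textrm{\tiny NDSC}}$, injectivity of $G_{S_0}$ to conclude uniqueness, and then \citep[Proposition~7]{blasso_duval_peyre} to identify the pre-certificate with the minimal-norm certificate $p_{0,0}$. The paper's proof is just a terser version of the same argument (it likewise delegates the delicate identification step entirely to Proposition~7 rather than reproving it).
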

\begin{proof}
As $\restr{\eta}{\X} \in \partial^+\norm{\mu_\omega^0}_{\rm TV}$, we deduce that $p_{0,0}$ is a solution of \eqref{eq:pb_D_00}, linked to any solution $\mu_{0,0}$ of \eqref{eq:pb_P_00} by $\Psi^*p_{0,0}=\restr{\eta}{\X} \in \partial^+\norm{\mu_{0,0}}_{\rm TV}$. 
So $\supp(\mu_{0,0}) \subset \lbrace \restr{\eta}{\X}=1 \rbrace=\{x_j^0\}_{j=1}^s$. Using the injectivity of $G_{S_0}$ (Lemma \ref{lemma:check_hyp_ndsc}), we have $\mu_{0,0}=\mu_\omega^0$.
\\Moreover, as $\eta$ (of the form \eqref{eq:eta_general_form}) satisfies the NDSC, \citep[Proposition 7]{blasso_duval_peyre} implies that $p_{0,0}$ is of minimal norm. In our setting, working with nonnegative measures allows the condition $|\Psi^* p| \leq 1$ in the proof of \citep[Proposition 7]{blasso_duval_peyre} to be replaced by $\Psi^* p \leq 1$, without altering the conclusion.
\end{proof}

\begin{lemma}[Convergence of the dual solutions]\label{lemma:convergence_dual_sol_ndsc}
It holds that $\norm{p_{\kappa,0}- p_{0,0}}_{\L} \underset{\kappa \to 0}{\to} 0$.
\\Moreover, $\norm{p_{\kappa,b}- p_{\kappa,0}}_{\L} \leq \frac{\norm{b}_{\L}}{\kappa}$.
\end{lemma}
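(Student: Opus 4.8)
The statement to prove is Lemma~\ref{lemma:convergence_dual_sol_ndsc}: two claims, the continuity of $\kappa \mapsto p_{\kappa,0}$ at $\kappa = 0$, and the Lipschitz-type bound $\norm{p_{\kappa,b} - p_{\kappa,0}}_{\L} \leq \norm{b}_{\L}/\kappa$.

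\textbf{Plan for the second (easier) claim.} The plan is to exploit the fact that $p_{\kappa,b}$ solves \eqref{eq:pb_D_kappa_b}, which is the projection of $y/\kappa = (\Psi\mu_\omega^0 + b)/\kappa$ onto the closed convex set $\mathcal{C} \coloneq \{p \in \L : \norm{\Psi^*p}_\infty \leq 1\}$ (the constraint set is convex and closed since $\Psi^*$ is continuous from $\L$ to $\C(\X)$). Indeed, minimizing $\norm{y/\kappa - p}_\L^2$ over $p \in \mathcal{C}$ is exactly the metric projection $P_\mathcal{C}(y/\kappa)$. Likewise $p_{\kappa,0} = P_\mathcal{C}(\Psi\mu_\omega^0/\kappa)$. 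Since the metric projection onto a closed convex set in a Hilbert space is $1$-Lipschitz (nonexpansive), I get $\norm{p_{\kappa,b} - p_{\kappa,0}}_\L = \norm{P_\mathcal{C}(y/\kappa) - P_\mathcal{C}(\Psi\mu_\omega^0/\kappa)}_\L \leq \norm{y/\kappa - \Psi\mu_\omega^0/\kappa}_\L = \norm{b}_\L/\kappa$. That is the whole argument; I would just need to state clearly that $\mathcal{D}_{\kappa,b}$ is a projection problem and cite nonexpansiveness.

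\textbf{Plan for the first claim.} The plan is to show $p_{\kappa,0} \to p_{0,0}$ in $\L$ as $\kappa \to 0$, where $p_{0,0}$ is the minimal-norm element of the solution set of \eqref{eq:pb_D_00} (by Lemma~\ref{lemma:unique_sol_minimal_certif_ndsc}, this set is nonempty and $\eta_{0,0} = \Psi^* p_{0,0}$). A clean route: note that $p_{\kappa,0} = P_\mathcal{C}(\Psi\mu_\omega^0/\kappa)$, so $\kappa p_{\kappa,0}$ is (for $\kappa > 0$) the projection of $\Psi\mu_\omega^0$ onto $\kappa\mathcal{C}$; equivalently, rescaling, $p_{\kappa,0}$ minimizes $\frac{\kappa}{2}\norm{p}_\L^2 - \innerprod{\Psi\mu_\omega^0}{p}_\L$ over $p \in \mathcal{C}$ — this is the Moreau–Yosida / Tikhonov regularization of the linear functional $p \mapsto \innerprod{\Psi\mu_\omega^0}{p}_\L$ over $\mathcal{C}$. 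It is a classical fact (the ``selection of minimal-norm maximizers'' property, used exactly this way in \citep[proof of Prop. 7]{blasso_duval_peyre}) that as the Tikhonov parameter $\kappa \to 0$, the regularized solution converges \emph{strongly} in the Hilbert space to the minimal-norm solution of the limiting problem \eqref{eq:pb_D_00}, provided that limiting problem has a solution. So I would: (i) show $\{p_{\kappa,0}\}_{\kappa \in (0,1]}$ is bounded in $\L$ (using that $\innerprod{\Psi\mu_\omega^0}{p_{\kappa,0}}_\L \geq \innerprod{\Psi\mu_\omega^0}{p_{0,0}}_\L - \frac{\kappa}{2}\norm{p_{0,0}}_\L^2$ from optimality, combined with feasibility $\norm{\Psi^*p}_\infty \le 1$ — here one uses that $\mu_\omega^0$ has finite support so the pairing $\innerprod{\Psi\mu_\omega^0}{p}_\L = \int \Psi^* p \, d\mu_\omega^0$ is controlled by $\norm{\mu_\omega^0}_{\mathrm{TV}}$, hence upper-bounded on $\mathcal{C}$; and the coercive term forces boundedness of $\norm{p_{\kappa,0}}_\L$); (ii) extract a weakly convergent subsequence $p_{\kappa_n,0} \rightharpoonup \bar p$, argue $\bar p$ is feasible (weak closedness of $\mathcal{C}$) and optimal for \eqref{eq:pb_D_00} (pass to the limit in the optimality inequality), hence $\innerprod{\Psi\mu_\omega^0}{\bar p}_\L = \innerprod{\Psi\mu_\omega^0}{p_{0,0}}_\L$; (iii) show $\norm{\bar p}_\L \leq \norm{p_{0,0}}_\L$ (from the energy inequality and weak lower semicontinuity of the norm), so by uniqueness of the minimal-norm solution $\bar p = p_{0,0}$, which also upgrades weak convergence to norm convergence via $\norm{p_{\kappa_n,0}}_\L \to \norm{p_{0,0}}_\L$; (iv) since every subsequence has a further subsequence converging to $p_{0,0}$, the whole net converges.

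\textbf{Expected main obstacle.} The genuinely delicate point is step (iii)–(iv) of the first claim: making the minimal-norm selection argument rigorous in the infinite-dimensional RKHS $\L$ rather than just asserting it. One must be careful that $\mathcal{C}$ is weakly closed (this follows from $\Psi^* : \L \to \C(\X)$ being bounded, so that $\norm{\Psi^* p}_\infty \le 1$ is preserved under weak limits, using that point evaluations $p \mapsto (\Psi^*p)(x) = \innerprod{p}{\Psi\delta_x}_\L$ are weakly continuous), and that the solution set of \eqref{eq:pb_D_00} is indeed nonempty — which is precisely what Lemma~\ref{lemma:unique_sol_minimal_certif_ndsc} guarantees. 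I expect I could alternatively shortcut the whole thing by invoking \citep[Lemma 2]{blasso_duval_peyre} or \citep[Section 4.3]{blasso_poon} directly, since the setting there is abstract enough to cover $\Psi$, after checking the two hypotheses collected in Lemma~\ref{lemma:check_hyp_ndsc} (injectivity of $G_{S_0}$ and boundedness of the feature map and its derivatives on $\X$); that is likely the route the authors take, and it reduces the ``proof'' to verifying that their abstract framework applies.
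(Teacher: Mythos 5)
Your proposal is correct and takes essentially the same route as the paper: the bound $\norm{p_{\kappa,b}-p_{\kappa,0}}_\L\leq \norm{b}_\L/\kappa$ is obtained there exactly by viewing \eqref{eq:pb_D_kappa_b} as the metric projection of $y/\kappa$ onto the closed convex set $\{p\in\L:\norm{\Psi^*p}_\infty\leq 1\}$ and invoking nonexpansiveness, while the convergence $p_{\kappa,0}\to p_{0,0}$ is proved by the same Tikhonov/minimal-norm selection argument (uniform bound $\norm{p_{\kappa,0}}_\L\leq\norm{p_{0,0}}_\L$ from the two optimality inequalities, weak subsequential limit shown feasible and optimal for \eqref{eq:pb_D_00}, identification with $p_{0,0}$ via the minimal-norm property from Lemma \ref{lemma:unique_sol_minimal_certif_ndsc}, upgrade to strong convergence through norm convergence, and the subsequence principle), adapted from \citep{blasso_duval_peyre}. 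One small caution: in your step (i), bound the linear term by $\innerprod{\Psi\mu_\omega^0}{p_{\kappa,0}}_\L\leq\innerprod{\Psi\mu_\omega^0}{p_{0,0}}_\L$ (optimality of $p_{0,0}$, equivalently feasibility of $p_{\kappa,0}$ together with $\int\eta_{0,0}\,\d\mu_\omega^0=\norm{\mu_\omega^0}_{\mathrm{TV}}$) so the linear contributions cancel and yield $\norm{p_{\kappa,0}}_\L\leq\norm{p_{0,0}}_\L$ uniformly, rather than merely bounding the pairing by $\norm{\mu_\omega^0}_{\mathrm{TV}}$, which would leave a $\kappa^{-1}$ factor.
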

\begin{proof}
\underline{$\norm{p_{\kappa,0}- p_{0,0}}_{\L} \underset{\kappa \to 0}{\to} 0$:}
 Using that $p_{\kappa,0}$ is the solution of \eqref{eq:pb_D_kappa_b} for $b=0$ and $p_{0,0}$ is a solution of \eqref{eq:pb_D_00}, writing $y=\Psi\mu_\omega^0$ we have \[\innerprod{y}{p_{\kappa,0}}_{\L}-\frac{\kappa}{2}\norm{p_{\kappa,0}}^2 \geq \innerprod{y}{p_{0,0}}_{\L}-\frac{\kappa}{2}\norm{p_{0,0}}^2 \quad \text{and} \quad \innerprod{y}{p_{0,0}}_{\L} \geq \innerprod{y}{p_{\kappa,0}}_{\L}\,.\] We deduce that $\norm{p_{\kappa,0}}_\L$ is bounded by $\norm{p_{0,0}}_\L$. Given $\kappa_n \to 0$, as the closed unit ball of a Hilbert space is weakly sequentially compact, we can extract $p_{\kappa_{n_k},0}$ that converges weakly towards $p^* \in \L$. 
\\We show that $p^*=p_{0,0}$. We have
$\innerprod{y}{p^*}_{\L}\geq\innerprod{y}{p_{0,0}}_{\L}$ so \[\innerprod{\mu_\omega^0}{\Psi^*p^*}\geq\innerprod{\mu_\omega^0}{\Psi^*p_{0,0}}=\norm{\mu_\omega^0}_{\rm TV}\,.\]
We also have that $\Psi^*$ is weakly continuous from $\L$ to $\C(\X)$ so \[\sup_\X \Psi^*p^*\leq \sup_\X \Psi^*p_{\kappa_{n_k},0}=1\,,\] hence $\supp(\mu^0)\subset\{\Psi^*p*=1\}$ and $p^* \in \partial^+\norm{\mu_\omega^0}_{\rm TV}$. Finally, \[\norm{p_{0,0}}_{\L} \geq \lim\inf \norm{p_{\kappa_{n_k},0}}_{\L}\geq\norm{p^*}_{\L}\] as if $h_n \rightharpoonup h$ in $\L$, then $\norm{h}_{\L}^2\leq \lim \inf \norm{h_n}_{\L}^2$. Hence $p^*=p_{0,0}$.
\\Then $p_{\kappa_{n_k},0}\rightarrow p_{0,0}$ strongly because $\norm{p_{\kappa_{n_k},0}}_{\L}\rightarrow\norm{p_{0,0}}_{\L}$ and $p_{\kappa_{n_k},0}\rightharpoonup p_{0,0}$. 
Finally, $p_{\kappa,0} \underset{n_k\rightarrow \infty}{\longrightarrow} p_{0,0}$ strongly because any convergent subsequence of $(p_{\kappa,0})_\kappa$ has limit $p_{0,0}$.
\\\underline{$\norm{p_{\kappa,b}- p_{\kappa,0}}_{\L} \leq \frac{\norm{b}_{\L}}{\kappa}$:} The mapping $P: \frac{y}{\kappa} \rightarrow p_{\kappa,b}$ is the projection onto a close convex set so it is non expansive: for $h_1, h_2 \in \L$, \[\innerprod{h_1-P(h_1)}{P(h_2)-P(h_1)}\leq 0\quad \text{and} \quad \innerprod{h_2-P(h_2)}{P(h_1)-P(h_2)}\leq 0\,.\]
 Subtracting the two and applying Cauchy-Schwarz leads to \[\innerprod{P(h_1)-P(h_2)}{P(h_1)-P(h_2)}\leq\innerprod{h_1-h_2}{P(h_1)-P(h_2)}\leq \norm{h_1-h_2}\norm{P(h_1)-P(h_2)}\,.\] Hence \[\norm{p_{\kappa,b}-p_{\kappa,0}}\leq\norm{\frac{\Psi \mu_\omega^0+b-\Psi \mu_\omega^0}{\kappa}}=\frac{\norm{b}}{\kappa}\,.\]
\end{proof}

\begin{proposition}[Sparsity of the solution]\label{prop:at_most_1_spike_near_region_ndsc}
Under the assumptions of Theorem \ref{th:main_theorem_certif},
 there exists $\tilde \kappa_0>0$ (depending on $\mu^0,\X, \tau$), $\gamma_0$ (depending on $d$) such that for all $\kappa \leq \tilde \kappa_0$ and $b$ such that $\norm{b}_\L \leq \gamma_0 \kappa$, the solution $\mu_{\kappa,b}$ of \eqref{eq:pb_P_kappa_b} is a discrete measure and has at most 1 particle in each $\X_j^{\rm near}(r)$.
\end{proposition}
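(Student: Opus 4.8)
The statement is the classical ``exact support recovery'' result of Duval--Peyr\'e \citep{blasso_duval_peyre}, transported to our non-translation-invariant kernel. The plan is to follow the structure of \citep[Lemma 2, Proposition 7]{blasso_duval_peyre} together with \citep[Section 4.3]{blasso_poon}, using the ingredients already assembled: the minimal-norm certificate $p_{0,0}$ with $\eta_{0,0}=\Psi^*p_{0,0}=\restr{\eta_{\textrm{\tiny NDSC}}}{\X}$ satisfying the NDSC (Lemma \ref{lemma:unique_sol_minimal_certif_ndsc}), the convergence of dual solutions $p_{\kappa,b}\to p_{0,0}$ in $\L$ as $(\kappa,\norm{b}_\L/\kappa)\to(0,0)$ (Lemma \ref{lemma:convergence_dual_sol_ndsc}), and the structural facts of Lemma \ref{lemma:check_hyp_ndsc} ($G_{S_0}$ full-rank and the uniform bound $B<\infty$ on $\norm{\nabla_x^k\Psi\delta_x}_\L$ up to order $2$, which controls $\eta_{\kappa,b}$ and its first two derivatives uniformly). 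I would first record that, by strong duality, any solution $\mu_{\kappa,b}$ of \eqref{eq:pb_P_kappa_b} satisfies $\eta_{\kappa,b}\in\partial\norm{\mu_{\kappa,b}}_{\mathrm{TV}}$, so $\supp(\mu_{\kappa,b})\subset\{|\eta_{\kappa,b}|=1\}$; hence controlling where $|\eta_{\kappa,b}|$ can equal $1$ controls the support.

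\textbf{Key steps.} First I would show that $\eta_{\kappa,b}\to\eta_{0,0}$ in $\C^2(\X)$ as $(\kappa,\norm{b}_\L/\kappa)\to 0$: indeed $\eta_{\kappa,b}(x)-\eta_{0,0}(x)=\innerprod{p_{\kappa,b}-p_{0,0}}{\Psi\delta_x}_\L$ and similarly for derivatives $\nabla_x^k$, so the uniform bound $B$ from Lemma \ref{lemma:check_hyp_ndsc} combined with $\norm{p_{\kappa,b}-p_{0,0}}_\L\le\norm{p_{\kappa,b}-p_{\kappa,0}}_\L+\norm{p_{\kappa,0}-p_{0,0}}_\L\le\frac{\norm{b}_\L}{\kappa}+o_\kappa(1)$ (Lemma \ref{lemma:convergence_dual_sol_ndsc}) gives uniform-in-$\X$ convergence of $\eta_{\kappa,b}$, $\nabla\eta_{\kappa,b}$, $\nabla^2\eta_{\kappa,b}$ to those of $\eta_{0,0}$. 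Second, I would use the NDSC at $\eta_{0,0}$: since $|\eta_{0,0}|<1$ on $\X\setminus\{x_j^0\}$ and $\nabla^2\eta_{0,0}(x_j^0)\prec 0$ for each $j$, by compactness there is a radius $\bar r\le r_{\textrm{\tiny NDSC}}$ and a constant $c>0$ such that $\eta_{0,0}\le 1-c\,\mathfrak{d}_{\mathfrak g}(x,x_j^0)^2$ on each $\X_j^{near}(\bar r)$ (this is already built into the $(\varepsilon_{0,\textrm{\tiny NDSC}},\varepsilon_{2,\textrm{\tiny NDSC}})$-non-degeneracy), and $|\eta_{0,0}|\le 1-\varepsilon_{0,\textrm{\tiny NDSC}}$ on $\X^{far}(r_{\textrm{\tiny NDSC}})$. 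Third, for $\kappa$ and $\norm{b}_\L/\kappa$ small enough, the $\C^2$-proximity forces $|\eta_{\kappa,b}|<1$ on $\X^{far}(\bar r)$ (hence no support there) and $\nabla^2\eta_{\kappa,b}\prec 0$ on each closed ball $\X_j^{near}(\bar r)$; strict concavity of $\eta_{\kappa,b}$ on each near region means $\eta_{\kappa,b}$ has at most one maximizer there, so $\{\eta_{\kappa,b}=1\}$, and a fortiori $\{|\eta_{\kappa,b}|=1\}$, meets each $\X_j^{near}(\bar r)$ in at most one point. Combining, $\supp(\mu_{\kappa,b})$ consists of at most $s$ points, one per near region; in particular $\mu_{\kappa,b}$ is discrete. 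Fourth, to upgrade $\bar r$ to $r_{\textrm{\tiny NDSC}}$ one notes the far/near split was chosen with $\bar r\le r_{\textrm{\tiny NDSC}}$, so ``at most one particle in $\X_j^{near}(\bar r)$'' and ``no particle in $\X^{far}(\bar r)\supset\X^{far}(r_{\textrm{\tiny NDSC}})$'' already give ``at most one particle in $\X_j^{near}(r_{\textrm{\tiny NDSC}})$''. Finally, I would take $\tilde\kappa_0$ as the threshold on $\kappa$ coming from the $p_{\kappa,0}\to p_{0,0}$ convergence and the $\C^2$ tolerance, and $\gamma_0$ as the admissible ratio $\norm{b}_\L/\kappa$; Theorem \ref{th:NDSC} then follows (with $\kappa_0\le\tilde\kappa_0$) because $\mu_{n,\omega}^\star$, being an exact solution of \eqref{eq:pb_BLASSO_gaussian_kernel_norm} with observation $y=\Psi\mu_\omega^0+\Gamma_n$, is precisely a $\mu_{\kappa,b}$ with $b=\Gamma_n$, and sparsity index exactly $s$ (rather than ``at most $s$'') is obtained by combining this with the mass-control/near-region estimates of Corollary \ref{cor:ndsc} which prevent any near region from being empty for $n$ large.

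\textbf{Main obstacle.} The delicate point is the passage from $\L$-convergence of the dual certificates to $\C^2(\X)$-convergence of $\eta_{\kappa,b}$ uniformly on $\X$, and in particular the negative-definiteness of $\nabla^2\eta_{\kappa,b}$ on the \emph{closed} near balls rather than just at the centers $x_j^0$; this is where the uniform bound $B$ on the second Riemannian/Euclidean derivatives of $x\mapsto\Psi\delta_x$ (Lemma \ref{lemma:check_hyp_ndsc}, itself a consequence of $C^\infty$-smoothness on the compact $\X$, Remark \ref{remark:smoothness_Psi_delta_x}) is essential, together with the equicontinuity of $x\mapsto\nabla^2_x\Psi\delta_x$ so that the concavity at $x_j^0$ persists on a fixed neighborhood uniformly in $(\kappa,b)$. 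A secondary subtlety is that $r_{\textrm{\tiny NDSC}}$ was fixed in Lemma \ref{lemma:eta_NDSC} purely so that the upper curvature bound keeps $\eta_{\textrm{\tiny NDSC}}>-1$ on $\X_j^{near}(r_{\textrm{\tiny NDSC}})$; one must check it is compatible with the (possibly smaller) radius $\bar r$ on which strict concavity is available, which it is since one may always shrink $\bar r$.
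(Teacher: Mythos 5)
Your proposal follows essentially the same route as the paper's proof: use the NDSC together with compactness to fix a radius $r_\varepsilon\leq r_{\textrm{\tiny NDSC}}$ on which $|\eta_{0,0}|<1-\varepsilon$ outside the near regions and $\nabla^2\eta_{0,0}\prec-\varepsilon I$ inside them, then transfer these properties to $\eta_{\kappa,b}$ via the bound $B$ of Lemma \ref{lemma:check_hyp_ndsc} and the dual convergence of Lemma \ref{lemma:convergence_dual_sol_ndsc} (choosing $\tilde\kappa_0$ and $\gamma_0=\varepsilon/(4B)$ exactly as the paper does), and conclude through $\eta_{\kappa,b}\in\partial\norm{\mu_{\kappa,b}}_{\mathrm{TV}}$. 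The only cosmetic difference is your extra discussion of how Theorem \ref{th:NDSC} and the exact count $s$ follow, which is beyond this proposition; the core argument is the paper's.
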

\begin{proof}
We recall the definitions of the covariant derivatives and the associated norms in Definition \ref{def:riemannian_derivatives}.

Using Lemma \ref{lemma:eta_ndsc}, we deduce that $\eta_{0,0}(x)\leq 1-\varepsilon_0$ for all $x \notin \bigcup \X_j^{\rm near}(r)$ and $v^T H^{\mathfrak{g}} \eta_{0,0}(x) v \leq -2\varepsilon_2 \norm{v}_x^2 $ for all $v\in \R^{2d}$ and $x \in \bigcup \mathcal{G}_{x_j^0}(\X_j^{\rm near}(r))$.
\\In light of Lemma \ref{lemma:convergence_dual_sol_ndsc}, we can choose $\tilde \kappa_0$ such that for all $\kappa \leq \tilde \kappa_0$, $B\norm{p_{\kappa,0}- p_{0,0}}_{\L} \leq \frac{\varepsilon_0}{4}$ (where $B$ is defined in Lemma \ref{lemma:check_hyp_ndsc}).
We assume that $\norm{b}_{\L} \leq \gamma_0 \kappa$ with $\gamma_0= \frac{\varepsilon_0}{4B}=\frac{0.03911}{8 \sqrt{7}d}$. Using Cauchy-Schwarz we get that for all $x \in \R^d \times [u_{\min}, +\infty)^d$, $i=0,1,2$,
\begin{align*}
   \norm{D_i\eta_{\kappa,b}(x)- D_i\eta_{0,0}(x)}_x & \leq  \norm{p_{\kappa,b}-p_{0,0}}_{\L} \norm{D_i \Psi\delta_x}_x \,, \\
   &\leq B\left( \frac{\norm{b}_{\L}}{\kappa}+ \norm{p_{\kappa,0}-p_{0,0}}_{\L} \right) \,,\\
 & \leq \frac{\varepsilon_0}{2} \,.
\end{align*}

So $\eta_{\kappa,b}(x)<1$ for all $x \notin \bigcup \X_j^{\rm near}(r)$ and $H^\mathfrak{g}\eta_{\kappa,b}(x)$ is negative-definite for all $x \in \bigcup \mathcal{G}_{x_j^0}(\X_j^{\rm near}(r))$ (in fact, remark that as $\varepsilon_0 \leq 2\varepsilon_2$, $v^T H^{\mathfrak{g}} \eta_{\kappa,b}(x) v \leq - \varepsilon_2 \norm{v}_x^2$ for all $v\in \R^{2d}$). Hence there exists at most 1 point $\bar x$ in each $\X_j^{\rm near}(r)$ such that $\eta_{\kappa,b}(\bar x)=1$. In fact for such a point $\bar x$, $\nabla \eta_{\kappa,b}(\bar x)=0$ because $\eta_{\kappa,b}\leq 1$. So for $x\in \X_j^{\rm near}(r) \setminus \{\bar x\}$, denoting $\gamma$ the geodesic connecting $\bar x$ to $x$, we have \[\eta_{\kappa,b}(x)=1 + \int_0^1 (1-y) \dot \gamma(y)^T H^\mathfrak{g}\eta_{\kappa,b}(\gamma(y)) \dot \gamma(y) \, \d y <1\,.\]
As $\eta_{\kappa,b} \in \partial^+ \norm{\mu_{\kappa,b}}_{\rm TV}$, we deduce that $\mu_{\kappa,b}$ is a discrete measure and that it has at most 1 particle in each $\X_j^{\rm near}(r)$.
\end{proof}

\begin{proposition}
 Under the assumptions of Theorem \ref{th:main_theorem_certif},
 there exists $\kappa_0>0$ (depending on $\mu^0,\X, \tau$), $\gamma_0$ (depending on $d$) such that for all $\kappa \leq \kappa_0$ and $b$ such that $\norm{b}_\L \leq \gamma_0 \kappa$, $\mu_{\kappa,b}$ is a discrete measure and has exactly 1 particle in each $\X_j^{\rm near}(r)$, and no particle on the far region.
 
 Moreover, writing $\mu_{\kappa,b}=\sum_{j=1}^s \omega_j^{\kappa,b} \delta_{x_j^{\kappa,b}}$ with $x_j^{\kappa,b} \in \X_j^{\rm near}(r)$, for $j=1,\ldots,s$ we have $|\omega^{\kappa,b}-\omega^0| \lesssim_{\X, \mu^0, \tau} \kappa$ and $\mathpzc{d}(x_j^0, x_j^{\kappa,b}) \lesssim_{\X, \mu^0, \tau} \kappa$.
\end{proposition}
\begin{proof}
\underline{$\mu_{\kappa,b}(\X_j^{\rm near}(r))>0$:}
We take Proposition \ref{prop:at_most_1_spike_near_region_ndsc} as a starting point.
 We show that $\mu_{\kappa,b}(\X_j^{\rm near}(r))>0$ for $\norm{b}_{\L} \leq \gamma_0 \kappa$ and $\kappa\leq \kappa_0$ with $\kappa_0 \leq \tilde \kappa_0$ small enough. 
 
It suffices to show the weak* convergence of $\mu_{\kappa,b}$ towards $\mu_\omega^0$ as $\kappa \to 0$ and for $\norm{b}_{\L} \leq \gamma_0 \kappa$ (this result is also stated in \citep[Proposition 4]{blasso_duval_peyre}). 
We have \[\frac{1}{2}\norm{\Psi\mu_\omega^0+b-\Psi\mu_{\kappa,b}}_{\L}^2+\kappa\norm{\mu_{\kappa,b}}_{\rm TV}\leq \frac{1}{2}\norm{b}_{\L}^2+\kappa \norm{\mu_\omega^0}_{\rm TV}\] since $\mu_{\kappa,b}$ minimizes \[J_{\kappa,b}:\mu \mapsto \frac{1}{2}\norm{\Psi\mu_\omega^0+b-\Psi\mu}_{\L}^2+\kappa\norm{\mu}_{\rm TV}\,.\] Dividing by $\kappa$, it follows that $\norm{\mu_{\kappa,b}}_{\rm TV}$ is bounded by $\frac{\gamma_0^2}{2} \kappa+ \norm{\mu_\omega^0}_{\rm TV}$. So we can extract $\mu_k$ a weak* convergent subsequence of $\mu_{\kappa,b}$ as $\kappa,b \rightarrow 0$ with $\norm{b}\leq \gamma_0 \kappa$. Let $\mu^*$ be its limit. By the lower semi-continuity of the TV norm for the weak* convergence, $\norm{\mu^*}_{\rm TV}\leq \lim\inf \norm{\mu_k}_{\rm TV} \leq \norm{\mu_\omega^0}_{\rm TV}$. To establish $\mu^*=\mu_\omega^0$, it remains to show that $\Psi \mu^*=\Psi\mu_\omega^0$ (because $\mu_\omega^0$ is the only solution of \eqref{eq:pb_P_00}, see Lemma \ref{lemma:unique_sol_minimal_certif_ndsc}).
Recall that $\norm{p_{\kappa,b}-p_{\kappa,0}}_\L\leq \frac{\norm{b}_\L}{\kappa}$ and that $p_{\kappa,b}=-\frac{1}{\kappa}(\Psi\mu_{\kappa,b}-\Psi\mu_\omega^0-b)$, giving $\norm{\Psi\mu_{\kappa,b}-b-\Psi\hat{\mu}_{\kappa,0}}_\L\leq \norm{b}_\L$. As $\norm{\Psi\mu_{\kappa,0}-\Psi\mu_\omega^0}_\L^2\leq 2 \kappa \norm{\mu_\omega^0}_{\rm TV}$ (because $J_{\kappa,0}(\mu_{\kappa,0})\leq J_{\kappa,0}(\mu_\omega^0)$), it comes 
\begin{align*}
\norm{\Psi\mu_{\kappa,b}-\Psi\mu_\omega^0}_\L& \leq \norm{\Psi\mu_{\kappa,b}-\Psi\hat{\mu}_{\kappa,0}}_\L+ \norm{\Psi\mu_{\kappa,0}-\Psi\mu_\omega^0}_\L \,, \\
& \leq 2\norm{b}_\L + \sqrt{2 \kappa \norm{\mu_\omega^0}_{\rm TV}} \,.
\end{align*}
As $\Psi$ is weak* to weak continuous from $\M(\X)$ to $\L$, for all $p\in\L$ we have 
\[|\innerprod{p}{\Psi(\mu_k-\mu_\omega^0)}_\L| \to |\innerprod{p}{\Psi(\mu^*-\mu_\omega^0)}_\L|\,.\] Applying Cauchy-Schwarz, we have $|\innerprod{p}{\Psi(\mu_k-\mu_\omega^0)}_\L| \to 0$ from which we conclude that $\innerprod{p}{\Psi(\mu^*-\mu_\omega^0)}_\L=0$. So $\Psi \mu^*=\Psi \mu_\omega^0$, and $\mu_{\kappa,b}$ converges towards $\mu_\omega^0$ as $\kappa \to 0$ and $\norm{b}_\L \leq \gamma_0 \kappa$ for the weak* topology. This concludes this part of the proof.
\\\underline{Continuity of the solution \wrt $\kappa,b$:}
Following the proof of \citep[Theorem 2]{blasso_duval_peyre}, as $G_{S^0}$ defined by \eqref{eq:def_G_S_0} is full-rank, we can make use of the implicit function theorem. 
We define \[D_{\gamma_0,\kappa_0} \coloneq \{\kappa\geq 0\,, b\in\L \: :\: \kappa\leq \kappa_0 \: \text{and} \: \norm{b}_L \leq \gamma_0\kappa\}\,.\]
Possibly taking $\kappa_0$ smaller than in the first part of this proof, there exists a $C^1$ function $g: D_{\gamma_0,\kappa_0}\to \R^s \times \R^{2d \times s}$ such that $g(\kappa,b)=(\omega_1^{\kappa,b},\ldots,\omega_s^{\kappa,b},x_1^{\kappa,b},\ldots,x_s^{\kappa,b})$ for all $(\kappa,b) \in D_{\gamma_0,\kappa_0}$.
So for all $j=1,\ldots,s$, $|\omega_j^{\kappa,b}-\omega_j^0|= O(\max\{\kappa,\norm{b}_\L\})$ and $\norm{x_j^{\kappa,b}-x_j^0}_2= O(\max\{\kappa,\norm{b}_\L\})$. The constant hiding in $O$ depends on $\X,\mu^0,\tau$. As $\norm{b}_\L \leq \gamma_0 \kappa$ and as $\gamma_0$ only depends on $\X$ via $d$, we get \[|\omega_j^{\kappa,b}-\omega_j^0| \lesssim_{\X, \mu^0, \tau} \kappa \quad \text{and} \quad \norm{x_j^{\kappa,b}-x_j^0}_2\lesssim_{\X, \mu^0, \tau} \kappa\,.\]
\underline{Control of $\mathpzc{d}(x_j^{\kappa,b},x_j^0)$:} We then show that we can replace the Euclidean norm by the semi-distance in the last inequality.
Indeed, for~$x,x'\in \R^d \times [u_{\min},+\infty)^d$ we have 
\begin{align*}
\mathpzc{d}(x,x')^2&=\sum_{i=1}^d \left(\frac{(t_i-t_i')^2}{u_i^2+{u_i'}^2+\tau^2} + \ln\left(\frac{u_i^2+{u_i'}^2+\tau^2}{\sqrt{2u_i^2+\tau^2}\sqrt{2{u_i'}^2+\tau^2}} \right) \right) \,, \\
&= \sum_{i=1}^d \left(\frac{(t_i-t_i')^2}{u_i^2+{u_i'}^2+\tau^2} + \ln\left(1+\frac{\left(\sqrt{u_i^2+\frac{\tau^2}{2}} -\sqrt{{u_i'}^2+\frac{\tau^2}{2}}\right)^2}{2\sqrt{u_i^2+\frac{\tau^2}{2}}\sqrt{{u_i'}^2+\frac{\tau^2}{2}}} \right) \right) \,, \\
&\leq \sum_{i=1}^d \left(\frac{(t_i-t_i')^2}{u_i^2+{u_i'}^2+\tau^2} + \frac{\left(\sqrt{u_i^2+\frac{\tau^2}{2}} -\sqrt{{u_i'}^2+\frac{\tau^2}{2}}\right)^2}{2\sqrt{u_i^2+\frac{\tau^2}{2}}\sqrt{{u_i'}^2+\frac{\tau^2}{2}}} \right) \,, \\
&\leq \sum_{i=1}^d \left(\frac{(t_i-t_i')^2}{2u_{\min}^2+\tau^2} + \frac{\left(\sqrt{u_i^2+\frac{\tau^2}{2}} -\sqrt{{u_i'}^2+\frac{\tau^2}{2}}\right)^2}{2u_{\min}^2+\tau^2} \right) \,, \\
&\leq \sum_{i=1}^d \left(\frac{(t_i-t_i')^2}{2u_{\min}^2+\tau^2} + \frac{(u_i -u_i')^2}{2u_{\min}^2+\tau^2} \right) \,, \\
&= \frac{1}{2u_{\min}^2+\tau^2}\norm{x-x'}_2^2 \,, \\
&\lesssim_{\X,\tau} \norm{x-x'}_2^2 \,.
\end{align*}
\underline{Control of $|a_j^{\kappa,b}-a_j^0|$:} To deal with the reparameterized weights, we can reuse the calculations given in Section \ref{section:proof_cor_effective_near}. Denoting $a_j^{\kappa,b}=\frac{\omega_j^{\kappa,b}}{W(x_j^{\kappa,b})}$, we showed that if $r_e \coloneq \mathpzc{d}(x_j^0,x_j^{\kappa,b}) \leq r$, then
\[
    |a_j^{\kappa,b}-a_j^0|\leq |\omega_j^{\kappa,b} - \omega_j^0| W(x_j^0)^{-1} (1+H(r) r_e) + a_j^0 H(r) r_e \,,
\]
where $H(r)$ is defined by \eqref{eq:def_H(r)_effective}. As we can make the upper bound $r$ on $\mathpzc{d}(x_j^0,x_j^{\kappa,b})$ only depend on $\X$ and $\tau$, it holds that
\begin{align*}
    |a_j^{\kappa,b}-a_j^0| &\lesssim_{\X,\mu^0,\tau} |\omega_j^{\kappa,b} - \omega_j^0| + \mathpzc{d}(x_j^0,x_j^{\kappa,b}) \,, \\
    &\lesssim_{\X,\mu^0,\tau} \kappa \,.
\end{align*}
\end{proof}

\subsection{Proof of Corollary \ref{cor:ndsc}}\label{section:proof_cor_ndsc}
 If $n \geq \frac{c_\kappa^2}{\kappa_0^2 (2\pi)^{d/2} \tau^d}$, choosing $\kappa=\frac{c_\kappa}{(2\pi)^{d/4} \tau^{d/2} \sqrt{n}}$ we have $\kappa \leq \kappa_0$. Moreover, it holds that $\norm{\Gamma_n} \leq \gamma_0 \kappa$ with probability greater than $1-C_\Gamma e^{-\left(\frac{\gamma_0 c_\kappa}{C_\Gamma}\right)^2}$ (\cf Lemma \ref{lemma:control_noise} where we took $\rho=\frac{\gamma_0^2 c_\kappa^2}{C_\Gamma^2}$). Theorem \ref{th:NDSC} applies: $\mu_{n,\omega}^\star=\sum_{j=1}^s \omega_j^\star \delta_{x_j^{\star}}$ where $\omega_j^\star>0$ and $x_j^{\star} \in \X_j^{\rm near}(r)$ for all $j=1,\ldots, s$.
 
 Moreover, $|a_j^\star-a_j^0|\lesssim_{\X,\mu^0,\tau} \frac{c_\kappa}{\sqrt{n}}$ and $\mathpzc{d}(x_j^\star,x_j^0) \lesssim_{\X,\mu^0,\tau} \frac{c_\kappa}{\sqrt{n}}$.

\end{document}